\def\namedlabel#1#2{\begingroup
    #2%
    \def\@currentlabel{#2}%
    \phantomsection\label{#1}\endgroup
}
\numberwithin{equation}{section}
\newcommand{\Z}{\mathbb Z}
\newcommand{\R}{\mathbb R}
\newcommand{\C}{\mathbb C}
\newcommand{\NM}{{\mathbb{N}}}
\newcommand{\vt}{\mathring{v}}
\newcommand{\ri}{\mathrm{i}}
\newcommand{\re}{\mathrm{e}}
\newcommand{\de}{\mathrm{d}}
\newcommand{\zt}{\mathring{z}}
\newcommand{\wt}{\mathring{w}}
\newcommand{\gt}{\boldsymbol{\gamma}}
\renewcommand{\Re}{\mathrm{Re}}
\definecolor{purpp}{RGB}{188,0,237}	
	\definecolor{greeen}{RGB}{0,128,0}	
\newtheorem{theorem}{Theorem}[section]
\newtheorem{proposition}[theorem]{Proposition}
\newtheorem{corollary}[theorem]{Corollary}
\newtheorem{lemma}[theorem]{Lemma}
\newtheorem{remark}[theorem]{Remark}
\theoremstyle{definition}
\newcommand{\xx}{{\zeta}}
\newcommand{\xt}{{\bar{\zeta}}}
\newcommand{\Non}{\mathcal{N}}
\newcommand{\El}{\mathcal{L}}
\title{Nonlinear dynamics of reaction-diffusion wave trains under large and fully nonlocalized modulations}
\author{Joannis Alexopoulos$^*$, and Bj\"orn de Rijk\thanks{Department of Mathematics, Karlsruhe Institute of Technology, Englerstra\ss e 2, 76131 Karlsruhe, Germany; \texttt{joannis.alexopoulos@kit.edu}, \texttt{bjoern.de-rijk@kit.edu}}}
\begin{document}
\maketitle
\begin{abstract}
We study the dynamics of periodic wave trains in reaction-diffusion systems on the real line under large, fully nonlocalized modulations. We prove that solutions with nearby initial data converge, at an enhanced diffusive rate, to a modulated wave train whose leading-order phase and wavenumber dynamics are governed by an explicit solution to the viscous Hamilton-Jacobi equation. This constitutes a global stability result: such initial data are generally not close to the large-time modulated wave train. In contrast to previous modulational stability results, our analysis does not require that the initial data approach phase shifts of the wave train at spatial infinity. The central methodological advance is a nontrivial extension of the recently developed $L^\infty$-stability theory to accommodate large phase modulations. This framework, based entirely on $L^\infty$-estimates, removes all localization requirements as imposed in the previous literature, allowing us to treat the full range of bounded modulational initial data under minimal regularity assumptions. The main technical contributions include: the strategic use of interpolation inequalities to balance smallness and temporal decay, and a detailed analysis of the linear dynamics under fully nonlocalized modulational data.
\bigskip\\
\textbf{Keywords.} Reaction-diffusion systems; periodic waves; modulation; global stability; nonlocalized perturbations\\
\textbf{Mathematics Subject Classification (2020).} 35B10; 35B35; 35B40; 35K57
\end{abstract}

\section{Introduction}

In this paper, we study the dynamics of modulated wave trains in the reaction-diffusion system
\begin{equation}\label{RD0}
\partial_t u = Du_{xx} + f(u), \qquad x \in\R,\, t \geq 0,\, u\in\R^n,
\end{equation}
where $n \in \mathbb N$, $D \in \R^{n \times n}$ is a symmetric, positive-definite matrix, and $f \colon \R^n \to \R^n$ is a smooth nonlinearity. Wave trains are solutions to~\eqref{RD0} of the form $u_{\mathrm{wt}}(x,t) = \phi_0(k_0 x -\omega_0 t)$ with wavenumber $k_0 \in \R \setminus \{0\}$, temporal frequency $\omega_0 \in \R$, propagation speed $c = \omega_0/k_0$, and $1$-periodic profile function $\phi_0(\zeta)$. These periodic traveling waves represent the most fundamental patterns that arise at the onset of a Turing instability and as such play a fundamental role in pattern-forming processes in biology, chemistry, ecology and more; we refer to~\cite{DOE19} for further details and references. Under generic conditions, the wave train $u_{\mathrm{wt}}(x,t)$ can be continued in the wavenumber, giving rise to a family of wave trains $u_k(x,t) =  \phi(k x - \omega(k) t; k)$, with $1$-periodic profile function $\phi(\cdot;k)$ and temporal frequency $\omega(k)$, defined for $k$ near $k_0$ and satisfying $\omega(k_0) = \omega_0$ and $\phi(\cdot;k_0) = \phi_0$. Here, the function $\omega(k)$, which expresses the frequency in terms of the wavenumber, is referred to as the \emph{nonlinear dispersion relation}; see~\cite{DSSS}.

We are interested in the dynamics of solutions whose initial data are close to a \emph{modulated wave train}
\begin{align*}
u_{\mathrm{mod}}(x) = \phi_0\left(k_0 x + \gt_0(x)\right),
\end{align*}
where $\gt_0 \colon \R \to \R$ denotes a phase modulation. Specifically, we consider initial data of the form
\begin{align} \label{e:ICintro}
u(x,0) = \phi_0\left(k_0 x + \gt_0(x)\right) + \wt_0(x),
\end{align}
where $\wt_0 \colon \R \to \R^n$ is a small perturbation. A natural question is whether the solution $u(x,t)$ to~\eqref{RD0} with initial condition~\eqref{e:ICintro} remains close to a modulated wave train for all $t \geq 0$. That is, under what conditions does there exist a modulation function $\gt \colon \R \times [0,\infty) \to \R$ such that the perturbation
\begin{align} \label{e:asympdescr0}
\wt(x,t) = u(x,t) - \phi_0\left(k_0 x - \omega_0 t + \gt(x,t)\right)
\end{align}
remains small for all time? In this paper, we establish such a \emph{modulational stability} result under standard spectral stability assumptions on the wave train, together with the requirement that $\gt_0$ is bounded and both $\gt_0'$ and $\wt_0$ are uniformly continuous and small in $L^\infty(\R)$. In particular, for any bounded function $g_0 \colon \R \to \R$ with uniformly continuous derivative, there exists $\delta > 0$ such that the phase modulation $\gt_0(x) = g_0(\delta x)$ is admissible.

Our result advances the modulational stability theory for wave-train solutions to~\eqref{RD0} in two crucial ways. First, in contrast to all prior results~\cite{IYSA,JUNNL,JONZNL,JONZW,SAN3}, we do not require $\gt_0'$ to be $L^1$-localized and, in particular, do not assume the existence of asymptotic phase limits $\gt_\pm = \lim_{x \to \pm \infty} \gt_0(x)$; see Remark~\ref{rem:L1} and Figure~\ref{fig_nonlocalized_derivative}. Second, we remove all localization assumptions on both $\gt_0'$ and the initial perturbation $\wt_0$ that are present in the previous literature; see Remark~\ref{rem:literature}. This allows us to handle a significantly larger class of modulational initial data of the form~\eqref{e:ICintro} than considered before.

Notably, we do not require $\gt_0$ to be small in $L^\infty(\mathbb{R})$. To the best of our knowledge, the only other work addressing \emph{large} phase modulations of wave-train solutions to~\eqref{RD0} (while still assuming $\gt_0 \in L^1(\mathbb{R})$) is~\cite{IYSA}. In this context, we also highlight the recently developed stability theory for periodic patterns in \emph{planar} reaction-diffusion systems~\cite{melinand2024}. These patterns are intrinsically planar in the sense that they are nontrivial in \emph{all} spatial directions. Building on techniques from~\cite{JONZNL,JONZW}, the analysis in~\cite{melinand2024} still relies on decay induced by localization, yet it allows for large, and even unbounded, phase modulations. The reason is that localization leads to stronger temporal decay in higher spatial dimensions, permitting a relaxation of the localization requirement on the initial phase modulation: instead of assuming $\gt_0' \in L^1(\mathbb{R})$ as in~\cite{JONZNL,JONZW}, it suffices to require that $\Delta \gt_0 \in L^1(\mathbb{R}^2)$. This condition imposes only that $\gt_0$ has bounded mean oscillation (up to additive constants), allowing for unbounded $\gt_0$; see also~\S\ref{sec:unbounded}.

Our result is an \emph{asymptotic} modulational stability result in the sense that the perturbation $\wt(t)$ in~\eqref{e:asympdescr0} decays over time at a diffusive rate $\smash{t^{-\frac12}}$. Moreover, the modulation $\gt(t)$, which governs the phase dynamics in~\eqref{e:asympdescr0}, remains bounded with small derivative, which also decays at rate $\smash{t^{-\frac12}}$. As in~\cite{IYSA,JONZW,SAN3}, we show that $\gt(t)$ is well-approximated by the solution $\breve{\gt}(t)$ to the viscous Hamilton-Jacobi equation
\begin{align} \label{e:HamJacIntro}
\partial_t \breve{\gt} = \frac{d}{k_0^2} \, \breve{\gt}_{xx} + \omega'(k_0)\, \breve{\gt}_x - \frac{1}{2} \omega''(k_0)\, \breve{\gt}_x^2,
\end{align}
with initial condition $\breve{\gt}(0) = \gt_0$. Here, the coefficient $d > 0$ is an explicit Melnikov-type integral; see~\eqref{e:defad}. 

The first rigorous justification of equation~\eqref{e:HamJacIntro} as an effective description of the dynamics of slowly modulated wave trains on long time intervals was provided in~\cite{DSSS}. Our result, along with those in~\cite{IYSA,JONZW,SAN3}, confirms that this description remains valid globally.

It is not difficult to show that the solution $\breve{\gt}(t)$ to~\eqref{e:HamJacIntro} does not, in general, remain close to its initial condition $\gt_0$, and the same holds for $\gt(t)$ by approximation; see Remark~\ref{rem:global}. Thus, our modulational stability result is \emph{global} in the sense that it provides a complete description of the dynamics of solutions $u(t)$ with initial data of the form~\eqref{e:ICintro}, even though $u(t)$ typically does not remain close to $u(0)$ over time. 

It was already observed in~\cite{DSSS} that phase and wavenumber modulations are intimately connected. For the modulated wave train described in~\eqref{e:asympdescr0}, the local wavenumber, i.e. the number of waves per unit interval near position $x$, is given by $k_0 + \gt_x(x,t)$. This suggests that incorporating a modulation of the wavenumber may yield a more accurate approximation of the solution $u(t)$. We show that the refined residual
\begin{align} \label{e:asympdescr}
\mathring{y}(x,t) := u(x,t) - \phi\left(k_0 x - \omega_0 t + \gt(x,t)\left(1 + \tfrac{1}{k_0} \gt_x(x,t)\right);\, k_0 + \gt_x(x,t)\right)
\end{align}
indeed decays in $L^\infty(\R)$ at an enhanced rate $t^{-1}\log(1+t)$. This refined asymptotic description of the dynamics of $u(t)$, accounting for both phase and wavenumber modulation, was previously established in~\cite{JONZW}, but only for \emph{small} initial phase modulations $\gt_0$ and with weaker decay rates in $L^\infty(\R)$; see Table~\ref{Tabelle}. Here, we obtain it for the first time in the setting of large initial phase modulations.

While previous modulational stability results rely on localization-induced decay, employing renormalization group techniques~\cite{IYSA,SAN3}, iterative $L^1$-$H^k$ estimates~\cite{JONZNL,JONZW}, or pointwise Green's function bounds~\cite{JUNNL}, our analysis follows a different approach. We extend the recently developed $L^\infty$-based stability theory of~\cite{BjoernMod} to accommodate large modulational initial data. This method eliminates the need for any localization assumptions by leveraging decay generated by smoothing properties of the critical part of the semigroup to close the nonlinear argument. Further details on the strategy underlying our nonlinear stability analysis are provided in~\S\ref{sec:strategy}.

\begin{remark} \label{rem:L1}
{
\upshape
The assumption that $\gt_0 \in C^1(\mathbb{R})$ is bounded is equivalent to the condition
\begin{align}
\label{equiv_cond}
\sup_{R \in \mathbb{R}} \left| \int_0^R \gt_0'(x) \, \mathrm{d}x \right| < \infty.
\end{align}
In this work, we thus extend previous modulational stability results~\cite{JONZW,JONZNL,JUNNL,SAN3,IYSA}, which relied on the stronger assumption $\|\gt_0\|_{L^1} = \int_{\mathbb{R}} |\gt_0'(x)| \, \mathrm{d}x < \infty$, to the weaker condition~\eqref{equiv_cond}. However, this assumption still excludes initial data with nonzero asymptotic wavenumber shifts. Whether modulational stability persists under such wavenumber offsets remains an open question; see~\S\ref{sec:unbounded}.
}
\end{remark}

\begin{remark} \label{rem:literature} { \upshape
The modulational stability results~\cite{JONZNL,JONZW,JUNNL,SAN3} consider initial data~\eqref{e:ICintro}, where $\gt_0'$ is small in $L^1(\R)$. This readily implies that $\|\gt_0\|_\infty$ is small and that the asymptotic limits $\lim_{x \to \pm \infty} \gt_0(x) = \gt_\pm$ exist. While~\cite{IYSA} still requires $\gt_0'$ to be $L^1$-localized, the smallness assumption on $\|\gt_0'\|_{L^1}$ is dropped, thereby allowing for large \emph{phase offsets} $\gt_d = \gt_+ - \gt_-$. See Table~\ref{Tabelle} for further details.}
\end{remark}

\begin{table}[h]
\centering
\renewcommand{\arraystretch}{1.4}
  \begin{tabular}{ | c | c | c | c | } 
  \hline

 & $\gt_0$ and $\wt_0$ & decay of $\gt_x(t)$ and $\wt(t)$ & decay of $\mathring{y}(t)$ \\[5pt]

\hline

\cite{SAN3} & $\left\|\rho^2 \wt_0\right\|_{H^2} + \big\|\rho^2 \gt_0'\big\|_{H^2} \ll 1$ &  $t^{-\frac12+\alpha}$ & 

\\[5pt]

\hline

\cite{JONZNL,JONZW} & $\|\wt_0\|_{L^1 \cap H^3} + \|\gt_0'\|_{L^1 \cap H^3} \ll 1$ &  $t^{-\frac12}$   & $t^{-\frac34} \log(1+t)$\\[5pt]

\hline

\cite{JUNNL} & \begin{minipage}{0.4\textwidth} \vspace{0.1cm} \centering $\|\wt_0\|_{H^2} < \infty$ and \\ 
$\big\|\rho^{\frac32} \wt_0\big\|_{\infty} + \big\|\rho^{\frac{3}{2}}\gt_0'\big\|_{W^{1,\infty}} \ll 1$
 \vspace{0.1cm}\end{minipage} &  \begin{minipage}{0.23\textwidth} \centering \vspace{0.1cm} $\left(1+|x-a t| + \sqrt{t}\right)^{-\frac32}$\\
 $+ \, (1+t)^{-\frac12}\re^{-\frac{|x-at|^2}{C t}}$  \vspace{0.1cm} \end{minipage}  & \\[5pt]

\hline

\cite{IYSA} & \begin{minipage}{0.4\textwidth} \vspace{0.1cm} \centering $\big\|\rho^2 \wt_0\big\|_{H^2} + \big\|\rho^2\gt_0'\big\|_{H^2} < \infty$ and \\ $\|\widehat{\gt}_0'(1 + |\cdot|)\|_{L^1} + \|\wt_0\|_{H^2} \ll \gt_d$ \vspace{0.2cm}\end{minipage}  & $t^{-\frac12+\alpha}$  &  \\[5pt]  


\hline
Thm.~\ref{main_theorem} &  $\|\gt_0\|_\infty < M$ and $\|\wt_0\|_{\infty} + \|\gt_0'\|_{\infty} \ll 1$ & $t^{-\frac12}$  & $t^{-1} \log(1+t)$ \\[5pt]
\hline
\end{tabular}
\renewcommand{\arraystretch}{1} 
\caption{This table compares previously established modulational stability results with our main result, Theorem~\ref{main_theorem}, for solutions $u(t)$ to~\eqref{RD0} with initial condition~\eqref{e:ICintro}. The second column presents the assumptions on the initial modulation $\gt_0$ and perturbation $\wt_0$, where $\rho$ is the algebraic weight $\rho(x) = \smash{\sqrt{1 + x^2}}$, $\widehat{\cdot}$ denotes the Fourier transform, and $M > 0$ is an arbitrary constant fixed a priori. The third column contains the pointwise decay rates obtained for $\gt_x(t)$ and $\wt(t)$ in~\eqref{e:asympdescr0}. Here, $C > 0$ and $a \in \R$ are constants, and $\alpha \in (0, \tfrac{1}{2})$ is an arbitrary parameter fixed a priori. The final column presents the pointwise decay rates obtained for the refined residual defined in~\eqref{e:asympdescr}. We note that the inequality $\smash{\|\gt_0'\|_{L^1} \leq \|\rho^{-\frac32}\|_{L^1} \|\rho^{\frac32} \gt_0'\|_\infty}$ and the continuous embedding $H^1(\R) \hookrightarrow L^\infty(\R)$ imply that $\gt_0' \in L^1(\R)$ in~\cite{JONZW,JONZNL,JUNNL,SAN3,IYSA}. }
\label{Tabelle}
\end{table}

\begin{figure}[h]
    \centering
    \vspace{0.3cm}
    \includegraphics[width=0.55\linewidth,trim={0.1cm 0.1cm 0.1cm 0.1cm},clip]{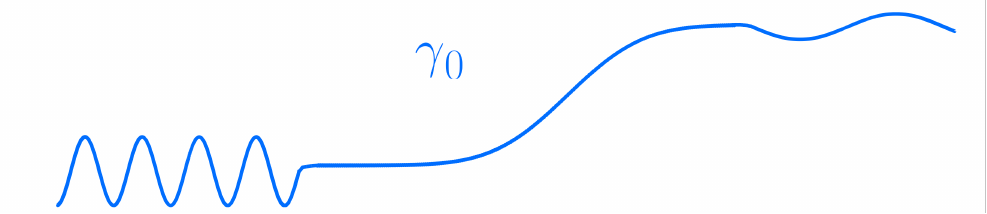}
    \caption{Example of an initial phase modulation $\gt_0 \in C_{\mathrm{ub}}^1(\R)$ with nonlocalized derivative.}
    \label{fig_nonlocalized_derivative}
\end{figure}

\subsection{Statement of main result} 

Before stating the main result, we formulate the necessary hypotheses, which concern the existence and spectral stability of the underlying wave train. We emphasize that these assumptions are standard in the nonlinear stability literature of wave trains; see, for example,~\cite{BjoernMod,IYSA,JONZ,JONZNL,JONZW,JUN,JUNNL,SCH,SAN3,ScheelWu}. We begin with the existence hypothesis:
\begin{itemize}
\item[\namedlabel{assH1}{\upshape (H1)}] There exist a wavenumber $k_0 \in \R \setminus \{0\}$ and a temporal frequency $\omega_0 \in \R$ such that~\eqref{RD0} admits a wave-train solution $u_{\mathrm{wt}}(x,t)=\phi_0(k_0x - \omega_0 t)$, where the profile function $\phi_0 \colon \R \to \R^n$ is nonconstant, smooth and $1$-periodic.
\end{itemize}

Thus, switching to the co-moving frame $\zeta = k_0x - \omega_0 t$,  we find that $\phi_0$ is a stationary solution to 
\begin{align}\label{RD}
\partial_t u = k_0^2Du_{\zeta\zeta} + \omega_0 u_\zeta + f(u).
\end{align}

Next, we turn to the required spectral assumptions. To this end, we consider the linearization of~\eqref{RD} about the wave train $\phi_0$:
\begin{align*}
\El_0 u = k_0^2 D u_{\zeta\zeta} + \omega_0 u_\zeta + f'\big(\phi_0(\zeta)\big) u.
\end{align*}
We view $\El_0$ as a $1$-periodic differential operator acting on the space $C_{\mathrm{ub}}(\R)$, with dense domain $D(\El_0) = C_{\mathrm{ub}}^2(\R)$. Here, $C_{\mathrm{ub}}^m(\R)$, $m \in \mathbb{N}_0$, denotes the Banach space of bounded, uniformly continuous functions that are $m$ times differentiable, with all derivatives up to order $m$ also bounded and uniformly continuous. We equip $C_{\mathrm{ub}}^m(\R)$ with the standard $\smash{W^{m,\infty}}$-norm.

Applying the Floquet-Bloch transform to $\El_0$ yields the family of Bloch operators
\begin{align*}
\El(\xi) u = k_0^2 D \left(\partial_\zeta + \ri \xi\right)^2 u + \omega_0 \left(\partial_\zeta + \ri \xi\right) u + f'(\phi_0(\zeta)) u,
\end{align*}
defined on $L_{\mathrm{per}}^2(0,1)$ with domain $D(\El(\xi)) = H_{\mathrm{per}}^2(0,1)$, and parameterized by the Bloch frequency $\xi \in [-\pi,\pi)$. Due to translational invariance, the derivative $\phi_0'$ of the wave train lies in the kernel of $\El(0)$. As a result, the spectrum of $\El_0$, given by
$$\sigma(\El_0) = \bigcup_{\xi \in [-\pi,\pi)} \sigma(\El(\xi)),$$
necessarily touches the origin. The most stable nondegenerate spectral configuration, known as \emph{diffusive spectral stability}, is then characterized by the following conditions:
\begin{itemize}
\setlength\itemsep{0em}
\item[\namedlabel{assD1}{\upshape (D1)}] It holds $\sigma(\El_0)\subset\{\lambda\in\C:\Re(\lambda)<0\}\cup\{0\}$;
\item[\namedlabel{assD2}{\upshape (D2)}] There exists $\theta>0$ such that for any $\xi\in[-\pi,\pi)$ we have $\Re\,\sigma(\El(\xi))\leq-\theta \xi^2$;
\item[\namedlabel{assD3}{\upshape (D3)}] $0$ is a simple eigenvalue of $\El(0)$.
\end{itemize}
Examples of reaction-diffusion systems that support diffusively spectrally stable wave trains include the complex Ginzburg-Landau equation~\cite{vanH}, the Gierer-Meinhardt system~\cite{PLO}, and the Brusselator model~\cite{SUKH}. We refer to~\cite{BjoernMod} for further discussion and additional references.

It follows from Hypothesis~\ref{assD3} that $0$ is also a simple eigenvalue of the adjoint operator $\El(0)^*$. We denote by $\smash{\widetilde{\Phi}_0} \in H^2_{\mathrm{per}}(0,1)$ the corresponding eigenfunction, normalized such that
\begin{align} \label{e:adjoint}
\big\langle \widetilde{\Phi}_0, \phi_0' \big\rangle_{L^2(0,1)} = 1.
\end{align}
Hypothesis~\ref{assD3} also implies that the wave train $\phi_0$ can be continued in the wavenumber, yielding a family of wave-train solutions to~\eqref{RD0} of the form $u_k(x,t) = \phi(k x - \omega(k) t; k)$, defined for $k$ near $k_0$, where the frequency $\omega(k)$ and the $1$-periodic profile function $\phi(\cdot;k)$ obey
\begin{align} \label{e:gauge}
\omega(k_0) = \omega_0, \qquad \phi(\cdot;k_0) = \phi_0, \qquad \big\langle \widetilde{\Phi}_0, \partial_k \phi(\cdot,k_0) \big\rangle_{L^2(0,1)} = 0,
\end{align}
cf.~Proposition~\ref{prop:family}. 

We now present our main result, which shows that solutions to~\eqref{RD} with initial data close to a largely modulated wave train converge at an enhanced diffusive rate to a modulated wave train, whose phase and wavenumber dynamics are governed by a solution $\breve{\gamma}(t)$ to the viscous Hamilton-Jacobi equation
\begin{align}
\partial_t \breve\gamma = d\breve\gamma_{\xx\xx} + a \breve\gamma_{\xx} + \nu \breve\gamma_\zeta^2, \label{e:HamJac}
\end{align}
with coefficients
\begin{align} \label{e:defad}
a = \omega_0  - k_0\omega'(k_0), \qquad d = \big\langle \widetilde{\Phi}_0,D \phi_0' + 2k_0D \partial_{\zeta k} \phi(\cdot;k_0)\big\rangle_{L^2(0,1)}, \qquad \nu = -\frac{1}{2}k_0^2 \omega''(k_0).
\end{align}

\begin{theorem}
\label{main_theorem}
Assume~\ref{assH1} and~\ref{assD1}-\ref{assD3}. Fix $\alpha \in [0,\frac16)$ and $M > 0$. Then, there exist constants $K,\varepsilon > 0$ such that, whenever $u_0 \in C_{\mathrm{ub}}(\mathbb{R})$ and $\gamma_0 \in C_{\mathrm{ub}}^1(\mathbb{R})$ satisfy 
\begin{align*}
\|\gamma_0\|_\infty \leq M, \qquad E_0 := \|u_0 - \phi_0(\cdot + \gamma_0(\cdot))\|_{\infty} + \|\gamma_0'\|_\infty <\varepsilon, 
\end{align*}
there exist a scalar function 
\begin{align*}
\gamma\in C\big([0,\infty), C_{\mathrm{ub}}^1(\mathbb{R})\big) \cap C^j\big((0,\infty), C_{\mathrm{ub}}^l(\mathbb{R})\big), \qquad j, l \in \NM_0
\end{align*}
with $\gamma(0) = \gamma_0$ and a unique classical global solution
\begin{align*} 
 u \in \mathcal{X} := C\big([0,\infty),C_{\mathrm{ub}}(\R)\big) \cap C\big((0,\infty),C_{\mathrm{ub}}^2(\R)\big) \cap C^1\big((0,\infty),C_{\mathrm{ub}}(\R)\big)
\end{align*}
to~\eqref{RD} with $u(\cdot,0) = u_0$, which obey the estimates
\begin{align} 
\label{e:mtest10}
\begin{split}
\left\|\gamma(t)\right\|_\infty \leq K, \qquad \left\|u(t)-\phi_0(\cdot + \gamma(\cdot,t))\right\|_\infty, \left\|\gamma_\xx(t)\right\|_\infty &\leq \frac{KE_0^\alpha}{(1+t)^{\frac12 - \alpha}}, \\
\left\|u(t) - \phi\big(\cdot + \gamma(\cdot,t)\left(1+\gamma_\xx(\cdot,t)\right);\,k_0(1 + \gamma_\xx(\cdot,t))\big)\right\|_\infty &\leq KE_0^\alpha\frac{\log(2+t)}{(1+t)^{1 - 2\alpha}}
\end{split}
\end{align}
for $t \geq 0$, and
\begin{align}  \label{e:mtest2}
\begin{split}
\left\|u(t)-\phi_0(\cdot + \gamma(\cdot,t))\right\|_{C_\mathrm{ub}^1} &\leq KE_0^\alpha \frac{ (1+t)^\alpha}{\sqrt{t}},\\ 
\left\|u(t) - \phi\big(\cdot + \gamma(\cdot,t)\left(1+\gamma_\xx(\cdot,t)\right);\,k_0(1 + \gamma_\xx(\cdot,t))\big)\right\|_{C_\mathrm{ub}^1}, \left\|\gamma_{\xx\xx}(t)\right\|_{C_{\mathrm{ub}}^1} &\leq KE_0^\alpha\frac{\log(2+t)}{\sqrt{t} \, (1+t)^{\frac12 - 2\alpha}}
\end{split}
\end{align}
for all $t > 0$. Moreover, there exists a unique classical global solution 
\begin{align*}
\breve{\gamma} \in \mathcal{Y} := C\big([0,\infty),C_{\mathrm{ub}}^1(\R)\big) \cap C\big((0,\infty),C_{\mathrm{ub}}^2(\R)\big) \cap C^1\big((0,\infty),C_{\mathrm{ub}}(\R)\big)
\end{align*}
with initial condition $\breve{\gamma}(0) = \gamma_0$ to the viscous Hamilton-Jacobi equation~\eqref{e:HamJac} such that we have the approximations
\begin{align}
\label{e:mtest3}
\left\|\gamma(t) - \breve{\gamma}(t)\right\|_\infty &\leq KE_0^{\frac23\alpha}, \qquad
\left\|\gamma_\xx(t) - \breve{\gamma}_\xx(t)\right\|_\infty \leq \frac{KE_0^\alpha}{\sqrt{1+t}}
\end{align}
for all $t \geq 0$. In particular, it holds
\begin{align} \label{e:mtest33}
\left\|u(t) - \phi_0\left(\cdot+\breve{\gamma}(\cdot,t)\right)\right\|_{\infty}  &\leq KE_0^{\frac23\alpha}, \qquad t \geq 0.
\end{align}
\end{theorem}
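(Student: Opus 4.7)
My approach combines three ingredients: a decomposition of the solution via the refined modulated ansatz in~\eqref{e:asympdescr}, an $L^\infty$-based bootstrap built on smoothing estimates for the critical Bloch semigroup, and a parabolic comparison step for the viscous Hamilton-Jacobi equation~\eqref{e:HamJac}. First, in the co-moving frame $\zeta = k_0 x - \omega_0 t$, I would introduce the decomposition
\begin{align*}
u(\zeta/k_0 + \omega_0 t/k_0, t) = \phi\big(\zeta + \gamma(\zeta,t)(1 + k_0^{-1}\gamma_\zeta(\zeta,t));\;k_0 + \gamma_\zeta(\zeta,t)\big) + \mathring{y}(\zeta,t),
\end{align*}
and select the phase $\gamma$ by requiring the Bloch-localized critical component of the residual (computed with respect to the adjoint eigenfunction $\widetilde{\Phi}_0$ from~\eqref{e:adjoint}) to vanish. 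Expanding, this yields a coupled $(\gamma,\mathring{y})$ system whose nonlinearity depends on $\mathring{y}$ and $\gamma_\zeta$ but not on $\gamma$ itself — a consequence of periodicity and translation invariance of $\phi_0$ — and is therefore at least quadratic in the \emph{small} quantities $\mathring{y}$ and $\gamma_\zeta$, never in the \emph{large} modulation $\gamma$.

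For the linear part I would use Floquet-Bloch together with Hypotheses~\ref{assD1}-\ref{assD3} to split $e^{t\El_0} = S^c(t) + S^s(t)$, with $S^s(t)$ decaying exponentially in $L^\infty$ and $S^c(t)$, after perturbative expansion of the critical Bloch eigenvalue at $\xi = 0$, acting like the convection-diffusion semigroup generated by $dk_0^{-2}\partial_\zeta^2 + a\partial_\zeta$ on the critical eigenspace. The essential bounds needed are pure $L^\infty \to L^\infty$ smoothing estimates of the form $\|\partial_\zeta^j S^c(t) f\|_\infty \leq C t^{-j/2}(1+t)^{-j/2}\|f\|_\infty$, which yield diffusive decay directly in $L^\infty$ and bypass every $L^1$ localization used in the earlier literature.

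The nonlinear estimates would be closed through a template functional of the form
\begin{align*}
\eta(T) := \sup_{0 \leq t \leq T} E_0^{-\alpha}\!\left[(1+t)^{\frac12 - \alpha}\big(\|\mathring{w}(t)\|_\infty + \|\gamma_\zeta(t)\|_\infty\big) + \frac{(1+t)^{1-2\alpha}}{\log(2+t)}\|\mathring{y}(t)\|_\infty + \cdots\right],
\end{align*}
paired with the separate uniform control $\sup_{t \geq 0}\|\gamma(t)\|_\infty \leq K$. Substituting these bounds into the Duhamel representation, one meets the characteristic obstruction: terms such as $\phi_0'(\zeta+\gamma)\gamma_\zeta^2$ carry a coefficient $\phi_0'(\cdot+\gamma)$ that is bounded but \emph{not} small. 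The key device, as signaled in the abstract, is interpolation: writing $\gamma_\zeta^2 = \gamma_\zeta^{2-\theta}\cdot\gamma_\zeta^\theta$ for a small $\theta > 0$, the first factor supplies $E_0^{(2-\theta)\alpha}$ with temporal weight $(1+s)^{-(2-\theta)(\frac12-\alpha)}$, while the second absorbs the smallness needed to upgrade the output from $E_0$ to $E_0^\alpha$. The threshold $\alpha < \tfrac16$ is precisely what balances these exponents against the logarithmic loss in convolutions $\int_0^t(1+t-s)^{-1/2}(1+s)^{-1}\de s$. The $C_{\mathrm{ub}}^1$ bounds in~\eqref{e:mtest2} then follow from the additional $t^{-1/2}$ smoothing factor in the same Duhamel integrals.

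Finally, projecting the coupled system onto the critical mode shows that $\gamma$ satisfies~\eqref{e:HamJac} up to remainders in $\mathring{y}$, $\gamma_\zeta^3$, and cross-terms — all temporally integrable by the previous step. Global well-posedness of~\eqref{e:HamJac} in the class $\mathcal{Y}$ for bounded $C^1$ data is standard via the Hopf-Cole substitution $w = \exp(\nu\breve{\gamma}/d)$, which linearizes~\eqref{e:HamJac} to a heat equation with bounded positive initial datum. The difference $\gamma - \breve{\gamma}$ then satisfies a linear parabolic equation with transport coefficient $-\nu(\gamma_\zeta + \breve{\gamma}_\zeta) = O(E_0^\alpha)$ and a forcing with integrable $L^\infty$ norm. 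This is where I expect the main obstacle: since neither $\gamma$ nor $\breve{\gamma}$ is small, the triangle inequality yields nothing useful, so I would interpolate between the uniform bound $\|\gamma - \breve{\gamma}\|_\infty \leq 2(K+M)$ and the pointwise decay of the forcing, and then invoke a maximum-principle estimate for the linearized HJ operator to obtain the fractional bound $KE_0^{2\alpha/3}$ in~\eqref{e:mtest3}. Estimate~\eqref{e:mtest33} follows from $\|\phi_0(\cdot+\gamma) - \phi_0(\cdot+\breve{\gamma})\|_\infty \leq \|\phi_0'\|_\infty\|\gamma - \breve{\gamma}\|_\infty$ combined with~\eqref{e:mtest10}.
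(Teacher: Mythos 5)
Your overall framework (template bootstrap, $L^\infty$ smoothing from the critical Bloch semigroup, Cole--Hopf for the HJ limit) is in the right spirit and matches the paper's broad strategy, but the central novelty of the paper — the handling of the \emph{large} phase modulation — is misplaced in your sketch. You locate the obstruction in the nonlinear term $\phi_0'(\zeta+\gamma)\gamma_\zeta^2$, observing that $\phi_0'$ is bounded but not small, and you propose to interpolate by factoring $\gamma_\zeta^2 = \gamma_\zeta^{2-\theta}\gamma_\zeta^\theta$. This does not address the real issue: $\phi_0'$ being a bounded coefficient is already the case in the \emph{small} modulation theory, and once the bootstrap controls $\|\gamma_\zeta(t)\|_\infty \lesssim E_0^\alpha(1+t)^{-1/2+\alpha}$, the factor $\gamma_\zeta^2$ is already quadratically small and needs no splitting. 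The genuine difficulty, which the paper isolates in \S\ref{sec:strategy} and~\S\ref{sec:mod_data}, lies in the \emph{linear} term $\re^{\El_0 t}(\phi_0'\gamma_0)$ (and its principal-part analogue $S_p^0(t)(\phi_0'\gamma_0)$) that necessarily appears in the Duhamel formulation once $\gamma(0)=\gamma_0$. Since $\|\gamma_0\|_\infty\leq M$ is not small, the classical estimate $\|\partial_\zeta S_p^0(t)(\phi_0'\gamma_0)\|_\infty\lesssim (1+t)^{-1/2}\|\gamma_0\|_\infty$ yields the right decay but a constant $\approx M$ rather than $E_0^\alpha$, so your template $\eta$, normalized by $E_0^{-\alpha}$, is $\gtrsim M E_0^{-\alpha}$ already at time one, and the bootstrap cannot close. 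The paper's interpolation is performed between two kinds of \emph{linear} bounds on this same object: bounds that give decay but no smallness versus modulational bounds like $\|\partial_\zeta^l S_p^0(t)(\phi_0'\gamma_0)\|_\infty \lesssim (1+t)^{-(l-1)/2}\|\gamma_0'\|_\infty$ which give smallness through $\gamma_0'$ but lose half a power of decay (Propositions~\ref{prop:lin_mod_1}--\ref{prop:lin_mod_2}). Your proposal has no analogue of these modulational linear estimates and therefore has no mechanism to manufacture the $E_0^\alpha$ factor in front of the linear contribution $\gamma_\zeta$.

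A secondary gap: you do not explain how regularity is controlled. The paper stresses that the inverse-modulated equation is quasilinear with an apparent loss of derivative, requiring a nonlinear damping estimate in uniformly local Sobolev spaces for the modified forward-modulated perturbation $\mathring{z}$ (Proposition~\ref{prop:nonlinear_damping}) for large times, plus a separate short-time parametrix argument for $\mathring{v}$. Your sketch works directly with the refined residual $\mathring{y}$, but as the paper recalls, a forward-modulated iteration for the amplitude cannot be closed on its own due to insufficient decay in its nonlinearity, and the modified forward-modulated equation has the nonintegrable short-time blow-up of $\gamma_{\zeta t}$ that precludes a damping argument for $t\downarrow 0$. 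Finally, for the HJ approximation~\eqref{e:mtest3}, your proposal to interpolate the crude uniform bound $\|\gamma-\breve\gamma\|_\infty\leq 2(K+M)$ against the decay of the forcing would at best reproduce an $O(1)$ constant, not the $E_0^{2\alpha/3}$ scaling; the paper instead derives a short-time bound $\lesssim E_0^{2\alpha}(1+t)^{2\alpha}$ and a long-time bound $\lesssim E_0^\alpha + (1+t)^{-1/2+2\alpha}$ and splits at the time threshold $(1+t)\sim E_0^{-2/3}$, which is where the exponent $2\alpha/3$ (and the restriction $\alpha<\tfrac16$) actually comes from.
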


\begin{remark}{\upshape
We observe that the temporal decay rates presented in Theorem~\ref{main_theorem} match those established in the sharp nonlinear stability result for wave trains under $C_{\mathrm{ub}}$-perturbations in~\cite{BjoernMod}, confirming their optimality up to a possible logarithmic correction; see~\cite[Section~6.1]{BjoernMod}. Since our primary objective was to establish a modulational stability result with sharp temporal decay rates, we did not strive for optimal powers of the $E_0$-terms in the estimates~\eqref{e:mtest10},~\eqref{e:mtest2},~\eqref{e:mtest3}, and~\eqref{e:mtest33} in Theorem~\ref{main_theorem}. As the toy example in~\S\ref{sec:heuristic} suggests, we expect that these powers may be improved with further technical effort.
}
\end{remark}

\begin{remark} \label{rem:global}
{\upshape
Let $g_0 \in C_{\mathrm{ub}}^1(\mathbb{R})$ be any function such that $g_0' \in L^1(\mathbb{R}) \setminus \{0\}$ has mean zero. Set $M = \|g_0\|_\infty$ and fix $\alpha \in (0, \tfrac{1}{6})$. Let $K, \varepsilon > 0$ be as in Theorem~\ref{main_theorem}. Clearly, there exists $\delta > 0$ such that the function $\gamma_0 \in C_{\mathrm{ub}}^1(\mathbb{R})$ given by $\gamma_0(\xx) = g_0(\delta \xx)$ satisfies $\|\gamma_0\|_\infty \leq M$ and $\|\gamma_0'\|_\infty < \varepsilon$. Let $\breve{\gamma}$ denote the solution to the viscous Hamilton-Jacobi equation~\eqref{e:HamJac} with initial condition $\breve{\gamma}(0) = \gamma_0$. Then, $\breve{k}(t) := \breve{\gamma}_\xx(t)$ solves the viscous Burgers equation
\begin{align*} \partial_t \breve k = d\partial_\xx^2 \breve k + a\partial_\xx \breve k + \nu \partial_\xx \big(\breve{k}^2\big). \end{align*}
Since $\gamma_0' \in L^1(\mathbb{R})$ has mean zero, it follows from~\cite[Theorem~2 and Corollary~1]{FREI} that $\breve{k}(t)$ converges to zero in $L^1(\mathbb{R}) \cap L^\infty(\mathbb{R})$ as $t \to \infty$. This implies that $\breve{\gamma}(t)$ converges to a spatially constant function $\breve{\gamma}_\infty$ in $C_{\mathrm{ub}}^1(\mathbb{R})$ as $t \to \infty$. In particular, for large times, $\breve{\gamma}(t)$ is not close to its initial condition $\gamma_0$ in $C_{\mathrm{ub}}^1(\mathbb{R})$. By the estimates~\eqref{e:mtest10} and~\eqref{e:mtest3}, the same holds for $\gamma(t)$ and $u(t)$, demonstrating that Theorem~\ref{main_theorem} provides a \emph{global} stability result.
}
\end{remark}

\begin{remark} \label{rem:regularity}
{\upshape Although the reaction-diffusion system~\eqref{RD} admits mild solutions for initial data in $L^\infty(\mathbb{R})$ via standard analytic semigroup theory, the minimal condition ensuring that a solution $u(t)$ converges to its initial condition $u_0$ in $L^\infty(\mathbb{R})$ as $t \downarrow 0$ is that $u_0$ is uniformly continuous; see~\cite[Theorems~3.1.7 and 7.1.2]{LUN}. This right-continuity of $u(t)$ at $t = 0$ is clearly necessary for our main result to be meaningful, as is the requirement that the initial phase modulation $\gamma_0$ is bounded and differentiable, since we rely on the smallness of its derivative. In this sense, the regularity assumptions imposed on the initial data in Theorem~\ref{main_theorem} are minimal.
}\end{remark}

\begin{remark}
{
\upshape
Estimate~\eqref{e:mtest33} provides a leading-order description of the solution $u(t)$ that is valid for \emph{all} times. This estimate is fully explicit in terms of the underlying wave train $\phi_0$ and the solution $\breve{\gamma}(t)$ to the viscous Hamilton-Jacobi equation~\eqref{e:HamJac}, with initial condition $\breve{\gamma}(0) = \gamma_0 \in C_{\mathrm{ub}}^1(\mathbb{R})$. In contrast, in the other work~\cite{IYSA} addressing large phase modulations of wave-train solutions to~\eqref{RD0}, the following estimate was established for fixed $\alpha > 0$:
\begin{align}
\|u(t) - \phi_0\left(\cdot+\Phi(\cdot,t)\right)\|_{\infty} \leq Kt^{-\frac12+\alpha}, \qquad t \geq 0, \label{e:Iyer}
\end{align}
where $\Phi$ is a self-similar front solution to~\eqref{e:HamJac} and $K > 0$ is a time-independent constant. While the estimate~\eqref{e:Iyer} does not capture the leading-order behavior of $u(t)$ for short times, it yields near-diffusive decay for large times. We argue that this large-time decay stems from the stronger assumptions imposed on the initial phase modulation $\gamma_0$ in~\cite{IYSA}.

Indeed, even with conditions that are less stringent than those in~\cite{IYSA}, we expect to recover an estimate matching~\eqref{e:Iyer}. To formulate these conditions, we fix asymptotic limits $\gamma_\pm \in \R$ and consider the associated self-similar front solution
\begin{align*}
\Phi(\xx,t) = 
\begin{cases} 
\gamma_- + \gamma_d \, \mathrm{erf}\left(\dfrac{\xx + a(t+1)}{\sqrt{d(t+1)}}\right), & \nu = 0, \\[1.5ex]
\displaystyle \gamma_- + \frac{d}{\nu} \log\left(1 + \left(\re^{\frac{\nu}{d} \gamma_d} - 1\right) \mathrm{erf}\left(\dfrac{\xx + a(t+1)}{\sqrt{4d(t+1)}}\right)\right), & \nu \neq 0,
\end{cases}
\end{align*}
to~\eqref{e:HamJac}, where $\gamma_d := \gamma_+ - \gamma_-$ denotes its phase offset, and
\begin{align*}
\mathrm{erf}(\zeta) = \frac{1}{\sqrt{4\pi}} \int_{-\infty}^{\zeta} \re^{-\frac{\xt^2}{4}} \, \mathrm{d} \xt
\end{align*}
is the Gaussian error function. Thus, under the additional assumptions that $\gamma_0$ is twice continuously differentiable with $\gamma_0'' \in L^2(\mathbb{R})$ and satisfies
\begin{align*}
\left\| \gamma_0 - \Phi \right\|_{L^1 \cap L^\infty} \leq M,
\end{align*}
we expect estimate~\eqref{e:mtest33} in Theorem~\ref{main_theorem} to improve to
\begin{align} \label{e:finalloc}
\left\| u(t) - \phi_0(\cdot + \Phi(t)) \right\|_{\infty} \leq K \frac{\left(\log(2+t)\right)^2}{\sqrt{1+t}}, \qquad t \geq 0.
\end{align}
We note that estimate~\eqref{e:finalloc} is slightly stronger than~\eqref{e:Iyer} Since the proof of~\eqref{e:finalloc} requires substantial additional effort, especially due to the need for tracking $L^2$-norms in the nonlinear iteration, we defer it to future work.
}
\end{remark}

\begin{remark}
{\upshape Converting to the original $(x,t)$-variables in Theorem~\ref{main_theorem}, we obtain a constant $\mathring{K} > 0$ such that, for each $\wt_0 \in C_{\mathrm{ub}}(\R)$ and $\gt_0 \in C_{\mathrm{ub}}^1(\R)$ with $\|\gt_0\|_\infty \leq M$ and $E_0 = \|\wt_0\|_\infty + \tfrac{1}{k_0}\|\gt_0'\|_\infty < \varepsilon$, there exist a unique classical global solution $u \in \mathcal{X}$ to the reaction-diffusion system~\eqref{RD0} with initial condition~\eqref{e:ICintro} and a phase modulation function $\gt \in\mathcal{Y}$ with $\gt(0) = \gt_0$ such that
 \begin{align*} 
\begin{split}
\left\|\gt(t)\right\|_\infty \leq \mathring{K}, \qquad \left\|\wt(t)\right\|,\left\|\gt_x(t)\right\|_\infty \leq \frac{\mathring{K}E_0^\alpha}{(1+t)^{\frac12 - \alpha}}, \qquad \left\|\mathring{y}(t)\right\|_\infty \leq \mathring{K}E_0^\alpha\frac{\log(2+t)}{(1+t)^{1 - 2\alpha}}
\end{split} 
\end{align*}
for all $t \geq 0$, and
\begin{align*} 
\begin{split}
\left\|\wt(t)\right\|_{C_\mathrm{ub}^1} \leq \mathring{K}E_0^\alpha \frac{ (1+t)^\alpha}{\sqrt{t}}, \qquad \left\|\mathring{y}(t)\right\|_{C_\mathrm{ub}^1} &\leq \mathring{K}E_0^\alpha\frac{\log(2+t)}{\sqrt{t} \, (1+t)^{\frac12 - 2\alpha}}
\end{split} 
\end{align*}
for all $t > 0$, where $\wt(t)$ and $\mathring{y}(t)$ are the residuals given by~\eqref{e:asympdescr0} and~\eqref{e:asympdescr}, respectively. Moreover, we have the approximations
\begin{align*}
\left\|\gt(t) - \breve{\gt}(t)\right\|_\infty &\leq \mathring{K} E_0^{\frac23 \alpha}, \qquad
\left\|\gt_x(t) - \breve{\gt}_x(t)\right\|_\infty \leq \frac{\mathring{K}E_0^\alpha}{\sqrt{1+t}}
\end{align*}
for $t \geq 0$, where $\breve{\gt} \in \mathcal{Y}$ is the classical solution to the viscous Hamilton-Jacobi equation~\eqref{e:HamJacIntro} with initial condition $\breve{\gt}(0) = \gt_0$.
}\end{remark}

\subsection{Strategy of proof} \label{sec:strategy}

Our proof builds on an extension of the recently developed stability theory in~\cite{BjoernMod}, which is based on pure $L^\infty$-estimates and accommodates fully nonlocalized perturbations. A key difference from the setting in~\cite{BjoernMod} is that we cannot exploit smallness of $\|\gamma_0\|_\infty$, which presents a significant challenge. In particular, a linear term of the form $\re^{\El_0 t}(\phi_0' \gamma_0)$ appears in the Duhamel formulation of the \emph{inverse-modulated perturbation}
\begin{align} \label{e:defv}
v(\xx,t) = u(\xx - \gamma(\xx,t),t) - \phi_0(\xx),
\end{align}
where $\re^{\El_0t}$ denotes the semigroup on $C_{\mathrm{ub}}(\R)$ generated by the linearization $\El_0$. The inverse-modulated perturbation $v(t)$ controls the dynamics of the difference
\begin{align} \label{e:defringv}
\mathring{v}(\xx,t) = u(\xx,t) - \phi_0(\xx + \gamma(\xx,t))
\end{align}
between the solution and the modulated wave train. It is typically used in nonlinear stability analyses, since its evolution equation exhibits more favorable decay properties compared to the one of the \emph{forward-modulated perturbation} $\mathring{v}(t)$; see~\cite{ZUM23}. A crucial observation is that the lack of smallness in the Duhamel formulation of $v(t)$ can be precisely quantified using the identity
\begin{align}
\label{e:strat1}
\begin{split}
\re^{\El_0 t}\left(\phi_0' \gamma_0\right) - \phi_0' \gamma_0 &= \int_0^t \re^{\El_0 s} \El_0 \left(\phi_0' \gamma_0\right) \de s\\ 
&= \int_0^t \re^{\El_0 s} \left(k_0^2 D \left(\partial_\xx\left(\phi_0' \gamma_0'\right) + \phi_0'' \gamma_0'\right) + \omega_0 \phi_0' \gamma_0'\right) \de s,
\end{split}
\end{align}
which follows from standard semigroup theory~\cite[Proposition~2.1.4]{LUN} and the fact that $\phi_0'$ lies in the kernel of $\El(0)$. Since the right-hand side of~\eqref{e:strat1} depends only on the \emph{derivative} $\gamma_0'$, whose $L^\infty$-norm is assumed to be small, the lack of smallness in $\re^{\El_0 t}(\phi_0' \gamma_0)$ manifests solely in the term $\phi_0' \gamma_0$.

Since the linear operator $\El_0$ has spectrum touching the imaginary axis, the semigroup $\re^{\El_0 t}$ does not exhibit decay. To obtain sufficient decay to close a nonlinear argument, it is therefore necessary to decompose $\re^{\El_0 t}$ into a principal part of the form $\phi_0' S_p^0(t)$, where $S_p^0(t)$ enjoys the same bounds and smoothing properties as the heat semigroup $\smash{\re^{\partial_\xx^2 t}}$, and a residual part exhibiting faster temporal decay. Specifically, the phase modulation $\gamma(t)$ is chosen in such a way that, for large times, it captures the slowest-decaying contributions associated with $S_p^0(t)$ in the Duhamel formulation of the inverse-modulated perturbation $v(t)$; see~\S\ref{sec:phase_mod}. Since the phase modulation enters only via its \emph{derivatives} in this formulation, we may exploit improved decay of $\partial_\xx^j \partial_t^l S_p^0(t)$ for $j,l \in \NM_0$ with $j+l\geq1$ which arises due to diffusive smoothing.

It is thus essential to characterize the lack of smallness in $S_p^0(t)(\phi_0' \gamma_0)$, analogously to the identity~\eqref{e:strat1}. Inspired by the Fourier-series arguments in~\cite{JONZNL}, we perform a detailed linear analysis, extending the $L^\infty$-techniques from~\cite{HDRS22} to the setting of large modulational initial data. We find, similar to~\eqref{e:strat1}, that $S_p^0(t)(\phi_0' \gamma_0) - \gamma_0$ can be bounded in terms of $\gamma_0'$. Crucially, this implies that the same holds for spatial and temporal derivatives of $S_p^0(t)(\phi_0' \gamma_0)$, allowing us to recover smallness in the linear terms arising in the Duhamel representations of \emph{derivatives} of $\gamma(t)$.

There is, however, a caveat: bounding $S_p^0(t)(\phi_0' \gamma_0) - \gamma_0$ and its derivatives in terms of $\gamma_0'$ yields only weak decay rates and, in some cases, even temporal growth. This is also reflected in identity~\eqref{e:strat1}, where the time integral on the right-hand side naturally leads to bounds exhibiting growth in $t$. We address this issue by \emph{interpolating} between two types of bounds: (i) classical bounds on $\re^{\El_0 t}(\phi_0' \gamma_0)$ and $\smash{\partial_\xx^j \partial_t^l S_p^0(t)(\phi_0' \gamma_0)}$, used in the stability argument in~\cite{BjoernMod}, which provide sufficient decay but not smallness, and (ii) bounds on $\re^{\El_0 t}(\phi_0' \gamma_0) - \phi_0' \gamma_0$ and $\smash{\partial_\xx^j \partial_t^l(S_p^0(t)(\phi_0' \gamma_0) - \gamma_0)}$ in terms of $\gamma_0'$, which provide smallness but not sufficient decay. In~\S\ref{sec:heuristic}, the core idea behind this interpolation strategy is illustrated by means of a representative toy example.

By carefully balancing smallness and decay, we are able to derive sufficient bounds to close the nonlinear iteration scheme. Heuristically, this is possible because the nonlinear estimates in~\cite{BjoernMod} leave some leeway, i.e., the decay bounds on the nonlinear terms are not critical and can tolerate some weakening.

An important distinction from the analysis in~\cite{BjoernMod} lies in how we control regularity in the nonlinear iteration. In~\cite{BjoernMod}, regularity is controlled via estimates on the perturbation $\tilde{v}(t) = u(t) - \phi_0$. However, Theorem~\ref{main_theorem} shows that in the current modulational setting, $\tilde{v}(t)$ is no longer small, making it natural to instead consider to consider the forward-modulated perturbation $\mathring{v}(t)$. Unlike the inverse-modulated perturbation $v(t)$, which obeys a quasilinear equation (see~\S\ref{sec:inv_mod_pert}), the forward-modulated perturbation satisfies a semilinear equation that does not suffer from a loss of derivatives. However, the obtained decay rates of $\mathring{v}(t)$ are too weak to gain good enough large-time control on derivatives. Therefore, following the approach in~\cite{AdR1}, we establish a nonlinear damping estimate for the \emph{modified forward-modulated perturbation}
\begin{align} \label{e:defringz}
\mathring{z}(\xx,t) = u(\xx,t) - \phi\left(\xx + \gamma(\xx,t)\left(1+\gamma_\xx(\xx,t)\right);\,k_0(1 + \gamma_\xx(\xx,t))\right)
\end{align}
in order to control regularity for large times. As indicated by the estimate~\eqref{e:mtest10}, this variable enjoys better decay properties than $\vt(t)$. Due to the absence of $L^2$-localization, we derive this energy estimate in uniformly local Sobolev spaces; see~\cite{SU17book} for background.

With the regularity control in hand, we are able to close a nonlinear iteration argument, thereby proving Theorem~\ref{main_theorem}. Further details of the nonlinear iteration scheme are provided in~\S\ref{sec:itscheme}.

\subsection{Outline}

In~\S\ref{sec:heuristic}, we present the core idea behind handling large modulational data through interpolation arguments in the context of a toy model. Section~\ref{sec:prelim} reviews standard preliminary results on wave trains. In~\S\ref{sec:decomp}, we recall the semigroup decomposition from~\cite{BjoernMod} and summarize the corresponding linear estimates established therein. We also carry out a detailed analysis of the linear dynamics arising from large, fully nonlocalized modulational initial data and derive associated estimates. In~\S\ref{sec:itscheme}, we develop our nonlinear iteration scheme and establish a nonlinear damping estimate to control regularity for large times. Section~\ref{sec:nonlinearstab} is devoted to the proof of our main result, Theorem~\ref{main_theorem}. An outlook on future research directions is provided in~\S\ref{sec:discussion}. Appendix~\ref{app:aux} contains technical low- and high-frequency estimates required for the linear analysis of large modulational data. Finally, Appendix~\ref{app:B} provides the proof of the local existence result for the phase modulation.

\paragraph*{Notation.} Let $S$ be a set, and let $A, B \colon S \to \R$. Throughout the paper, the expression ``$A(x) \lesssim B(x)$ for $x \in S$'', means that there exists a constant $C>0$, independent of $x$, such that $A(x) \leq CB(x)$ holds for all $x \in S$. 

\paragraph*{Acknowledgments.}  This project is funded by the Deutsche Forschungsgemeinschaft (DFG, German Research Foundation) -- Project-ID 491897824 and Project-ID 258734477 - SFB 1173. 

\paragraph*{Data availability statement.} Data sharing is not applicable to this article as no datasets were generated or analyzed during the current study.

\section{Heuristic behind handling large modulational data} \label{sec:heuristic}

The goal of this section is to convey the core idea behind our $L^\infty$-based nonlinear iteration argument designed to handle large phase modulations. Motivated by the fact that the leading-order modulational dynamics is given by the viscous Hamilton-Jacobi equation~\eqref{e:HamJac}, we consider
\begin{align} \label{e:toy1}
\partial_t w = d w_{\zeta\zeta} + a w_\zeta + \nu w_\zeta^2 + \mu w_\zeta^3
\end{align}
with $d > 0$ and $a, \mu, \nu \in \R$. Here, the cubic contribution $w_\zeta^3$ plays the role of a higher-order residual term. When $\nu \neq 0$, the quadratic term in~\eqref{e:toy1} can be removed via the Cole-Hopf transform
\begin{align*}
v(t) = \re^{\frac{\nu}{d} w(t)},
\end{align*}
which transforms the equation into
\begin{align*}
\partial_t v = d v_{\zeta\zeta} + a v_\zeta + \frac{\mu d^2}{\nu^2}\, \frac{v_\zeta^3}{v^2}.
\end{align*}
This motivates restricting attention to the case $\nu = 0$ in~\eqref{e:toy1}. Moreover, by transitioning to a co-moving frame and rescaling space, we may assume without loss of generality that $a = 0$ and $d = 1$. The resulting simplified toy problem takes the form
\begin{align} \label{e:toy2}
\partial_t w = w_{\zeta\zeta} + \mu w_\zeta^3
\end{align}
with $\mu \in \R$. We use this model to illustrate how our $L^\infty$-based scheme effectively manages large phase modulations. Specifically, under the sole assumption that the initial condition $w_0 \in C_{\mathrm{ub}}^1(\R)$ has sufficiently small derivative, we establish global existence of the associated solution $w(t)$ to~\eqref{e:toy2} and show that its derivative $w_\zeta(t)$ decays at a near-diffusive rate.

Take $w_0 \in C_{\mathrm{ub}}^1(\R)$. Integrating~\eqref{e:toy2}, we obtain the Duhamel formula
\begin{align}
w(t) &= \re^{\partial_\zeta^2 t} w_0 + \mu \int_0^t \re^{\partial_\zeta^2 (t-s)} w_\zeta(s)^3 \de s. \label{e:du_heat1} 
\end{align}
By standard local existence theory for semilinear parabolic equations~\cite{LUN}, there exist $T \in (0,\infty]$ and a unique maximal solution $w \in C\big([0,T), C_{\mathrm{ub}}^1(\R)\big)$ satisfying~\eqref{e:du_heat1}. If $T < \infty$, then it holds
\begin{align} 
\sup_{t \uparrow T} \|w(t)\|_{C_{\mathrm{ub}}^1} = \infty. \label{e:qblowup}
\end{align}

The smoothing effect of the heat semigroup leads to the linear $L^\infty$-estimate
\begin{align} 
\left\|\partial_\zeta^j \re^{\partial_\zeta^2 t}v\right\|_\infty \lesssim t^{-\frac{j}{2}} \|v\|_\infty \label{e:linheat}
\end{align}
for $j = 0,1$, $v \in C_{\mathrm{ub}}(\R)$, and $t > 0$. On the other hand, we may commute the derivative with the semigroup, which gives
\begin{align} 
\left\|\partial_\zeta \re^{\partial_\zeta^2 t}v\right\|_\infty = \left\|\re^{\partial_\zeta^2 t} \partial_\zeta v\right\|_\infty \leq \|\partial_\zeta v\|_\infty \label{e:linheat2}
\end{align}
for $v \in C_{\mathrm{ub}}^1(\R)$ and $t \geq 0$. Estimates of the form~\eqref{e:linheat2}, where derivatives are transferred onto the initial data through the linear propagator, are crucial to exploit smallness in the derivative of the (possibly large) initial data; see~\S\ref{sec:mod_data}. 

If the initial condition $w_0$ is small in $C_{\mathrm{ub}}^1(\R)$, then the linear behavior dominates, and the solution $w(t)$ to~\eqref{e:du_heat1} remains bounded while its derivative decays diffusively at rate $\smash{t^{-\frac{1}{2}}}$; see~\cite[Section~2]{HDRS22}. Here, we show that, by sacrificing a small amount of decay, we obtain global-in-time control on solutions to~\eqref{e:du_heat1} with large initial data. To that end, we fix $M > 0$ and $a \in (0,\tfrac{1}{6})$, take $w_0 \in C_{\mathrm{ub}}^1(\R)$ with $\|w_0\|_{C_{\mathrm{ub}}^1} \leq M$, and introduce the template function $\eta \colon [0,T) \to \R$ defined by
\begin{align*}
\eta(t) = \sup_{0 \leq s \leq t} (1+s)^{\frac{1}{2} - a} \, \left\|\partial_\zeta w(s)\right\|_\infty.
\end{align*}

Using the linear bound~\eqref{e:linheat}, we estimate the right-hand side of~\eqref{e:du_heat1} as
\begin{align} \label{e:du_heat2}
\|w(t)\|_\infty \lesssim \|w_0\|_\infty + \int_0^t \frac{\eta(t)^3}{(1+s)^{\frac{3}{2} - 3a}} \de s \lesssim \|w_0\|_\infty+ \eta(t)^3,
\end{align}
for $t \in [0,T)$. For the spatial derivative of~\eqref{e:du_heat1}, we apply~\eqref{e:linheat2} for $s \in [0,1]$ and use the interpolation inequality
\begin{align*}
\left\|\partial_\zeta \re^{\partial_\zeta^2 s} w_0\right\|_\infty 
&\leq \left\|\partial_\zeta \re^{\partial_\zeta^2 s} w_0\right\|_\infty^{1 - 2a} 
\left\|\re^{\partial_\zeta^2 s} w_0'\right\|_\infty^{2a} 
\lesssim (1+s)^{\frac{1}{2} - a} \|w_0\|_\infty^{1 - 2a} \|w_0'\|_\infty^{2a},
\end{align*}
for $s \geq 1$, yielding the bound
\begin{align*}
\left\|\partial_\zeta \re^{\partial_\zeta^2 s} w_0\right\|_\infty 
\lesssim (1+s)^{\frac{1}{2} - a} \|w_0\|_{C_{\mathrm{ub}}^1}^{1 - 2a} \|w_0'\|_\infty^{2a},
\end{align*}
for all $s \geq 0$. Altogether, we estimate the spatial derivative of~\eqref{e:du_heat1} as
\begin{align*}
\|w_\zeta(s)\|_\infty 
&\lesssim (1+s)^{\frac{1}{2} - a} \|w_0\|_{C_{\mathrm{ub}}^1}^{1 - 2a} \|w_0'\|_\infty^{2a} 
+ \int_0^s \frac{\eta(s)^3}{\sqrt{s-r} \, (1+r)^{\frac{3}{2} - 3a}} \de r \\
&\lesssim (1+s)^{-\frac{1}{2} + a} \left( M^{1 - 2a} \|w_0'\|_\infty^{2a} + \eta(s)^3 \right),
\end{align*}
for $s \in [0,T)$. Taking the supremum in the latter inequality over $s$, we find a constant $C \geq 1$, independent of $t$ and $w_0$, such that
\begin{align} 
\eta(t) \leq C \left( \|w_0'\|_\infty^{2a} + \eta(t)^3 \right), \label{e:heat_key}
\end{align}
for all $t \in [0,T)$. Now assume $\|w_0'\|_\infty^{4a} < \tfrac{1}{8C^2}$. If there exists $t \in (0,T)$ with $\eta(t) > 2C\|w_0'\|_\infty^{2a}$, then by continuity of $\eta$ and the fact that $\eta(0) = \|w_0'\|_\infty < 2C\|w_0'\|_\infty^{2a}$, there must exist $\tau \in [0,t]$ such that $\eta(\tau) = 2C\|w_0'\|_\infty^{2a}$. Applying~\eqref{e:heat_key} and $\|w_0'\|_\infty^{4a} < \tfrac{1}{8C^3}$, we obtain
\begin{align*}
\eta(\tau) \leq C\left(\|w_0'\|_\infty^{2a} + \eta(\tau)^3\right) 
< 2C \|w_0'\|_\infty^{2a},
\end{align*}
yielding a contradiction. Hence, we must have $\eta(t) \leq 2C\|w_0'\|_\infty^{2a}$ for all $t \in [0,T)$. Combining this with~\eqref{e:qblowup} and~\eqref{e:du_heat2}, we conclude that $T = \infty$. So, the solution is global and enjoys the bounds
\begin{align*}
\|w(t)\|_{\infty} \lesssim \|w_0\|_\infty + \|w_0'\|_\infty^{6a}, \qquad
\|w_\zeta(t)\|_{\infty} \lesssim \frac{\|w_0'\|_\infty^{2a}}{(1+t)^{\frac{1}{2} - a}}
\end{align*}
for $t \geq 0$. 

We emphasize that the nonlinear iteration argument above relies exclusively on $L^\infty$-estimates and requires smallness solely in the \emph{derivative} of the initial data. This is because the nonlinearity in~\eqref{e:toy2} involves only spatial derivatives of $w$. Analogously, the equations for the inverse- and forward-modulated perturbations, used in the proof of Theorem~\ref{main_theorem}, depend only on derivatives of the phase modulation $\gamma(t)$.

\section{Preliminaries} \label{sec:prelim}

We recall some basic results on wave trains and their dispersion relations from~\cite{BjoernMod}. For a more comprehensive treatment, we refer the reader to~\cite[Section~4]{DSSS}.

The first result asserts that, under the nondegeneracy condition~\ref{assD3}, the wave train can be continued in the wavenumber.

\begin{proposition} \label{prop:family}
Assume~\ref{assH1} and~\ref{assD3}. Then, there exist $r_0 \in (0,\frac12)$ and smooth functions $\phi \colon \R \times (k_0-r_0,k_0 + r_0) \to \R$ and $\omega \colon (k_0-r_0,k_0+r_0) \to \R$, satisfying~\eqref{e:gauge}, such that
$$u_k(x,t) = \phi(k x - \omega(k) t;k)$$
is a wave-train solution to~\eqref{RD0} for each wavenumber $k \in (k_0-r_0,k_0+r_0)$. The profile function $\phi(\cdot;k)$ has period $1$ for each $k \in (k_0-r_0,k_0+r_0)$.
\end{proposition}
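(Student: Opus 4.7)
The plan is to apply the implicit function theorem to the profile equation augmented with the gauge condition from \eqref{e:gauge}. A function $u_k(x,t) = \phi(kx - \omega(k) t; k)$ with $1$-periodic profile $\phi(\cdot;k)$ solves \eqref{RD0} if and only if the profile satisfies the stationary ODE
\[
k^2 D \phi'' + \omega \phi' + f(\phi) = 0.
\]
I would set up the smooth map
\[
G \colon \Hper{2}(0,1) \times \R \times \R \to \Lper{2}(0,1) \times \R, \qquad G(\phi, \omega, k) = \bigl(k^2 D \phi'' + \omega \phi' + f(\phi),\, \langle \widetilde{\Phi}_0, \phi - \phi_0\rangle_{L^2(0,1)}\bigr),
\]
whose second component implements the gauge. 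By \ref{assH1}, $G(\phi_0, \omega_0, k_0) = 0$.

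The key step is to verify that the Fr\'echet derivative $T := D_{(\phi,\omega)} G(\phi_0, \omega_0, k_0)$, acting by $T[\psi, \nu] = \bigl(\El(0)\psi + \nu \phi_0',\, \langle \widetilde{\Phi}_0, \psi\rangle_{L^2(0,1)}\bigr)$, is a linear isomorphism. By \ref{assD3}, $\El(0)$ is Fredholm of index zero with kernel $\mathrm{span}\{\phi_0'\}$ and range equal to the $L^2$-orthogonal complement of $\widetilde{\Phi}_0$. For injectivity, suppose $T[\psi, \nu] = 0$; pairing the first component against $\widetilde{\Phi}_0$ and invoking \eqref{e:adjoint} yields $\nu = 0$, whence $\psi = c \phi_0'$, and the second component together with \eqref{e:adjoint} forces $c = 0$. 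For surjectivity, given $(g,\alpha)$, set $\nu := \langle \widetilde{\Phi}_0, g\rangle_{L^2(0,1)}$ so that $g - \nu \phi_0' \in \Range(\El(0))$, pick any preimage, and adjust by a suitable multiple of $\phi_0'$ to achieve $\langle \widetilde{\Phi}_0, \psi\rangle_{L^2(0,1)} = \alpha$.

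With $T$ invertible, the implicit function theorem delivers some $r_0 \in (0,\tfrac{1}{2})$ and smooth maps $k \mapsto \phi(\cdot;k) \in \Hper{2}(0,1)$, $k \mapsto \omega(k) \in \R$ with $\phi(\cdot;k_0) = \phi_0$, $\omega(k_0) = \omega_0$ and $G(\phi(\cdot;k), \omega(k), k) = 0$ for $k \in (k_0 - r_0, k_0 + r_0)$. Since the profile equation is a second-order ODE with smooth coefficients and smooth nonlinearity, a standard bootstrap upgrades $\phi(\cdot;k) \in \Hper{2}$ to $\phi(\cdot;k) \in C^\infty(\R)$; joint smoothness in $(\zeta,k)$ follows either by iterating the IFT in higher periodic Sobolev spaces combined with Sobolev embedding, or from smooth parameter dependence for ODEs. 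Finally, the identity in \eqref{e:gauge} is obtained by differentiating the constraint $\langle \widetilde{\Phi}_0, \phi(\cdot;k) - \phi_0\rangle_{L^2(0,1)} = 0$ at $k = k_0$.

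The main technical subtlety lies in setting up the gauge: translation invariance of the profile ODE makes $\El(0)$ noninvertible on its own, but the free frequency parameter $\omega$ supplies a complementary direction $\phi_0'$ that restores surjectivity of $T$, while the scalar constraint $\langle \widetilde{\Phi}_0, \phi - \phi_0\rangle_{L^2(0,1)} = 0$ eliminates the remaining translational degree of freedom. Once this setup is in place, the proposition reduces to a routine application of the implicit function theorem.
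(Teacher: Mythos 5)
The paper states this proposition without proof, merely recalling it from~\cite{BjoernMod} with a pointer to~\cite[Section~4]{DSSS}. Your argument---applying the implicit function theorem to the profile ODE in $H^2_{\mathrm{per}}(0,1)\times\R$ augmented by the gauge constraint $\langle \widetilde{\Phi}_0, \phi - \phi_0\rangle_{L^2(0,1)} = 0$, with injectivity/surjectivity of the linearization resting on the simple-eigenvalue structure of $\El(0)$ from~\ref{assD3} and the normalization~\eqref{e:adjoint}---is precisely the standard Lyapunov--Schmidt argument underlying those references, and it is correct, including the bootstrap to smoothness and the recovery of~\eqref{e:gauge} by differentiating the constraint in $k$.
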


The next result provides an expansion of the critical spectral curve of the linearization $\El_0$ in terms of the Bloch frequency parameter $\xi$. The curve, known as the \emph{linear dispersion relation}, touches the origin in a quadratic tangency. In addition, the result includes an expansion of the associated Bloch eigenfunctions.

\begin{proposition} \label{prop:speccons}
Assume~\ref{assH1} and~\ref{assD2}-\ref{assD3}. Then, there exist a constant $\xi_0 \in (0,\pi)$ and an analytic curve $\lambda_c \colon (-\xi_0,\xi_0) \to \C$ satisfying 
\begin{itemize}
\setlength\itemsep{0em}
\item[(i)] The complex number $\lambda_c(\xi)$ is a simple eigenvalue of $\El(\xi)$ for any $\xi \in (-\xi_0,\xi_0)$. An associated eigenfunction $\Phi_\xi$ of $\El(\xi)$ lies in $H_{\mathrm{per}}^m(0,1)$ for each $m \in \mathbb N_0$, satisfies $\Phi_0 = \phi_0'$, is analytic in $\xi$ and fulfills
\begin{align*}
 \big\langle \widetilde{\Phi}_0,\Phi_\xi\big\rangle_{L^2(0,1)} = 1.
\end{align*}
\item[(ii)] The complex conjugate $\overline{\lambda_c(\xi)}$ is a simple eigenvalue of the adjoint $\El(\xi)^*$ for any $\xi \in (-\xi_0,\xi_0)$. An associated eigenfunction $\widetilde{\Phi}_\xi$ lies in $H_{\mathrm{per}}^m(0,1)$ for each $m \in \mathbb N_0$, is smooth in $\xi$ and satisfies
\begin{align*}
 \big\langle \widetilde{\Phi}_\xi,\Phi_\xi\big\rangle_{L^2(0,1)} = 1.
\end{align*}
\item[(iii)] The expansions
\begin{align*}
\left|\lambda_c(\xi) - \ri a\xi + d \xi^2\right| \lesssim |\xi|^3, \qquad \left\|\Phi_\xi - \phi_0' - \ri k_0 \xi \partial_k \phi(\cdot;k_0)\right\|_{H^m(0,1)} \lesssim |\xi|^2
\end{align*}
hold for $\xi \in (-\xi_0,\xi_0)$ with coefficients $a \in \R$ and $d > 0$ given by~\eqref{e:defad}.
\end{itemize}
\end{proposition}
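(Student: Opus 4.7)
The plan is to apply classical analytic perturbation theory to the Bloch family $\{\El(\xi)\}_{\xi \in \R}$. Since $\El(\xi)$ depends polynomially on $\xi$ and acts on the fixed domain $\Hper{2}(0,1)$, this family is analytic of type (A) in Kato's sense. By~\ref{assD1}-\ref{assD3}, $0$ is an isolated simple eigenvalue of $\El(0)$ with eigenfunction $\phi_0'$. The Kato-Rellich analytic perturbation theorem therefore furnishes $\xi_0 \in (0,\pi)$, an analytic eigenvalue curve $\lambda_c \colon (-\xi_0, \xi_0) \to \C$ with $\lambda_c(0) = 0$, and an analytic eigenfunction curve $\xi \mapsto \Phi_\xi \in \Hper{2}(0,1)$ with $\Phi_0 = \phi_0'$ satisfying $\El(\xi)\Phi_\xi = \lambda_c(\xi)\Phi_\xi$, such that simplicity of $\lambda_c(\xi)$ is preserved for all $|\xi|<\xi_0$. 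Shrinking $\xi_0$ if necessary, I would divide $\Phi_\xi$ by the analytic scalar $\langle \widetilde{\Phi}_0, \Phi_\xi\rangle_{L^2(0,1)}$, which is nonvanishing at $\xi = 0$ by~\eqref{e:adjoint}, to secure the normalization $\langle \widetilde{\Phi}_0, \Phi_\xi\rangle_{L^2(0,1)} = 1$. The regularity $\Phi_\xi \in \Hper{m}(0,1)$ for each $m \in \NM_0$ then follows by an elliptic bootstrap applied to the eigenvalue identity, exploiting smoothness of $f$, with analyticity in $\xi$ preserved at each step.

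For statement (ii), I would observe that $\sigma(\El(\xi)^*) = \overline{\sigma(\El(\xi))}$ and that the family $\xi \mapsto \El(\xi)^*$, obtained by flipping the sign of the $\omega_0$-term and transposing $f'(\phi_0)$, is itself a polynomial (hence analytic) family of type (A) on $\Hper{2}(0,1)$. Applying the same Kato-Rellich construction to $\{\El(\xi)^*\}$ produces a smooth eigenfunction curve $\widetilde{\Phi}_\xi \in \Hper{m}(0,1)$ for each $m \in \NM_0$, associated to the simple eigenvalue $\overline{\lambda_c(\xi)}$. The scalar $\langle\widetilde{\Phi}_\xi, \Phi_\xi\rangle_{L^2(0,1)}$ is smooth in $\xi$ and nonzero at $\xi = 0$, by the standard non-degenerate pairing between eigenvectors and adjoint eigenvectors of simple eigenvalues, so rescaling $\widetilde{\Phi}_\xi$ by its inverse enforces the required normalization $\langle \widetilde{\Phi}_\xi, \Phi_\xi\rangle_{L^2(0,1)} = 1$.

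For the Taylor expansions in (iii), I would differentiate the eigenvalue equation $\El(\xi)\Phi_\xi = \lambda_c(\xi)\Phi_\xi$ at $\xi = 0$. With
\begin{align*}
\El'(0) := \frac{\de \El(\xi)}{\de \xi}\bigg|_{\xi=0} = 2\ri k_0^2 D\partial_\zeta + \ri\omega_0,
\end{align*}
this gives $\El(0)\Phi_\xi'(0) = \lambda_c'(0)\phi_0' - \El'(0)\phi_0'$. The key algebraic step is to differentiate the profile equation $k^2 D\phi_{\zeta\zeta}(\cdot;k) + \omega(k)\phi_\zeta(\cdot;k) + f(\phi(\cdot;k)) = 0$ from Proposition~\ref{prop:family} with respect to $k$ at $k_0$, obtaining
\begin{align*}
\El(0)\,\partial_k \phi(\cdot;k_0) = -2k_0 D\phi_0'' - \omega'(k_0)\phi_0',
\end{align*}
which, combined with the explicit form of $\El'(0)$, rearranges to $\El'(0)\phi_0' = -\ri k_0\,\El(0)\partial_k\phi(\cdot;k_0) + \ri a\,\phi_0'$. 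Pairing with $\widetilde{\Phi}_0$ and using $\El(0)^*\widetilde{\Phi}_0 = 0$ together with~\eqref{e:adjoint} yields $\lambda_c'(0) = \ri a$. The residual identity $\El(0)\big(\Phi_\xi'(0) - \ri k_0 \partial_k\phi(\cdot;k_0)\big) = 0$, together with $\langle \widetilde{\Phi}_0, \Phi_\xi\rangle \equiv 1$ (hence $\langle \widetilde{\Phi}_0, \Phi_\xi'(0)\rangle = 0$) and the gauge $\langle \widetilde{\Phi}_0, \partial_k\phi(\cdot;k_0)\rangle = 0$ from~\eqref{e:gauge}, then pins down $\Phi_\xi'(0) = \ri k_0\,\partial_k\phi(\cdot;k_0)$. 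The stated remainder estimates follow immediately from the analyticity of $\lambda_c$ and $\Phi_\xi$; the second-order coefficient $-d$ in the expansion of $\lambda_c$ is obtained by differentiating the eigenvalue equation once more, substituting the already-computed $\Phi_\xi'(0)$, and pairing with $\widetilde{\Phi}_0$, reproducing the Melnikov-type integral in~\eqref{e:defad}. The main obstacle I anticipate is bookkeeping: carefully matching the second-order coefficient to the exact form of $d$ in~\eqref{e:defad}; conceptually, no machinery beyond classical Kato-Rellich theory and direct computation is required.
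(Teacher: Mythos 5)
The paper does not prove this proposition; it is recalled from~\cite{BjoernMod}, with~\cite[Section~4]{DSSS} cited for a fuller treatment, so there is no in-paper argument to compare against. Your reconstruction is the standard one underlying that citation: the Bloch family $\El(\xi)$ is a holomorphic family of type (A) on the fixed domain $H^2_{\mathrm{per}}(0,1)$ (it is polynomial in $\xi$), Kato--Rellich gives analytic simple-eigenvalue and eigenfunction curves near $\xi = 0$, elliptic bootstrap delivers $\Phi_\xi \in H^m_{\mathrm{per}}(0,1)$ for all $m$, the adjoint family is handled identically, and differentiating the eigenvalue equation and the $k$-derivative of the profile equation (with the gauge $\langle\widetilde\Phi_0,\partial_k\phi(\cdot;k_0)\rangle_{L^2(0,1)}=0$) pins down $\lambda_c'(0)=\ri a$ and $\Phi_\xi'(0)=\ri k_0\partial_k\phi(\cdot;k_0)$. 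Two small flags. First, you invoke (D1)--(D3), but (D1) is not among the hypotheses and is not needed here: isolation of $0$ in $\sigma(\El(0))$ already follows from compactness of the resolvent on $L^2_{\mathrm{per}}(0,1)$, and simplicity is (D3). Second, your warning about second-order bookkeeping is well-placed: carrying out the step you sketch, $\lambda_c''(0)=\langle\widetilde\Phi_0,\El''(0)\phi_0'\rangle + 2\langle\widetilde\Phi_0,\El'(0)\Phi_\xi'(0)\rangle$ with $\El''(0)=-2k_0^2 D$ and $\El'(0)\Phi_\xi'(0)=-2k_0^3 D\,\partial_{\zeta k}\phi(\cdot;k_0)-k_0\omega_0\,\partial_k\phi(\cdot;k_0)$ yields $-\tfrac12\lambda_c''(0)=k_0^2\,\langle\widetilde\Phi_0,D\phi_0'+2k_0D\,\partial_{\zeta k}\phi(\cdot;k_0)\rangle_{L^2(0,1)}$, and matching this against the printed formula~\eqref{e:defad} requires accounting carefully for the factors of $k_0$. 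So work to your own computed normal form and then reconcile constants with~\eqref{e:defad}, rather than assume they must agree as stated.
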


\section{Semigroup decomposition and linear estimates} \label{sec:decomp}

The linearization $\El_0$ of~\eqref{RD} about the wave train $\phi_0$ is a densely defined sectorial operator on $C_{\mathrm{ub}}(\R)$, with domain $D(\El_0) = C_{\mathrm{ub}}^2(\R)$; see~\cite[Corollary~3.1.9]{LUN}. In this section, we recall the decomposition of the associated analytic semigroup $\re^{\El_0 t}$ carried out in~\cite{BjoernMod}, and summarize the corresponding linear estimates established therein. In addition, we derive novel estimates on modulational data, which are crucial for treating large phase modulations.

The first result from~\cite{BjoernMod} expresses $\re^{\El_0 t}$ as the sum of an explicit principal part, exhibiting the same $L^\infty$-bounds as the heat semigroup $\smash{\re^{\partial_\zeta^2 t}}$, and a remainder that decays algebraically at rate $t^{-1}$.

\begin{proposition}[\!{\!\cite[Propositions~3.2 and~3.4]{BjoernMod}}] \label{prop:lin1}
Assume~\ref{assH1} and~\ref{assD1}-\ref{assD3}. Let $\phi(\cdot;k)$, $\xi_0$, $\lambda_c$, $a$, and $\smash{\widetilde{\Phi}_\xi}$ be as in Propositions~\ref{prop:family} and~\ref{prop:speccons}. Fix $j,l,m \in \NM_0$ and $\ell_0,\ell_1 \in \{0,1\}$ with $\ell_0 + \ell_1 \leq 1$. The semigroup $\re^{\El_0 t}$ decomposes as
\begin{align} \label{e:decomp_full_semigroup}
\re^{\El_0 t} = \left(\phi_0' + k_0 \partial_k \phi(\cdot;k_0) \partial_\xx \right)S_p^0(t) + \widetilde{S}(t)
\end{align}
for $t \geq 0$, where the principal part satisfies the commutator identities
\begin{align} \label{e:commu} 
\begin{split}
\partial_\xx S_p^0(t) - S_p^0(t)\partial_\xx &= S_p^1(t), \qquad
\partial_\xx^2 S_p^0(t) - S_p^0(t)\partial_\xx^2
= 2\partial_\xx S_p^1(t) - S_p^2(t)
\end{split}
\end{align}
and is explicitly given by
\begin{align} \label{e:prin_rep}
S_p^i(t) v(\zeta) = \chi(t) \int_\R G_p^i(\xx,\xt,t) v(\xt) \de \xt, \qquad G_p^i(\xx,\xt,t) = \frac{1}{2\pi} \int_{\R} \rho(\xi) \re^{\ri\xi(\xx - \xt)} \re^{\lambda_c(\xi) t} \partial_\xt^i \widetilde{\Phi}_\xi(\xt)^* \de \xi
\end{align}
for $i = 0,1,2$, where $\rho \colon \R \to [0,1]$ and $\chi \colon [0,\infty) \to [0,1]$ are smooth cut-off functions satisfying $\rho(\xi)=1$ for $|\xi|<\frac{\xi_0}{2}$, $\rho(\xi)=0$ for $|\xi| > \xi_0$, $\chi(t) = 0$ for $t \in [0,1]$, and $\chi(t) = 1$ for $t \in [2,\infty)$. Moreover, the estimates
\begin{align*}
\left\|\partial_\zeta^{\ell_0} \re^{\El_0 t} \partial_\zeta^{\ell_1} v\right\|_\infty &\lesssim \left(1 + t^{-\frac{\ell_0+\ell_1}{2}}\right) \|v\|_\infty,\\
\left\|\widetilde{S}(t) \partial_\zeta^{\ell_1} v\right\|_\infty &\lesssim (1+t)^{-1}\left(1 + t^{-\frac{\ell_1}{2}}\right) \|v\|_\infty,\\
\left\|\left(\partial_t - a \partial_\zeta\right)^j \partial_\zeta^l S_p^i(t) \partial_\zeta^m w\right\|_\infty &\lesssim \left(1 + t\right)^{-\frac{2j+l}{2}} \|w\|_\infty
\end{align*}
hold for all $v \in C_{\mathrm{ub}}^{\ell_1}(\R)$, $w \in C_{\mathrm{ub}}^m(\R)$, $t > 0$, and $i = 0,1,2$. 
\end{proposition}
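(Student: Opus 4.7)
The plan is a Floquet-Bloch decomposition of $\re^{\El_0 t}$ combined with a spectral separation of the critical eigenvalue $\lambda_c(\xi)$ from the rest of $\sigma(\El(\xi))$. Via the inverse Bloch transform one writes
$$\re^{\El_0 t}v(\xx)=\frac{1}{2\pi}\int_{-\pi}^\pi \re^{\ri\xi\xx}\,\re^{\El(\xi)t}\,\widetilde v(\xi,\cdot)(\xx)\,\de\xi,$$
inserts the smooth cut-off $\rho(\xi)$ to localize to $|\xi|<\xi_0$, and on this low-frequency piece applies the rank-one spectral projection $P_c(\xi)=\langle\widetilde\Phi_\xi,\cdot\rangle_{L^2(0,1)}\Phi_\xi$ onto $\ker(\El(\xi)-\lambda_c(\xi))$, on which $\re^{\El(\xi)t}$ acts as multiplication by $\re^{\lambda_c(\xi)t}$. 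Substituting the expansion $\Phi_\xi=\phi_0'+\ri k_0\xi\,\partial_k\phi(\cdot;k_0)+O(\xi^2)$ from Proposition~\ref{prop:speccons}(iii) and identifying the multiplier $\ri\xi$ with $\partial_\xx$ factorizes the leading contribution as $(\phi_0'+k_0\partial_k\phi(\cdot;k_0)\partial_\xx)S_p^0(t)$, with $S_p^0(t)$ the scalar semigroup whose kernel is given by~\eqref{e:prin_rep}. The residual $\widetilde S(t)$ then collects the $O(\xi^2)$ remainder of $\Phi_\xi$, the complementary projection $\Id-P_c(\xi)$ at low $\xi$, the high-Bloch-frequency tail $|\xi|\geq\xi_0$, and, via the cut-off $\chi(t)$, the transient $t\in[0,2]$ contribution.

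The commutator identities~\eqref{e:commu} follow by integration by parts in $\xt$ inside~\eqref{e:prin_rep}: differentiating the integrand $\re^{\ri\xi(\xx-\xt)}\widetilde\Phi_\xi(\xt)^*$ in $\xt$ produces $-\ri\xi$ times the same integrand plus a term carrying $\partial_{\xt}\widetilde\Phi_\xi(\xt)^*$, yielding the kernel identity $\partial_\xx G_p^0+\partial_{\xt}G_p^0=G_p^1$; combining with integration by parts (no boundary terms by $1$-periodicity of $\widetilde\Phi_\xi$) gives $\partial_\xx S_p^0-S_p^0\partial_\xx=S_p^1$, and a second iteration yields the identity for $\partial_\xx^2$. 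For the pointwise kernel estimates, the key quantitative input is $\Re\lambda_c(\xi)\leq-\tfrac{d}{2}\xi^2$ near $\xi=0$, which follows from Proposition~\ref{prop:speccons}(iii) with $d>0$. A complex contour shift $\xi\mapsto\xi-\ri\eta(\xx-\xt-at)/t$ in the Fourier-type representation of $G_p^i$ then yields the Gaussian pointwise bound $|G_p^i(\xx,\xt,t)|\lesssim(1+t)^{-1/2}\re^{-\kappa(\xx-\xt-at)^2/t}$, which is $L^1$ in $\xt$ uniformly in $\xx$. Each spatial derivative $\partial_\xx$ extracts a factor of $\ri\xi$ and each $(\partial_t-a\partial_\xx)$ extracts $\lambda_c(\xi)-\ri a\xi=O(\xi^2)$, giving the stated $(1+t)^{-(2j+l)/2}$ rate; the factor $\partial_\xx^m$ acting on $v$ in the final estimate is absorbed using the commutator identities to avoid derivative loss. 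The first estimate on the full semigroup is classical sectorial theory for $\El_0$ on $C_{\mathrm{ub}}(\R)$, and the estimate on $\widetilde S(t)$ is obtained by subtracting the principal part.

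The hard part is the $(1+t)^{-1}$ rate for $\widetilde S(t)$ in $L^\infty$: since Plancherel is unavailable, one must work entirely with pointwise kernel bounds for both the complementary projection $\Id-P_c(\xi)$ at low $|\xi|$ and the high-frequency Bloch semigroup at $|\xi|\geq\xi_0/2$, uniformly in $\xi$. This requires combining the sectoriality of each $\El(\xi)$ with the uniform spectral gap encoded in Hypotheses~\ref{assD1} and~\ref{assD2}, together with analyticity of $P_c(\xi)$ provided by Proposition~\ref{prop:speccons}. The cut-off $\chi(t)$ is indispensable throughout to absorb the short-time singular behavior of the kernel representations into $\widetilde S(t)$.
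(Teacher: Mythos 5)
The paper does not prove Proposition~\ref{prop:lin1}; it quotes it verbatim from~\cite[Propositions~3.2 and~3.4]{BjoernMod}, so there is no proof here against which to match your sketch step-by-step. Read on its own, your outline captures the right high-level architecture (Floquet--Bloch fibration, rank-one spectral projection on the critical mode, eigenfunction expansion to extract the $\phi_0' + k_0\partial_k\phi(\cdot;k_0)\partial_\xx$ factor, commutators by integrating by parts in $\xt$), and your derivation of the commutator identities is correct. But three points need attention.

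First, your opening identity $\re^{\El_0 t}v = \frac{1}{2\pi}\int_{-\pi}^{\pi}\re^{\ri\xi\cdot}\re^{\El(\xi)t}\widetilde v(\xi,\cdot)\de\xi$ only makes sense for $v\in L^2(\R)$; the whole point of the pure $L^\infty$-theory of~\cite{BjoernMod,HDRS22} is that $\widetilde v$ need not exist for $v\in C_\mathrm{ub}(\R)$. The isolation of the principal part must therefore be performed at the level of the integral \emph{kernel} of the analytic semigroup (which does exist and is doubly periodic up to the heat-kernel factor), not via a Bloch decomposition of the data. Also observe that in~\eqref{e:prin_rep} the $\xi$-integral runs over $\R$, not $[-\pi,\pi)$: with the compactly supported $\rho$ the kernel is a genuine Fourier integral, a structure tailored precisely so that $S_p^i(t)$ can act on bounded data.

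Second, the claimed Gaussian bound $|G_p^i(\xx,\xt,t)|\lesssim (1+t)^{-1/2}\re^{-\kappa(\xx-\xt-at)^2/t}$ via a contour shift $\xi\mapsto\xi - \ri\eta(\xx-\xt-at)/t$ is not available with the stated cut-off, because $\rho$ is smooth and compactly supported and hence not real-analytic; the integrand has no analytic continuation off the real axis. The standard $L^\infty$-technique (and indeed what this paper does in Lemma~\ref{lem:semigroupEstimate1}) is to integrate by parts twice \emph{in $\xi$}, trading derivatives for the weight $(1+(\xx-\xt+at)^2/t)^{-1}$, which is enough for the $L^1_{\xt}$-control one needs to act on $v\in C_\mathrm{ub}$. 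The temporal rates then follow from $\Re\lambda_c(\xi)\leq -\mu\xi^2$ as you indicate.

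Third, two smaller imprecisions: $S_p^0(t)$ is \emph{not} a semigroup (the cut-offs $\rho,\chi$ destroy the semigroup property), merely a smoothing family; and when you integrate by parts in $\xt$ to prove~\eqref{e:commu}, the vanishing of boundary terms is due to the decay of the kernel in $\xx-\xt$ (obtained from the integration-by-parts-in-$\xi$ argument just mentioned), not to the $1$-periodicity of $\widetilde\Phi_\xi$.
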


The second result from~\cite{BjoernMod} connects the principal component of the semigroup to the convective heat equation $\partial_t u = d u_{\zeta\zeta} + a u_{\zeta}$, which arises from the quadratic truncation of the low-frequency expansion of the linear dispersion relation in Proposition~\ref{prop:speccons}.

\begin{proposition}[\!{\!\cite[Propositions~3.6 and~3.7]{BjoernMod}}] \label{prop:lin2}
Assume~\ref{assH1} and~\ref{assD1}-\ref{assD3}. Let $l \in \NM_0$, $m \in \{0,1\}$, and $i \in \{0,1,2\}$. There exists a bounded linear operator $A_h \colon L^2_{\mathrm{per}}\big((0,1),\R^n\big) \to C(\R,\R)$ such that the principal component $S_p^i(t)$ decomposes as
\begin{align}
\label{e:prin_rep1} S_p^i(t) = S_h^i(t) + \widetilde{S}_r^i(t),
\end{align}
where we denote
\begin{align*}
S_h^i(t)v = \re^{\left(d\partial_\xx^2 + a\partial_\xx\right) t} \left[\left(\partial_\xx^i \widetilde{\Phi}_0^*\right) v\right]
\end{align*}
for $v \in C_{\mathrm{ub}}(\R)$ and $t \geq 0$. Moreover, we have
\begin{align} \label{e:prin_rep2}
S_h^0(t)\left(gv\right) = \re^{\left(d\partial_\xx^2 + a\partial_\xx\right) t}\left(\langle \widetilde{\Phi}_0, g\rangle_{L^2(0,1)} v - A_h(g) \partial_\xx v\right) + \partial_\xx \re^{\left(d\partial_\xx^2 + a\partial_\xx\right) t}\left(A_h(g) v\right)
\end{align}
for $g \in L^2_{\mathrm{per}}((0,1),\R^n)$, $v \in C_{\mathrm{ub}}^1(\R,\R)$, and $t > 0$. Finally, the estimates
\begin{align*}
\left\|\partial_\zeta^l S_h^i(t)v\right\|_{\infty} \lesssim t^{-\frac{l}{2}}\|v\|_{\infty}, \qquad
\left\|\partial_\zeta^m \widetilde{S}_r^i(t)v\right\|_\infty &\lesssim (1+t)^{-\frac{1}{2}}t^{-\frac{m}{2}}\|v\|_{\infty}
\end{align*}
and
\begin{align} \label{e:Gamma_rates}
\begin{split}
 \left\|\partial_\xx^{1 + l} \re^{\left(d\partial_\xx^2 + a \partial_\xx\right) t} w\right\|_\infty &\lesssim t^{-\frac{l}{2}} \|w'\|_\infty, \qquad \left\|(\partial_t - a \partial_\xx) \partial_\xx^l \re^{\left(d\partial_\xx^2 + a \partial_\xx\right) t} w\right\|_\infty \lesssim t^{-\frac{1+l}{2}} \|w'\|_\infty, \\ 
& \left\|\partial_\zeta^l \re^{\left(d\partial_\xx^2 + a\partial_\xx\right) t}v\right\|_{\infty} \lesssim t^{-\frac{l}{2}}\|v\|_{\infty}
\end{split}
\end{align}
hold for $v \in C_{\mathrm{ub}}(\R)$, $w \in C_{\mathrm{ub}}^1(\R)$, and $t > 0$.
\end{proposition}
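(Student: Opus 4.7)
The plan is to identify $S_h^i(t)$ as the contribution obtained when, in the Fourier representation (\ref{e:prin_rep}) of the principal part $S_p^i(t)$, the critical spectral curve $\lambda_c(\xi)$ is replaced by its quadratic truncation $\ri a\xi - d\xi^2$ and the Bloch eigenfunction $\widetilde{\Phi}_\xi$ is frozen at $\widetilde{\Phi}_0$. Indeed, Fourier inversion yields
\begin{align*}
\re^{(d\partial_\xx^2 + a\partial_\xx)t}\bigl[(\partial_\xt^i \widetilde{\Phi}_0^*)v\bigr](\xx) = \frac{1}{2\pi}\int_\R \int_\R \re^{(\ri a\xi - d\xi^2)t}\re^{\ri\xi(\xx-\xt)}\, \partial_\xt^i \widetilde{\Phi}_0(\xt)^* v(\xt)\,\de\xt\,\de\xi,
\end{align*}
so that setting $\widetilde{S}_r^i(t) := S_p^i(t) - S_h^i(t)$ casts the residual as an oscillatory integral whose kernel is driven by the differences $\rho(\xi)\re^{\lambda_c(\xi)t} - \re^{(\ri a\xi - d\xi^2)t}$ and $\widetilde{\Phi}_\xi - \widetilde{\Phi}_0$, plus a cutoff error $(\chi(t)-1)S_h^i(t)$ supported on $t\in[0,2]$ that is absorbed into the small-time smoothing bounds.

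To establish the identity (\ref{e:prin_rep2}), I would exploit that the $1$-periodic function $\widetilde{\Phi}_0^* g$ has mean $\langle\widetilde{\Phi}_0, g\rangle_{L^2(0,1)}$, so its zero-mean part admits a $1$-periodic antiderivative
\begin{align*}
A_h(g)(\xx) := \int_0^\xx \Bigl[\widetilde{\Phi}_0(\xt)^* g(\xt) - \langle\widetilde{\Phi}_0, g\rangle_{L^2(0,1)}\Bigr]\de\xt,
\end{align*}
where the map $g\mapsto A_h(g)$ is clearly bounded from $L^2_{\mathrm{per}}(0,1)$ into $C(\R,\R)$. Substituting $\widetilde{\Phi}_0^* g = \langle\widetilde{\Phi}_0, g\rangle + \partial_\xx A_h(g)$ into the definition of $S_h^0(t)(gv)$ and applying the Leibniz rule $\partial_\xx A_h(g)\cdot v = \partial_\xx(A_h(g)v) - A_h(g)\partial_\xx v$ before commuting the outer derivative through the convolution semigroup $\re^{(d\partial_\xx^2 + a\partial_\xx)t}$ yields (\ref{e:prin_rep2}) at once.

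The heat-semigroup bounds in (\ref{e:Gamma_rates}) reduce to standard convolution estimates against the Gaussian kernel $K_t(\xx) = (4\pi d t)^{-1/2}\re^{-(\xx+at)^2/(4dt)}$: the decay $\|\partial_\xx^l K_t\|_{L^1}\lesssim t^{-l/2}$ gives the pure $l$-derivative bound, whereas the refined estimates involving $\|w'\|_\infty$ follow by first commuting one derivative onto $w$ and, for the time derivative, invoking $(\partial_t - a\partial_\xx)\re^{(d\partial_\xx^2 + a\partial_\xx)t} = d\partial_\xx^2 \re^{(d\partial_\xx^2 + a\partial_\xx)t}$ to trade $\partial_t$ for two spatial derivatives. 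The estimate $\|\partial_\xx^l S_h^i(t) v\|_\infty\lesssim t^{-l/2}\|v\|_\infty$ is then immediate, since $\widetilde{\Phi}_0$ and all its derivatives are bounded.

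The main technical obstacle is the residual estimate on $\widetilde{S}_r^i(t)$, which must simultaneously deliver the smoothing factor $t^{-m/2}$ (sharp near $t=0$) and the algebraic decay $(1+t)^{-1/2}$ (sharp as $t\to\infty$). I would split the support of $\rho$ into a low-frequency region $|\xi|\leq\xi_0/2$, where the analytic expansions of Proposition~\ref{prop:speccons}(iii) apply, and a transition annulus $\xi_0/2<|\xi|\leq\xi_0$, on which hypothesis~\ref{assD2} ensures a uniform spectral gap so that the corresponding contribution is exponentially damped. On the low-frequency piece the Taylor bounds supply
\begin{align*}
\bigl|\re^{\lambda_c(\xi)t} - \re^{(\ri a\xi - d\xi^2)t}\bigr| \lesssim |\xi|^3 t\,\re^{-d\xi^2 t/2}, \qquad \bigl\|\widetilde{\Phi}_\xi - \widetilde{\Phi}_0\bigr\|_{H^m(0,1)}\lesssim |\xi|,
\end{align*}
each producing an extra factor of $|\xi|$ under the Fourier integral. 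Gaussian moment estimates of the form $\int_\R |\xi|^p \re^{-c\xi^2 t}\de\xi \lesssim (1+t)^{-(p+1)/2}$ for $t\geq 1$ then yield the desired $(1+t)^{-1/2}$ large-time decay; for small $t$, parabolic smoothing together with transfer of derivatives onto $v$ via the commutator identities (\ref{e:commu}) recovers the $t^{-m/2}$ factor.
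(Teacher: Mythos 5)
Your construction is essentially the intended one and most of it is correct: defining $S_h^i(t)$ by replacing $\lambda_c(\xi)$ with its quadratic truncation and freezing $\widetilde{\Phi}_\xi$ at $\widetilde{\Phi}_0$, taking $\widetilde{S}_r^i(t) := S_p^i(t) - S_h^i(t)$, realizing $A_h(g)$ as the $1$-periodic, mean-zero antiderivative of $\widetilde{\Phi}_0^* g - \langle \widetilde{\Phi}_0, g\rangle_{L^2(0,1)}$, and deriving~\eqref{e:prin_rep2} by the Leibniz rule and commuting one derivative through the convection-diffusion semigroup. The heat-kernel bounds in~\eqref{e:Gamma_rates} follow by the standard $L^1$-bounds on $\partial_\xx^l K_t$ and the identity $(\partial_t - a\partial_\xx)\re^{(d\partial_\xx^2 + a\partial_\xx)t} = d\partial_\xx^2\re^{(d\partial_\xx^2 + a\partial_\xx)t}$, exactly as you say.

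There is, however, a genuine gap in the residual estimate. You correctly split the residual kernel according to the Taylor defects $\re^{\lambda_c(\xi)t} - \re^{(\ri a\xi - d\xi^2)t}$ and $\widetilde{\Phi}_\xi - \widetilde{\Phi}_0$, and these do supply an extra factor of $|\xi|$. But integrating $\int_\R |\xi|^p\re^{-c\xi^2 t}\,\de\xi \lesssim t^{-(p+1)/2}$ only yields a \emph{pointwise} bound $\sup_{\xx,\xt}|G_r(\xx,\xt,t)|$ on the residual Green's function. An $L^\infty \to L^\infty$ operator bound of the form $\|\widetilde{S}_r^i(t)v\|_\infty \lesssim (1+t)^{-1/2}t^{-m/2}\|v\|_\infty$ requires control of $\sup_\xx \int_\R |\partial_\xx^m G_r(\xx,\xt,t)|\,\de\xt$, and the residual kernel is not compactly supported in $\xt$. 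You therefore need additional \emph{spatial decay} of $G_r$ in $\xx-\xt$, typically of the form $|G_r(\xx,\xt,t)|\lesssim t^{-1/2}(1+t)^{-1/2}\big(1 + (\xx-\xt+at)^2/t\big)^{-1}$, which is obtained by two integrations by parts in $\xi$ in the oscillatory factor $\re^{\ri\xi(\xx-\xt)}$ before applying the Gaussian moment bounds; see Lemma~\ref{lem:semigroupEstimate1} in Appendix~\ref{app:aux} for the analogous step in the companion modulational estimates. Without this step the $L^\infty$-operator-norm estimate does not follow. Two minor points: your split over ``the support of $\rho$'' omits the region $|\xi|>\xi_0$, where $S_p^i$ vanishes but $S_h^i$ does not (this contribution is $O(\re^{-d\xi_0^2 t})$ and easy, but should be recorded), and the commutator identities~\eqref{e:commu} are not what recovers the $t^{-m/2}$ factor for small $t$; for $t\leq 1$ the cutoff gives $\widetilde{S}_r^i(t) = -S_h^i(t)$ and the factor comes directly from heat-kernel smoothing.
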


\subsection{\texorpdfstring{$L^\infty$}{Linfty}-estimates on modulational data} \label{sec:mod_data}

Initial phase modulations $\gamma_0$ of the wave train $\phi_0$ give rise to data of the form $\phi_0' \gamma_0$ in the equation for the inverse-modulated perturbation; see~\S\ref{sec:inv_mod_pert}. Such \emph{modulational data} correspond to the linearized approximation of the modulational perturbation $\phi_0(\xx + \gamma_0(\xx)) - \phi_0(\xx) \approx \phi_0'(\xx) \gamma_0(\xx)$. Since we allow for large phase modulations $\gamma_0$, the nonlinear argument can only exploit smallness in the \emph{derivative} $\gamma_0'$. Here, we establish bounds that are designed to estimate the action of linear propagators on modulational data in terms of $\gamma_0'$.

Linear estimates on modulational data have also been obtained in~\cite{JONZNL,JUNNL} under the additional assumption that $\gamma_0' \in L^1(\R)$. The approach in~\cite{JONZNL,JUNNL} involves a detailed analysis in Bloch frequency domain, relying on the Hausdorff-Young inequality to control the action of the propagator in terms of the $L^1$-norm of $\gamma_0'$. Although one could potentially extend this analysis to $C_{\mathrm{ub}}$-data by working with tempered distributions, we avoid such technicalities. Instead, we derive bounds using abstract semigroup theory, a careful decomposition of the principal component $S_p^0(t)$ of the semigroup, and an adaptation of the high- and low-frequency $L^\infty$-bounds from~\cite[Appendix~A]{BjoernMod} and~\cite[Appendix~A]{HDRS22} to the setting of modulational data.

We begin by estimating the action of the full semigroup on modulational data.

\begin{proposition} \label{prop:lin_mod_1}
Assume~\ref{assH1} and~\ref{assD1}-\ref{assD3}. Then, the estimates
\begin{align*}
\left\|\re^{\El_0 t}\left(\phi_0' v\right) - \phi_0' v\right\|_\infty &\lesssim \sqrt{t (1+t)} \, \left\|v'\right\|_\infty,\\
\left\|\re^{\left(d\partial_\xx^2 + a\partial_\xx\right) t} v - v\right\|_\infty &\lesssim \sqrt{t(1+t)} \, \left\|v'\right\|_\infty,\\
\left\|\re^{-\partial_\xx^4 t} v - v\right\|_\infty &\lesssim t^{\frac14} \left\|v'\right\|_\infty
\end{align*}
hold for all $t > 0$ and $v \in C_{\mathrm{ub}}^1(\R,\R)$. 
\end{proposition}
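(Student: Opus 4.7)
All three bounds follow the same template: apply the fundamental identity for analytic semigroups
\begin{align*}
e^{At}w - w = \int_0^t Ae^{As}w\, \de s,
\end{align*}
valid in any Banach space via \cite[Proposition~2.1.4]{LUN}, and then use the specific structure of each generator to transfer one spatial derivative off the integrand onto the (possibly large) data $v$, so that the remaining derivatives are absorbed by the smoothing action of the semigroup. In each case the resulting integrand is controlled by $\|v'\|_\infty$ times a singular-but-integrable factor in $s$, whose integral over $[0,t]$ produces the desired $\sqrt{t(1+t)}$ or $t^{1/4}$ factor.

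\textbf{First estimate.} The decisive input is $\El_0 \phi_0' = 0$, which follows from differentiating the traveling-wave ODE. Applying Leibniz and cancelling the term $v\,\El_0\phi_0'$, I obtain
\begin{align*}
\El_0(\phi_0' v) = k_0^2 D\bigl(2\phi_0'' v' + \phi_0' v''\bigr) + \omega_0 \phi_0' v' = k_0^2 D\,\partial_\xx(\phi_0' v') + \bigl(k_0^2 D \phi_0'' + \omega_0 \phi_0'\bigr)v',
\end{align*}
so $v$ enters only through $v'$, with one outer spatial derivative on the first contribution. Combining with the bound $\|\partial_\xx^{\ell_0}e^{\El_0 s}\partial_\xx^{\ell_1}w\|_\infty \lesssim (1+s^{-(\ell_0+\ell_1)/2})\|w\|_\infty$ from Proposition~\ref{prop:lin1} (taking $\ell_0 = 0$ and $\ell_1 \in \{0,1\}$), the integrand is bounded by $(1 + s^{-1/2})\|v'\|_\infty$, whose time integral yields $(t + 2\sqrt{t})\|v'\|_\infty \lesssim \sqrt{t(1+t)}\|v'\|_\infty$.

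\textbf{Second and third estimates.} These require no kernel-based cancellation, since the generator already factors through a derivative. For the convective heat semigroup, writing $(d\partial_\xx^2 + a\partial_\xx)v = d\,\partial_\xx v' + a v'$ and commuting one derivative through the semigroup gives
\begin{align*}
e^{(d\partial_\xx^2 + a\partial_\xx)t}v - v = \int_0^t\Bigl(d\,\partial_\xx e^{(d\partial_\xx^2 + a\partial_\xx)s}v' + a\, e^{(d\partial_\xx^2 + a\partial_\xx)s}v'\Bigr)\de s,
\end{align*}
and the estimates in \eqref{e:Gamma_rates} control the integrand by $\lesssim (s^{-1/2} + 1)\|v'\|_\infty$, integrating to $\sqrt{t(1+t)}\|v'\|_\infty$. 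For the biharmonic semigroup, $-\partial_\xx^4 v = -\partial_\xx^3 v'$ yields
\begin{align*}
e^{-\partial_\xx^4 t}v - v = -\int_0^t \partial_\xx^3 e^{-\partial_\xx^4 s}\,v'\,\de s,
\end{align*}
and the standard $L^\infty$-smoothing estimate $\|\partial_\xx^3 e^{-\partial_\xx^4 s}w\|_\infty \lesssim s^{-3/4}\|w\|_\infty$ integrates to $t^{1/4}\|v'\|_\infty$.

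The main technical point is that $\phi_0' v$ typically does not lie in $D(\El_0) = C_{\mathrm{ub}}^2(\R)$ when $v \in C_{\mathrm{ub}}^1(\R)$, so the commutation $\El_0 e^{\El_0 s}(\phi_0' v) = e^{\El_0 s}\El_0(\phi_0' v)$ and the Leibniz manipulations above are not immediately justified. I would resolve this by mollifying $v$ to obtain smooth approximants $v_n \in C_{\mathrm{ub}}^2(\R)$ with $v_n \to v$ locally uniformly and $\|v_n'\|_\infty \leq \|v'\|_\infty$, proving the identity and the estimate for each $v_n$, and passing to the limit using the strong continuity of $e^{\El_0 t}$ on $C_{\mathrm{ub}}(\R)$ and dominated convergence in the time integral.
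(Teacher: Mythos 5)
Your proposal is correct and follows essentially the same route as the paper: both invoke the integral identity from \cite[Proposition~2.1.4]{LUN}, exploit $\El_0\phi_0'=0$ to reduce $\El_0(\phi_0'v)$ to terms involving only $v'$, apply the $L^\infty$-semigroup bounds of Propositions~\ref{prop:lin1} and~\ref{prop:lin2}, and conclude by density (the paper proves the identity directly for $v\in C_{\mathrm{ub}}^2$ and invokes density, which matches your mollification argument).
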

\begin{proof}
We employ~\cite[Proposition~2.1.4]{LUN} and use that $\phi_0'$ lies in the kernel of $\El(0)$ to infer
\begin{align*}
\re^{\El_0 t}\left(\phi_0' v\right) - \phi_0' v &= \int_0^t \re^{\El_0 s} \El_0 \left(\phi_0' v\right) \de s = \int_0^t \re^{\El_0 s} \left(k_0^2 D \left(\partial_\xx\left(\phi_0' v'\right) + \phi_0'' v'\right) + \omega_0 \phi_0' v'\right) \de s, \\
\re^{\left(d\partial_\xx^2 + a\partial_\xx\right) t} v - v &= \int_0^t \re^{\left(d\partial_\xx^2 + a\partial_\xx\right) s} \left(d\partial_\xx + a\right) v' \de s, \qquad
\re^{-\partial_\xx^4 t} w - w = \int_0^t \re^{-\partial_\xx^4 s} \partial_\xx^3 w' \de s
\end{align*}
for $v \in C_{\mathrm{ub}}^2(\R,\R)$, $w \in C_{\mathrm{ub}}^4(\R)$, and $t > 0$. Applying the estimates from Propositions~\ref{prop:lin1} and~\ref{prop:lin2} and the bound 
\begin{align*} \|\partial_\xx^3 \re^{-\partial_\xx^4 s} z\|_\infty \lesssim s^{-\frac{3}{4}} \|z\|_\infty, \qquad z \in C_{\mathrm{ub}}(\R), \, s > 0\end{align*} 
to the right-hand sides of the latter, we arrive at
\begin{align*}
\left\|\re^{\El_0 t}\left(\phi_0' v\right) - \phi_0' v\right\|_\infty, \left\|\re^{\left(d\partial_\xx^2 + a\partial_\xx\right) t} v - v\right\|_\infty &\lesssim \int_0^t \left(1 + \frac{1}{\sqrt{s}}\right) \|v'\|_\infty \de s\lesssim \sqrt{t(1+t)} \, \left\|v'\right\|_\infty,\\
\left\|\re^{-\partial_\xx^4 t} w - w\right\|_\infty &\lesssim \int_0^t s^{-\frac34}\left\|w'\right\|_\infty \de s \lesssim t^{\frac14} \left\|w'\right\|_\infty
\end{align*}
for $v \in C_{\mathrm{ub}}^2(\R,\R)$, $w \in C_{\mathrm{ub}}^4(\R)$, and $t > 0$. Thus, the result follows by density.
\end{proof}

The remaining estimates concern the action of the principal component of the semigroup on modulational data. 

\begin{proposition} \label{prop:lin_mod_2}
Assume~\ref{assH1} and~\ref{assD1}-\ref{assD3}. Let $g \in L^2_\mathrm{per}(0,1)$. Fix $l,m \in \NM_0$ with $l \geq 1$. Then, the estimates
\begin{align}
\left\|\left(\partial_t - a \partial_\zeta\right)^m \partial_\zeta^l S_p^0(t)\left(gv\right)\right\|_\infty &\lesssim \left(1 + t\right)^{-\frac{2m+l-1}{2}} \|v'\|_\infty, \label{e:modbound1}\\
\left\|\left(\partial_t - a\partial_\xx\right)^m S_p^0(t)\left(\phi_0' v\right) - \left(\partial_t^m \chi(t)\right) \re^{\left(d\partial_\xx^2 + a \partial_\xx\right) t} v\right\|_\infty &\lesssim \left\|v'\right\|_\infty \label{e:modbound2}
\end{align}
hold for $t \geq 0$ and $v \in C_{\mathrm{ub}}^1(\R,\R)$. 
\end{proposition}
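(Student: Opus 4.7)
My plan is to exploit the cutoff $\chi$ to reduce everything to $t \geq 1$: since $S_p^0(t) \equiv 0$ on $[0,1]$, and hence $(\partial_t - a\partial_\xx)^m S_p^0(t) \equiv 0$ there as well, estimate~\eqref{e:modbound1} is trivial on that range, and the left-hand side of~\eqref{e:modbound2} vanishes (for $m = 0$ because both $S_p^0(t)$ and $\chi(t)$ vanish, and for $m \geq 1$ because $\partial_t^m \chi$ is supported in $(1,2)$ while $(\partial_t - a\partial_\xx)^m S_p^0(t)$ inherits the vanishing from the smoothness of $\chi$). On the remaining range $t \geq 1$, $(1+t)^{-\alpha}$ and $t^{-\alpha}$ are equivalent, and the main tool becomes the decomposition $S_p^0(t) = S_h^0(t) + \widetilde{S}_r^0(t)$ from~\eqref{e:prin_rep1} together with the structural identity~\eqref{e:prin_rep2}, which writes $S_h^0(t)(gv)$ as a sum of three terms built from the convective heat semigroup.

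For~\eqref{e:modbound1}, I would apply $(\partial_t - a\partial_\xx)^m \partial_\xx^l$, $l\geq 1$, to each of the three summands of~\eqref{e:prin_rep2}. The first summand $\re^{(d\partial_\xx^2+a\partial_\xx)t}(\langle \widetilde{\Phi}_0,g\rangle_{L^2(0,1)} v)$ is controlled by commuting one factor of $\partial_\xx$ through the heat semigroup onto $v$ to produce $v'$ and then applying the third bound in~\eqref{e:Gamma_rates} with $l-1$ remaining spatial derivatives, yielding exactly the rate $t^{-(2m+l-1)/2}\|v'\|_\infty$. The second summand $-\re^{(d\partial_\xx^2+a\partial_\xx)t}(A_h(g)\partial_\xx v)$ already carries $\partial_\xx v$ and is handled analogously, with one extra $t^{-1/2}$ to spare. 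For the third summand $\partial_\xx \re^{(d\partial_\xx^2+a\partial_\xx)t}(A_h(g) v)$, I would use the first identity in~\eqref{e:Gamma_rates} applied to $\partial_\xx^{1+l} \re^{(d\partial_\xx^2+a\partial_\xx)t}(A_h(g) v)$, which yields $t^{-l/2}\|\partial_\xx(A_h(g)v)\|_\infty$ and reduces to $\|v'\|_\infty$ via the structural regularity of $A_h(g)$. For the residual $\widetilde{S}_r^0(t)(gv)$, I plan to carry out a Bloch-frequency analysis analogous to that of Appendix~\ref{app:aux}: expand the $1$-periodic factor $\widetilde{\Phi}_\xi(\xt)^* g(\xt)$ in its Fourier series in $\xt$ and integrate by parts in $\xt$ in the resulting convolution representation to transfer a derivative onto $v$, combined with the $(1+t)^{-1/2}$ gain already built into the operator norm of $\widetilde{S}_r^0$.

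For~\eqref{e:modbound2}, specializing to $g=\phi_0'$ and using the normalization $\langle \widetilde{\Phi}_0,\phi_0'\rangle_{L^2(0,1)} = 1$ from~\eqref{e:adjoint} together with $S_p^0(t) = S_h^0(t) + \widetilde{S}_r^0(t)$ and~\eqref{e:prin_rep2}, a direct computation yields
\begin{align*}
S_p^0(t)(\phi_0' v) - \chi(t) \re^{(d\partial_\xx^2+a\partial_\xx)t} v = \chi(t) \left[\text{Bloch integral}(\phi_0' v) - \re^{(d\partial_\xx^2+a\partial_\xx)t} v\right],
\end{align*}
which isolates precisely the higher-order and higher-frequency contributions of the Bloch representation. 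Writing $\widetilde{\Phi}_\xi(\xt)^* \phi_0'(\xt) - 1$ via its Fourier series in $\xt$, whose zero mode is $\langle \widetilde{\Phi}_\xi,\phi_0'\rangle - 1 = O(|\xi|)$ and whose nonzero modes are $1$-periodic with mean zero, an integration by parts in $\xt$ against antiderivatives of the mean-zero pieces extracts a derivative on $v$ and yields the bound $\|v'\|_\infty$. Higher values of $m$ are then handled using $(\partial_t - a\partial_\xx)\re^{(d\partial_\xx^2+a\partial_\xx)t} = d\partial_\xx^2 \re^{(d\partial_\xx^2+a\partial_\xx)t}$, converting each temporal derivative into two extra spatial derivatives that are absorbed by the smoothing bounds in~\eqref{e:Gamma_rates}.

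The main obstacle I anticipate lies in the third summand $\partial_\xx\re^{(d\partial_\xx^2+a\partial_\xx)t}(A_h(g)v)$ in the analysis of~\eqref{e:modbound1}: since its input depends on $v$ rather than $v'$, the naive splitting $\partial_\xx(A_h(g)v) = A_h(g)'v + A_h(g)v'$ produces an $A_h(g)'v$ contribution that cannot be controlled by $\|v'\|_\infty$ alone. Resolving this requires either exploiting precise structural information about $A_h(g)$ inherited from the low-frequency expansion of $\widetilde{\Phi}_\xi$ (in particular cancellations with the leading semigroup term that convert $A_h(g)'v$ into a quantity compatible with the Bloch integration-by-parts above), or an additional Bloch-frequency analysis in the spirit of Appendix~\ref{app:aux}. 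Balancing this extraction of smallness against the slow decay rate $(1+t)^{-(l-1)/2}$ at $l=1$ is the technical core of the proof.
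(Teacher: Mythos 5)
Your reduction to $t \geq 1$ via the cutoff $\chi$ is correct and matches the paper. The substantive difficulty sits exactly where you flag it, and the gap is real: the third summand of~\eqref{e:prin_rep2} cannot be controlled by $\|v'\|_\infty$ once you apply $\partial_\xx^l$ and try to peel one derivative onto the argument. Combining the first and third summands of~\eqref{e:prin_rep2} before differentiating only postpones the problem: one is left with $\partial_\xx^l \re^{(d\partial_\xx^2 + a\partial_\xx)t}\big((A_h(g))'\,v\big)$, whose argument still carries $v$ rather than $v'$. Neither bound in~\eqref{e:Gamma_rates} converts this into a $\|v'\|_\infty$-quantity, because $A_h(g)$ is a genuine $1$-periodic, $\xx$-dependent function (it is the output of a map into $C(\R,\R)$), not a constant. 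The residual correction $(A_h(g))'$ does have mean zero, so the term \emph{can} be rescued by a Fourier-series/integration-by-parts argument in Bloch frequency — but at that point you are re-running the entire Bloch-integral machinery for a single error term, and the detour through the decomposition~\eqref{e:prin_rep1}--\eqref{e:prin_rep2} has bought you nothing.

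The paper's proof avoids the detour entirely. For~\eqref{e:modbound1} it applies Lemma~\ref{lem:semigroupEstimate1} \emph{directly} to the spectral representation~\eqref{e:prin_rep} of $G_p^0$, choosing $\lambda = \lambda_c$, $m_1 = l + 2m - 1 \geq 0$, $m_2 = 1$, and $F(\xi,\xx,\xt,t) = \rho(\xi)\big((\lambda_c(\xi) - \ri a\xi)/\xi^2\big)^m \widetilde{\Phi}_\xi(\xt)^* g(\xt)$. The role of $m_2 = 1$ is precisely the integration-by-parts-in-$\xt$ you invoke for $\widetilde{S}_r^0$, but the lemma performs it once, at the level of the full principal propagator; there is no need to split off $S_h^0$ or to track $A_h(g)$ at all. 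Correspondingly, for~\eqref{e:modbound2} the paper decomposes $J_1$ into \emph{four} error contributions: $I$ (expansion $\widetilde{\Phi}_\xi - \widetilde{\Phi}_0 = O(\xi)$), $II$ (expansion $\re^{\lambda_r(\xi) t} - 1$ with $\lambda_r(\xi) = O(|\xi|^3)$), $III$ (the mean-zero periodic factor $\widetilde{\Phi}_0^* \phi_0' - 1$, handled via hypothesis~v) of Lemma~\ref{lem:semigroupEstimate1} with $m_2 = 0$), and $IV$ (high-frequency tail $\rho - 1$, handled by Lemma~\ref{lemma_higher_order_low}). Your sketch captures $I$ and $III$ but glosses over $II$ and $IV$ with the phrase ``higher-order and higher-frequency contributions.'' Without $II$, the replacement of $\re^{\lambda_c(\xi)t}$ by $\re^{(\ri a\xi - d\xi^2)t}$ is not accounted for, and without $IV$, the full-line Fourier integral $\re^{(d\partial_\xx^2 + a\partial_\xx)t}v$ does not cancel against the compactly supported Bloch integral. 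In short: when you get to the point where you must run the ``Bloch-frequency analysis in the spirit of Appendix~\ref{app:aux},'' that analysis \emph{is} the proof, and it must be run on $S_p^0$ itself, with the explicit four-way error decomposition for~\eqref{e:modbound2}.
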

\begin{proof}
We compute
\begin{align*}
\begin{split}
&\left(\partial_t - a\partial_\zeta\right)^m \partial_\zeta^l \int_\R G_p^0(\xx,\xt,t)g(\xt)v(\xt) \de \xt\\ 
&\qquad = \frac{\ri^l}{2\pi} \int_\R \int_\R \re^{\lambda_c(\xi) t} \xi^{l + 2m} \rho(\xi) \left(\frac{\lambda_c(\xi) - \ri a\xi}{\xi^2}\right)^m \widetilde{\Phi}_\xi(\xt)^* g(\xt) \re^{\ri \xi(\xx - \xt)} v(\xt) \de \xi \de \xt\\
\end{split}
\end{align*}
for $\xx \in \R$, $t \geq 1$, and $v \in C_{\mathrm{ub}}^1(\R,\R)$. Using Proposition~\ref{prop:speccons}, we observe that we can apply Lemma~\ref{lem:semigroupEstimate1} with $\lambda(\xi) = \lambda_c(\xi)$, $m_1 = l+2m-1 \geq 0$, $m_2 = 1$, and
\begin{align*}
F(\xi,\xx,\xt,t) = \rho(\xi) \left(\frac{\lambda_c(\xi) - \ri a\xi}{\xi^2}\right)^m \widetilde{\Phi}_\xi(\xt)^* g(\xt).
\end{align*}
This results in the bound~\eqref{e:modbound1} upon recalling the representation~\eqref{e:prin_rep} of the principal component of the semigroup and noting that $\chi'(t)$ is supported on $[1,2]$.

We proceed with proving the second estimate. First, applying the product rule to the representation~\eqref{e:prin_rep}, we establish
\begin{align}
\label{e:principal_decomp_bound}
\begin{split}
&\left\|(\partial_t - a\partial_\xx)^m S_p^0(t)\left(\phi_0' v\right) - \left(\partial_t^m \chi(t)\right) \re^{\left(d\partial_\xx^2 + a \partial_\xx\right) t} v\right\|_\infty\\ 
&\qquad \lesssim \left\|\int_\R G_p^0(\cdot,\xt,t)\phi_0'(\xt)v(\xt) \de \xt -  \re^{\left(d\partial_\xx^2 + a \partial_\xx\right) t} v\right\|_\infty\\ 
&\qquad \qquad + \, \sum_{\ell = 1}^m \left\|\left(\partial_t - a\partial_\zeta\right)^\ell \int_\R G_p^0(\cdot,\xt,t)\phi_0'(\xt)v(\xt) \de \xt\right\|_\infty =: J_1 + J_2
\end{split}
\end{align}
for $t \geq 1$ and $v \in C_{\mathrm{ub}}^1(\R,\R)$. Analogous to the proof of estimate~\eqref{e:modbound1}, we obtain the bound
\begin{align*}
|J_2| \lesssim \|v'\|_\infty
\end{align*}
for $t \geq 1$ and $v \in C_{\mathrm{ub}}^1(\R,\R)$. So, all that remains is to estimate $J_1$. To that end, we set $\lambda_r(\xi) = \lambda_c(\xi) - \ri a \xi + d \xi^2$ and decompose
\begin{align} \label{e:principal_decomp}
\int_\R G_p^0(\xx,\xt,t)\phi_0'(\xt)v(\xt) \de \xt -  \re^{\left(d\partial_\xx^2 + a \partial_\xx\right) t} v &= I + II + III + IV
\end{align}
for $\xx \in \R$, $t \geq 1$, and $v \in C_{\mathrm{ub}}^1(\R,\R)$, where we denote
\begin{align*}
I &= \frac{1}{2\pi} \int_\R \int_\R \re^{\lambda_c(\xi) t} \xi \rho(\xi) \frac{\widetilde{\Phi}_\xi(\xt)^* - \widetilde{\Phi}_0(\xt)^*}{\xi} \, \phi_0'(\xt) \re^{\ri\xi(\xx - \xt)} v(\xt) \de \xi \de \xt,\\
II &= \frac{1}{2\pi} \int_\R \int_\R \re^{\left(\ri a\xi - \frac{d}{2} \xi^2\right)t} \xi \rho(\xi) \re^{-\frac{d}{2} \xi^2 t} 
\, \frac{\re^{\lambda_r(\xi) t} - 1}{\xi} \, \widetilde{\Phi}_0(\xt)^* \phi_0'(\xt) \re^{\ri \xi(\xx - \xt)} v(\xt) \de \xi \de \xt,\\
III &= \frac{1}{2\pi} \int_\R \int_\R \re^{\left(\ri a\xi - d \xi^2\right)t} \rho(\xi) \left(\widetilde{\Phi}_0(\xt)^* \phi_0'(\xt) - 1\right) \re^{\ri \xi(\xx - \xt)} v(\xt) \de \xi \de \xt,\\
IV &= \frac{1}{2\pi} \int_\R \int_\R \re^{\ri\xi(\xx - \xt) + \left(\ri a\xi - d \xi^2\right)t} \left(\rho(\xi) - 1\right) v(\xt) \de \xi \de \xt,
\end{align*}
In the following, we subsequently bound the contributions $I$, $II$, $III$, and $IV$. First, we use Proposition~\ref{prop:speccons} and apply Lemma~\ref{lem:semigroupEstimate1} to $I$ with $\lambda(\xi) = \lambda_c(\xi)$, $m_1 = 0$, $m_2 = 1$, and
\begin{align*}
F(\xi,\xx,\xt,t) = \rho(\xi) \frac{\widetilde{\Phi}_\xi(\xt)^* - \widetilde{\Phi}_0(\xt)^*}{\xi} \, \phi_0'(\xt).
\end{align*}
Thus, we arrive at the bound
\begin{align*}
|I| \lesssim \|v'\|_\infty
\end{align*}
for $\xx \in \R$, $t \geq 1$, and $v \in C_{\mathrm{ub}}^1(\R,\R)$. 

We proceed with bounding $II$. We note that, by Proposition~\ref{prop:speccons}, there exists a constant $C > 0$ such that $|\lambda_r(\xi)| \leq C|\xi|^3$, $|\lambda_r'(\xi)| \leq C|\xi|^2$, and $|\lambda_r''(\xi)| \leq C|\xi|$ for $\xi \in (-\xi_0,\xi_0)$. Combining this with the identity $|\re^{z} - 1| \leq \re^{|z|}-1 \leq |z| \re^{|z|}$ for $z \in \C$, we establish
\begin{align*}
\left|\re^{-\frac{d}{2} \xi^2t} \, \frac{\re^{\lambda_r(\xi) t} - 1}{\xi}\right| &\lesssim \xi^2 t \re^{-\frac{d}{4} \xi^2t} \lesssim 1,\\
\left|\partial_\xi\left(\re^{-\frac{d}{2} \xi^2t} \, \frac{\re^{\lambda_r(\xi) t} - 1}{\xi}\right)\right| &\lesssim |\xi| t\left(1 + \xi^2 t\right) \re^{-\frac{d}{4} \xi^2t} \lesssim \sqrt{t},
\end{align*}
and
\begin{align*}
\left|\partial_\xi^2\left(\re^{-\frac{d}{2} \xi^2t} \, \frac{\re^{\lambda_r(\xi) t} - 1}{\xi}\right)\right| &\lesssim t\left(1 + \xi^2 t + \xi^4 t^2\right) \re^{-\frac{d}{4} \xi^2t}
\lesssim t
\end{align*}
for $\xi \in (-\xi_0,\xi_0)$ and $t \geq 1$.  Hence, applying Lemma~\ref{lem:semigroupEstimate1} with $\lambda(\xi) = a \ri \xi - \frac{d}{2} \xi^2$, $m_1 = 0$, $m_2 = 1$, and
\begin{align*}
F(\xi,\xx,\xt,t) = \rho(\xi) \re^{-\frac{d}{2} \xi^2 t} \frac{\re^{\lambda_r(\xi) t} - 1}{\xi} \, \widetilde{\Phi}_0(\xt)^* \phi_0'(\xt),
\end{align*}
we bound
\begin{align*}
|II| \lesssim \|v'\|_\infty.
\end{align*}
for $\xx \in \R$, $t \geq 1$, and $v \in C_{\mathrm{ub}}^1(\R,\R)$. 

To estimate the contribution $III$, we observe that we can apply Lemma~\ref{lem:semigroupEstimate1} with $\lambda(\xi) = a \ri \xi - d \xi^2$, $m_1 = 0$, $m_2 = 0$, and
\begin{align*}
F(\xi,\xx,\xt,t) = \rho(\xi) \left(\widetilde{\Phi}_0(\xt)^* \phi_0'(\xt) - 1\right),
\end{align*}
because it holds $\int_0^1 \widetilde{\Phi}_0(\xt)^* \phi_0'(\xt) \de \xt = \langle\widetilde{\Phi}_0,\phi_0'\rangle_{L^2(0,1)} = 1$ by~\eqref{e:adjoint}. Thus, we arrive at the estimate
\begin{align*}
|III| \lesssim \|v'\|_\infty
\end{align*}
for $\xx \in \R$, $t \geq 1$, and $v \in C_{\mathrm{ub}}^1(\R,\R)$.

Finally, we estimate the contribution $IV$. Integration by parts in $\xt$ yields
\begin{align*}
IV = \frac{\ri}{2\pi} \int_\R \int_\R \re^{\ri\xi(\xx - \xt) + \left(\ri a\xi - d \xi^2\right)t} \, \frac{\rho(\xi) - 1}{\xi} \, v'(\xt) \de \xi \de \xt
\end{align*}
for $\xx \in \R$, $t \geq 1$, and $v \in C_{\mathrm{ub}}^1(\R,\R)$. Hence, applying the high-frequency estimate from Lemma~\ref{lemma_higher_order_low} with $F(\xi) = (\rho(\xi) - 1)\xi^{-1}$, we find a constant $\mu_0 > 0$ such that
\begin{align*}
|IV| \lesssim \re^{-\mu_0 t} \|v'\|_\infty
\end{align*}
for $\xx \in \R$, $t \geq 1$, and $v \in C_{\mathrm{ub}}^1(\R,\R)$. Combining~\eqref{e:principal_decomp_bound} with~\eqref{e:principal_decomp}, and the estimates on $J_2$, $I$, $II$, $III$, and $IV$, we arrive at the bound~\eqref{e:modbound2}, which concludes the proof.
\end{proof}

\section{Nonlinear iteration scheme} \label{sec:itscheme}

In this section, we introduce the nonlinear iteration scheme that will be employed in~\S\ref{sec:nonlinearstab} to prove our nonlinear stability result, Theorem~\ref{main_theorem}. To this end, let $u_{\mathrm{wt}}(x,t) = \phi_0(k_0x - \omega_0 t)$ denote a diffusively spectrally stable wave-train solution of~\eqref{RD0}, satisfying assumptions~\ref{assH1} and~\ref{assD1}-\ref{assD3}. Take a perturbation $\vt_0 \in C_{\mathrm{ub}}(\R)$ and a phase modulation $\gamma_0 \in C_{\mathrm{ub}}^1(\R)$, and consider the solution $u(t)$ to~\eqref{RD} with initial condition $u(0) = u_0 \in C_{\mathrm{ub}}(\R)$ given by
\begin{align} \label{e:defu0}
u_0(\xx) = \phi_0(\xx + \gamma_0(\xx)) + \mathring{v}_0(\xx), \qquad \xx \in \R.
\end{align}
Assuming that $\mathring{v}_0$ and $\gamma_0'$ are sufficiently small in $C_{\mathrm{ub}}(\R)$, our aim is to construct a spatiotemporal modulation function $\gamma(t)$ with $\gamma(0) = \gamma_0$ such that the solution $u(t)$ to~\eqref{RD} can be written in the form
\begin{align*}
u(\xx,t) = \phi_0(\xx + \gamma(\xx,t)) + \mathring{v}(\xx,t) 
\end{align*}
where both $\gamma_\xx(t)$ and the remainder $\mathring{v}(t)$ stay small over time in $C_{\mathrm{ub}}(\R)$ and decay at diffusive rates. In particular, this precludes finite-time blow-up and implies that the solution $u(t)$ exists globally in time. 

A nonlinear iteration argument cannot be closed by directly estimating the \emph{forward-modulated perturbation} $\mathring{v}(t)$, due to insufficient decay in the nonlinear terms of its evolution equation; see~\cite{ZUM23} for a detailed discussion. As outlined in~\S\ref{sec:strategy}, our approach instead relies on the fact that the $L^\infty$-norm of $\mathring{v}(t)$ is equivalent to that of the \emph{inverse-modulated perturbation} $v(t)$, which is given by~\eqref{e:defv}. Taking inspiration from~\cite{JONZNL}, we then choose a phase modulation $\gamma(t)$, which captures the most critical terms in the Duhamel formula for the inverse-modulated perturbation $v(t)$ and satisfies $\gamma(0) = \gamma_0$. As in~\cite{BjoernMod,JONZW,SAN3}, we find that $\gamma(t)$ obeys a perturbed viscous Hamilton-Jacobi equation. After eliminating its dominant nonlinear term via the Cole-Hopf transformation, an $L^\infty$-based nonlinear iteration argument involving $\gamma(t)$ and $v(t)$ can be closed. 

However, due to the quasilinear nature of the evolution equation for the inverse-modulated perturbation $v(t)$, an apparent loss of regularity must be addressed. To control regularity, we distinguish between short and long times. For short times, we estimate the forward-modulated perturbation $\vt(t)$ iteratively using its Duhamel representation, which is of semilinear nature and does not suffer from a loss of derivatives. This allows us to control regularity of the inverse-modulated perturbation $v(t)$ by relating $\mathring{v}(t)$ and $v(t)$, including their derivatives, through mean-value type estimates; see~\cite{ZUM23}. However, the obtained decay of $\mathring{v}(t)$ and its derivatives is too slow to provide effective control for large times. To overcome this, we follow the approach in~\cite{AdR1} and employ forward-modulated damping estimates. Specifically, we derive a nonlinear damping estimate in uniformly local Sobolev spaces for the \emph{modified forward-modulated perturbation}~\eqref{e:defringz}, which also satisfies a semilinear equation without derivative loss. This energy estimate provides control over the $L^\infty$-norms of derivatives of $\mathring{z}(t)$ in terms of the $L^\infty$-norm of $\mathring{z}(t)$ itself. To close the nonlinear argument, we then relate $\mathring{z}(t)$ to the residual
\begin{align} \label{e:defz}
z(t) = v(t) - k_0 \partial_k \phi(\cdot;1)\, \gamma_\xx(t),
\end{align}
along with their spatial derivatives, again via mean-value type arguments. As a result, derivatives of $v(t)$ are ultimately controlled by the $L^\infty$-norm of $\mathring{z}(t)$ (and hence of $z(t)$). We remark that short-time regularity control through the nonlinear damping estimate on $\mathring{z}(t)$ is not feasible due to the presence of the term $\gamma_{\zeta t}(t)$ in the nonlinearity of the evolution equation for $\mathring{z}(t)$; see~\S\ref{sec:forward_modulated}. This term exhibits a non-integrable blow-up as $t \downarrow 0$, with $L^\infty$-norm scaling like $t^{-1}$, thereby precluding control in the short-time regime.

The remainder of this section is organized as follows. We begin by establishing local existence and uniqueness of the solution $u(t)$ to~\eqref{RD}. We then derive the equation satisfied by the inverse-modulated perturbation $v(t)$ and obtain $L^\infty$-bounds on its nonlinear terms. Next, we introduce the phase modulation $\gamma(t)$ and derive the perturbed viscous Hamilton-Jacobi equation that governs its dynamics. Finally, we formulate the equations for the forward-modulated perturbation $\vt(t)$ and the modified forward-modulated perturbation $\mathring{z}(t)$, which are used for short- and long-time regularity control, respectively, and we establish a nonlinear damping estimate on $\zt(t)$.

\subsection{Local existence and uniqueness of the solution}

Since the advection-diffusion operator $L_0 = k_0^2 D \partial_{\xx\xx} + \omega_0 \partial_\xx$ is sectorial on $\smash{C_{\mathrm{ub}}^l(\R)}$ with dense domain $D(L_0) = \smash{C_{\mathrm{ub}}^{l+2}(\R)}$ by~\cite[Corollary~3.1.9]{LUN} and the nonlinear map $u \mapsto \smash{\partial_u^j f(u)}$ is locally Lipschitz continuous on $\smash{C_{\mathrm{ub}}^l(\R)}$ for any $j,l \in \NM_0$, local existence and uniqueness of the solution $u(t)$ to the reaction-diffusion system~\eqref{RD} follows directly from standard analytic semigroup theory; see~\cite[Theorem~7.1.5 and Propositions~7.1.8 and~7.1.10]{LUN}.

\begin{proposition} \label{well_posed_full_sol}
Assume~\ref{assH1}. Let $u_0 \in C_{\mathrm{ub}}(\R)$. Then, there exists a maximal time $T_{\max} \in (0,\infty]$ such that~\eqref{RD} admits a unique classical solution
\begin{align*} u \in C\big([0,T_{\max}),C_{\mathrm{ub}}(\R)\big) \cap C\big((0,T_{\max}),C_{\mathrm{ub}}^2(\R)\big) \cap C^1\big((0,T_{\max}),C_{\mathrm{ub}}(\R)\big), \end{align*}
with initial condition $u(0) = u_0$. Moreover, the map $[0,T_{\max}) \to C_{\mathrm{ub}}(\R), t \mapsto \sqrt{t} \, u_\xx(t)$ is continuous and, if $T_{\max} < \infty$, then we have
\begin{align} \label{e:blowupu}
\limsup_{t \uparrow T_{\max}} \left\|u(t)\right\|_\infty = \infty. \end{align}
Finally, for any $j,l \in \NM_0$ we have $u \in C^j\big((0,T_{\max}),C_{\mathrm{ub}}^l(\R)\big)$.
\end{proposition}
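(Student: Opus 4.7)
The plan is to recast~\eqref{RD} as an abstract semilinear Cauchy problem
\[
\partial_t u = L_0 u + F(u), \qquad u(0) = u_0,
\]
on the Banach space $X = C_{\mathrm{ub}}(\R)$, where $L_0 = k_0^2 D \partial_{\xx\xx} + \omega_0 \partial_\xx$ has dense domain $D(L_0) = C_{\mathrm{ub}}^2(\R)$, and $F \colon X \to X$ is the Nemytskii operator $F(u)(\xx) := f(u(\xx))$. The two preliminary facts needed are: (a) $L_0$ is sectorial on $X$ with densely defined domain, which is precisely~\cite[Corollary~3.1.9]{LUN}; and (b) since $f \in C^\infty(\R^n,\R^n)$, the map $F$ is locally Lipschitz continuous from $C_{\mathrm{ub}}^l(\R)$ into itself for every $l \in \NM_0$, which follows from the chain rule together with the boundedness of derivatives of $f$ on bounded subsets of $\R^n$.

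With these in hand, an application of~\cite[Theorem~7.1.5]{LUN} yields a maximal existence time $T_{\max} \in (0,\infty]$ and a unique classical solution
\[
u \in C\big([0,T_{\max}), X\big) \cap C\big((0,T_{\max}), D(L_0)\big) \cap C^1\big((0,T_{\max}), X\big)
\]
with $u(0) = u_0$, together with the blow-up alternative~\eqref{e:blowupu}; the latter holds because $F$ is globally defined on $X$, so the only obstruction to continuation is divergence of $\|u(t)\|_\infty$. The continuity of the map $t \mapsto \sqrt{t}\, u_\xx(t)$ from $[0,T_{\max})$ into $X$ then follows via the Duhamel representation $u(t) = \re^{L_0 t} u_0 + \int_0^t \re^{L_0(t-s)} F(u(s)) \, \de s$, combined with the intermediate-space smoothing estimate $\|\partial_\xx \re^{L_0 t}\|_{X \to X} \lesssim t^{-1/2}$ and the vanishing behaviour $\sqrt{t}\,\|\partial_\xx \re^{L_0 t} v\|_\infty \to 0$ as $t \downarrow 0$ for any $v$ in the closure $\overline{D(L_0)} = X$, which is supplied by~\cite[Proposition~7.1.8]{LUN}.

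Finally, the higher-regularity statement $u \in C^j\big((0,T_{\max}), C_{\mathrm{ub}}^l(\R)\big)$ for every $j, l \in \NM_0$ would be obtained by bootstrapping. Applying~\cite[Proposition~7.1.10]{LUN}, which propagates smoothness of the nonlinearity to the semilinear flow away from $t = 0$, one shows inductively that $u(t) \in D(L_0^l)$ with $t \mapsto u(t)$ of class $C^j$ into $D(L_0^l)$ on any compact subinterval of $(0,T_{\max})$; since the continuous embedding $D(L_0^l) \hookrightarrow C_{\mathrm{ub}}^{2l}(\R)$ then converts domain regularity into classical regularity, the claim follows. I do not expect any substantive technical obstacle here: the proposition is a direct application of standard analytic semigroup machinery, and the main effort is limited to verifying the sectoriality and local Lipschitz hypotheses in (a)--(b) above and then invoking the three quoted results from~\cite{LUN} in turn.
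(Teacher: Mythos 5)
Your proposal follows essentially the same route as the paper's own proof: verify that $L_0$ is sectorial on $C_{\mathrm{ub}}^l(\R)$ with dense domain $C_{\mathrm{ub}}^{l+2}(\R)$ via~\cite[Corollary~3.1.9]{LUN}, check that the Nemytskii operator is locally Lipschitz on each $C_{\mathrm{ub}}^l(\R)$, and then invoke~\cite[Theorem~7.1.5 and Propositions~7.1.8 and~7.1.10]{LUN} for existence, uniqueness, the blow-up alternative, continuity of $t \mapsto \sqrt{t}\,u_\xx(t)$, and higher regularity by bootstrapping. The paper states precisely this argument (in compressed form), so your proof is correct and matches the paper's approach.
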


\subsection{Inverse-modulated perturbation equation} 
\label{sec:inv_mod_pert}

Using that $u(t)$ and $\phi_0$ are solutions to~\eqref{RD}, one finds that the inverse-modulated perturbation, given by~\eqref{e:defv}, satisfies the quasilinear equation
\begin{align}
\left(\partial_t - \El_0\right)\left[v + \phi_0' \gamma - \gamma_\xx v\right] = \Non(v,\gamma,\partial_t \gamma), \label{e:modpertbeq}
\end{align}
where the nonlinearity $\Non$ is given by
\begin{align} \label{e:defnonl}
\Non(v,\gamma,\gamma_t) &= \mathcal Q(v,\gamma) + \partial_\xx \mathcal R(v,\gamma,\gamma_t) + \partial_{\xx}^2 \mathcal S(v,\gamma)
\end{align}
with
\begin{align}
\begin{split}
\mathcal Q(v,\gamma) &= \left(f(\phi_0+v) - f(\phi_0) - f'(\phi_0) v\right)\left(1-\gamma_\xx\right),\\
\mathcal R(v,\gamma,\gamma_t) &= -\gamma_t v + \omega_0 \gamma_\xx v + \frac{k_0^2}{1-\gamma_{\xx}} D \left(\gamma_\xx^2 \phi_0' - \frac{\gamma_{\xx\xx} v}{1-\gamma_\xx}\right),\\
\mathcal S(v,\gamma) &= k_0^2D \left(2\gamma_\xx v + \frac{\gamma_\xx^2 v}{1-\gamma_\xx}\right).
\end{split} \label{e:defnonl0}
\end{align}
We refer to~\cite[Lemma~4.2]{JONZ} for a detailed derivation of~\eqref{e:modpertbeq}.

The nonlinearities obey the following $L^\infty$-bounds.

\begin{lemma} \label{lemma_nonlinear_bound_on_N}
Assume~\ref{assH1}. Fix a constant $C > 0$. Then, we have
\begin{align*}
\left\|\mathcal Q(v,\gamma)\right\|_\infty &\lesssim \|v\|_\infty^2,\\
\left\|\mathcal R(v,\gamma,\gamma_t)\right\|_\infty &\lesssim \|v\|_\infty \|(\gamma_\xx,\gamma_t)\|_{C_{\mathrm{ub}}^1 \times C_{\mathrm{ub}}} + \|\gamma_\xx\|_\infty^2,\\
\left\|\mathcal S(v,\gamma)\right\|_\infty &\lesssim \|v\|_{\infty} \|\gamma_\xx\|_{\infty},\\
\left\|\partial_\xx \mathcal S(w,\gamma)\right\|_\infty &\lesssim \|w\|_{C_{\mathrm{ub}}^1} \|\gamma_\xx\|_{C_{\mathrm{ub}}^1}
\end{align*}
for $v \in C_{\mathrm{ub}}(\R)$, $w \in C_{\mathrm{ub}}^1(\R)$, and $(\gamma,\gamma_t) \in C_{\mathrm{ub}}^2(\R) \times C_{\mathrm{ub}}(\R)$ satisfying $\|v\|_\infty \leq C$ and $\|\gamma_\xx\|_{\infty} \leq \frac{1}{2}$.
\end{lemma}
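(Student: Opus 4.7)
\medskip

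\noindent\textbf{Proof plan.} The bounds reduce to routine pointwise algebraic manipulations and rely essentially on three ingredients: (i) a second-order Taylor expansion of $f$ to extract the quadratic structure in $\mathcal Q$; (ii) the Neumann-type bound $|1-\gamma_\xx(\xx)|^{-1} \leq 2$, which follows immediately from the hypothesis $\|\gamma_\xx\|_\infty \leq \tfrac{1}{2}$; and (iii) elementary $L^\infty$ product inequalities. The smoothness of $f$, the boundedness of $\phi_0$ (continuous and $1$-periodic, hence $\phi_0, \phi_0' \in L^\infty(\R)$), and the standing assumption $\|v\|_\infty \leq C$ together ensure that $\phi_0(\xx) + sv(\xx)$ takes values in a fixed compact subset of $\R^n$ for all $\xx \in \R$ and $s \in [0,1]$. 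Consequently, $f''$ is bounded by a constant depending only on $C$, $f$, and $\phi_0$ on the relevant range.

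For the bound on $\mathcal Q$, I would write the integral remainder
\begin{align*}
f(\phi_0 + v) - f(\phi_0) - f'(\phi_0) v = \int_0^1 (1-s)\, f''(\phi_0 + s v)[v,v] \, \de s
\end{align*}
and estimate pointwise by $\tfrac12 \sup |f''| \cdot |v|^2$. Combining this with $|1-\gamma_\xx| \leq \tfrac{3}{2}$ yields $\|\mathcal Q(v,\gamma)\|_\infty \lesssim \|v\|_\infty^2$. For $\mathcal R$, I would split into its four summands: the terms $-\gamma_t v$ and $\omega_0 \gamma_\xx v$ are bounded directly by $\|v\|_\infty(\|\gamma_t\|_\infty + \|\gamma_\xx\|_\infty)$; the term $k_0^2 D \gamma_\xx^2 \phi_0'/(1-\gamma_\xx)$ is controlled by $\|\gamma_\xx\|_\infty^2$ using $\phi_0' \in L^\infty$ together with $(1-\gamma_\xx)^{-1} \leq 2$; and the term $-k_0^2 D \gamma_{\xx\xx} v/(1-\gamma_\xx)^2$ is bounded by $\|\gamma_{\xx\xx}\|_\infty \|v\|_\infty$. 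Regrouping and using $\|\gamma_\xx\|_{C_{\mathrm{ub}}^1} = \|\gamma_\xx\|_\infty + \|\gamma_{\xx\xx}\|_\infty$ produces the claimed estimate.

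For $\mathcal S$, the factor $\gamma_\xx^2/(1-\gamma_\xx)$ obeys $|\gamma_\xx^2/(1-\gamma_\xx)| \leq 2\|\gamma_\xx\|_\infty^2 \leq \|\gamma_\xx\|_\infty$, so both summands contribute $\|v\|_\infty \|\gamma_\xx\|_\infty$. For $\partial_\xx \mathcal S(w,\gamma)$, I would apply the product and quotient rules and collect the resulting contributions, which are of the schematic form $\gamma_{\xx\xx} w$, $\gamma_\xx w_\xx$, $\gamma_\xx \gamma_{\xx\xx} w/(1-\gamma_\xx)$, $\gamma_\xx^2 w_\xx/(1-\gamma_\xx)$, and $\gamma_\xx^2 \gamma_{\xx\xx} w/(1-\gamma_\xx)^2$. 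Each of these is trivially dominated by $\|w\|_{C_{\mathrm{ub}}^1} \|\gamma_\xx\|_{C_{\mathrm{ub}}^1}$ once the denominators are controlled by $\|\gamma_\xx\|_\infty \leq \tfrac{1}{2}$.

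There is no genuine obstacle in this proof; it is a book-keeping exercise. The only points requiring minor care are to invoke the Neumann-type bound on $(1-\gamma_\xx)^{-1}$ precisely where needed in $\mathcal R$, $\mathcal S$, and $\partial_\xx \mathcal S$, and to verify that the Taylor remainder in $\mathcal Q$ is evaluated on a compact set on which $f''$ is uniformly bounded by a constant depending only on $C$ and $\phi_0$.
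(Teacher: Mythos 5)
Your proof is correct and is the standard argument; the paper leaves this lemma without an explicit proof, evidently because the bounds follow from routine Taylor expansion and product estimates exactly as you describe. The only ingredients that need to be checked carefully are the ones you flag: the Taylor remainder for $\mathcal Q$ is evaluated on the compact set $\{\phi_0(\xx)+s v(\xx) : \xx\in\R,\ s\in[0,1]\}\subset\{|u|\leq \|\phi_0\|_\infty + C\}$ where $f''$ is uniformly bounded, and the Neumann-type bound $(1-\gamma_\xx)^{-1}\leq 2$ is used pointwise in $\mathcal R$, $\mathcal S$, and $\partial_\xx\mathcal S$. Your term-by-term book-keeping for $\partial_\xx\mathcal S(w,\gamma)$ is accurate and each contribution is indeed dominated by $\|w\|_{C_{\mathrm{ub}}^1}\|\gamma_\xx\|_{C_{\mathrm{ub}}^1}$ after the denominators are controlled. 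No gaps.
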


\subsection{Choice of phase modulation} \label{sec:phase_mod}

Assuming that $\gamma(t)$ satisfies $\gamma(0) = \gamma_0$, we arrive at the Duhamel formulation
\begin{align}
v(t) + \phi_0'\gamma(t) = \re^{\El_0 t}\left(v_0 + \phi_0' \gamma_0 - \gamma_0' v_0\right) + \int_0^t \re^{\El_0(t-s)}\mathcal{N}(v(s),\gamma(s),\partial_t \gamma(s))\de s + \gamma_\xx(t)v(t) \label{e:intv}
\end{align}
after integrating~\eqref{e:modpertbeq}, where we denote
\begin{align} \label{e:defv0}
v_0(\xx) = u_0(\xx- \gamma_0(\xx)) - \phi_0(\xx), \qquad \xx \in \R.
\end{align}

As in~\cite{JONZNL,JUNNL}, we make a judicious choice for the phase modulation $\gamma(t)$ such that the linear term $\phi_0' \gamma(t)$ on the left-hand side of~\eqref{e:intv} compensates for the slowest decaying nonlinear contributions on the right-hand side of~\eqref{e:intv}. This choice must be compatible with the initial condition $\gamma(0) = \gamma_0 \in C_{\mathrm{ub}}^1(\R)$. Moreover, to ensure sufficient regularity for the subsequent nonlinear iteration scheme, we require that $\gamma(t)$ is smooth for all $t > 0$. Thus, motivated by the semigroup decomposition~\eqref{e:decomp_full_semigroup}, we define $\gamma(t)$ through the integral equation
\begin{align}
\begin{split}
\gamma(t) &= S_p^0(t)\left(v_0 + \phi_0'\gamma_0 + \gamma_0' v_0\right) + \int_0^t S_p^0(t-s) \mathcal{N}(v(s),\gamma(s),\partial_t \gamma(s))\de s\\ &\qquad + \, \left(1 - \chi(t)\right) \re^{-\partial_\xx^4 t} \gamma_0, 
\end{split}
\label{e:intgamma}
\end{align}
where $\chi(t)$ is the temporal cut-off function from Proposition~\ref{prop:lin1}. 

Before establishing existence of a solution $\gamma(t)$ to~\eqref{e:intgamma}, we argue that our implicit choice for $\gamma(t)$ indeed possesses the required properties. To begin with, we have $\gamma(0) = \gamma_0$, since $S_p^0(t)$ vanishes on $[0,1]$. Second, since the sectorial operator $-\partial_\xx^4$ generates an analytic semigroup on $C_{\mathrm{ub}}(\R)$, cf.~\cite[Proposition~2.4.4 and Corollary~3.1.9]{LUN}, and the propagator $S_p^0(t)$ is smoothing by Proposition~\ref{prop:lin1}, it follows that $\gamma(t)$ is smooth for all $t > 0$. The choice of the analytic semigroup $\smash{\re^{-\partial_\xx^4 t}}$, rather than the standard heat semigroup $\smash{\re^{\partial_\xx^2 t}}$, is motivated by the need to control third-order spatial derivatives of $\gamma(t)$ at short times within our nonlinear argument; see Remark~\ref{rem:Gamma}.

Substituting~\eqref{e:decomp_full_semigroup},~\eqref{e:defz}, and~\eqref{e:intgamma} into~\eqref{e:intv}, we obtain the Duhamel formulation
\begin{align}
\begin{split}
z(t) &= \widetilde{S}(t)\left(v_0 + \phi_0'\gamma_0 + \gamma_0' v_0\right) +\int_0^t\widetilde{S}(t-s)\mathcal{N}(v(s),\gamma(s),\partial_t \gamma(s)) \de s + \gamma_\xx(t)v(t)\\ 
&\qquad - \, \left(1 - \chi(t)\right)\left(\phi_0' + k_0 \partial_k \phi(\cdot;k_0) \partial_\xx\right) \re^{-\partial_\xx^4 t} \gamma_0
\end{split}\label{e:intz}
\end{align}
for the residual $z(t)$. Recalling that the propagator $\widetilde{S}(t)$ decays at rate $t^{-1}$ as $t \to \infty$, cf.~Proposition~\ref{prop:lin1}, we confirm that our choice of $\gamma(t)$ indeed eliminates the slowest decaying contributions on the right-hand side of~\eqref{e:intv}.

\begin{remark}\label{rem:Gamma} { \upshape
Our choice for the analytic semigroup $\smash{\re^{-\partial_\xx^4 t}}$ in~\eqref{e:intgamma} is motivated by the structure of the nonlinear terms in the perturbed viscous Hamilton-Jacobi equation, which governs the leading-order dynamics of the phase modulation $\gamma(t)$; see~\S\ref{sec:visc_HamJac}. In particular, the nonlinearity involves a temporal derivative and up to three spatial derivatives of $\gamma(t)$. To control these terms in the associated Duhamel formulation, it is crucial that the $C_{\mathrm{ub}}^2$-norm of $\gamma_\xx(t)$, as well as the $L^\infty$-norm of $\gamma_t(t)$, exists for $t > 0$ and exhibits a singularity as $t \downarrow 0$ that remains integrable in time. Since we merely assume $\gamma_0 \in C_{\mathrm{ub}}^1(\R)$, this motivates the introduction of $\smash{\re^{-\partial_\xx^4 t} \gamma_0}$. Indeed, for fixed $j \in \NM_0$, standard analytic semigroup theory~\cite{LUN} yields the estimates 
\begin{align} \label{e:Gamma_rates3}
\begin{split}
\left\|\re^{-\partial_\xx^4 t} \gamma\right\|_\infty &\leq \|\gamma\|_\infty, \qquad \left\|\partial_t \partial_\xx^j \re^{-\partial_\xx^4 t} \gamma\right\|_\infty \lesssim t^{-\frac{3+j}{4}} \|\gamma'\|_\infty, \qquad \left\|\partial_\xx^{1 + j} \re^{-\partial_\xx^4 t} \gamma\right\|_\infty \lesssim t^{-\frac{j}{4}} \|\gamma'\|_\infty
\end{split}
\end{align}
for $t > 0$ and $\gamma \in C_{\mathrm{ub}}^1(\R)$, which show that the temporal derivative and the first four spatial derivatives of $\smash{\re^{-\partial_\xx^4 t} \gamma_0}$ permit such integrable blow-up behavior as $t \downarrow 0$. So, the term $\smash{\re^{-\partial_\xx^4 t} \gamma_0}$ is tailored to the demands of our nonlinear argument. Alternatively, one could impose higher regularity on the initial phase modulation $\gamma_0$, which allows replacing $\smash{\re^{-\partial_\xx^4 t} \gamma_0}$ with $\gamma_0$ in~\eqref{e:intgamma}, as done in~\cite{JONZNL}.
}\end{remark}

We conclude this subsection by establishing local existence of a solution $\gamma(t)$ to the integral equation~\eqref{e:intgamma}. The contraction-mapping argument is adapted from~\cite[Proposition~4.4]{BjoernMod}, accommodating large initial data $\gamma_0 \in C_{\mathrm{ub}}^1(\R)$, while still requiring that $\gamma_\xx(t)$ remains sufficiently small to ensure that the nonlinearity in~\eqref{e:defnonl} is well-defined.

\begin{proposition} \label{p:gamma}
Assume~\ref{assH1} and~\ref{assD3}. Let $r_0 \in (0,\frac12)$ be as in Proposition~\ref{prop:family}. Let $\mathring{v}_0 \in C_{\mathrm{ub}}(\R)$ and $\gamma_0 \in C_{\mathrm{ub}}^1(\R)$ with $\|\gamma_0'\|_\infty < r_0$. Define $u_0, v_0 \in C_{\mathrm{ub}}(\R)$ by~\eqref{e:defu0} and~\eqref{e:defv0}, respectively. For $u$ and $T_{\max}$ as in Proposition~\ref{well_posed_full_sol}, there exists a maximal time $\tau_{\max} \in \left[1,\max\{1,T_{\max}\}\right]$ such that~\eqref{e:intgamma}, with $v$ given by~\eqref{e:defv}, has a solution
\begin{align} \label{e:classicalgamma} \gamma \in C\big([0,\tau_{\max}),C_{\mathrm{ub}}^1(\R)\big) \cap C^j\big((0,\tau_{\max}),C_{\mathrm{ub}}^l(\R)\big), \qquad j,l \in \NM_0, \end{align}
satisfying $\gamma(t) = \re^{-\partial_\xx^4 t} \gamma_0$ for $t \in [0,1]$. In addition, it holds $\|\gamma_\xx(t)\|_\infty < r_0$ for all $t \in [0,\tau_{\max})$. Finally, $\tau_{\max} < T_{\max}$ implies 
\begin{align} \label{e:blowupgamma1}
\limsup_{t \uparrow \tau_{\max}} \left\|\gamma_\xx(t)\right\|_\infty = r_0
\end{align}
or
\begin{align} \label{e:blowupgamma2}
\limsup_{t \uparrow \tau_{\max}} \left\|\left(\gamma(t),\partial_t \gamma(t)\right)\right\|_{C_{\mathrm{ub}}^2 \times C_{\mathrm{ub}}} = \infty.\end{align}
\end{proposition}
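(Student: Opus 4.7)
The strategy is to treat the interval $[0,1]$ explicitly, then extend the solution beyond $t = 1$ via a contraction-mapping argument in a Banach space tailored to the quasilinear dependence of $\Non$ on $\partial_t \gamma$, and finally to invoke a standard continuation argument to obtain the maximal existence time $\tau_{\max}$ together with the blow-up alternatives. Since $\chi(t) = 0$ for $t \in [0,1]$ by Proposition~\ref{prop:lin1}, the representation~\eqref{e:prin_rep} yields $S_p^0(t) \equiv 0$ on $[0,1]$, so~\eqref{e:intgamma} reduces on this interval to $\gamma(t) = \re^{-\partial_\xx^4 t}\gamma_0$. Since $-\partial_\xx^4$ is sectorial on $C_{\mathrm{ub}}(\R)$ with dense domain, the resulting function lies in $C([0,1], C_{\mathrm{ub}}^1(\R)) \cap C^j((0,1], C_{\mathrm{ub}}^l(\R))$ for all $j, l \in \NM_0$, and since $\partial_\xx$ commutes with the semigroup and the latter is a contraction on $C_{\mathrm{ub}}(\R)$, we obtain $\|\gamma_\xx(t)\|_\infty \leq \|\gamma_0'\|_\infty < r_0$ on $[0,1]$.

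For the extension beyond $t = 1$, I would fix $T \in (0, T_{\max} - 1]$ and consider the map $\Psi$ defined by the right-hand side of~\eqref{e:intgamma} (with $v$ expressed in terms of $u$ and $\gamma$ via~\eqref{e:defv}), acting on the closed subset
\begin{align*}
X_T = \bigg\{ \gamma \in C([0, 1+T], C_{\mathrm{ub}}^2(\R)) \cap C^1([1, 1+T], C_{\mathrm{ub}}(\R)) \,:\, & \gamma(t) = \re^{-\partial_\xx^4 t}\gamma_0 \text{ for } t \in [0,1], \\
& \|\gamma_\xx\|_\infty < r_0, \; \|\gamma\|_{X_T} \leq R \bigg\}
\end{align*}
of a suitable Banach space, where $\|\gamma\|_{X_T}$ controls the $C_{\mathrm{ub}}^2$-norm of $\gamma(t)$ and the $C_{\mathrm{ub}}$-norm of $\partial_t \gamma(t)$ uniformly on $[1, 1+T]$. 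On $X_T$, the condition $\|\gamma_\xx\|_\infty < r_0 < \tfrac12$ ensures that $\xx \mapsto \xx + \gamma(\xx, s)$ is a diffeomorphism, so $v(s)$ is well defined in $C_{\mathrm{ub}}(\R)$ (with $u$ provided by Proposition~\ref{well_posed_full_sol}), and the nonlinearity $\Non(v(s), \gamma(s), \partial_s \gamma(s))$ is controlled by Lemma~\ref{lemma_nonlinear_bound_on_N}. Using the commutator identities~\eqref{e:commu} to shift the $\partial_\xx$ and $\partial_\xx^2$ derivatives out of $\Non$ onto $S_p^0(t-s)$, combined with the smoothing bounds from Proposition~\ref{prop:lin1} and the estimates~\eqref{e:Gamma_rates3} for $\re^{-\partial_\xx^4 t}$, I would verify that, for $R$ large enough depending on the data and $T$ small enough depending on $R$, the map $\Psi$ sends $X_T$ into itself and contracts in a subordinate norm; its unique fixed point is the sought local solution.

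The principal obstacle is the quasilinear feature of~\eqref{e:intgamma}: $\Non$ depends on $\partial_t \gamma$ through $\mathcal{R}$, which forces the function space to control $\partial_t \gamma$ in $C_{\mathrm{ub}}(\R)$ and hence demands an estimate on $\partial_t \Psi(\gamma)$. Here the smoothing bounds for $(\partial_t - a\partial_\xx) S_p^0(t-s)$ from Proposition~\ref{prop:lin1} are essential, as is the integrable singularity $\|\partial_t \re^{-\partial_\xx^4 t}\gamma_0\|_\infty \lesssim t^{-3/4}\|\gamma_0'\|_\infty$ from~\eqref{e:Gamma_rates3}. Once local existence is established, a standard continuation argument yields a maximal existence time $\tau_{\max} \in [1, \max\{1, T_{\max}\}]$; if $\tau_{\max} < T_{\max}$, the contraction-mapping argument must fail at $\tau_{\max}$, which can happen only if either $\|\gamma_\xx(t)\|_\infty$ approaches $r_0$ (causing Lemma~\ref{lemma_nonlinear_bound_on_N} to degenerate), giving~\eqref{e:blowupgamma1}, or $\|(\gamma(t), \partial_t \gamma(t))\|_{C_{\mathrm{ub}}^2 \times C_{\mathrm{ub}}}$ blows up, giving~\eqref{e:blowupgamma2}. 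The full smoothness $\gamma \in C^j((0, \tau_{\max}), C_{\mathrm{ub}}^l(\R))$ for all $j, l \in \NM_0$ then follows by a parabolic bootstrap, exploiting the arbitrary smoothing of both $\re^{-\partial_\xx^4 t}$ on $(0,1]$ and of $S_p^0(t-s)$ on the integral term for $t > 1$, provided by Proposition~\ref{prop:lin1}.
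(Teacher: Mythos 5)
Your proposal follows the same high-level scheme as the paper (explicit on $[0,1]$ via vanishing of $S_p^0$, then contraction, continuation, blow-up alternative), but there are two genuine differences in execution and one small conceptual slip.

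\emph{Difference in function space.} You propose to close the contraction in a space $X_T$ that controls $\gamma$ in $C^1([1,1+T],C_{\mathrm{ub}}(\R))$, which forces you to differentiate the Duhamel integral in time inside the fixed-point argument and to verify that $\Psi(\gamma)$ inherits the $C^1$-regularity. The paper instead promotes $\gamma_t$ to an \emph{independent} unknown and closes a contraction for the coupled pair $(\gamma,\gamma_t)$ in $C([t_0,t_0+\delta],C_{\mathrm{ub}}^2(\R)\times C_{\mathrm{ub}}(\R))$, with a second integral equation for $\gamma_t$ built by formally differentiating~\eqref{e:intgamma}; the identity $\gamma_t=\partial_t\gamma$ is then recovered a posteriori. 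Your route is viable (the commutation $\partial_t\int_0^tS_p^0(t-s)\,\Non\,\de s=\int_0^t\partial_tS_p^0(t-s)\,\Non\,\de s$ is justified because $S_p^0(0)=0$ and the singular factor $\|\partial_s\Gamma(s)\|_\infty\lesssim s^{-3/4}$ remains integrable), but the coupled-pair formulation avoids having to re-establish differentiability within the iteration and is closer to the standard treatment for quasilinear parabolic problems. Note also that your $X_T$ as written requires $\gamma\in C([0,1+T],C_{\mathrm{ub}}^2(\R))$, which fails at $t=0$ since $\gamma_0$ is only $C_{\mathrm{ub}}^1$; the $[0,1]$ branch should be kept out of the contraction space and pinned to $\Gamma(t)=\re^{-\partial_\xx^4 t}\gamma_0$, exactly as the paper does.

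\emph{Commutator identities.} You invoke~\eqref{e:commu} to move derivatives off $\Non$, but this is not needed and the paper's proof does not use them: Proposition~\ref{prop:lin1} already gives that $\partial_\xx^l S_p^0(t)\partial_\xx^m$ is bounded on $C_{\mathrm{ub}}(\R)$ for $m\in\NM_0$, so the decomposition $\Non=\mathcal{Q}+\partial_\xx\mathcal{R}+\partial_\xx^2\mathcal{S}$ combined with Lemma~\ref{lemma_nonlinear_bound_on_N} directly suffices.

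\emph{Small conceptual slip.} The constraint $\|\gamma_\xx\|_\infty<r_0<\tfrac12$ is not there to make $\xx\mapsto\xx+\gamma(\xx,s)$ a diffeomorphism; the inverse-modulated perturbation $v(\xx,s)=u(\xx-\gamma(\xx,s),s)-\phi_0(\xx)$ is a plain composition and is well-defined for any bounded $\gamma$. The constraint is what keeps the factors $(1-\gamma_\xx)^{-1}$ in $\mathcal{R}$ and $\mathcal{S}$ bounded, which is the hypothesis under which Lemma~\ref{lemma_nonlinear_bound_on_N} delivers Lipschitz control of the nonlinearity, and which feeds into the blow-up alternative~\eqref{e:blowupgamma1}.
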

\begin{proof}
We delegate the proof to Appendix~\ref{app:B}.
\end{proof}

Local existence and regularity of the inverse-modulated perturbation $v(t)$ and the residual $z(t)$ follow directly from Propositions~\ref{well_posed_full_sol} and~\ref{p:gamma}.

\begin{corollary} \label{C:local_v}
Assume~\ref{assH1} and~\ref{assD3}. Let $r_0 \in (0,\frac12)$ be as in Proposition~\ref{prop:family}. Let $\mathring{v}_0 \in C_{\mathrm{ub}}(\R)$ and $\gamma_0 \in C_{\mathrm{ub}}^1(\R)$ with $\|\gamma_0'\|_\infty < r_0$. Define $u_0, v_0 \in C_{\mathrm{ub}}(\R)$ by~\eqref{e:defu0} and~\eqref{e:defv0}, respectively. For $u$ as in Proposition~\ref{well_posed_full_sol} and $\gamma$ and $\tau_{\max}$ as in Proposition~\ref{p:gamma}, the inverse-modulated perturbation $v$, defined by~\eqref{e:defv}, and the residual $z$, defined by~\eqref{e:defz}, satisfy
\begin{align*}
v,z \in C\big([0,\tau_{\max}),C_{\mathrm{ub}}(\R)\big) \cap C\big((0,\tau_{\max}),C_{\mathrm{ub}}^2(\R)\big) \cap C^1\big((0,\tau_{\max}),C_{\mathrm{ub}}(\R)\big).
\end{align*}
Moreover, the map $[0,\tau_{\max}) \to C_{\mathrm{ub}}(\R), \, t \mapsto \sqrt{t} \,\big(v_\xx(t),z_\xx(t)\big)$ is continuous. Finally, the Duhamel formulations~\eqref{e:intv} and~\eqref{e:intz} are valid for $t \in [0,\tau_{\max})$.
\end{corollary}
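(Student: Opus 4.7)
The plan is to derive all asserted properties of $v$ and $z$ from the regularity of $u$ and $\gamma$ established in Propositions~\ref{well_posed_full_sol} and~\ref{p:gamma}, exploiting that, for each $t \in [0, \tau_{\max})$, the map $\xx \mapsto \xx - \gamma(\xx, t)$ is a bi-Lipschitz homeomorphism of $\R$ (a $C^l$-diffeomorphism for $t > 0$) because of the uniform bound $\|\gamma_\xx(t)\|_\infty < r_0 < \tfrac12$.

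First I would establish the claimed regularity of $v$ by direct inspection of the composition $v(\xx, t) = u(\xx - \gamma(\xx, t), t) - \phi_0(\xx)$. Continuity of $[0, \tau_{\max}) \to C_{\mathrm{ub}}(\R)$, $t \mapsto v(t)$, follows from the splitting
\[
v(t) - v(s) = \bigl(u(t) - u(s)\bigr)(\cdot - \gamma(\cdot,t)) + \bigl(u(s)(\cdot - \gamma(\cdot,t)) - u(s)(\cdot - \gamma(\cdot,s))\bigr),
\]
combined with continuity of $t \mapsto u(t)$ in $C_{\mathrm{ub}}(\R)$, continuity of $t \mapsto \gamma(t)$ in $C_{\mathrm{ub}}^1(\R)$ (hence uniform convergence of the shifts), and local uniform continuity of $u(s)$. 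For $t > 0$, the chain rule yields
\[
v_\xx(\xx, t) = u_\xx(\xx - \gamma(\xx, t), t)\bigl(1 - \gamma_\xx(\xx, t)\bigr), \qquad v_t = u_t(\cdot - \gamma, t) - u_\xx(\cdot - \gamma, t)\, \gamma_t,
\]
and an analogous formula for $v_{\xx\xx}$, which inherit $C \cap C^1$ regularity with values in $C_{\mathrm{ub}}^l$ for $l \in \{0, 2\}$ from the smoothness of $\gamma$ for $t > 0$ together with the regularity of $u$ provided by Proposition~\ref{well_posed_full_sol}. The corresponding regularity of $z$ then follows directly from~\eqref{e:defz}, since $\gamma_\xx \in C([0, \tau_{\max}), C_{\mathrm{ub}}) \cap C^j((0, \tau_{\max}), C_{\mathrm{ub}}^l)$ for all $j, l \in \NM_0$.

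Next I would verify continuity of $t \mapsto \sqrt{t}\,(v_\xx(t), z_\xx(t))$ at $t = 0$. Using the chain-rule formula above, $\sqrt{t}\, v_\xx(\xx, t) = \bigl[\sqrt{t}\, u_\xx(\cdot - \gamma(\cdot, t), t)\bigr](1 - \gamma_\xx(\xx, t))$; since $\sqrt{t}\, u_\xx(t)$ extends continuously to $[0, T_{\max})$ by Proposition~\ref{well_posed_full_sol} with value $0$ at $t = 0$, and $\gamma(\cdot, t) \to \gamma_0$ uniformly as $t \downarrow 0$, a uniform-continuity argument yields $\sqrt{t}\, u_\xx(\cdot - \gamma(\cdot, t), t) \to 0$ in $C_{\mathrm{ub}}(\R)$; combined with $\gamma_\xx(t) \to \gamma_0'$ in $C_{\mathrm{ub}}(\R)$, this gives $\sqrt{t}\, v_\xx(t) \to 0$. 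For $z_\xx$ one additionally needs $\sqrt{t}\, \gamma_{\xx\xx}(t) \to 0$ as $t \downarrow 0$, which follows from the explicit formula $\gamma(t) = \re^{-\partial_\xx^4 t}\gamma_0$ on $[0, 1]$ together with the bound $\|\partial_\xx^2 \re^{-\partial_\xx^4 t}\gamma_0\|_\infty \lesssim t^{-1/4}\|\gamma_0'\|_\infty$ from~\eqref{e:Gamma_rates3}, giving $\sqrt{t}\, \gamma_{\xx\xx}(t) \lesssim t^{1/4}\|\gamma_0'\|_\infty$. This step is the main obstacle, as it requires carefully combining the $\sqrt{t}$-weighted smoothing of $u$ from $C_{\mathrm{ub}}$-data with the $\re^{-\partial_\xx^4 t}$-smoothing of $\gamma_0 \in C_{\mathrm{ub}}^1$, while tracking the composition through the diffeomorphism $\xx \mapsto \xx - \gamma(\xx, t)$.

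Finally, I would establish the Duhamel formulations. With the regularity just proved, a direct computation (cf.~\cite[Lemma~4.2]{JONZ}) confirms that $v$ is a classical solution of~\eqref{e:modpertbeq} on $(0, \tau_{\max})$, and the bounds of Lemma~\ref{lemma_nonlinear_bound_on_N} together with the established regularity of $v$ and $\gamma$ show that $s \mapsto \Non(v(s), \gamma(s), \partial_t \gamma(s))$ lies in $C((0, \tau_{\max}), C_{\mathrm{ub}}(\R))$. Applying the variation-of-parameters formula for the analytic semigroup $\re^{\El_0 t}$ to the combination $v + \phi_0'\gamma - \gamma_\xx v$, whose value at $t = 0$ equals $v_0 + \phi_0'\gamma_0 - \gamma_0' v_0$, yields~\eqref{e:intv} after rearrangement. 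The formula~\eqref{e:intz} is then obtained by substituting the semigroup decomposition~\eqref{e:decomp_full_semigroup} into~\eqref{e:intv} and using the defining integral equation~\eqref{e:intgamma} of $\gamma(t)$ to eliminate the $\phi_0'\, S_p^0(t)$-contributions, together with the identity $z = v - k_0\partial_k\phi(\cdot;1)\,\gamma_\xx$ to absorb the remaining $k_0\partial_k\phi(\cdot;k_0)\,\partial_\xx S_p^0(t)$-terms; the resulting equation is the claimed~\eqref{e:intz}.
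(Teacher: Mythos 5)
The paper states this corollary without proof, asserting it ``follows directly from Propositions~\ref{well_posed_full_sol} and~\ref{p:gamma}.'' Your proposal fills in the natural details. The regularity of $v$ and $z$ by composition and chain rule, and the continuity of $\sqrt{t}\,(v_\xx(t),z_\xx(t))$ at $t=0$ using the $\sqrt{t}$-smoothing of Proposition~\ref{well_posed_full_sol} together with estimate~\eqref{e:Gamma_rates3}, are argued correctly (the uniform-continuity step for $\sqrt{t}\,u_\xx(\cdot-\gamma(\cdot,t),t)\to 0$ is superfluous, since the $L^\infty$-norm is invariant under the shift, but not wrong).

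There is a genuine gap in your justification of the Duhamel formula~\eqref{e:intv}. You establish $s\mapsto \Non(v(s),\gamma(s),\partial_t\gamma(s))\in C\big((0,\tau_{\max}),C_{\mathrm{ub}}(\R)\big)$ and then invoke variation of parameters for $w=v+\phi_0'\gamma-\gamma_\xx v$ over $[0,t]$. This step requires $\Non\in L^1_{\mathrm{loc}}\big([0,\tau_{\max}),C_{\mathrm{ub}}(\R)\big)$, which is \emph{not} a consequence of the established continuity: $\Non=\mathcal Q+\partial_\xx\mathcal R+\partial_\xx^2\mathcal S$ contains terms such as $\gamma_\xx(s)\,v_{\xx\xx}(s)$ (from $\partial_\xx^2\mathcal S$) and $\gamma_{t\xx}(s)\,v(s)$, $\gamma_t(s)\,v_\xx(s)$ (from $\partial_\xx\mathcal R$), whose $L^\infty$-norms behave like $s^{-1}$ or worse as $s\downarrow 0$ by~\eqref{e:Gamma_rates3} and the parabolic smoothing rates inherent in Proposition~\ref{well_posed_full_sol}, so $\int_{0}^{t}\|\Non(s)\|_\infty\,\de s$ may diverge. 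The fix is routine but needs to be said: apply the variation-of-parameters formula on $[\varepsilon,t]$ (where $w$ is a classical solution), let $\varepsilon\to 0$, and observe that $\re^{\El_0(t-\varepsilon)}w(\varepsilon)\to\re^{\El_0 t}w(0)$ by $w\in C\big([0,\tau_{\max}),C_{\mathrm{ub}}\big)$, while for the tail of the integral one shifts the derivatives from the nonlinearity onto the propagator using Proposition~\ref{prop:lin1}, obtaining
\begin{align*}
\left\|\re^{\El_0(t-s)}\Non(s)\right\|_\infty \lesssim \Big(1+(t-s)^{-\frac12}\Big)\Big(\|\mathcal Q(s)\|_\infty+\|\mathcal R(s)\|_\infty+\|\partial_\xx\mathcal S(s)\|_\infty\Big) \lesssim \Big(1+(t-s)^{-\frac12}\Big)\,s^{-\frac34},
\end{align*}
which is integrable on $(0,t)$, so $\int_0^\varepsilon\re^{\El_0(t-s)}\Non(s)\,\de s\to 0$. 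Only then does~\eqref{e:intv} (and thereafter~\eqref{e:intz}, via the algebraic substitutions you describe) hold on $[0,\tau_{\max})$.
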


\subsection{Derivation of viscous Hamilton-Jacobi equation} \label{sec:visc_HamJac}

Proceeding along the lines of~\cite{BjoernMod}, we derive a perturbed viscous Hamilton-Jacobi equation governing the dynamics of the phase modulation $\gamma(t)$.

As a first step, we isolate all contributions involving $\gamma_\zeta^2$ in the nonlinearity in~\eqref{e:intgamma}. These terms are critical, as they exhibit the slowest decay. As shown in~\cite[Section~4.3]{BjoernMod}, the nonlinearity~\eqref{e:defnonl} admits the decomposition
\begin{align} \label{e:nonldecomp}
\Non(v(s),\gamma(s),\partial_t \gamma(s)) = k_0^2 f_p \gamma_{\xx}(s)^2 + \mathcal N_p(z(s),v(s),\gamma(s),\widetilde{\gamma}(s)),
\end{align}
where we denote
\begin{align*} \widetilde{\gamma}(t) := \partial_t \gamma(t) - a\gamma_\xx(t),\end{align*}
$f_p$ is the $1$-periodic function
\begin{align*}
f_p &= \frac{1}{2} f''(\phi_0)\left(\partial_k \phi(\cdot;k_0),\partial_k \phi(\cdot;k_0)\right) + \omega'(k_0) \partial_{\xx k} \phi(\cdot;k_0) + D\left(\phi_0'' + 2k_0 \partial_{\xx\xx k} \phi(\cdot;k_0)\right),
\end{align*}
and the residual is given by
\begin{align} \label{e:defNp}
\mathcal N_p(z,v,\gamma,\widetilde{\gamma}) &= \mathcal Q_p(z,v,\gamma) + \partial_\xx \mathcal R_p(z,v,\gamma,\widetilde{\gamma}) + \partial_{\xx\xx} \mathcal S_p(z,v,\gamma),
\end{align}
with
\begin{align*}
\mathcal Q_p(z,v,\gamma) &= \left(f(\phi_0+v) - f(\phi_0) - f'(\phi_0) v\right)\gamma_\xx + f(\phi_0+v) - f(\phi_0) - f'(\phi_0) v - \frac{1}{2} f''(\phi_0)(v,v)\\
&\qquad + \frac{1}{2}f''(\phi_0)(z,z) + k_0 \gamma_{\xx} f''(\phi_0)(z,\partial_k \phi(\cdot;k_0)) + 2k_0^2\omega'(k_0) \gamma_\xx \gamma_{\xx\xx} \partial_k \phi(\cdot;k_0)\\
&\qquad + \, 2k_0^2 D \left(\gamma_\xx \gamma_{\xx\xx} \left(\phi_0' + 4k_0\partial_{\xx k} \phi(\cdot;k_0)\right) + 2k_0\left(\gamma_{\xx\xx}^2 + \gamma_\xx \gamma_{\xx\xx\xx}\right) \partial_k \phi(\cdot;k_0)\right),\\
\mathcal R_p(z,v,\gamma,\widetilde{\gamma}) &= -v \widetilde{\gamma} + k_0\omega_0'(k_0) \gamma_{\xx} z + \frac{k_0^2}{1-\gamma_\xx}D\left(\gamma_\xx^3 \phi_0' - \frac{\gamma_{\xx\xx} v}{1-\gamma_\xx}\right),\\
\mathcal S_p(z,v,\gamma) &= k_0^2D \left(2\gamma_\xx z + \frac{\gamma_\xx^2 v}{1-\gamma_\xx}\right).
\end{align*}

Applying Taylor's theorem, we obtain the following nonlinear estimate.

\begin{lemma} \label{lem:nlboundsmod3}
Assume~\ref{assH1} and~\ref{assD3}. Fix a constant $C > 0$. Then, we have
\begin{align*}
\begin{split}
\|\mathcal Q_p(z,v,\gamma)\|_\infty &\lesssim \left(\|v\|_\infty + \|\gamma_\xx\|_\infty\right)\|v\|_\infty^2 + \left(\|z\|_\infty + \|\gamma_\xx\|_\infty\right)\|z\|_\infty + \|\gamma_\xx\|_{C_{\mathrm{ub}}^1}\|\gamma_{\xx\xx}\|_{C_{\mathrm{ub}}^1},\\
\|\mathcal R_p(z,v,\gamma,\widetilde{\gamma})\|_\infty &\lesssim \|v\|_\infty \left(\|\widetilde{\gamma}\|_\infty + \|\gamma_{\xx\xx}\|_\infty\right) + \|z\|_\infty \|\gamma_\xx\|_\infty + \|\gamma_\xx\|^3_\infty,\\
\|\mathcal S_p(z,v,\gamma)\|_\infty &\lesssim \|v\|_\infty\|\gamma_\xx\|^2_\infty + \|z\|_\infty \|\gamma_\xx\|_\infty
\end{split}
\end{align*}
for $z \in C_{\mathrm{ub}}(\R)$, $v \in C_{\mathrm{ub}}(\R)$, and $(\gamma,\widetilde{\gamma}) \in C_{\mathrm{ub}}^3(\R) \times C_{\mathrm{ub}}(\R)$ satisfying $\|v\|_\infty \leq C$ and $\|\gamma_\xx\|_\infty \leq \frac{1}{2}$.
\end{lemma}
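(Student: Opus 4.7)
The plan is to estimate each of $\mathcal{Q}_p$, $\mathcal{R}_p$, and $\mathcal{S}_p$ term by term, relying only on two ingredients: Taylor's theorem applied to the smooth nonlinearity $f$, and the elementary inequality $(1-\gamma_\xx)^{-1} \leq 2$ valid under the hypothesis $\|\gamma_\xx\|_\infty \leq \tfrac12$. All factors involving the profile functions $\phi_0$, $\partial_k\phi(\cdot;k_0)$, $\partial_{\xx k}\phi(\cdot;k_0)$, $\phi_0'$, $\phi_0''$, together with their images under $f'$, $f''$ applied to bounded arguments, are uniformly bounded on $\R$ by $1$-periodicity and smoothness (cf.~Propositions~\ref{prop:family} and~\ref{prop:speccons}), and can be absorbed into the implicit constant in $\lesssim$.

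For $\mathcal{Q}_p$, I would first invoke Taylor's theorem with remainder: since $f \in C^3$ and $\|v\|_\infty \leq C$, the two remainders
\[
f(\phi_0+v)-f(\phi_0)-f'(\phi_0)v, \qquad f(\phi_0+v)-f(\phi_0)-f'(\phi_0)v-\tfrac12 f''(\phi_0)(v,v)
\]
are pointwise bounded by constants (depending on $C$) times $\|v\|_\infty^2$ and $\|v\|_\infty^3$, respectively. Multiplying the first by $\gamma_\xx$ contributes $\|\gamma_\xx\|_\infty \|v\|_\infty^2$ to the bound, and the second directly contributes $\|v\|_\infty^3$, which together yield the first summand $(\|v\|_\infty + \|\gamma_\xx\|_\infty)\|v\|_\infty^2$. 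The quadratic contributions $\tfrac12 f''(\phi_0)(z,z)$ and $k_0\gamma_\xx f''(\phi_0)(z,\partial_k\phi(\cdot;k_0))$ are bounded by $\|z\|_\infty^2$ and $\|\gamma_\xx\|_\infty\|z\|_\infty$, giving $(\|z\|_\infty+\|\gamma_\xx\|_\infty)\|z\|_\infty$. Finally, the purely phase-dependent terms
\[
\gamma_\xx\gamma_{\xx\xx}, \qquad \gamma_{\xx\xx}^2, \qquad \gamma_\xx\gamma_{\xx\xx\xx}
\]
are each controlled by $\|\gamma_\xx\|_{C^1_{\mathrm{ub}}}\|\gamma_{\xx\xx}\|_{C^1_{\mathrm{ub}}}$ after noting $\|\gamma_\xx\|_{C^1_{\mathrm{ub}}} = \|\gamma_\xx\|_\infty + \|\gamma_{\xx\xx}\|_\infty$ and $\|\gamma_{\xx\xx}\|_{C^1_{\mathrm{ub}}} = \|\gamma_{\xx\xx}\|_\infty + \|\gamma_{\xx\xx\xx}\|_\infty$, so the three products fit into the single expression $\|\gamma_\xx\|_{C^1_{\mathrm{ub}}}\|\gamma_{\xx\xx}\|_{C^1_{\mathrm{ub}}}$.

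For $\mathcal{R}_p$, I would treat each of the four summands separately: $|{-v\widetilde{\gamma}}| \leq \|v\|_\infty \|\widetilde{\gamma}\|_\infty$, $|k_0\omega'(k_0)\gamma_\xx z| \lesssim \|\gamma_\xx\|_\infty\|z\|_\infty$, and then using $(1-\gamma_\xx)^{-1} \leq 2$ twice, the remaining two terms are bounded by constant multiples of $\|\gamma_\xx\|_\infty^3$ and $\|\gamma_{\xx\xx}\|_\infty \|v\|_\infty$, respectively. Summing yields exactly the claimed estimate. The bound on $\mathcal{S}_p$ is even more direct: each of the two summands is a product of at most two small factors, and using $(1-\gamma_\xx)^{-1}\leq 2$ the required bound $\|z\|_\infty\|\gamma_\xx\|_\infty + \|v\|_\infty\|\gamma_\xx\|_\infty^2$ follows. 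There is no genuine obstacle here; the task is essentially bookkeeping, and the only mild subtlety is to place each Taylor remainder at the correct order in $\|v\|_\infty$ so as to match the precise form of the stated bounds.
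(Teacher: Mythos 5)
Your proof is correct and follows exactly what the paper intends: the paper offers only a one-line remark that Lemma~\ref{lem:nlboundsmod3} is obtained "applying Taylor's theorem," and your term-by-term bookkeeping — second- and third-order Taylor remainders for the two $f$-related differences, trivial multilinear bounds on the $f''$-terms, the inequality $(1-\gamma_\xx)^{-1}\leq 2$ under $\|\gamma_\xx\|_\infty\leq\tfrac12$, and absorption of the bounded periodic profile factors into the implicit constants — is the intended unwinding of that remark. The only cosmetic slip is calling the term $\gamma_\xx^2 v/(1-\gamma_\xx)$ in $\mathcal{S}_p$ "a product of at most two small factors"; it is three factors, but your stated bound $\|\gamma_\xx\|_\infty^2\|v\|_\infty$ is the correct one.
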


Next, we decompose the phase modulation $\gamma(t)$ into a solution $\widetilde{y}(t)$ solving a perturbed viscous Hamilton-Jacobi equation and a remainder $r(t)$ exhibiting higher-order decay. To this end, we use the commutator identities~\eqref{e:commu} and insert the decomposition~\eqref{e:prin_rep1} of the propagator $S_p^i(t)$ and the decomposition~\eqref{e:nonldecomp} of the nonlinearity into~\eqref{e:intgamma}. Thus, after using~\eqref{e:adjoint} and~\eqref{e:prin_rep2} to reexpress $\smash{\re^{\left(d\partial_\xx^2 - c_g\partial_\xx\right) t} (\widetilde{\Phi}_0^*\mathbf{f}_p \gamma_\xx^2)}$ and setting
\begin{align*}
\nu = \langle \widetilde{\Phi}_0, \mathbf{f}_p\rangle_{L^2(0,1)},
\end{align*}
we arrive at the decomposition
\begin{align} \label{e:decomp_gamma}
\gamma(t) = \widetilde{y}(t) + r(t)
\end{align}
with
\begin{align}
\begin{split}
 \widetilde{y}(t) &= S_h^0(t) \left(v_0 + \gamma_0' v_0\right) + 
 \re^{\left(d\partial_\xx^2 + a\partial_\xx\right) t}\left(\gamma_0 - A_h(\phi_0') \gamma_0'\right)\\
 &\qquad + \, \int_0^t \re^{\left(d\partial_\xx^2 + a\partial_\xx\right) (t-s)} \left(\nu \gamma_\xx(s)^2 - k_0^2 A_h(f_p) \partial_\xx \left(\gamma_\xx(s)^2\right)\right) \de s\\
 &\qquad + \, \int_0^t S_h^0(t-s) \mathcal Q_p(z(s),v(s),\gamma(s))\de s
- \int_0^t S_h^1(t-s) \mathcal R_p(z(s),v(s),\gamma(s),\widetilde{\gamma}(s)) \de s\\
 &\qquad + \, \int_0^t S_h^2(t-s) \mathcal S_p(z(s),v(s),\gamma(s)) \de s, 
\end{split}
\label{e:def_tilde_y}
\end{align}
and remainder
\begin{align}
\begin{split}
    r(t) &= \widetilde{S}_r^0(t) \left(v_0 + \phi_0'\gamma_0 + \gamma_0' v_0\right) + \partial_\xx \re^{\left(d\partial_\xx^2 + a\partial_\xx\right) t} \left(A_h(\phi_0') \gamma_0\right) + k_0^2 \int_0^t \widetilde{S}_r^0(t-s)\left(f_p \gamma_\xx(s)^2\right) \de s\\
&\qquad + \, k_0^2 \partial_\xx \int_0^t \re^{\left(d\partial_\xx^2 + a\partial_\xx\right) (t-s)}\left(A_h(f_p) \gamma_\xx(s)^2\right) + \int_0^t \widetilde{S}_r^0(t-s) \mathcal Q_p(z(s),v(s),\gamma(s)) \de s\\
&\qquad - \, \int_0^t \widetilde{S}_r^1(t-s) \mathcal R_p(z(s),v(s),\gamma(s),\widetilde{\gamma}(s)) \de s
+ \int_0^t \widetilde{S}_r^2(t-s) \mathcal S_p(z(s),v(s),\gamma(s)) \de s\\
&\qquad + \, \partial_\xx \int_0^t S_p^0(t-s) \mathcal R_p(z(s),v(s),\gamma(s),\widetilde{\gamma}(s)) \de s + (1-\chi(t)) \re^{-\partial_\xx^4 t} \gamma_0\\ 
&\qquad + \, \partial_\xx^2 \int_0^t S_p^0(t-s) \mathcal S_p(z(s),v(s),\gamma(s)) \de s - 2\partial_\xx \int_0^t S_p^1(t-s) \mathcal S_p(z(s),v(s),\gamma(s)) \de s.
    \end{split}
    \label{e:intr}
\end{align}
Invoking the linear estimates from Propositions~\ref{prop:lin1} and~\ref{prop:lin2}, we observe that $r(t)$ accounts for the contributions in~\eqref{e:intgamma} that decay on the linear level. On the other hand, applying the convective heat operator $\partial_t - d\partial_\xx^2 - a\partial_\xx$ to~\eqref{e:def_tilde_y}, we arrive at the viscous Hamilton-Jacobi equation
\begin{align} \label{e:hamjac}
\left(\partial_t - d\partial_\xx^2 - a\partial_\xx\right) \widetilde{y} = \nu \widetilde{y}_\xx^2 + G(r,\widetilde{y},z,v,\gamma,\widetilde{\gamma})
\end{align}
with perturbation
\begin{align*}
G(z,v,\gamma,\widetilde{\gamma}) &= 2\nu\widetilde{y}_\xx r_\xx + \nu r_\xx^2 - k_0^2 A_h(f_p) \partial_\xx (\gamma_\xx^2) + \widetilde{\Phi}_0^*\mathcal Q_p(z,v,\gamma) - \left(\partial_\xx \widetilde{\Phi}_0^*\right)\mathcal R_p(z,v,\gamma,\widetilde{\gamma})\\ &\qquad + \, \left(\partial_\xx^2 \widetilde{\Phi}_0^*\right)\mathcal S_p(z,v,\gamma).
\end{align*}
That is, $\widetilde{y}(t)$ is a solution to~\eqref{e:hamjac} with initial condition
\begin{align*}
\widetilde{y}(0) = \gamma_0 - A_h(\phi_0') \gamma_0' + \widetilde{\Phi}_0^*\left(v_0 + \gamma_0' v_0\right) \in C_{\mathrm{ub}}(\R).
\end{align*}

By expressing the right-hand side of~\eqref{e:hamjac} as
\begin{align*}
F(z,v,\gamma,\widetilde{\gamma}) = \nu \gamma_\xx^2 - k_0^2 A_h(f_p) \partial_\xx (\gamma_\xx^2) + \widetilde{\Phi}_0^*\mathcal Q_p(z,v,\gamma) - \left(\partial_\xx \widetilde{\Phi}_0^*\right)\mathcal R_p(z,v,\gamma,\widetilde{\gamma}) + \left(\partial_\xx^2 \widetilde{\Phi}_0^*\right)\mathcal S_p(z,v,\gamma),
\end{align*}
we can interpret equation~\eqref{e:hamjac} as an inhomogeneous linear parabolic problem. The inhomogeneity $t \mapsto F(z(t),v(t),\gamma(t),\widetilde{\gamma}(t))$ belongs to $C\big((0,\tau_{\max}),C_{\mathrm{ub}}^1(\R)\big) \cap L^1\big((0,\tau_{\max}),C_{\mathrm{ub}}(\R)\big)$ by combining the estimates~\eqref{e:Gamma_rates} with Proposition~\ref{p:gamma} and Corollary~\ref{C:local_v}. Consequently, regularity properties of $\widetilde{y}(t)$ and $r(t) = \gamma(t) - \widetilde{y}(t)$ follow from analytic semigroup theory, cf.~\cite[Theorem~4.3.11]{LUN}.

\begin{corollary} \label{C:local_r}
Assume~\ref{assH1} and~\ref{assD3}. Let $r_0 \in (0,\frac12)$ be as in Proposition~\ref{prop:family}. Let $\mathring{v}_0 \in C_{\mathrm{ub}}(\R)$ and $\gamma_0 \in C_{\mathrm{ub}}^1(\R)$ with $\|\gamma_0'\|_\infty < r_0$. Define $u_0, v_0 \in C_{\mathrm{ub}}(\R)$ by~\eqref{e:defu0} and~\eqref{e:defv0}, respectively. For $\gamma$ and $\tau_{\max}$ as in Proposition~\ref{p:gamma} and and for $v$ and $z$ as in Corollary~\ref{C:local_v}, the Hamilton-Jacobi variable $\widetilde{y}$, given by~\eqref{e:def_tilde_y} and the residual $r$, given by~\eqref{e:intr}, obey
\begin{align*}
\widetilde{y},r \in C\big([0,\tau_{\max}),C_{\mathrm{ub}}(\R)\big) \cap C\big((0,\tau_{\max}),C_{\mathrm{ub}}^2(\R)\big) \cap C^1\big((0,\tau_{\max}),C_{\mathrm{ub}}(\R)\big).
\end{align*}
In addition, the map $[0,\tau_{\max}) \to C_{\mathrm{ub}}(\R) \times C_{\mathrm{ub}}(\R), t \mapsto \sqrt{t} \, \big(\widetilde{y}_\xx(t),r_\xx(t)\big)$ is continuous.
\end{corollary}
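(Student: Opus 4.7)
The plan is to view $\widetilde{y}$ as the mild solution to the inhomogeneous linear parabolic Cauchy problem
\begin{align*}
\partial_t \widetilde{y} = \bigl(d\partial_\xx^2 + a \partial_\xx\bigr)\widetilde{y} + F(z,v,\gamma,\widetilde{\gamma}), \qquad \widetilde{y}(0) = \gamma_0 - A_h(\phi_0')\gamma_0' + \widetilde{\Phi}_0^*\bigl(v_0 + \gamma_0' v_0\bigr),
\end{align*}
on the Banach space $C_{\mathrm{ub}}(\R)$. Since $d\partial_\xx^2 + a\partial_\xx$ is sectorial on $C_{\mathrm{ub}}(\R)$ by~\cite[Corollary~3.1.9]{LUN}, the claimed regularity of $\widetilde{y}$ will follow from~\cite[Theorem~4.3.11]{LUN} as soon as the initial datum lies in $C_{\mathrm{ub}}(\R)$ (which is clear) and the inhomogeneity $t \mapsto F(z(t),v(t),\gamma(t),\widetilde{\gamma}(t))$ belongs to $C\bigl((0,\tau_{\max}), C_{\mathrm{ub}}^1(\R)\bigr) \cap L^1\bigl((0,\tau_{\max}), C_{\mathrm{ub}}(\R)\bigr)$. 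Once the regularity of $\widetilde{y}$ is established, the regularity of $r = \gamma - \widetilde{y}$ follows immediately from the strong smoothness of $\gamma$ recorded in~\eqref{e:classicalgamma}.

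Continuity of $t \mapsto F(z(t),v(t),\gamma(t),\widetilde{\gamma}(t))$ into $C_{\mathrm{ub}}^1(\R)$ on the open interval $(0,\tau_{\max})$ is essentially immediate from the definition of $F$ as a smooth polynomial expression in $v, z, \gamma_\xx, \gamma_{\xx\xx}, \gamma_{\xx\xx\xx}, \widetilde{\gamma}$ and their first spatial derivatives, combined with the regularity $v, z \in C\bigl((0,\tau_{\max}), C_{\mathrm{ub}}^2(\R)\bigr)$ from Corollary~\ref{C:local_v} and the regularity $\gamma \in C^j\bigl((0,\tau_{\max}), C_{\mathrm{ub}}^l(\R)\bigr)$ for all $j,l \in \NM_0$ from~\eqref{e:classicalgamma}.

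The $L^1$-integrability at $t = 0$ is the main technical obstacle. For $t \in [0,1]$, Proposition~\ref{p:gamma} gives the explicit formula $\gamma(t) = \re^{-\partial_\xx^4 t}\gamma_0$, so the bi-Laplacian semigroup bounds~\eqref{e:Gamma_rates3} yield
\begin{align*}
\|\gamma_\xx(t)\|_\infty \lesssim \|\gamma_0'\|_\infty, \qquad \|\partial_\xx^{1+j}\gamma(t)\|_\infty \lesssim t^{-j/4}\|\gamma_0'\|_\infty, \qquad \|\widetilde{\gamma}(t)\|_\infty \lesssim t^{-3/4}\|\gamma_0'\|_\infty,
\end{align*}
for $j = 1,2,3$, so in particular every relevant derivative of $\gamma$ that appears in $F$ blows up at most like $t^{-3/4}$ as $t \downarrow 0$. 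Combined with the uniform boundedness of $\|v(t)\|_\infty$ and $\|z(t)\|_\infty$ near $0$, which follows from continuity at $0$ in Corollary~\ref{C:local_v}, and the nonlinear estimates in Lemma~\ref{lem:nlboundsmod3}, these bounds show that $\|F(z(t),v(t),\gamma(t),\widetilde{\gamma}(t))\|_\infty$ is controlled by a constant times $t^{-3/4}$ on $(0,1]$, which is $L^1$-integrable near $0$.

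With these regularity properties of the inhomogeneity in hand, an application of~\cite[Theorem~4.3.11]{LUN} yields the desired regularity of $\widetilde{y}$, while the continuity of $t \mapsto \sqrt{t}\,\widetilde{y}_\xx(t)$ at $t = 0$ follows from the smoothing estimate $\|\partial_\xx \re^{(d\partial_\xx^2 + a\partial_\xx)t}w\|_\infty \lesssim t^{-1/2}\|w\|_\infty$ from~\eqref{e:Gamma_rates} applied to the initial datum, together with a routine estimate of the Duhamel convolution exploiting the $t^{-3/4}$-blow-up of $F$. Setting $r = \gamma - \widetilde{y}$ and invoking~\eqref{e:classicalgamma} then gives the claimed regularity of $r$, and the continuity of $t \mapsto \sqrt{t}\,r_\xx(t)$ follows a fortiori, since $\gamma_\xx$ is in fact continuous from $[0,\tau_{\max})$ into $C_{\mathrm{ub}}(\R)$ by Proposition~\ref{p:gamma}.
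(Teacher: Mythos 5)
Your proposal follows essentially the same route as the paper: reinterpret~\eqref{e:hamjac} as an inhomogeneous linear parabolic problem for $\widetilde y$, verify that the inhomogeneity $t\mapsto F(z(t),v(t),\gamma(t),\widetilde\gamma(t))$ lies in $C\big((0,\tau_{\max}),C_{\mathrm{ub}}^1(\R)\big)\cap L^1\big((0,\tau_{\max}),C_{\mathrm{ub}}(\R)\big)$ using the short-time identity $\gamma(t)=\re^{-\partial_\xx^4 t}\gamma_0$ together with Corollary~\ref{C:local_v}, and then invoke~\cite[Theorem~4.3.11]{LUN}, with $r=\gamma-\widetilde y$ inheriting its regularity from~\eqref{e:classicalgamma}. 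The extra details you supply (the explicit $t^{-3/4}$ blow-up bound and the separate treatment of $t\mapsto\sqrt t\,\widetilde y_\xx(t)$ at $t=0$) are consistent with, and merely flesh out, the paper's argument.
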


\begin{remark}{ \upshape
The decomposition~\eqref{e:decomp_gamma} generalizes the corresponding one in~\cite{BjoernMod}, where the initial phase modulation was assumed to vanish identically. Indeed, setting $\gamma_0 = 0$ in~\eqref{e:def_tilde_y} and~\eqref{e:intr} recovers the decomposition used in~\cite{BjoernMod}. Alternatively, by assuming more regular initial data $\mathring{v}_0 \in C_{\mathrm{ub}}^2(\R)$, one could employ the simpler decomposition from~\cite{AdR1}, thereby avoiding the commutator identities~\eqref{e:commu} needed to shift derivatives from the nonlinearity~\eqref{e:defNp} onto the propagators.}
\end{remark}

\subsection{Application of the Cole-Hopf transform} 

In the forthcoming nonlinear analysis, we control the remainder $r(t)$ by estimating the right-hand side of~\eqref{e:intr}. For short times, a similar approach allows us to bound $\widetilde{y}(t)$ via~\eqref{e:def_tilde_y}. However, for large times, this strategy fails due to the presence of the critical nonlinear term $\nu y_\xx(s)^2$ in~\eqref{e:def_tilde_y}, which decays at the nonintegrable rate $(1+s)^{-1}$ as $s \to \infty$. To address this issue, we proceed as in~\cite{AdR1,BjoernMod} and eliminate the problematic nonlinear term $\nu \widetilde{y}_\xx^2$ in~\eqref{e:hamjac} by applying the Cole-Hopf transformation. To this end, we introduce the new variable
\begin{align} \label{e:defy} y(t) = \re^{\frac{\nu}{d} \widetilde{y}(t)},\end{align}
which satisfies
\begin{align}\label{e:regy} y \in C\big([0,\tau_{\max}),C_{\mathrm{ub}}(\R)\big) \cap C\big((0,\tau_{\max}),C_{\mathrm{ub}}^2(\R)\big) \cap C^1\big((0,\tau_{\max}),C_{\mathrm{ub}}(\R)\big)\end{align} 
by Corollary~\ref{C:local_r}. We obtain the convective heat equation
\begin{align} \left(\partial_t - d\partial_\xx^2 - a\partial_\xx\right)y = 2\nu r_\xx y_\xx + \frac{\nu}{d}\left(\nu r_\xx^2 + G(z,v,\gamma,\widetilde{\gamma})\right)y\label{e:colehopf} \end{align}
in which critical $y_\xx^2$-contributions are  no longer present. Integrating~\eqref{e:colehopf}, we arrive at the Duhamel formulation
\begin{align}
\begin{split}
y(t) &= \re^{\left(d\partial_\xx^2 + a\partial_\xx\right) (t-1)} y(1) + \int_1^t \re^{\left(d\partial_\xx^2 + a\partial_\xx\right) (t-s)} \Non_c(r(s),y(s),z(s),v(s),\gamma(s),\widetilde{\gamma}(s)) \de s,
\end{split}
\label{e:inty}
\end{align}
for $t \in [0,\tau_{\max})$ with $t \geq 1$, with nonlinearity
\begin{align*}
\Non_c(r,y,z,v,\gamma,\widetilde{\gamma}) = 2\nu r_\xx y_\xx + \frac{\nu}{d}\left(\nu r_\xx^2 + G(z,v,\gamma,\widetilde{\gamma})\right)y.
\end{align*}
Using Lemma~\ref{lem:nlboundsmod3}, we readily obtain the following nonlinear estimate.
\begin{lemma}\label{lemma_nonlinear_bound_on_G}
Fix a constant $C > 0$. Then, we have
\begin{align*}
\begin{split}
\left\|\Non_c(r,y,z,v,\gamma,\widetilde{\gamma})\right\|_\infty &\lesssim \|r_\xx\|_\infty \|y_\xx\|_\infty + \|y\|_\infty\Big(\|r_\xx\|_\infty^2 + \left(\|v\|_\infty + \|\gamma_\xx\|_\infty\right)\|v\|_\infty^2 + \|\gamma_\xx\|_{C_{\mathrm{ub}}^1}\|\gamma_{\xx\xx}\|_{C_{\mathrm{ub}}^1} \\ 
&\qquad +\, \|v\|_\infty \left(\|\widetilde{\gamma}\|_\infty + \|\gamma_{\xx\xx}\|_\infty + \|\gamma_{\xx}\|^2_\infty\right) + \left(\|z\|_\infty + \|\gamma_\xx\|_\infty\right)\|z\|_\infty+ \|\gamma_\xx\|^3_\infty \Big),
\end{split}
\end{align*}
for $z \in C_{\mathrm{ub}}(\R)$, $v \in C_{\mathrm{ub}}(\R)$, $r,y \in C_{\mathrm{ub}}^1(\R)$ and $(\gamma,\widetilde{\gamma}) \in C_{\mathrm{ub}}^3(\R) \times C_{\mathrm{ub}}(\R)$ with $\|v\|_\infty \leq C$ and $\|\gamma_\xx\|_\infty \leq \frac{1}{2}$.
\end{lemma}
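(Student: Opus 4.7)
The bound follows from a direct application of the triangle inequality to the explicit expression
\[\Non_c(r,y,z,v,\gamma,\widetilde{\gamma}) = 2\nu r_\xx y_\xx + \frac{\nu}{d}\left(\nu r_\xx^2 + G(z,v,\gamma,\widetilde{\gamma})\right)y,\]
together with the nonlinear estimates of Lemma~\ref{lem:nlboundsmod3} applied to the components $\mathcal{Q}_p$, $\mathcal{R}_p$, $\mathcal{S}_p$ of $G$. The plan is to split $\Non_c$ into its two summands, estimate the first directly, and reduce the second to an estimate of $\|G\|_\infty$ by pulling out the factor $y$.

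More precisely, I would first bound the mixed product term in $L^\infty$ by $\|r_\xx\|_\infty\|y_\xx\|_\infty$, which yields the leading summand on the right-hand side of the claimed inequality. For the remaining contribution I would use $\|(\nu/d)(\nu r_\xx^2 + G)y\|_\infty \lesssim \|y\|_\infty(\|r_\xx\|_\infty^2 + \|G\|_\infty)$, so that the task reduces to controlling $\|G\|_\infty$ by all terms in the large parentheses of the statement apart from the $\|r_\xx\|_\infty^2$ term. I would treat each of the four summands in
\[ G = -k_0^2 A_h(f_p)\partial_\xx(\gamma_\xx^2) + \widetilde{\Phi}_0^*\mathcal{Q}_p(z,v,\gamma) - (\partial_\xx \widetilde{\Phi}_0^*)\mathcal{R}_p(z,v,\gamma,\widetilde{\gamma}) + (\partial_\xx^2\widetilde{\Phi}_0^*)\mathcal{S}_p(z,v,\gamma) \]
separately. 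Since Proposition~\ref{prop:speccons} gives $\widetilde{\Phi}_0 \in H^m_{\mathrm{per}}(0,1)$ for every $m$, the periodic functions $\widetilde{\Phi}_0^*$, $\partial_\xx \widetilde{\Phi}_0^*$, and $\partial_\xx^2\widetilde{\Phi}_0^*$ are uniformly bounded on $\R$, and $A_h(f_p)$ is bounded by Proposition~\ref{prop:lin2} together with $f_p \in L^2_{\mathrm{per}}((0,1),\R^n)$. Writing $\partial_\xx(\gamma_\xx^2) = 2\gamma_\xx\gamma_{\xx\xx}$ then controls the first summand by a constant multiple of $\|\gamma_\xx\|_\infty\|\gamma_{\xx\xx}\|_\infty$, and the remaining three summands are immediately handled by the corresponding estimates of Lemma~\ref{lem:nlboundsmod3} (whose hypotheses $\|v\|_\infty \leq C$ and $\|\gamma_\xx\|_\infty \leq \tfrac12$ coincide with ours).

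Summing the four contributions, absorbing $\|\gamma_\xx\|_\infty\|\gamma_{\xx\xx}\|_\infty$ into $\|\gamma_\xx\|_{C^1_{\mathrm{ub}}}\|\gamma_{\xx\xx}\|_{C^1_{\mathrm{ub}}}$, and collecting the two overlapping $\|z\|_\infty\|\gamma_\xx\|_\infty$-terms coming from $\mathcal{R}_p$ and $\mathcal{S}_p$ into the joint expression $(\|z\|_\infty+\|\gamma_\xx\|_\infty)\|z\|_\infty$ yields exactly the bound on $\|G\|_\infty$ needed. Combining with the first summand then gives the stated inequality on $\|\Non_c\|_\infty$. There is no substantive obstacle here: the argument is a routine bookkeeping exercise, and its only content is to verify that every summand appearing in the Cole-Hopf nonlinearity matches a summand already supplied by Lemma~\ref{lem:nlboundsmod3} together with the smoothness of $\widetilde{\Phi}_0$ and boundedness of $A_h(f_p)$.
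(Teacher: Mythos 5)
Your proposal is correct and coincides with the paper's intended argument: the paper itself states only that Lemma~\ref{lemma_nonlinear_bound_on_G} follows readily from Lemma~\ref{lem:nlboundsmod3}, and your splitting of $\Non_c$, absorption of $A_h(f_p)\partial_\xx(\gamma_\xx^2)$ via boundedness of $A_h(f_p)$ and the smoothness of $\widetilde{\Phi}_0$, and the subsequent bookkeeping are exactly that reduction. You also correctly interpret $G(z,v,\gamma,\widetilde{\gamma})$ in the Cole--Hopf equation~\eqref{e:colehopf} as the part of the full source term $G(r,\widetilde y,z,v,\gamma,\widetilde\gamma)$ from~\eqref{e:hamjac} with the $2\nu\widetilde{y}_\xx r_\xx + \nu r_\xx^2$ contributions already separated off.
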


\subsection{Forward-modulated perturbation} 

Since the equation~\eqref{e:modpertbeq} for the inverse-modulated perturbation $v(t)$ is quasilinear, it presents an apparent loss of derivatives that must be addressed in the nonlinear argument. In order to control regularity for \emph{short} times, we iteratively estimate the forward-modulated perturbation $\mathring{v}(t)$ via its Duhamel representation, which is semilinear and does not suffer from a loss of derivatives. As a first step, we note that the following existence and regularity properties of $\mathring{v}(t)$ follow directly from Propositions~\ref{well_posed_full_sol} and~\ref{p:gamma}.

\begin{corollary} \label{c:local_forward_v}
Assume~\ref{assH1} and~\ref{assD3}. Let $r_0 \in (0,\frac12)$ be as in Proposition~\ref{prop:family}. Let $\mathring{v}_0 \in C_{\mathrm{ub}}(\R)$ and $\gamma_0 \in C_{\mathrm{ub}}^1(\R)$ with $\|\gamma_0'\|_\infty < r_0$. Define $u_0, v_0 \in C_{\mathrm{ub}}(\R)$ by~\eqref{e:defu0} and~\eqref{e:defv0}, respectively. For $u$ as in Proposition~\ref{well_posed_full_sol}, and $\gamma$ and $\tau_{\max}$ as in Proposition~\ref{p:gamma}, the forward-modulated perturbation $\vt(t)$, given by~\eqref{e:defringv}, is well-defined for $t \in [0,\tau_{\max})$ and satisfies
\begin{align*}
\vt \in C\big([0,\tau_{\max}),C_{\mathrm{ub}}(\R)\big) \cap C^j\big((0,\tau_{\max}),C_{\mathrm{ub}}^l(\R)\big)
\end{align*}
for any $j,l \in \NM_0$.
\end{corollary}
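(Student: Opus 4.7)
The plan is to derive the claimed regularity of the forward-modulated perturbation $\vt(\xx,t) = u(\xx,t) - \phi_0(\xx + \gamma(\xx,t))$ directly from its definition, by combining the regularity of $u$ (Proposition~\ref{well_posed_full_sol}), the regularity of $\gamma$ (Proposition~\ref{p:gamma}), and the smoothness and $1$-periodicity of the wave-train profile $\phi_0$. Since $\gamma \in C([0,\tau_{\max}), C_{\mathrm{ub}}^1(\R))$ takes real values, $\phi_0(\xx + \gamma(\xx,t))$ is well-defined pointwise, so $\vt$ is well-defined on $\R \times [0,\tau_{\max})$; and since $\phi_0$, $\phi_0'$ are bounded by periodicity, the composition $\xx \mapsto \phi_0(\xx + \gamma(\xx,t))$ lies in $C_{\mathrm{ub}}(\R)$ for each fixed $t \in [0,\tau_{\max})$.

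First, I would establish that $\vt \in C([0,\tau_{\max}), C_{\mathrm{ub}}(\R))$. Since $u \in C([0,T_{\max}), C_{\mathrm{ub}}(\R))$ by Proposition~\ref{well_posed_full_sol}, it suffices to show that $t \mapsto \phi_0(\cdot + \gamma(\cdot,t))$ is continuous as a map into $C_{\mathrm{ub}}(\R)$. For $t_0, t \in [0, \tau_{\max})$ we estimate
\begin{align*}
\left\|\phi_0(\cdot + \gamma(\cdot,t)) - \phi_0(\cdot + \gamma(\cdot,t_0))\right\|_\infty \leq \|\phi_0'\|_\infty \, \|\gamma(\cdot,t) - \gamma(\cdot,t_0)\|_\infty,
\end{align*}
which tends to zero as $t \to t_0$ by continuity of $\gamma$ into $C_{\mathrm{ub}}^1(\R) \hookrightarrow C_{\mathrm{ub}}(\R)$, provided by Proposition~\ref{p:gamma}. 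This handles in particular right-continuity at $t = 0$ and recovers $\vt(0) = u_0 - \phi_0(\cdot + \gamma_0) = \mathring{v}_0$.

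Next, I would establish the smoothness off the origin. By Proposition~\ref{well_posed_full_sol}, $u \in C^j((0,T_{\max}), C_{\mathrm{ub}}^l(\R))$ for every $j,l \in \NM_0$, and by Proposition~\ref{p:gamma}, the same holds for $\gamma$ on $(0,\tau_{\max})$. An application of the Faà di Bruno formula expresses $\partial_t^j \partial_\xx^l \phi_0(\xx + \gamma(\xx,t))$ as a finite polynomial combination of the form
\begin{align*}
\sum_{|\alpha| \leq j+l} \phi_0^{(|\alpha|)}(\xx + \gamma(\xx,t)) \, P_\alpha\!\left(\{\partial_t^p \partial_\xx^q \gamma(\xx,t)\}_{p+q \leq j+l}\right),
\end{align*}
where each $P_\alpha$ is a polynomial in the indicated derivatives of $\gamma$. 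Each factor $\phi_0^{(m)}(\xx + \gamma(\xx,t))$ lies in $C_{\mathrm{ub}}(\R)$ by the $1$-periodicity and smoothness of $\phi_0$, and each derivative of $\gamma$ lies in $C_{\mathrm{ub}}(\R)$ by the assumed regularity; products and compositions of such quantities remain in $C_{\mathrm{ub}}(\R)$ and depend continuously on $t \in (0,\tau_{\max})$. Combining the resulting regularity of $\phi_0(\cdot + \gamma(\cdot,t))$ with that of $u$ yields $\vt \in C^j((0,\tau_{\max}), C_{\mathrm{ub}}^l(\R))$ for all $j,l \in \NM_0$.

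The only genuine point requiring care is the stability of $C_{\mathrm{ub}}$ under composition with $\phi_0$ and under multiplication, both of which follow from the Lipschitz-type bound $|\phi_0^{(m)}(y) - \phi_0^{(m)}(y')| \leq \|\phi_0^{(m+1)}\|_\infty |y - y'|$ and the stability of $C_{\mathrm{ub}}$-norms under products; no analytic obstacle arises and the remaining work is bookkeeping via the chain rule.
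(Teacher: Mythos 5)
Your argument is correct, and it is the same route the paper implicitly takes: the paper supplies no explicit proof, stating only that the existence and regularity of $\vt$ ``follow directly'' from Propositions~\ref{well_posed_full_sol} and~\ref{p:gamma}, and your chain-rule/Fa\`a di Bruno bookkeeping, together with boundedness and Lipschitz continuity of $\phi_0$ and its derivatives from $1$-periodicity and smoothness, is precisely the content one would fill in. The only minor informality is the step from the pointwise Fa\`a di Bruno representation to genuine $C^j$-differentiability of $t \mapsto \phi_0(\cdot + \gamma(\cdot,t))$ as a curve in the Banach space $C_{\mathrm{ub}}^l(\R)$, but this is standard (Nemytskii-type smoothness of $g \mapsto \phi_0(\cdot + g)$ on $C_{\mathrm{ub}}^l$ composed with the $C^j$ curve $t \mapsto \gamma(\cdot,t)$) and not a gap.
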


Differentiating $\vt(t)$ with respect to time and using the facts that $\phi_0$ and $u(t)$ solve~\eqref{RD} and that $\gamma(t) = \smash{\re^{-\partial_\xx^4 t} \gamma_0}$ for $t \in [0,1]$ by Proposition~\ref{p:gamma}, we derive the semilinear equation
\begin{align} \label{e:Pert}
\partial_t \vt &= k_0^2 D \vt_{\xx\xx} + \omega_0 \vt_\xx + A[\gamma_0] \vt + \mathring{\mathcal{N}}(\vt,\gamma,\partial_t \gamma)
\end{align}
valid for $t \in [0,1]$, with spatiotemporal coefficient
\begin{align*}
A[\gamma_0](\xx,t) := f'(\phi_0(\xx+\Gamma(\xx,t))), \qquad \Gamma(\xx,t) := \left(\re^{-\partial_\xx^4 t} \gamma_0\right)[\zeta]
\end{align*}
and nonlinearity given by
\begin{align*}
\mathring{\mathcal{N}}(\vt,\gamma,\gamma_t) &= f(\vt + \phi_0(\kappa(\gamma))) - f(\phi_0(\kappa(\gamma))) - f'(\phi_0(\kappa)) \vt
+ \phi_0'(\kappa(\gamma)) \left(\omega_0 \gamma_\xx - \gamma_t\right)\\ 
&\qquad + k_0^2 D \left(\phi_0'(\kappa(\gamma)) \gamma_{\xx\xx} + \phi_0''(\kappa(\gamma)) \gamma_\xx(2+\gamma_\xx)\right),             
\end{align*}
with shorthand notation
\begin{align*}
\kappa(\gamma)(\xx,t) = \xx + \gamma(\xx,t).
\end{align*}
Differentiating~\eqref{e:Pert} with respect to space, we obtain an evolution equation for $\wt(t) := \mathring{v}_\xx(t)$, reading
\begin{align} \label{e:Pert11}
\partial_t \wt &= k_0^2 D \wt_{\xx\xx} + \omega_0 \wt_\xx + A[\gamma_0] \wt + \mathring{\mathcal{N}_1}(\vt,\gamma,\partial_t \gamma)
\end{align}
valid for $t \in [0,1]$, with nonlinearity
\begin{align*}
\mathring{\mathcal{N}}_1(\vt,\gamma,\gamma_t) &= (1 + \gamma_\xx) f''(\phi_0(\kappa(\gamma))) \left(\phi_0'(\kappa(\gamma)),\vt\right) + \partial_\xx \mathring{\mathcal{N}}(\vt,\gamma,\gamma_t).  \end{align*}

We now state short-time bounds on the temporal Green's function associated with the linearized equation
\begin{align} \label{e:linmathringv}
\partial_t \vt &= k_0^2 D \vt_{\xx\xx} + \omega_0 \vt_\xx + A[\gamma_0] \vt.
\end{align}
These bounds have been established in~\cite{ZUH}, using Levi's parametrix method.

\begin{proposition} \label{prop:pointwiseGreen}
Assume~\ref{assH1}. There exist constants $C,M > 0$ and $t_* \in (0,1]$ such that for each $\gamma_0 \in C_{\mathrm{ub}}^1(\R)$ the temporal Green's function $G \colon \R \times (0,t_*] \times \R \times (0,t_*] \to \R^{n \times n}$ associated with~\eqref{e:linmathringv} is continuously differentiable in its second and fourth coordinate and twice continuously differentiable in its first and third coordinate. The solution to the linearized equation~\eqref{e:linmathringv} with initial condition $\vt(s) = \vt_0$ is given by
\begin{align*}
v(\xx,t) = \int_\R G(\xx,t;\xt,s) \vt_0(\xt) \de \xt
\end{align*}
for all $\xx \in \R$ and any $s,t \in [0,t_*]$ with $0 \leq s < t$. Moreover, the Green's function enjoys the pointwise bounds
\begin{align*}
\left|\partial_\xx^j G(\xx,t;\xt,s)\right| \leq C (t-s)^{-\frac{1+j}{2}} \re^{-\frac{(\xx-\xt)^2}{M(t-s)}}, \qquad j = 0,1,2
\end{align*}
for all $\xx,\xt \in \R$ and $s,t \in (0,t_*]$ with $0 \leq s < t$.
\end{proposition}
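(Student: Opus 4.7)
The plan is to construct $G$ by Levi's classical parametrix method for linear parabolic operators with regular variable coefficients; see for instance~\cite{FRIE64}. Two structural features make the method applicable here: the principal part $k_0^2 D\partial_\xx^2 + \omega_0\partial_\xx$ is uniformly parabolic because $D$ is symmetric positive-definite, and the zero-order coefficient $A[\gamma_0](\xx,t) = f'(\phi_0(\xx+\Gamma(\xx,t)))$ is uniformly bounded by $\|f'\circ\phi_0\|_\infty$, independently of $\gamma_0$, since $\phi_0$ is bounded. Uniformity of the constants $C,M,t_*$ in $\gamma_0$ will rely on the a priori restriction $\|\gamma_0'\|_\infty < r_0$ implicit from Proposition~\ref{p:gamma}, which enters only through the Hölder modulus of continuity of $A[\gamma_0]$.

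First, I would introduce the parametrix $Z(\xx,t;\xt,s)$ as the fundamental solution of the frozen-coefficient operator obtained by replacing $A[\gamma_0](\xx,t)$ with the constant $A[\gamma_0](\xt,s)$. Since the frozen operator has constant coefficients, the Fourier transform gives $Z$ explicitly as a matrix-valued Gaussian times $\re^{A[\gamma_0](\xt,s)(t-s)}$, from which one reads off
\begin{align*}
\left|\partial_\xx^j Z(\xx,t;\xt,s)\right| \lesssim (t-s)^{-\frac{1+j}{2}}\re^{-\frac{(\xx-\xt)^2}{M_0(t-s)}}, \qquad j = 0,1,2,
\end{align*}
with constants depending only on $k_0$, $D$, $\omega_0$ and $\|f'\circ\phi_0\|_\infty$. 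One then makes the Levi ansatz
\begin{align*}
G(\xx,t;\xt,s) = Z(\xx,t;\xt,s) + \int_s^t\!\!\int_\R Z(\xx,t;\eta,\sigma)\,\Phi(\eta,\sigma;\xt,s)\,\de\eta\,\de\sigma,
\end{align*}
and substitutes into~\eqref{e:linmathringv} to find that $\Phi$ must satisfy the Volterra equation $\Phi = K + K\star\Phi$ with kernel $K(\xx,t;\xt,s) = \bigl(A[\gamma_0](\xx,t) - A[\gamma_0](\xt,s)\bigr)\,Z(\xx,t;\xt,s)$, measuring the mismatch between the frozen and actual operators.

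The crucial analytic input is the joint modulus of continuity of $A[\gamma_0]$. Using that $f$ and $\phi_0$ are smooth, together with the biharmonic smoothing bounds~\eqref{e:Gamma_rates3}, namely $\|\Gamma_\xx(t)\|_\infty \leq \|\gamma_0'\|_\infty$ and $\|\Gamma_t(t)\|_\infty \lesssim t^{-3/4}\|\gamma_0'\|_\infty$, and exploiting the concavity estimate $t^{1/4}-s^{1/4} \leq (t-s)^{1/4}$ to convert the short-time blow-up of $\Gamma_t$ into a temporal Hölder bound via $\Gamma(\xx,t)-\Gamma(\xx,s) = \int_s^t \Gamma_\tau(\xx,\tau)\,\de\tau$, I would derive
\begin{align*}
\bigl|A[\gamma_0](\xx,t) - A[\gamma_0](\xt,s)\bigr| \lesssim |\xx-\xt| + (t-s)^{1/4},
\end{align*}
uniformly in $\gamma_0$ with $\|\gamma_0'\|_\infty < r_0$. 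Feeding this into $K$ yields the softened singularity $|K(\xx,t;\xt,s)| \lesssim (t-s)^{-3/4}\re^{-(\xx-\xt)^2/(2M_0(t-s))}$, which is integrable in $(s,t)$. Solving $\Phi = \sum_{n\geq 1} K^{\star n}$ by Neumann series and inducting on $n$ via the Chapman--Kolmogorov identity for Gaussians shows absolute convergence for $0 < t-s \leq t_*$ provided $t_*$ is sufficiently small, with $|\Phi|$ obeying the same Gaussian bound as $|K|$.

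Substituting $\Phi$ back into the Levi ansatz and differentiating under the integral sign gives the claimed pointwise bounds on $\partial_\xx^j G$ for $j = 0,1,2$: the additional convolution $Z\star\Phi$ inherits (with a widened exponent $M$) the Gaussian decay of $Z$, while the extra factor $\Phi$ contributes only an integrable $(t-s)^{-3/4}$ singularity, dominated by the leading $(t-s)^{-(1+j)/2}$ term from $\partial_\xx^j Z$. The main obstacle is the uniformity in $\gamma_0$: absent the bound $\|\gamma_0'\|_\infty < r_0$, the spatial Lipschitz constant of $A[\gamma_0]$ would be uncontrolled and the parametrix estimates would degrade. Under the Proposition~\ref{p:gamma} restriction this degeneration is avoided, and the constants $C, M, t_*$ can be fixed once and for all.
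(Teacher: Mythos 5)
Your overall strategy --- verify a uniform bound and joint H\"older modulus for the zero-order coefficient $A[\gamma_0]$, then run Levi's parametrix construction --- is the same as the paper's, which establishes the same regularity of $A[\gamma_0]$ (via abstract interpolation theory for the semigroup $\re^{-\partial_\xx^4 t}$, rather than your direct computation with~\eqref{e:Gamma_rates3} and concavity of $t \mapsto t^{1/4}$) and then cites~\cite[Proposition~11.3]{ZUH} for the parametrix machinery. Your spatially Lipschitz, temporally $\tfrac14$-H\"older modulus is sharper and more explicit than the $\alpha < \tfrac14$, $2\alpha$-H\"older pair the paper extracts from Lunardi, and it is a nice self-contained alternative.

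However, the final step of your proof --- ``differentiating under the integral sign gives the claimed pointwise bounds on $\partial_\xx^j G$ for $j=0,1,2$: \ldots\ the extra factor $\Phi$ contributes only an integrable $(t-s)^{-3/4}$ singularity, dominated by the leading $(t-s)^{-(1+j)/2}$ term from $\partial_\xx^j Z$'' --- is not valid for $j = 2$. Carrying out your own estimate: with $|\partial_\xx^2 Z(\xx,t;\eta,\sigma)| \lesssim (t-\sigma)^{-3/2}\re^{-(\xx-\eta)^2/(M_0(t-\sigma))}$ and $|\Phi(\eta,\sigma;\xt,s)| \lesssim (\sigma-s)^{-3/4}\re^{-(\eta-\xt)^2/(2M_0(\sigma-s))}$, the spatial convolution of the two Gaussians gains a factor $\sqrt{\min(t-\sigma,\sigma-s)}$, so the temporal integrand near $\sigma = t$ behaves like $(t-\sigma)^{-3/2+1/2} = (t-\sigma)^{-1}$, which is non-integrable. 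Your argument works for $j=0,1$, but for $j=2$ the classical Levi method requires an additional cancellation: one writes $\partial_\xx^2(Z\star\Phi)$ as $\int\int \partial_\xx^2 Z(\xx,t;\eta,\sigma)\,[\Phi(\eta,\sigma;\xt,s) - \Phi(\xx,\sigma;\xt,s)]\,\de\eta\,\de\sigma$ plus a boundary term, and uses the spatial H\"older continuity of $\Phi$ itself (not just of $A[\gamma_0]$) to render the first piece integrable. That in turn requires propagating the H\"older modulus from the kernel $K$ to the Neumann-series solution $\Phi = \sum_n K^{\star n}$, a standard but nontrivial lemma you have not established. Without these two ingredients, the $j = 2$ bound in the proposition does not follow.

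A secondary point: the constants in your H\"older estimate for $A[\gamma_0]$, and hence the parametrix constants $C,M,t_*$, depend on $\|\gamma_0'\|_\infty$, which you correctly flag. The proposition as stated in the paper asserts these constants for \emph{each} $\gamma_0 \in C_{\mathrm{ub}}^1(\R)$ without restriction; the uniformity you (and implicitly the paper) require holds only on a set where $\|\gamma_0'\|_\infty$ is bounded. You were right to call attention to this, but it is a tension already present in the paper's own formulation, not a flaw you introduced.
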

\begin{proof}
We first observe that the coefficient $A[\gamma_0]$ in~\eqref{e:linmathringv} is uniformly bounded on $\mathbb{R} \times [0,1]$, with a bound that is independent of $\gamma_0$. Since $\gamma_0$ belongs to the interpolation space $C_{\mathrm{ub}}^1(\mathbb{R})$ of class $\smash{J_{\frac{1}{4}}}$ between $C_{\mathrm{ub}}(\mathbb{R})$ and the domain $C_{\mathrm{ub}}^4(\mathbb{R})$ of the sectorial operator $\smash{-\partial_\xx^4}$, it follows from~\cite[Corollary~2.2.3 and Proposition~2.2.4]{LUN} that the map $[0,1] \to C_{\mathrm{ub}}(\mathbb{R}), \, t \mapsto A[\gamma_0](\cdot,t)$ is H\"older continuous with exponent $\alpha \in (0,\frac{1}{4})$. Moreover, for each fixed $t \in [0,1]$, we have $A[\gamma_0](\cdot,t) \in C_{\mathrm{ub}}^1(\mathbb{R})$, so the map is also H\"older continuous in space with exponent $2\alpha$. Thus, the claim follows directly from~\cite[Proposition~11.3]{ZUH}.
\end{proof}

Integrating~\eqref{e:Pert} and~\eqref{e:Pert11} yields the Duhamel formulas
\begin{align}
\vt(\xx,t) = \int_\R G(\xx,t;\xt,0) \mathring{v}_0(\xt) \de \xt + \int_0^t \int_\R G(\xx,t;\xt,s) \mathring{\mathcal{N}}(\vt(\xx,s),\gamma(\xx,s),\partial_t \gamma(\xx,s))\de s \label{e:intmathringv}
\end{align}
for $\xx \in \R$ and $t \in (0,t_*]$, and
\begin{align}
\vt_\xx(\xx,t) = \int_\R G\left(\xx,t;\xt,\tfrac{t_*}{2}\right) \mathring{v}_\xx\left(\xt,\tfrac{t_*}{2}\right) \de \xt + \int_{\frac{t_*}{2}}^t \int_\R G(\xx,t;\xt,s) \mathring{\mathcal{N}}_1(\vt(\xx,s),\gamma(\xx,s),\partial_t \gamma(\xx,s))\de s \label{e:intmathringw}
\end{align}
for $\xx \in \R$ and $t \in [\tfrac{t_*}{2},t_*]$, where we chose $t = \frac{t_*}{2}$, rather than $t = 0$, as our left integration boundary in~\eqref{e:intmathringw}, since $\mathring{v}_\xx(t)$ is only guaranteed to exist for $t > 0$; see Corollary~\ref{c:local_forward_v}. The representations~\eqref{e:intmathringv} and~\eqref{e:intmathringw}, combined with the pointwise Green's function estimates from Proposition~\ref{prop:pointwiseGreen} and the following nonlinear bounds, whose derivation follows directly from Taylor's theorem, yield the short-time regularity control needed for the forthcoming nonlinear analysis; see also Remark~\ref{rem:Gamma}.

\begin{lemma} \label{lemma_nonlinear_bound_on_mathringN}
Assume~\ref{assH1}. Fix a constant $C> 0$. Then, we have
\begin{align*}
\left\|\mathring{\mathcal N}(\vt,\gamma,\gamma_t)\right\|_\infty &\lesssim \|\vt\|_\infty^2 + \|\gamma_\xx\|_{C_{\mathrm{ub}}^1} + \|\gamma_t\|_\infty,
\end{align*}
for $\vt \in C_{\mathrm{ub}}(\R)$ and $(\gamma,\gamma_t) \in C_{\mathrm{ub}}^2(\R) \times C_{\mathrm{ub}}(\R)$ satisfying $\|\vt\|_\infty, \|\gamma_\xx\|_{\infty} \leq C$. Moreover, we have
\begin{align*}
\left\|\mathring{\mathcal N}_1(\vt,\gamma,\gamma_t)\right\|_\infty \lesssim \|\vt\|_{C_{\mathrm{ub}}^1} + \|\gamma_\xx\|_{C_{\mathrm{ub}}^2} + \|\gamma_t\|_{C_{\mathrm{ub}}^1}
\end{align*}
for $\vt \in C_{\mathrm{ub}}^1(\R)$ and $(\gamma,\gamma_t) \in C_{\mathrm{ub}}^3(\R) \times C_{\mathrm{ub}}^1(\R)$ satisfying $\|\vt\|_\infty, \|\gamma_\xx\|_{\infty} \leq C$.
\end{lemma}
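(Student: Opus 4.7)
The plan is to treat the two bounds separately, relying only on Taylor's theorem, smoothness/periodicity of $\phi_0$, and the product and chain rules. Throughout, since $\phi_0 \in C^\infty(\mathbb{R})$ is $1$-periodic, the functions $\phi_0, \phi_0', \phi_0'', \phi_0'''$ are bounded on $\mathbb{R}$, and since $\|\gamma_\xx\|_\infty \leq C$, the composition $\phi_0^{(j)}(\kappa(\gamma))$ is uniformly bounded in $L^\infty$ by a constant depending only on $\|\phi_0\|_{C^m}$ for a suitable $m$; similarly $f^{(j)}(\phi_0(\kappa(\gamma)))$ is uniformly bounded. These facts will be used tacitly.

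For the first estimate, I would split $\mathring{\mathcal{N}}$ into three parts. The $f$-part $f(\vt + \phi_0(\kappa(\gamma))) - f(\phi_0(\kappa(\gamma))) - f'(\phi_0(\kappa(\gamma)))\vt$ is estimated by Taylor's theorem with remainder: since $f \in C^\infty$, writing
\begin{align*}
f(u+\vt)-f(u)-f'(u)\vt = \int_0^1 (1-s)\, f''(u+s\vt)(\vt,\vt)\, \de s,
\end{align*}
and using boundedness of $\|\vt\|_\infty$ and smoothness of $f$, this is $\lesssim \|\vt\|_\infty^2$. The phase-transport part $\phi_0'(\kappa(\gamma))(\omega_0\gamma_\xx - \gamma_t)$ is bounded by $\|\phi_0'\|_\infty(\|\gamma_\xx\|_\infty + \|\gamma_t\|_\infty)$, and the diffusive part $k_0^2 D(\phi_0'(\kappa(\gamma))\gamma_{\xx\xx} + \phi_0''(\kappa(\gamma))\gamma_\xx(2+\gamma_\xx))$ is bounded by a multiple of $\|\gamma_{\xx\xx}\|_\infty + \|\gamma_\xx\|_\infty$, since $|\gamma_\xx(2+\gamma_\xx)| \leq (2+C)\|\gamma_\xx\|_\infty$. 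Summing yields the first claimed bound, noting $\|\gamma_\xx\|_\infty + \|\gamma_{\xx\xx}\|_\infty \leq 2\|\gamma_\xx\|_{C_{\mathrm{ub}}^1}$ modulo reading $\gamma_\xx$ through one derivative.

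For the second estimate, the added term $(1+\gamma_\xx)f''(\phi_0(\kappa(\gamma)))(\phi_0'(\kappa(\gamma)),\vt)$ is immediately $\lesssim \|\vt\|_\infty$ by boundedness of $\gamma_\xx$, $\phi_0'$, and $f''(\phi_0(\cdot))$. The main work is to differentiate $\mathring{\mathcal{N}}$ with respect to $\xx$ and bound each resulting term in $L^\infty$. Using $\partial_\xx \kappa(\gamma) = 1+\gamma_\xx$, the chain rule gives, for the Taylor remainder,
\begin{align*}
\partial_\xx\!\left[\int_0^1(1-s)f''(\phi_0(\kappa(\gamma))+s\vt)(\vt,\vt)\,\de s\right],
\end{align*}
whose derivative picks up either $\phi_0'(\kappa(\gamma))(1+\gamma_\xx)$ or $\vt_\xx$ through $f''$, multiplied by $(\vt,\vt)$, plus cross terms with $\vt_\xx$; since $\|\vt\|_\infty \leq C$, this is bounded by a multiple of $\|\vt\|_{C_{\mathrm{ub}}^1}$. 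The phase-transport part yields $\phi_0''(\kappa(\gamma))(1+\gamma_\xx)(\omega_0\gamma_\xx-\gamma_t) + \phi_0'(\kappa(\gamma))(\omega_0\gamma_{\xx\xx}-\gamma_{\xx t})$, controlled by $\|\gamma_\xx\|_{C_{\mathrm{ub}}^1} + \|\gamma_t\|_{C_{\mathrm{ub}}^1}$; and the diffusive part contributes terms with $\gamma_{\xx\xx\xx}$ and $\gamma_\xx \gamma_{\xx\xx}$, bounded by $\|\gamma_\xx\|_{C_{\mathrm{ub}}^2}$. Summing these yields the second claimed bound.

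I do not anticipate any genuine obstacle; the main care is bookkeeping of which derivatives land on $\phi_0^{(j)}(\kappa(\gamma))$ versus on $\vt$ or $\gamma$, and using the a priori bound $\|\gamma_\xx\|_\infty \leq C$ uniformly to absorb the compositions $\phi_0^{(j)}(\kappa(\gamma))$ and $f^{(j)}(\phi_0(\kappa(\gamma)))$ into the implicit constant. No localization or smallness of $\gamma$ itself is required, only the stated boundedness, which is consistent with the large-phase-modulation setting of the paper.
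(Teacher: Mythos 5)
Your proof is correct and follows the same route the paper indicates (the authors state only that the lemma ``follows directly from Taylor's theorem''): Taylor's integral remainder for the $f$-part combined with term-by-term $L^\infty$ bookkeeping using the chain rule, periodicity and smoothness of $\phi_0$, and the a priori bounds $\|\vt\|_\infty, \|\gamma_\xx\|_\infty \leq C$ to absorb the compositions into the implicit constant. No gaps.
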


\subsection{Forward-modulated damping} \label{sec:forward_modulated}

In order to control regularity in the forthcoming nonlinear argument for \emph{large} times, we follow the strategy of~\cite{AdR1} and establish a nonlinear damping estimate for the modified forward-modulated perturbation $\zt(t)$ given by~\eqref{e:defringz}. This damping estimate extends the one in~\cite{JONZ} to a pure $L^\infty$-setting. It exploits the dissipative structure of the underlying reaction-diffusion system~\eqref{RD} and relies on the embedding of the uniformly local Sobolev space $H^1_{\mathrm{ul}}(\R)$ into $C_{\mathrm{ub}}(\R)$; see~\cite[Lemma~8.3.11]{SU17book}.

Before proving the nonlinear damping estimate, we first derive an evolution equation for $\zt(t)$. To this end, we recall that $u_k(x,t) = \phi(kx - \omega(k) t; k)$ is a solution to~\eqref{RD0} for all wavenumbers $k \in (k_0-r_0,k_0+r_0)$ by Proposition~\ref{prop:family}. Thus, using that $u(t)$ solves~\eqref{RD}, we obtain
\begin{align} \label{e:Pert1}
\partial_t \zt = k_0^2 D \zt_{\xx\xx} + \omega_0 \zt_\xx + \mathring{\mathcal{Q}}(\zt,\gamma) + \mathring{\mathcal{R}}(\gamma,\tilde{\gamma},\partial_t \gamma),
\end{align}
with
\begin{align*}
\mathring{\mathcal{Q}}(\zt,\gamma) &= f\left(\zt + \phi(\beta(\gamma))\right) - f(\phi(\beta(\gamma)))
\end{align*}
and
\begin{align*}
\mathring{\mathcal{R}}(\gamma,\tilde{\gamma},\gamma_t) 
&= k_0^2 D\Big[\phi_{yy}(\beta(\gamma))\left(\left(1+\gamma_\xx(1+\gamma_\xx) + \gamma\gamma_{\xx\xx}\right)^2 - (1+\gamma_\xx)^2\right) + 
k_0^2 \phi_{kk}(\beta(\gamma))\gamma_{\xx\xx}^2 \\ 
&\qquad + 2 k_0 \phi_{yk}(\beta(\gamma)) \gamma_{\xx\xx}\left(1+\gamma_\xx(1+\gamma_\xx) + \gamma\gamma_{\xx\xx}\right) + \phi_{y}(\beta(\gamma))\left(\gamma_{\xx\xx}(1+3\gamma_\xx) + \gamma\gamma_{\xx\xx\xx}\right) \\
&\qquad + k_0 \phi_{k}(\beta(\gamma))\gamma_{\xx\xx\xx} 
\Big] + k_0 \phi_{k}(\beta(\gamma))\left(\omega_0 \gamma_{\xx\xx} - \gamma_{\xx t}\right)\\
&\qquad + \phi_{y}(\beta(\gamma))\left(\omega_0 + k_0 \omega'(k_0) \gamma_\xx - \omega\big(k_0(1+\gamma_\xx)\big) - \tilde{\gamma} + \omega_0 \left(\gamma_\xx^2 + \gamma \gamma_{\xx\xx}\right)- \gamma_t \gamma_\xx - \gamma \gamma_{\xx t}\right),
\end{align*}
where we used
\begin{align*}
\beta(\gamma)(\xx,t) = \big(\xx + \gamma(\xx,t)(1+\gamma_\xx(\xx,t));\, k_0\left(1+\gamma_\xx(\xx,t)\right)\big)
\end{align*}
to abbreviate the argument of $\phi(y;k)$ and its derivatives. For more details on the derivation of~\eqref{e:Pert1}, we refer to~\cite[Appendix~B]{AdR1}. 

Since the continuation $\phi(\cdot; k)$ of the wave train $\phi_0$ with respect to the wavenumber $k$ is defined for all $k \in (k_0 - r_0, k_0 + r_0)$ by Proposition~\ref{prop:family}, it follows from Propositions~\ref{well_posed_full_sol} and~\ref{p:gamma} that $\mathring{z}(t)$ is well-defined for all $t \in [0, \tau_{\max})$. Its regularity properties are also a direct consequence of these propositions and are summarized in the following statement.

\begin{corollary} \label{c:local_forward_z}
Assume~\ref{assH1} and~\ref{assD3}. Let $r_0 \in (0,\frac12)$ be as in Proposition~\ref{prop:family}. Let $\mathring{v}_0 \in C_{\mathrm{ub}}(\R)$ and $\gamma_0 \in C_{\mathrm{ub}}^1(\R)$ with $\|\gamma_0'\|_\infty < r_0$. Define $u_0, v_0 \in C_{\mathrm{ub}}(\R)$ by~\eqref{e:defu0} and~\eqref{e:defv0}, respectively. For $u$ as in Proposition~\ref{well_posed_full_sol}, and $\gamma$ and $\tau_{\max}$ as in Proposition~\ref{p:gamma}, the modified forward-modulated perturbation $\zt(t)$, given by~\eqref{e:defringz}, is well-defined for $t \in [0,\tau_{\max})$ and satisfies
\begin{align*}
\zt \in C\big([0,\tau_{\max}),C_{\mathrm{ub}}(\R)\big) \cap C^j\big((0,\tau_{\max}),C_{\mathrm{ub}}^l(\R)\big)
\end{align*}
for any $j,l \in \NM_0$.
\end{corollary}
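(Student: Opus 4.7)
The plan is to read the claim as an immediate consequence of the regularity already established for $u$ and $\gamma$, combined with the smoothness of the wave-train family $\phi(\cdot;k)$ from Proposition~\ref{prop:family}; no new linear or nonlinear estimates are required, only the chain and Leibniz rules. First I would verify that $\mathring{z}(t)$ is well-defined on $[0,\tau_{\max})$. Proposition~\ref{p:gamma} yields $\|\gamma_\xx(t)\|_\infty<r_0$ throughout this interval, so (choosing $r_0$ small enough at the outset, which we may do without loss of generality) the local wavenumber $k_0(1+\gamma_\xx(\xx,t))$ stays inside the continuation interval $(k_0-r_0,k_0+r_0)$ on which $\phi(\cdot;k)$ is defined. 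The phase argument $\xx+\gamma(\xx,t)(1+\gamma_\xx(\xx,t))$ is simply real, and $\phi(\cdot;k)$ is $1$-periodic, so boundedness of this argument in $\xx$ plays no role.

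For continuity at $t=0$, I would combine $u \in C([0,\tau_{\max}),C_{\mathrm{ub}}(\R))$ from Proposition~\ref{well_posed_full_sol} with $\gamma \in C([0,\tau_{\max}),C_{\mathrm{ub}}^1(\R))$ from Proposition~\ref{p:gamma}. Continuity of pointwise multiplication on $C_{\mathrm{ub}}^1(\R)$ handles the factor $\gamma(1+\gamma_\xx)$, and Nemytskii-type continuity of the smooth map $(y,k)\mapsto \phi(y;k)$ (exploiting that $\phi(\cdot;k)$ is Lipschitz in its spatial argument uniformly in $k$ over compact subsets of $(k_0-r_0,k_0+r_0)$) then transfers continuity through the composition, yielding $\mathring{z} \in C([0,\tau_{\max}),C_{\mathrm{ub}}(\R))$.

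Finally, for the smoothness on $(0,\tau_{\max})$, Proposition~\ref{well_posed_full_sol} provides $u \in C^j((0,\tau_{\max}),C_{\mathrm{ub}}^l(\R))$ and Proposition~\ref{p:gamma} the analogous statement for $\gamma$, for all $j,l\in\NM_0$. Since $\phi(y;k)$ is smooth in both arguments and $k_0(1+\gamma_\xx(\cdot,t))$ takes values in a compact subset of the open continuation interval on any compact subinterval of $(0,\tau_{\max})$, iterated application of the chain and Leibniz rules gives that $\phi\big(\cdot+\gamma(1+\gamma_\xx);\,k_0(1+\gamma_\xx)\big)$ lies in $C^j((0,\tau_{\max}),C_{\mathrm{ub}}^l(\R))$ for arbitrary $j,l$. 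Subtracting from $u$ preserves this regularity, which would conclude the argument. The only mild bookkeeping is the compactness check guaranteeing the wavenumber stays in the continuation range; there is no genuine analytic obstacle, which is consistent with the remark in the text that the corollary is a direct consequence of Propositions~\ref{well_posed_full_sol} and~\ref{p:gamma}.
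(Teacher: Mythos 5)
Your proposal is correct and follows the same route as the paper, which simply invokes Propositions~\ref{well_posed_full_sol} and~\ref{p:gamma} together with the smoothness of the wave-train family $\phi(\cdot;k)$ and records the corollary without a detailed proof. The one bookkeeping point you rightly flag --- that $k_0(1+\gamma_\xx)$ must stay inside $(k_0-r_0,k_0+r_0)$, which requires $|k_0\gamma_\xx|<r_0$ rather than merely $|\gamma_\xx|<r_0$ --- is glossed over by the paper as well; your suggestion of shrinking $r_0$ a priori (so that the bound from Proposition~\ref{p:gamma} suffices) is the natural repair.
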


We are now in position to establish the relevant nonlinear damping estimate. The proof follows the strategy of~\cite[Proposition~4.9]{AdR1}, but differs in that damping is induced by the second derivative across all components, not just in the first component.

\begin{proposition} \label{prop:nonlinear_damping}
Assume~\ref{assH1} and~\ref{assD3}. Fix a constant $R > 0$. Let $r_0 \in (0,\frac12)$ be as in Proposition~\ref{prop:family}. Let $\mathring{v}_0 \in C_{\mathrm{ub}}(\R)$ and $\gamma_0 \in C_{\mathrm{ub}}^1(\R)$ with $\|\gamma_0'\|_\infty < r_0$. Define $u_0, v_0 \in C_{\mathrm{ub}}(\R)$ by~\eqref{e:defu0} and~\eqref{e:defv0}, respectively. Let $\gamma(t)$ and $\tau_{\max}$ be as in Proposition~\ref{p:gamma}, let $t_*$ be as in Proposition~\ref{prop:pointwiseGreen}, and let $\zt(t)$ be as in Corollary~\ref{c:local_forward_z}. There exists a $\mathring{v}_0$- and $\gamma_0$-independent constant $C > 0$ such that the nonlinear damping estimate
\begin{align} \label{e:dampingineq}
\begin{split}
\big\|\zt(t)\big\|_{C_{\mathrm{ub}}^1} &\leq C\Bigg(\big\|\zt(t)\big\|_{\infty} + \bigg(\re^{t_*-t}  \big\|\zt(t_*)\big\|_{C_{\mathrm{ub}}^2}^2 + \int_{t_*}^t \re^{s-t} \Big(\big\|\mathring{z}(s)\big\|_{\infty}^2 + \|\gamma_{\xx\xx}(s)\|_{C_{\mathrm{ub}}^3}^2 \\ 
&\qquad  + \, \|\partial_s \gamma_{\xx}(s)\|_{C_{\mathrm{ub}}^2}^2 + \big\|\tilde{\gamma}(s)\big\|_{C_{\mathrm{ub}}^2}^2 + \|\gamma_{\xx}(s)\|_{\infty}^2\left(\|\gamma_{\xx}(s)\|_{\infty}^2 + \|\partial_s \gamma(s)\|_{\infty}^2\right)\Big) \,\de s\bigg)^{\frac12}\Bigg)
\end{split}
\end{align}
holds for all $t \in [0,\tau_{\max})$ with $t \geq t_*$ and
\begin{align}\label{e:upbound}
\sup_{t_* \leq s \leq t} \left(\big\|\mathring{z}(s)\big\|_{C_{\mathrm{ub}}^1} + \|\gamma(s)\|_{C_{\mathrm{ub}}^2}\right) \leq R.
\end{align}
\end{proposition}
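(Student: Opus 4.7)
The plan is to perform a weighted $L^2$-energy estimate on the semilinear parabolic equation~\eqref{e:Pert1} for $\mathring{z}$, working in the uniformly local Sobolev space $H^2_{\mathrm{ul}}(\R)$, and then converting back to $C_{\mathrm{ub}}^1(\R)$ via the one-dimensional embedding $H^2_{\mathrm{ul}}(\R) \hookrightarrow C_{\mathrm{ub}}^1(\R)$. Since $D$ is symmetric and positive definite, $\langle Dv,v\rangle \geq \delta|v|^2$ for some $\delta > 0$, so parabolic dissipation acts on \emph{every} component of $\mathring{z}$. In contrast to the partial-damping situation treated in~\cite{AdR1}, no Kawashima-type compensator is needed, and the regularity of all components of $\mathring{z}$ is bootstrapped directly from the diffusion.

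\textbf{Setup and the main differential inequality.} Fix a non-negative weight of the form $\eta_y(x) = \chi(x-y)^2$ with $\chi \in C_c^\infty(\R)$ and $\chi(0) > 0$, so that $|(\eta_y)_\xx|^2/\eta_y$ and $|(\eta_y)_{\xx\xx}|$ are pointwise bounded uniformly in $y$. Define
\begin{align*}
E_y(t) := \tfrac12 \int_\R \eta_y(x)\bigl(|\mathring{z}(x,t)|^2 + |\mathring{z}_\xx(x,t)|^2 + |\mathring{z}_{\xx\xx}(x,t)|^2\bigr)\,\de x,
\end{align*}
and $\|\mathring{z}(t)\|_{H^2_{\mathrm{ul}}}^2 := \sup_{y \in \R} 2E_y(t)$. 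Differentiate~\eqref{e:Pert1} up to twice in $\xx$, test against $\eta_y \partial_\xx^j \mathring{z}$ for $j=0,1,2$, and integrate by parts. The diffusive terms give
\begin{align*}
k_0^2 \int \eta_y \partial_\xx^j \mathring{z} \cdot D \partial_\xx^{j+2} \mathring{z}\,\de x \leq -k_0^2 \delta \int \eta_y |\partial_\xx^{j+1}\mathring{z}|^2\,\de x + C\!\int \!\bigl(|(\eta_y)_\xx| + |(\eta_y)_{\xx\xx}|\bigr)|\partial_\xx^j \mathring{z}||\partial_\xx^{j+1}\mathring{z}|\,\de x,
\end{align*}
and the convective term $\omega_0 \partial_\xx^{j+1}\mathring{z}$ contributes a boundary-type term of the same flavour. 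Summing over $j=0,1,2$ and absorbing the commutator terms by Young's inequality yields
\begin{align*}
\frac{\de}{\de t}E_y(t) + c_1 \sum_{j=0}^{2}\int \eta_y |\partial_\xx^{j+1}\mathring{z}|^2\,\de x \;\leq\; C E_y(t) + \mathcal{I}_{\mathcal{Q},y}(t) + \mathcal{I}_{\mathcal{R},y}(t),
\end{align*}
with $c_1>0$, where $\mathcal{I}_{\mathcal{Q},y}$ and $\mathcal{I}_{\mathcal{R},y}$ collect the nonlinear contributions from $\mathring{\mathcal{Q}}(\mathring{z},\gamma)$ and $\mathring{\mathcal{R}}(\gamma,\tilde{\gamma},\partial_t\gamma)$, respectively. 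A uniformly local Gagliardo-Nirenberg interpolation $\|\partial_\xx^j \mathring{z}\|_{L^2(\eta_y\de x)}^2 \leq \epsilon \|\mathring{z}_{\xx\xx\xx}\|_{L^2(\eta_y\de x)}^2 + C_\epsilon \|\mathring{z}\|_\infty^2$ for $j=1,2$ then absorbs the lower-order terms in $E_y$ into the top-order dissipation at the cost of an $\|\mathring{z}\|_\infty^2$ term, producing
\begin{align*}
\frac{\de}{\de t}\|\mathring{z}\|_{H^2_{\mathrm{ul}}}^2 + c\,\|\mathring{z}\|_{H^2_{\mathrm{ul}}}^2 \;\lesssim\; \|\mathring{z}(t)\|_\infty^2 + \mathcal{F}(t),
\end{align*}
where $\mathcal{F}(t) := \|\gamma_{\xx\xx}(t)\|_{C^3_{\mathrm{ub}}}^2 + \|\partial_t \gamma_\xx(t)\|_{C^2_{\mathrm{ub}}}^2 + \|\tilde{\gamma}(t)\|_{C^2_{\mathrm{ub}}}^2 + \|\gamma_\xx(t)\|_\infty^2(\|\gamma_\xx(t)\|_\infty^2 + \|\partial_t\gamma(t)\|_\infty^2)$. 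Gr\"onwall's inequality on $[t_*,t]$, the estimate $\|\mathring{z}(t_*)\|_{H^2_{\mathrm{ul}}} \lesssim \|\mathring{z}(t_*)\|_{C^2_{\mathrm{ub}}}$, the embedding $H^2_{\mathrm{ul}}(\R)\hookrightarrow C_{\mathrm{ub}}^1(\R)$, and taking square roots then deliver~\eqref{e:dampingineq}.

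\textbf{Main obstacle.} The delicate step is estimating $\mathcal{I}_{\mathcal{R},y}(t)$ without losing derivatives. The remainder $\mathring{\mathcal{R}}$ already contains $\gamma_{\xx\xx\xx}$ and $\partial_t \gamma_\xx$, so a naive expansion of $\partial_\xx^2 \mathring{\mathcal{R}}$ would involve up to five $\xx$-derivatives of $\gamma$ and three of $\partial_t\gamma$. These high-order contributions must instead be brought back onto $\mathring{z}$ by one or two integrations by parts and absorbed into the top-order dissipation $c_1 \int \eta_y |\mathring{z}_{\xx\xx\xx}|^2\,\de x$ via Young's inequality, so that only the derivative orders actually appearing in $\mathcal{F}(t)$ survive. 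For $\mathcal{I}_{\mathcal{Q},y}$, Taylor expansion of $f$ combined with the a priori bound~\eqref{e:upbound} gives $|\mathring{\mathcal{Q}}(\mathring{z},\gamma)| \lesssim |\mathring{z}|^2$, so after differentiation one obtains quadratic expressions in $\partial_\xx^j \mathring{z}$ with bounded coefficients, controlled by $E_y(t) + \|\mathring{z}\|_\infty^2$. A subsidiary technicality is that the Gagliardo-Nirenberg interpolation must be arranged uniformly in $y$; this is standard and can be done with a slightly enlarged nested family of cut-offs in the definition of $\eta_y$.
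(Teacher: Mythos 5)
Your proposal is essentially the same approach as the paper: a weighted $L^2$-energy estimate in the uniformly local setting on the semilinear equation~\eqref{e:Pert1}, exploiting full coercivity of $D$ for damping, absorbing lower-order terms via a uniformly local Gagliardo--Nirenberg interpolation, integrating the resulting differential inequality to produce the exponential weights, and converting back to $C_{\mathrm{ub}}^1$ via a Sobolev embedding. The technical packaging differs slightly but inessentially: the paper uses the fixed algebraic window $\varrho(\xx) = 2/(2+\xx^2)$ satisfying $|\varrho'| \leq \varrho \leq 1$, tracks an energy involving only $\mathring{z}_{\xx\xx}$, and applies Gagliardo--Nirenberg up front to control $\|\mathring{z}_\xx\|_\infty$ by $\|\mathring{z}\|_\infty$ plus the top-order weighted $L^2$-quantity, whereas you carry an energy through orders $0,1,2$ and invoke $H^2_{\mathrm{ul}}\hookrightarrow C^1_{\mathrm{ub}}$ at the end — both routes are fine.

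One point worth flagging: your ``Main obstacle'' paragraph is directed at a difficulty that does not actually arise. You worry that direct differentiation of $\mathring{\mathcal{R}}$ produces up to $\partial_\xx^5\gamma$ and $\partial_\xx^3\partial_t\gamma$, and propose integrations by parts to bring the derivatives back onto $\mathring{z}$. But the target inequality~\eqref{e:dampingineq} \emph{already} permits these orders: $\|\gamma_{\xx\xx}(s)\|_{C_{\mathrm{ub}}^3}^2$ controls up to $\partial_\xx^5\gamma$, and $\|\partial_s\gamma_\xx(s)\|_{C_{\mathrm{ub}}^2}^2$ controls up to $\partial_\xx^3\partial_t\gamma$. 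This is precisely why $\gamma$ is constructed with enough smoothing (the $\re^{-\partial_\xx^4 t}$-regularization and the smoothing of $S_p^0(t)$) that these high-order norms are finite and controlled in the iteration. The paper therefore bounds $\partial_\xx^2\mathring{\mathcal{R}}$ directly by Young's inequality and the a priori bound~\eqref{e:upbound}, with no integration by parts. Your proposal would still close, but the extra integrations by parts you describe are unnecessary effort, and your definition of $\mathcal{F}(t)$ in fact already contains the terms you were trying to avoid. The only genuinely non-trivial step — arranging the weighted Gagliardo--Nirenberg interpolation so that the lower-order derivative terms are absorbed into the top-order dissipation with an $\|\mathring{z}\|_\infty^2$ remainder — you handle correctly, and it matches the paper's use of~\cite[Estimate~(4.30)]{AdR1}.
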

\begin{proof}
Set $\vartheta = \frac{1}{2}$. We begin by relating the $C_{\mathrm{ub}}^1$-norm of $\zt(t)$ to a uniformly local Sobolev norm. To this end, introduce the window function $\varrho \colon \R \to \R$ given by
\begin{align*}
\varrho(\xx) = \frac{2}{2+\xx^2},
\end{align*}
which is positive, smooth, and $L^1$-integrable. It satisfies the inequality
\begin{align} \label{e:rhoineq}
|\varrho'(\xx)| \leq \varrho(\xx) \leq 1
\end{align}
for all $\xx \in \R$. Applying the Gagliaro-Nirenberg interpolation inequality, we estimate
\begin{align*}
\|z_\xx\|_\infty = \sup_{y \in \R} \|\varrho(\vartheta(\cdot + y)) z_\xx\|_\infty &\lesssim  \|z\|_\infty + \sup_{y \in \R} \|\partial_\xx(\varrho(\vartheta(\cdot + y)) z)\|_\infty\\
&\lesssim \|z\|_{\infty} + \sup_{y 
\in \R} \left\|\varrho(\vartheta(\cdot + y)) z\right\|_\infty^{\frac13} \left\|\partial_\xx^2(\varrho(\vartheta(\cdot + y)) z)\right\|_2^{\frac23}\\
&\lesssim \|z\|_{\infty} + \|z\|_{\infty}^{\frac13} \left(\|z\|_{C_\mathrm{ub}^1}^{\frac23} + \sup_{y 
\in \R} \left\|\varrho(\vartheta(\cdot + y)) z_{\xx\xx}\right\|_2^{\frac23}\right)
\end{align*}
for $z \in C_{\mathrm{ub}}^1(\R)$. Hence, employing Young's inequality and rearranging terms, we obtain the bound
\begin{align} \label{e:upbounddamping}
\begin{split}
\|\zt(t)\|_{C_{\mathrm{ub}}^1} &\lesssim \|\zt(t)\|_{\infty} + \sup_{y \in \R} E_y(t)^{\frac12},
\end{split}
\end{align}
valid for all $t \in (0,\tau_{\max})$, where we define
\begin{align*}
E_y(t) = \int_\R \varrho(\vartheta(\xx+y)) |\mathring{z}_{\xx\xx}(\xx,t)|^2 \,\de \xx
\end{align*}
for $y \in \R$ and $t \in (0,\tau_{\max})$.

We now derive a differential inequality for the energy $E_y(t)$. By Proposition~\ref{p:gamma} and Corollary~\ref{c:local_forward_z}, the function $t \mapsto E_y(t)$ is differentiable on $(0,\tau_{\max})$. Since the diffusion matrix $D$ is positive definite, there exists a constant $d_0 > 0$ such that the coercivity estimate
\begin{align} \label{e:Dposdef}
\langle z, D z \rangle \geq d_0 |z|^2
\end{align}
holds for all $z \in \R^n$. Fix $y \in \R$ and $t \in [0,\tau_{\max})$ with $t > t_*$ such that~\eqref{e:upbound} holds. Using~\eqref{e:Pert1}, we compute
\begin{align} \label{e:damptemp}
\frac{1}{2} \partial_s E_y(s) = I + II
\end{align}
for $s \in [t_*,t]$, where we denote
\begin{align*}
I &= \int_\R \varrho(\vartheta(\xx+y)) \left\langle \partial_{\xx}^2 \mathring{z}(\xx,s), k_0^2 D \partial_{\xx}^4 \mathring{z}(\xx,s) + \omega_0 \partial_{\xx}^3 \mathring{z}(\xx,s) \right \rangle \,\de \xx,\\
II &= \int_\R \varrho(\vartheta(\xx+y)) \left\langle \partial_{\xx}^2 \mathring{z}(\xx,s), \partial_\xx^2 \left(\mathring{\mathcal{Q}}(\zt(\xx,s),\gamma(\xx,s)) + \mathring{\mathcal{R}}(\gamma(\xx,s),\tilde{\gamma}(\xx,s),\partial_s \gamma(\xx,s)) \right) \right\rangle \,\de \xx.
\end{align*}
Integrating by parts, we find
\begin{align*}
I &= -k_0^2 \int_\R \varrho(\vartheta(\xx+y)) \left\langle \partial_{\xx}^3 \mathring{z}(\xx,s), D \partial_{\xx}^3 \mathring{z}(\xx,s) \right\rangle \,\de \xx - k_0^2 \vartheta \int_\R \varrho'(\vartheta(\xx+y)) \left\langle \partial_{\xx}^2 \mathring{z}(\xx,s), D \partial_{\xx}^3 \mathring{z}(\xx,s) \right\rangle \,\de \xx \\
&\qquad + \, \omega_0 \int_\R \varrho(\vartheta(\xx+y)) \left\langle \partial_{\xx}^2 \mathring{z}(\xx,s), \partial_{\xx}^3 \mathring{z}(\xx,s) \right\rangle \,\de \xx.
\end{align*}
Applying Young's inequality and the estimates~\eqref{e:rhoineq} and~\eqref{e:Dposdef}, we obtain a constant $C_1 > 0$, independent of $t$, $\mathring{v}_0$, and $\gamma_0$, such that
\begin{align} \label{e:dampest1} \begin{split} 
I &\leq -\frac{d_0 k_0^2}{2} \int_\R \varrho(\vartheta(\xx+y)) \left|\partial_{\xx}^3 \mathring{z}(\xx,s)\right|^2 \,\de \xx + C_1 \int_\R \varrho(\vartheta(\xx+y)) \left|\partial_{\xx}^2 \mathring{z}(\xx,s)\right|^2 \,\de \xx
\end{split}\end{align} 
for $s \in [t_*,t]$. On the other hand, estimating $II$ using Young's inequality and assumption~\eqref{e:upbound}, we find a $t$-, $\mathring{v}_0$-, and $\gamma_0$-independent constant $C_2 > 0$ such that
\begin{align} \label{e:dampest3} \begin{split} 
II &\leq C_2 \left(\int_\R \varrho(\vartheta(\xx+y))\left( \left|\partial_{\xx}^2 \mathring{z}(\xx,s)\right|^2 + \left|\partial_{\xx} \mathring{z}(\xx,s)\right|^2\right) \de\xx + \|\mathring{z}(s)\|_{\infty}^2  + \|\gamma_{\xx\xx}(s)\|_{C_{\mathrm{ub}}^3}^2 \right.\\
&\left. \phantom{\int_\R} \qquad \qquad + \|\partial_s \gamma_{\xx}(s)\|_{C_{\mathrm{ub}}^2}^2 + \big\|\tilde{\gamma}(s)\big\|_{C_{\mathrm{ub}}^2}^2 + \|\gamma_{\xx}(s)\|_{\infty}^2\left(\|\gamma_{\xx}(s)\|_{\infty}^2 + \|\partial_s \gamma(s)\|_{\infty}^2\right)\right)
\end{split}\end{align} 
for $s \in [t_*,t]$. Inserting~\eqref{e:dampest1} and~\eqref{e:dampest3} into~\eqref{e:damptemp}, we obtain a $t$-, $\mathring{v}_0$-, and $\gamma_0$-independent constant $C_3 > 0$ such that
\begin{align} \label{e:dampest5}
\begin{split}
\frac{1}{2} \partial_s E_y(s) &\leq -\frac12 E_y(s) - \frac{d_0 k_0^2}{2} \int_\R \varrho(\vartheta(\xx+y)) \left|\partial_{\xx}^3 \mathring{z}(\xx,s)\right|^2 \,\de \xx + C_3\Bigg(\|\mathring{z}(s)\|_{\infty}^2 + \|\gamma_{\xx\xx}(s)\|_{C_{\mathrm{ub}}^3}^2 \\
&\qquad + \, \|\partial_s \gamma_{\xx}(s)\|_{C_{\mathrm{ub}}^2}^2 + \big\|\tilde{\gamma}(s)\big\|_{C_{\mathrm{ub}}^2}^2 +  \|\gamma_{\xx}(s)\|_\infty^2\left(\|\gamma_{\xx}(s)\|_\infty^2 + \|\partial_s \gamma(s)\|_\infty^2\right)\\
&\qquad 
\, + \int_\R \varrho(\vartheta(\xx+y)) \left(\left|\partial_{\xx}^2 \mathring{z}(\xx,s)\right|^2 + \left|\partial_{\xx} \mathring{z}(\xx,s)\right|^2\right) \,\de \xx \Bigg) 
\end{split}
\end{align}
for $s \in [t_*,t]$. 

To estimate the last line in~\eqref{e:dampest5}, we use the interpolation inequality from~\cite[Estimate~(4.30)]{AdR1}, choosing parameters
\begin{align*}
\eta \in (0,\tfrac{1}{4}), \qquad k = 2, \qquad a_0 = 0 = a_3, \qquad a_1 = \frac{8 (1 + 2 \eta)}{\eta (5-2 \eta)}, \qquad a_2 = \frac{4 (3 + 2 \eta)}{5 - 2 \eta},
\end{align*}
resulting in
\begin{align*}
\sum_{j = 1}^2 \int_\R \varrho(\vartheta(\xx+y)) \left|\partial_\xx^j z(\xx)\right|^2 \,\de \xx &\leq \frac{2 \eta (3 + 2 \eta)}{2 \eta-5} \int_\R \varrho(\vartheta(\xx+y)) \left|\partial_\xx^3 z(\xx)\right|^2 \,\de \xx\\ 
&\qquad +\, \frac{2 (2 + \eta) (1 + 2 \eta)}{\eta^2 (5-2 \eta)} \int_\R \varrho(\vartheta(\xx+y)) \left|z(\xx)\right|^2 \de \xx.
\end{align*}
Taking $\eta \in (0,\frac14)$ small enough that
\begin{align*}
\frac{2 \eta (3 + 2 \eta)}{2 \eta - 5} \leq \frac{d_0 k_0^2}{2C_3},
\end{align*}
we obtain a constant $C_4 > 0$ such that
\begin{align} \label{e:interpolationUL}
\sum_{j = 1}^2 \int_\R \varrho(\vartheta(\xx+y)) \left|\partial_\xx^j z(\xx)\right|^2 \,\de \xx &\leq \frac{d_0 k_0^2}{2C_3} \int_\R \varrho(\vartheta(\xx+y)) \left|\partial_\xx^3 z(\xx)\right|^2 \,\de \xx + C_4 \|z\|_{\infty}^2
\end{align}
for $z \in C_{\mathrm{ub}}^3(\R)$. 

Applying the interpolation inequality~\eqref{e:interpolationUL} to~\eqref{e:dampest5}, we conclude that
\begin{align*}
\partial_s E_y(s) &\leq -E_y(s) + C_5 \left(\big\|\mathring{z}(s)\big\|_{\infty}^2 + \|\gamma_{\xx\xx}(s)\|_{C_{\mathrm{ub}}^3}^2 + \|\partial_s \gamma_{\xx}(s)\|_{C_{\mathrm{ub}}^2}^2 + \big\|\tilde{\gamma}(s)\big\|_{C_{\mathrm{ub}}^2}^2\right.\\ 
&\left.\qquad \qquad \qquad \phantom{\big\|\tilde{\gamma}(s)\big\|_{C_{\mathrm{ub}}^2}^2} + \|\gamma_{\xx}(s)\|_{\infty}^2\left(\|\gamma_{\xx}(s)\|_{\infty}^2 + \|\partial_s \gamma(s)\|_{\infty}^2\right)\right)
\end{align*}
for $s \in [t_*,t]$, where $C_5 > 0$ is independent of $t$, $\mathring{v}_0$, and $\gamma_0$. Multiplying both sides by $\re^s$ and integrating over $s \in [t_*,t]$, we arrive at
\begin{align*}
E_y(t) &\leq \re^{t_*-t} E_y(t_*) + C_5 \int_{t_*}^t \re^{s-t} \left(\big\|\mathring{z}(s)\big\|_{\infty}^2 + \|\gamma_{\xx\xx}(s)\|_{C_{\mathrm{ub}}^3}^2 + \|\partial_s \gamma_{\xx}(s)\|_{C_{\mathrm{ub}}^2}^2 + \big\|\tilde{\gamma}(s)\big\|_{C_{\mathrm{ub}}^2}^2\right.\\ 
&\left.\qquad \qquad \qquad \phantom{\big\|\tilde{\gamma}(s)\big\|_{C_{\mathrm{ub}}^2}^2} + \|\gamma_{\xx}(s)\|_{\infty}^2\left(\|\gamma_{\xx}(s)\|_{\infty}^2 + \|\partial_s \gamma(s)\|_{\infty}^2\right)\right) \,\de s.\end{align*}
The damping estimate~\eqref{e:dampingineq} now follows by plugging the latter bound into~\eqref{e:upbounddamping} and using the fact that there exists a constant $C_6 > 0$ (independent of $\gamma_0$ and $\mathring{v}_0$) such that $E_y(t_*) \leq \smash{C_6 \|\zt(t_*)\|_{C_{\mathrm{ub}}^2}^2}$.
\end{proof}

We conclude this section by recalling the results from~\cite[Lemma~4.11]{AdR1} and~\cite[Lemma~5.1]{ZUM23}, which state that the $C_{\mathrm{ub}}^k$-norms of the forward- and inverse-modulated perturbations $\vt(t)$ and $v(t)$, as well as those of the modified forward-modulated perturbation $\zt(t)$ and the residual $z(t)$, are equivalent, up to controllable errors depending on $\gamma_\xx(t)$ and its derivatives.

\begin{lemma}  \label{lem:equivalence}
Fix a constant $R > 0$. Let $\mathring{v}_0 \in C_{\mathrm{ub}}(\R)$ and $\gamma_0 \in C_{\mathrm{ub}}^1(\R)$ with $\|\gamma_0'\|_\infty < r_0$. Define $u_0, v_0 \in C_{\mathrm{ub}}(\R)$ by~\eqref{e:defu0} and~\eqref{e:defv0}, respectively. Let $\gamma(t)$ and $\tau_{\max}$ be as in Proposition~\ref{p:gamma}, let $v(t)$ and $z(t)$ be as in Corollary~\ref{C:local_v}, let $\vt(t)$ be as in Corollary~\ref{c:local_forward_v}, and let $\zt(t)$ be as in Corollary~\ref{c:local_forward_z}. Then, we have
\begin{align*}
\begin{split}
\|v(t)\|_{\infty} &\lesssim \big\|\vt(t)\big\|_{\infty} + \|\gamma_{\xx}(t)\|_{\infty}, \qquad \big\|\vt(t)\big\|_{\infty} \lesssim \|v(t)\|_{\infty} + \|\gamma_{\xx}(t)\|_{\infty},
\end{split}
\end{align*}
for any $t \in [0,\tau_{\max})$ with
\begin{align} \label{e:gammaapriori}
\sup_{0 \leq s \leq t} \|\gamma(s)\|_{\infty} \leq R.
\end{align}
Moreover, we have
\begin{align*}
\begin{split}
\big\|\zt(t)\big\|_\infty &\lesssim \|z(t)\|_\infty + \|\gamma_{\xx\xx}(t)\|_\infty + \|\gamma_{\xx}(t)\|^2_\infty \\
\|z(t)\|_{C_{\mathrm{ub}}^1} &\lesssim \big\|\zt(t)\big\|_{C_{\mathrm{ub}}^1} + \|\gamma_{\xx\xx}(t)\|_{C_{\mathrm{ub}}^1} + \|\gamma_{\xx}(t)\|^2_\infty 
\end{split}
\end{align*}
for any $t \in (0,\tau_{\max})$ satisfying~\eqref{e:gammaapriori}.
\end{lemma}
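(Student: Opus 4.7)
The plan is to establish each pair of estimates by a change-of-variables combined with Taylor expansion, exploiting that under the condition $\|\gamma_\xx(t)\|_\infty \leq \|\gamma_0'\|_\infty < r_0 < \tfrac{1}{2}$ from Proposition~\ref{p:gamma}, the maps $\Psi^\pm(\xx,t) := \xx \pm \gamma(\xx,t)$ are $C^1$-diffeomorphisms of $\R$ for each fixed $t$. All bounds will then be pointwise in $\xx$ and follow from uniform control on $\phi_0$, $\phi(\cdot;k)$, their first few derivatives, and the a priori bound~\eqref{e:gammaapriori}.

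For the first pair of estimates, I would start from the algebraic identity
\begin{align*}
v(\xx, t) = \vt(\Psi^-(\xx, t), t) + \phi_0\big(\Psi^-(\xx, t) + \gamma(\Psi^-(\xx, t), t)\big) - \phi_0(\xx),
\end{align*}
obtained by inserting and subtracting $\phi_0(\Psi^-(\xx,t) + \gamma(\Psi^-(\xx,t),t))$ in the definition of $v$. The correction term is bounded by the mean value theorem as
\begin{align*}
\|\phi_0'\|_\infty \, \big|\gamma(\Psi^-(\xx, t), t) - \gamma(\xx, t)\big| \leq \|\phi_0'\|_\infty \|\gamma_\xx(t)\|_\infty |\gamma(\xx, t)| \leq \|\phi_0'\|_\infty R \|\gamma_\xx(t)\|_\infty,
\end{align*}
where the final step uses~\eqref{e:gammaapriori}. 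Taking the supremum proves the bound on $\|v(t)\|_\infty$. The reverse inequality follows by starting from $\vt(\xx,t) = v(\Psi^+(\xx,t),t) + \phi_0(\Psi^+(\xx,t) - \gamma(\Psi^+(\xx,t),t)) - \phi_0(\xx + \gamma(\xx,t))$ and applying the same argument.

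For the second pair, the strategy is analogous but requires a careful Taylor expansion of $\phi(y;k)$ around $(\xx;k_0)$. Setting $\xi = \Psi^-(\xx, t)$ and using $u(\xi,t) = z(\xx,t) + \phi_0(\xx) + k_0\gamma_\xx(\xx,t)\partial_k\phi(\xx;k_0)$, I would write
\begin{align*}
\zt(\xi, t) = z(\xx, t) + \big[\phi_0(\xx) + k_0 \gamma_\xx(\xx,t) \partial_k \phi(\xx; k_0)\big] - \phi\big(A(\xi,t);\, k_0(1+\gamma_\xx(\xi,t))\big),
\end{align*}
with $A(\xi,t) = \xi + \gamma(\xi,t)(1+\gamma_\xx(\xi,t))$. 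The bracketed terms are precisely the first two terms in the Taylor expansion of $\phi(A;k)$ about $(\xx;k_0)$, provided that $A-\xx$ and $k-k_0$ are correctly identified. A pointwise Taylor expansion of $\gamma(\xi,t) = \gamma(\xx,t) - \gamma_\xx(\xx,t)\gamma(\xx,t) + O(\|\gamma_{\xx\xx}\|_\infty \|\gamma\|_\infty^2)$ and $\gamma_\xx(\xi,t) = \gamma_\xx(\xx,t) + O(\|\gamma_{\xx\xx}\|_\infty \|\gamma\|_\infty)$ shows that the naive linear $\gamma\gamma_\xx$ contribution to $A-\xx$ cancels, leaving $|A-\xx| \lesssim \|\gamma_{\xx\xx}\|_\infty + \|\gamma_\xx\|_\infty^2$ after using~\eqref{e:gammaapriori}. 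Combining this with the quadratic remainder in the Taylor expansion of $\phi$ in $(y,k)$ and with the replacement of $\gamma_\xx(\xi,t)$ by $\gamma_\xx(\xx,t)$ at cost $O(\|\gamma_{\xx\xx}\|_\infty)$, the stated bound on $\|\zt(t)\|_\infty$ emerges.

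The $C^1$-estimate on $z(t)$ is obtained by differentiating the corresponding identity (expressing $z$ in terms of $\zt$ via $\Psi^+$) once in space and repeating the Taylor expansion to one higher order, which produces derivatives of $\gamma_\xx$ and hence the $\|\gamma_{\xx\xx}\|_{C_{\mathrm{ub}}^1}$-term. The main obstacle is purely combinatorial bookkeeping: tracking all nested compositions $\gamma(\xi,t)$, $\gamma_\xx(\xi,t)$, $\gamma_{\xx\xx}(\xi,t)$ and verifying that the specific choice of the $z$-correction $k_0\partial_k\phi(\cdot;k_0)\gamma_\xx$ cancels exactly the linear-in-$\gamma_\xx$ contribution from the wavenumber-modulated argument in $\zt$, so that only the genuinely higher-order remainders $\|\gamma_{\xx\xx}\|_\infty$ and $\|\gamma_\xx\|_\infty^2$ survive. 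Since the argument is pointwise and uses only uniform derivative bounds on $\phi_0$ and $\phi(\cdot;k)$ (available on a neighborhood of $k_0$ by Proposition~\ref{prop:family}), the proofs of~\cite[Lemma~4.11]{AdR1} and~\cite[Lemma~5.1]{ZUM23} carry over with only cosmetic changes to the $C_{\mathrm{ub}}$-setting needed here, and may be invoked directly.
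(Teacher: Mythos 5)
Your high-level plan — a pointwise change of variables combined with mean-value and Taylor estimates — is precisely the mechanism behind~\cite[Lemma~5.1]{ZUM23} and~\cite[Lemma~4.11]{AdR1}, which the paper's proof simply invokes without reproducing. However, the algebraic identity you propose for the reverse direction is false: starting from the definitions one finds
\begin{align*}
\vt(\xx,t) - v\big(\Psi^+(\xx,t),t\big) \;=\; u(\xx,t) - u\big(\Psi^+(\xx,t) - \gamma(\Psi^+(\xx,t),t),t\big),
\end{align*}
an increment of $u$, not of $\phi_0$, and estimating it with the mean value theorem would introduce $\|u_\xx(t)\|_\infty$, which is not a priori controlled. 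The underlying slip is that $\Psi^+(\cdot,t)$ is \emph{not} the inverse of $\Psi^-(\cdot,t)$. For the reverse direction one must substitute the genuine inverse: for each $\xx$ take the unique $y$ with $y - \gamma(y,t) = \xx$ (which exists because $\|\gamma_\xx(t)\|_\infty < r_0 < \tfrac12$). Then $\vt(\xx,t) = v(y,t) + \phi_0(y) - \phi_0(\xx+\gamma(\xx,t))$ with $|y - \xx - \gamma(\xx,t)| = |\gamma(y,t) - \gamma(\xx,t)| \le R\,\|\gamma_\xx(t)\|_\infty$, and the bound follows. The same slip recurs in your final paragraph: the identity relating $z$ and $\zt$ is obtained by substituting $\xi = \Psi^-(\xx,t)$ \emph{directly} into $\zt(\xi,t) = u(\xi,t) - \phi(\cdots)$; $\Psi^+$ plays no role there.

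Your treatment of the $\zt$-bounds has a second gap. Appealing to ``the quadratic remainder in the Taylor expansion of $\phi$ in $(y,k)$'' yields a term $\lesssim |A-\xx|^2 \sim \|\gamma_{\xx\xx}(t)\|_\infty^2$, which is \emph{not} dominated by $\|\gamma_{\xx\xx}(t)\|_\infty + \|\gamma_\xx(t)\|_\infty^2$ under the sole hypothesis~\eqref{e:gammaapriori}, since $\gamma_{\xx\xx}$ is not assumed small there. The $y$- and $k$-increments must be expanded asymmetrically: keep only a first-order mean-value remainder $\partial_y\phi(\theta;k)(A-\xx)$ in the $y$-variable — this is \emph{linear} in $A-\xx$ and hence controlled by $\|\gamma_{\xx\xx}(t)\|_\infty + \|\gamma_\xx(t)\|_\infty^2$ via the cancellation in $A-\xx$ that you correctly identified — and expand to second order only in $k$, where $|k-k_0| = k_0|\gamma_\xx(\xi,t)| \le k_0 r_0$ \emph{is} genuinely small by Proposition~\ref{p:gamma}. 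With this split, your computation does deliver the stated $L^\infty$-bound on $\zt$, and one further $\xx$-derivative gives the $C_{\mathrm{ub}}^1$-bound on $z$.
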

\begin{proof}
The first two inequalities were proved in~\cite[Lemma~5.1]{ZUM23}. The last two inequalities follow directly from the estimates established in the proof of~\cite[Lemma~4.11]{AdR1}.
\end{proof}

\section{Nonlinear stability argument} \label{sec:nonlinearstab}

In this section, we prove our main result, Theorem~\ref{main_theorem}, by completing a nonlinear stability argument based on a quasilinear iteration scheme built around the integral equations~\eqref{e:intgamma},~\eqref{e:intz},~\eqref{e:intr}, and~\eqref{e:inty}. Short-time regularity control is obtained via iterative estimates applied to the Duhamel representations~\eqref{e:intmathringv} and~\eqref{e:intmathringw} for the forward-modulated perturbation, while long-time regularity is ensured by the nonlinear damping estimate provided in Proposition~\ref{prop:nonlinear_damping}.

\begin{proof}[Proof of Theorem~\ref{main_theorem}] Let $r_0 \in (0,\frac12)$ be as in Proposition~\ref{prop:family}. Take $\vt_0 \in C_{\mathrm{ub}}(\R)$ and $\gamma_0 \in C_{\mathrm{ub}}^1(\R)$ with 
\begin{align*}
\|\gamma_0\|_\infty \leq M, \qquad E_0 := \|\vt_0\|_\infty + \left\|\gamma_0'\right\|_\infty < r_0.
\end{align*}
Define $u_0,v_0 \in C_{\mathrm{ub}}(\R)$ by~\eqref{e:defu0} and~\eqref{e:defv0}, respectively. 

By Proposition~\ref{well_posed_full_sol}, there exist a maximal time $T_{\max} \in (0,\infty]$ and a unique classical solution $u(t)$ to~\eqref{RD} with initial condition $u(0) = u_0$ satisfying~\eqref{e:classicalgamma}. If $T_{\max} < \infty$, then~\eqref{e:blowupu} holds. Moreover, Proposition~\ref{p:gamma} yields a maximal time $\tau_{\max} \in [1,\max\{1,T_{\max}\}]$ and a solution $\gamma(t)$ to~\eqref{e:intgamma} satisfying~\eqref{e:classicalgamma}, $\gamma(t) = \smash{\re^{-\partial_\xx^4 t} \gamma_0}$ for $t \in [0,1]$, and $\|\gamma_\xx(t)\|_\infty < r_0$ for all $t \in [0,\tau_{\max})$. Finally, if $\tau_{\max} < T_{\max}$, then we have~\eqref{e:blowupgamma1} or~\eqref{e:blowupgamma2}.

Our goal is to prove that $\tau_{\max} = T_{\max} = \infty$ and that $u(t)$ and $\gamma(t)$ satisfy the decay estimates~\eqref{e:mtest10},~\eqref{e:mtest2},~\eqref{e:mtest3}, and~\eqref{e:mtest33}, where $\breve{\gamma}(t)$ is the classical solution to the viscous Hamilton-Jacobi equation~\eqref{e:HamJac} with initial condition $\breve{\gamma}(0) = \gamma_0$. To this end, we define a template function, controlling the norms of the phase modulation $\gamma(t)$, the residuals $z(t)$ and $r(t)$, and the Cole-Hopf variable $y(t)$, which are defined by~\eqref{e:defz},~\eqref{e:intr}, and~\eqref{e:defy}, respectively. We first establish the result for $\alpha \in (0,\frac16)$. The case $\alpha = 0$ then follows a posteriori.

\paragraph*{Template function.} Let $\varrho \colon [0,\infty) \to [0,1]$ be a smooth temporal cut-off function which vanishes on $[0,\tfrac{t_*}{2}]$ and satisfies $\varrho(t) = 1$ for all $t \in [t_*,\infty)$, where $t_*$ is as in Proposition~\ref{prop:pointwiseGreen}. In addition, set $\smash{\widetilde{\tau}_{\max}} = \min\{\tau_{\max},T_{\max}\}$. By Proposition~\ref{p:gamma}, Corollaries~\ref{C:local_v} and~\ref{C:local_r}, and identities~\eqref{e:Gamma_rates3} and~\eqref{e:regy}, the template function $\eta \colon [0,\smash{\widetilde{\tau}_{\max}}) \to \R$ given by
\begin{align*}
\eta(t) &= \sup_{0\leq s\leq t} \Bigg[\frac{(1+s)^{1-2\alpha}}{\log(2+s)} \left(\|z(s)\|_{L^\infty} + \varrho(s) \left(\left\|\gamma_{\xx\xx\xx\xx}(s)\right\|_{C_{\mathrm{ub}}^1} + \big\|\widetilde{\gamma}_\xx(s)\big\|_{C_{\mathrm{ub}}^2}\right)\right) \\
&\qquad\qquad + \frac{\sqrt{s} \, (1+s)^{\frac12 - 2\alpha}}{\log(2+s)}  \left(\|r_\xx(s)\|_{\infty} + \|z_\xx(s)\|_\infty + \|\gamma_{\xx\xx\xx}(s)\|_{\infty}\right) + \frac{s^{\frac34} (1+s)^{\frac14 - 2\alpha}}{\log(2+s)} \big\|\widetilde{\gamma}(s)\big\|_\infty \\
&\qquad\qquad + \frac{s^{\frac14} (1+s)^{\frac34 - 2\alpha}}{\log(2+s)} \|\gamma_{\xx\xx}(s)\|_{\infty} + \sqrt{s}\,(1+s)^{-\alpha} \|y_\xx(s)\|_{\infty} + (1+s)^{\frac12 - \alpha} \|\gamma_\xx(s)\|_\infty \Bigg],
\end{align*}
is well-defined, continuous, non-negative, and monotonically increasing, where we recall $\widetilde{\gamma}(t) = \partial_t \gamma(t) - a\gamma_\xx(t)$. 

\paragraph*{Approach.} Fix $\alpha \in (0,\frac16)$. Our goal is to show that there exist constants $C,\eta_0 > 0$ such that the estimates
\begin{align}
\eta(0) \leq C E_0^\alpha, \qquad \eta(t) \leq C\left(E_0^\alpha + \eta(t)^2\right), \qquad \|\gamma(t)\|_\infty \leq C \label{e:etaest}
\end{align}
hold for all $t \in [0, \smash{\widetilde{\tau}_{\max}})$ with $\eta(t) \leq \eta_0$. Set
\begin{align*}
\varepsilon := \min\left\{\frac{1}{4C^2}, \frac{\eta_0}{2C} \right\}^{\frac{1}{\alpha}}.
\end{align*}
Then, provided $E_0 \in (0,\varepsilon)$, we find that~\eqref{e:etaest} implies that, for any $t \in [0, \smash{\widetilde{\tau}_{\max}})$ such that $\eta(s) \leq 2C E_0^\alpha$ for all $s \in [0, t]$, we have $\eta(t) \leq \eta_0$ and
\begin{align*} \eta(t) \leq C\left(E_0^\alpha + 4C^2 E_0^{2\alpha}\right) < 2C E_0^\alpha.
\end{align*}
Hence, by continuity of $\eta$ and the initial bound $\eta(0) \leq C E_0^\alpha$, it follows that if $E_0 \in (0, \varepsilon)$, then $\eta(t) \leq 2C E_0^\alpha < \eta_0$ for \emph{all} $t \in [0, \smash{\widetilde{\tau}_{\max}})$. The latter, in combination with the last estimate in~\eqref{e:etaest}, precludes the blow-up alternatives~\eqref{e:blowupu},~\eqref{e:blowupgamma1}, and~\eqref{e:blowupgamma2}, thereby implying $\tau_{\max} = T_{\max} = \infty$. We use the obtained global control of $z(t)$ and $\gamma(t)$ to subsequently prove the estimates~\eqref{e:mtest10} and~\eqref{e:mtest2}, where we make use of the norm equivalences established in Lemma~\ref{lem:equivalence}. Finally, we show the bounds~\eqref{e:mtest3} and~\eqref{e:mtest33} by proceeding along the lines of~\cite{BjoernMod}.

In the following, we will establish the key inequalities~\eqref{e:etaest} by bounding the terms in the template function $\eta(t)$ one by one; see Figure~\ref{fig_time_control}. Since we consider initial data of minimal regularity, cf.~Remark~\ref{rem:regularity}, higher-order derivatives may suffer from nonintegrable temporal bounds. To address this, we often distinguish between short-time bounds, which focus on reducing the number of derivatives, and long-time bounds, which ensure sufficient temporal decay.

\begin{figure}[t]
    \centering
    \vspace{0.3cm}
    \includegraphics[width=0.55\linewidth]{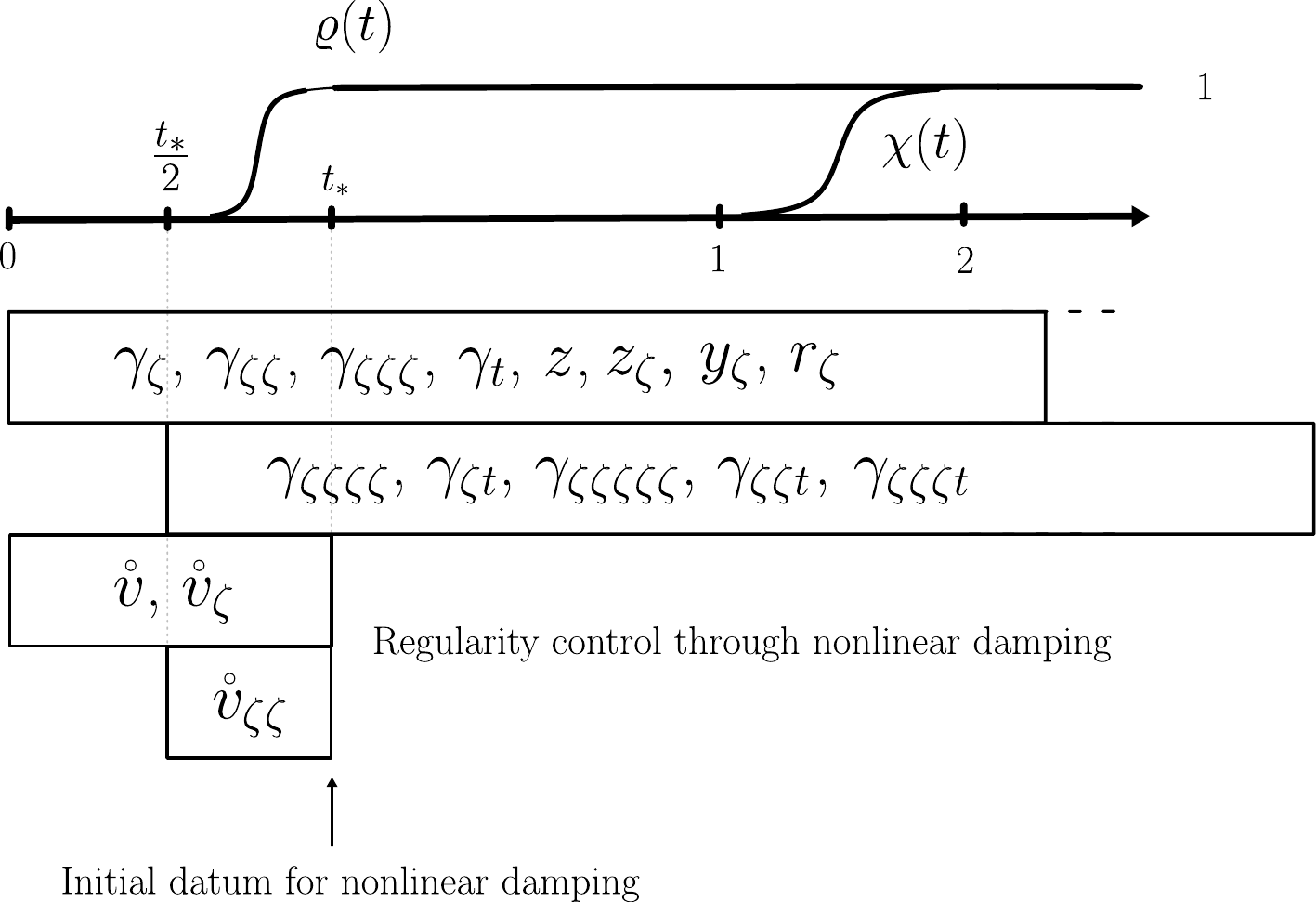}
    \caption{This figure depicts the temporal cut-off functions $\varrho(t)$ and $\chi(t)$, along with the time intervals over which we control the variables in the template function $\eta(t)$ (upper two rows). Short-time regularity on $[0, t_*]$ is obtained via iterative estimates on the Duhamel formulas for $\vt$ and $\vt_{\xx}$, while long-time regularity control is provided by the nonlinear damping estimate from Proposition~\ref{prop:nonlinear_damping} (third row). The final row indicates the time interval on which we control $\vt_{\xx\xx}$, required to supply the initial datum for the nonlinear damping estimate.}
    \label{fig_time_control}
\end{figure}

\paragraph*{Bound on \texorpdfstring{$v_0$}{v0}.}
Inserting~\eqref{e:defu0} into~\eqref{e:defv0}, we express
\begin{align*}
v_0(\xx) = \vt_0(\xx - \gamma_0(\xx)) + \phi_0\left(\xx - \gamma_0(\xx) + \gamma_0(\xx - \gamma_0(\xx)\right) - \phi_0(\xx).
\end{align*}
Using the mean-value theorem twice, we obtain the bound
\begin{align} \label{e:v0bound}
\|v_0\|_\infty \leq \|\vt_0\|_\infty + \|\phi_0'\|_\infty \|\gamma_0\|_\infty \|\gamma_0'\|_\infty \lesssim E_0.
\end{align}

\paragraph*{Interpolation bounds.} We establish bounds on the linear terms involving the initial phase modulation $\gamma_0$ in the Duhamel formulations of $\gamma(t)$, $z(t)$, $r(t)$, $\widetilde{y}(t)$, and their derivatives. Since $\gamma_0$ is bounded, the standard linear estimates from Propositions~\ref{prop:lin1} and~\ref{prop:lin2} provide sufficient temporal decay. However, since $\gamma_0$ is not necessarily small, they do not guarantee global-in-time smallness. To address this, we turn to the modulational estimates in Propositions~\ref{prop:lin_mod_1} and~\ref{prop:lin_mod_2}, which crucially exploit the smallness of the derivative $\gamma_0'$. By interpolating between the standard and modulational linear estimates, we obtain both the required temporal decay and global-in-time smallness. For a heuristic overview of this argument in a simplified setting, we refer to~\S\ref{sec:heuristic}.

We start by bounding the linear term
\begin{align*}
J_1(t) &:= \widetilde{S}(t)\left(\phi_0' \gamma_0\right) - (1-\chi(t)) \phi_0' \re^{-\partial_\xx^4 t} \gamma_0
\end{align*}
in the Duhamel representation~\eqref{e:intz} of $z(t)$. Proposition~\ref{prop:lin1} and estimate~\eqref{e:Gamma_rates3} yield
\begin{align} \label{e:interp4}
\|J_1(t)\|_\infty &\lesssim (1+t)^{-1}
\end{align}
for $t \geq 0$. Using~\eqref{e:decomp_full_semigroup}, we express
\begin{align*}
J_1(t) &= \re^{\El_0 t} \left(\phi_0' \gamma_0\right) - \phi_0' \gamma_0 - \, k_0 \partial_k \phi(\cdot;k_0) \partial_\xx S_p^0(t) \left(\phi_0'\gamma_0\right)\\
&\qquad - \phi_0' \left(\re^{-\partial_\xx^4 t} \gamma_0 - \gamma_0 + S_p^0(t)\left(\phi_0'\gamma_0\right) - \chi(t) \re^{\left(d\partial_\xx^2 + a\partial_\xx\right) t} \gamma_0 + \chi(t)\left(\re^{\left(d\partial_\xx^2 + a\partial_\xx\right) t} \gamma_0 - \re^{-\partial_\xx^4 t} \gamma_0\right) \right)
\end{align*}
Hence, Propositions~\ref{prop:lin_mod_1} and~\ref{prop:lin_mod_2} afford the bound
\begin{align} \label{e:interp5}
\|J_1(t)\|_\infty &\lesssim (1+t)  E_0
\end{align}
for $t \geq 0$. Interpolating between~\eqref{e:interp4} and~\eqref{e:interp5} and using $0\leq E_0 \leq 1$ and $0 < \alpha < \frac16$, we arrive at
\begin{align} \label{e:interp6}
\|J_1(t)\|_\infty &\lesssim (1+t)^{-1+2\alpha} E_0^{\alpha}
\end{align}
for $t \geq 0$.

We proceed with estimating the linear term
\begin{align*}
I_{j,l}(t) := \left(\partial_t - a \partial_\zeta\right)^j \partial_\zeta^l \left(S_p^0(t)(\phi_0'\gamma_0) + (1-\chi(t)) \re^{-\partial_\xx^4 t} \gamma_0\right)
\end{align*}
in the Duhamel formulation~\eqref{e:intgamma} of $\left(\partial_t - a \partial_\zeta\right)^j \partial_\zeta^l \gamma(t)$ for $j = 0,1$ and $l \in \NM_0$ with $l \leq 5$. Applying Proposition~\ref{prop:lin1}, we obtain
\begin{align} \label{e:interp1}
\left\|\left(\partial_t - a \partial_\zeta\right)^j \partial_\zeta^l S_p^0(t)\left(\phi_0' \gamma_0\right)\right\|_\infty &\lesssim (1+t)^{-\frac{2j+l}{2}}
\end{align}
for $t \geq 0$, $j = 0,1$, and $l \in \NM_0$ with $l \leq 5$. On the other hand, the modulational estimates in Propositions~\ref{prop:lin_mod_1} and~\ref{prop:lin_mod_2} yield
\begin{align}\label{e:interp2}
\begin{split}
\left\|\left(\partial_t - a \partial_\zeta\right)^j \partial_\zeta^{l} S_p^0(t)\left(\phi_0' \gamma_0\right)\right\|_\infty &\lesssim \left(1 + t\right)^{-\frac{2j+l-1}{2}} E_0,\\
\left\|\left(\partial_t - a \partial_\zeta\right) S_p^0(t)\left(\phi_0' \gamma_0\right) - \chi'(t) \re^{\left(d\partial_\xx^2 + a\partial_\xx\right) t}\gamma_0\right\|_\infty &\lesssim E_0,\\
\chi'(t) \left\|\re^{-\partial_\xx^4 t} \gamma_0 - \re^{\left(d\partial_\xx^2 + a\partial_\xx\right) t} \gamma_0\right\|_\infty &\lesssim E_0
\end{split}
\end{align}
for $t \geq 0$, $j \in \{0,1\}$, and $l \in \NM_0$ with $1 \leq l \leq 5$. Furthermore, we note that
\begin{align*}
\begin{split}
I_{1,0}(t) &= \left(\partial_t - a \partial_\xx\right) S_p^0(t)(\phi_0'\gamma_0) - \chi'(t) \re^{\left(d\partial_\xx^2 + a\partial_\xx\right) t}\gamma_0 + \chi'(t) \left(\re^{\left(d\partial_\xx^2 + a\partial_\xx\right) t}\gamma_0 - \re^{-\partial_\xx^4 t} \gamma_0\right)\\ 
&\qquad + \, (1-\chi(t)) \left(\partial_t - a \partial_\xx\right) \re^{-\partial_\xx^4 t} \gamma_0
\end{split}
\end{align*}
for $t \geq 0$. Hence, interpolating between~\eqref{e:interp1} and~\eqref{e:interp2}, using~\eqref{e:Gamma_rates3}, and recalling $0\leq E_0 \leq 1$ and $0 < \alpha < \frac16$, we establish
\begin{align}\label{e:interp33}
\begin{split}
\left\|I_{j,l}(t)\right\|_\infty &\lesssim \left(1 + t\right)^{-\frac{2j+l}{2} + 2\alpha} E_0^{\alpha}
\end{split}
\end{align}
for $t \geq 1$, $j \in \{0,1\}$, and $l \in \NM_0$ with $1 \leq \min\{j,l\}$ and $l \leq 5$.

Finally, we bound the linear terms
\begin{align*}
J_2(t) &:= \partial_\xx \left(\widetilde{S}_r^0(t)\left(\phi_0' \gamma_0\right) + \partial_\xx \re^{\left(d\partial_\xx^2 + a\partial_\xx\right) t} \left(A_h(\phi_0') \gamma_0\right) + (1-\chi(t)) \re^{-\partial_\xx^4 t} \gamma_0\right), \\
J_3(t) &:= \partial_\xx \re^{\left(d\partial_\xx^2 + a\partial_\xx\right) t} \gamma_0,
\end{align*}
appearing in the formulas~\eqref{e:intr} and~\eqref{e:def_tilde_y} for $r_\xx(t)$ and $\widetilde{y}_\xx(t)$, respectively. By Propositions~\ref{prop:lin1} and~\ref{prop:lin2} we have
\begin{align} \label{e:interp7}
\sqrt{t} \, \|J_2(t)\|_\infty &\lesssim (1+t)^{-\frac12}, \qquad \|J_3(t)\|_\infty \lesssim (1+t)^{-\frac12}
\end{align}
for $t \geq 0$. We use~\eqref{e:adjoint},~\eqref{e:prin_rep}, and~\eqref{e:prin_rep2} to rewrite
\begin{align*}
J_2(t) &= \partial_\xx \left(S_p^0(t)\left(\phi_0' \gamma_0\right) - \re^{\left(d\partial_\xx^2 + a\partial_\xx\right) t} \left(\gamma_0 - A_h(\phi_0')\gamma_0'\right) + (1-\chi(t)) \re^{-\partial_\xx^4 t} \gamma_0\right).
\end{align*}
Applying estimates~\eqref{e:interp2} and~\eqref{e:Gamma_rates3} and Proposition~\ref{prop:lin2} to the latter, we arrive at
\begin{align} \label{e:interp8}
\sqrt{t} \, \|J_2(t)\|_\infty &\lesssim \sqrt{1+t}\, E_0, \qquad \|J_3(t)\|_\infty \lesssim E_0,
\end{align}
for $t \geq 0$. Interpolating between~\eqref{e:interp7} and~\eqref{e:interp8} and using $0 \leq E_0 \leq 1$ and $0 < \alpha < \frac16$, we obtain
\begin{align} \label{e:interp9}
\begin{split}
\sqrt{t} \, \|J_2(t)\|_\infty &\lesssim (1+t)^{-\frac12+2\alpha} E_0^{\alpha}, \qquad \|J_3(t)\|_\infty \lesssim (1+t)^{-\frac12 + \alpha} E_0^\alpha
\end{split}
\end{align}
for $t \geq 0$.

\paragraph*{Bounds on \texorpdfstring{$v(t)$}{v(t)}, \texorpdfstring{$v_\xx(t)$}{v_x(t)} and \texorpdfstring{$\partial_t \gamma(t)$}{gamma_t(t)}.} 
Let $t \in [0,\smash{\widetilde{\tau}_{\max}})$ with $\eta(t) \leq \frac{1}{2}$. We bound $v(s) = z(s) + \partial_k \phi(\cdot;k_0) \gamma_\xx(s)$ and $\partial_t \gamma(s) = \widetilde{\gamma}(s) - a\gamma_\xx(s)$ as
\begin{align}
\begin{split}
\|v(s)\|_{\infty} &\lesssim \|z(s)\|_{\infty} + \|\gamma_\xx(s)\|_{\infty} \lesssim (1+s)^{-\frac12 + \alpha} \eta(t), \\ 
\sqrt{s} \, \left\|v_\xx(s)\right\|_{\infty} &\lesssim \sqrt{s} \, \left(\left\|z_\xx(s)\right\|_{\infty} + \|\gamma_\xx(s)\|_{C_{\mathrm{ub}}^1}\right) \lesssim (1+s)^{\alpha} \eta(t),\\ 
s^{\frac34}\,\|\partial_t \gamma(s)\|_{\infty} &\lesssim s^{\frac34} \left(\|\widetilde{\gamma}(s)\|_{\infty} + \|\gamma_\xx(s)\|_{\infty}\right) \lesssim (1+s)^{\frac14 + \alpha} \eta(t)
\end{split}
\label{e:vbound} \end{align}
for $s \in [0,t]$.

\paragraph*{Bounds on \texorpdfstring{$r(t)$}{r(t)} and \texorpdfstring{$r_\xx(t)$}{r_x(t)}.} We start with bounding the nonlinear terms in the representation~\eqref{e:intr} for $r(t)$ one by one. To this end, let $t \in (0,\smash{\widetilde{\tau}_{\max}})$ with $\eta(t) \leq \frac{1}{2}$. We employ Lemma~\ref{lem:nlboundsmod3} and estimate~\eqref{e:vbound} to establish the nonlinear bounds
\begin{align}
\label{e:nlest71}
\begin{split}
&\|\mathcal Q_p(z(s),v(s),\gamma(s))\|_\infty, \|\mathcal R_p(z(s),v(s),\gamma(s),\partial_t \gamma(s))\|_\infty, \|\mathcal S_p(z(s),v(s),\gamma(s))\|_\infty\\
&\qquad \lesssim \frac{\eta(s)^2 \log(2+s)}{s^{\frac34} (1+s)^{\frac34 - 3\alpha}}
\end{split}
\end{align}
for $s \in (0,t]$. So, invoking Proposition~\ref{prop:lin1} and using $0 < \alpha < \frac16$, we obtain
\begin{align} \label{e:nlest7}
\begin{split}
\left\|\partial_\xx^{m+1} \int_0^t S_p^0(t-s) \mathcal R_p(z(s),v(s),\gamma(s),\widetilde{\gamma}(s))\de s \right\|_\infty &\lesssim \int_0^t \frac{\eta(s)^2 \log(2+s)}{(1+t-s)^{\frac{1+m}{2}} s^{\frac34} (1+s)^{\frac{3}{4}-3\alpha}} \de s\\ &\lesssim \frac{\eta(t)^2}{(1+t)^{\frac{1+m}{2}}}
\end{split}
\end{align}
and, analogously,
\begin{align} \label{e:nlest78}
\begin{split}
\left\|\partial_\xx^{m+2-i} \int_0^t S_p^i(t-s) \mathcal S_p(z(s),v(s),\gamma(s))\de s \right\|_\infty &\lesssim \frac{\eta(t)^2}{(1+t)^{\frac{1+m}{2}}}
\end{split}
\end{align}
for all $t \in [0,\smash{\widetilde{\tau}_{\max}})$ with $\eta(t) \leq \frac{1}{2}$ and $i,m \in \{0,1\}$. Furthermore, Proposition~\ref{prop:lin2} and estimate~\eqref{e:nlest71} yield
\begin{align} \label{e:nlest77}
\begin{split}
t^{\frac{m}{2}} \left\|\partial_\xx^m \int_0^t \widetilde{S}_r^0(t-s) \mathcal Q_p(z(s),v(s),\gamma(s)) \de s\right\|_\infty &\lesssim \int_0^t \frac{t^{\frac{m}{2}} \eta(s)^2 \log(2+s)}{(t-s)^{\frac{m}{2}}\sqrt{1+t-s} \, s^{\frac34} (1+s)^{\frac{3}{4}-3\alpha}} \de s\\ 
&\lesssim \frac{\eta(t)^2}{\sqrt{1+t}}
\end{split}
\end{align}
and, analogously,
\begin{align} \label{e:nlest79}
\begin{split}
t^{\frac{m}{2}}\left\|\partial_\xx^m \int_0^t \widetilde{S}_r^1(t-s) \mathcal R_p(z(s),v(s),\gamma(s),\widetilde{\gamma}(s)) \de s\right\|_\infty &\lesssim \frac{\eta(t)^2}{\sqrt{1+t}},\\
t^{\frac{m}{2}}\left\|\partial_\xx^m \int_0^t \widetilde{S}_r^2(t-s) \mathcal S_p(z(s),v(s),\gamma(s)) \de s\right\|_\infty &\lesssim \frac{\eta(t)^2}{\sqrt{1+t}}
\end{split}
\end{align}
for $m = 0,1$ and all $t \in [0,\smash{\widetilde{\tau}_{\max}})$ with $\eta(t) \leq \frac{1}{2}$. Moreover, Proposition~\ref{prop:lin2} affords the bound
\begin{align} \label{e:nlest4}
\begin{split}
\left\|\partial_\xx^m \int_0^t \widetilde{S}_r^0(t-s)\left(f_p \gamma_\xx(s)^2\right) \de s\right\|_\infty &\lesssim \int_0^t \frac{\eta(s)^2 (1+s)^{2\alpha-1}}{\sqrt{1+t-s} (t-s)^{\frac{m}{2}} } \de s \lesssim \frac{\eta(t)^2 \left(\log(2+t)\right)^m}{(1+t)^{\frac{1+m}{2}-2\alpha}}
\end{split}
\end{align}
for $m = 0,1$ and all $t \in [0,\smash{\widetilde{\tau}_{\max}})$. Using Proposition~\ref{prop:lin2}, we subsequently infer
\begin{align} \label{e:nlest5}
\begin{split}
\left\|\partial_\xx \int_0^t \re^{\left(d\partial_\xx^2 + a\partial_\xx\right) (t-s)}\left(A_h(f_p) \gamma_\xx(s)^2\right) \de s\right\|_\infty &\lesssim \int_0^t \frac{\eta(s)^2}{\sqrt{t-s} (1+s)^{1-2\alpha}} \de s \lesssim \frac{\eta(t)^2}{(1+t)^{\frac12-2\alpha}}
\end{split}
\end{align}
for all $t \in [0,\smash{\widetilde{\tau}_{\max}})$. Similarly, exploiting that $\partial_\xx$ commutes with $\smash{\re^{\left(d\partial_\xx^2 + a\partial_\xx\right) (t-s)}}$, we obtain
\begin{align} \label{e:nlest6}
\begin{split}
&\left\|\partial_\xx^2 \int_0^t \re^{\left(d\partial_\xx^2 + a\partial_\xx\right) (t-s)}\left(A_h(f_p) \gamma_\xx(s)^2\right) \de s\right\|_\infty\\
&\qquad \lesssim \int_{0}^{\max\{0,t-1\}} \frac{\eta(s)^2}{(t-s)(1+s)^{1-2\alpha}} \de s + \int_{\max\{0,t-1\}}^t \frac{\eta(s)^2}{\sqrt{t-s}(1+s)^{1-2\alpha}} \de s\\
&\lesssim \frac{\eta(t)^2 \log(2+t)}{(1+t)^{1-2\alpha}}
\end{split}
\end{align}
for all $t \in [0,\smash{\widetilde{\tau}_{\max}})$. 

Next, we establish bounds on the linear term
\begin{align*}
J_*(t) := \widetilde{S}_r^0(t) \left(v_0 + \phi_0'\gamma_0 + \gamma_0' v_0\right) + \re^{\left(d\partial_\xx^2 + a\partial_\xx\right) t} \partial_\xx \left(A_h(\phi_0') \gamma_0\right) + (1-\chi(t))\re^{-\partial_\xx^4 t} \gamma_0
\end{align*}
in~\eqref{e:intr}. First, we note that 
\begin{align*}
\partial_\xx J_*(t) = J_2(t) + \partial_\xx \widetilde{S}_r^0(t)\left(v_0 + \gamma_0' v_0\right)
\end{align*}
for $t \geq 0$. Hence, using Proposition~\ref{prop:lin2}, the estimates~\eqref{e:Gamma_rates3},~\eqref{e:v0bound},~\eqref{e:interp8} and~\eqref{e:interp9}, and the facts that $0 \leq E_0 \leq 1$ and $0 < \alpha < \frac16$, we obtain
\begin{align} \label{e:linr1}
\left\|J_*(t)\right\|_\infty \lesssim \frac{1}{\sqrt{1+t}}, \qquad \sqrt{t} \, \left\|\partial_\xx J_*(t)\right\|_\infty \lesssim \sqrt{1+t} \, E_0, \qquad \sqrt{t} \, \left\|\partial_\xx J_*(t)\right\|_\infty \lesssim \frac{E_0^\alpha}{(1+t)^{\frac12 - 2\alpha}}
\end{align}
for $t \geq 0$.

Finally, combining the nonlinear bounds~\eqref{e:nlest7},~\eqref{e:nlest78},~\eqref{e:nlest77},~\eqref{e:nlest79},~\eqref{e:nlest4},~\eqref{e:nlest5}, and~\eqref{e:nlest6} with the linear bounds~\eqref{e:linr1}, we estimate the right-hand side of~\eqref{e:intr} by
\begin{align} \label{e:nlest8}
\begin{split}
\left\|r(t)\right\|_\infty &\lesssim \frac{1}{(1+t)^{\frac12 - 2\alpha}} \qquad \sqrt{t} \,  \|r_\xx(t)\|_\infty \lesssim\left(E_0^\alpha + \eta(t)^2\right)\frac{\log(2+t)}{(1+t)^{\frac12 - 2\alpha}}
\end{split}
\end{align}
and obtain
\begin{align} \label{e:nlest88}
\begin{split}
\left\|r(t) - J_*(t)\right\|_\infty \lesssim \frac{\eta(t)^2}{(1+t)^{\frac12 - 2\alpha}}
\end{split}
\end{align}
for all $t \in [0,\smash{\widetilde{\tau}_{\max}})$ with $\eta(t) \leq \frac{1}{2}$.

\paragraph*{Short-time bounds on \texorpdfstring{$y(t)$}{y(t)} and \texorpdfstring{$y_\xx(t)$}{y_x(t)}.} Recalling that the cut-off function $\chi(t)$ and the propagator $S_p^0(t)$ vanish on $[0,1]$, we find by~\eqref{e:intgamma},~\eqref{e:decomp_gamma}, and~\eqref{e:defy} that 
\begin{align}y(t) = \exp\left(\frac{\nu}{d}\left(\re^{-\partial_\xx^4 t} \gamma_0 - r(t)\right)\right) \label{e:yexpr} 
\end{align}
for $t \in [0,1]$. Hence,~\eqref{e:Gamma_rates3} and~\eqref{e:nlest8} yield an $E_0$-independent constant $K_* > 0$ such that
\begin{align} \label{e:yshort}
y(\xx,t) &\leq K_*, \qquad 1 \leq K_* y(\xx,t)
\end{align}
and
\begin{align} \label{e:yshort2}
\sqrt{t} \, \|y_\xx(t)\|_\infty \leq K_* \left(E_0^\alpha + \eta(t)^2\right) 
\end{align}
hold for all $\xx \in \R$ and $t \in [0,\smash{\widetilde{\tau}_{\max}})$ with $t \leq 1$ and $\eta(t) \leq \frac{1}{2}$, where we used $0 \leq E_0 \leq 1$ and $0 < \alpha < \frac16$.

\paragraph*{Bounds on \texorpdfstring{$y(t)$}{y(t)} and \texorpdfstring{$y_\xx(t)$}{y_x(t)}.} First, we bound the linear term in~\eqref{e:inty}. For this purpose, we assume that $1 \in [0,\smash{\widetilde{\tau}_{\max}})$ and $\eta(1) \leq \frac 12$ and decompose~\eqref{e:yexpr} at $t = 1$ as
\begin{align*}
y(1) = y_1 + y_2
\end{align*}
with
\begin{align*}
y_1 = \exp\left(\frac{\nu}{d}\left(\re^{-\partial_\xx^4 1} \gamma_0 - J_*(1)\right)\right)\left(\re^{\frac{\nu}{d} \left(J_*(1) - r(1)\right)} - 1\right), \qquad y_2 = \exp\left(\frac{\nu}{d}\left(\re^{-\partial_\xx^4 1} \gamma_0 - J_*(1)\right)\right).
\end{align*}
Applying the mean value theorem and the estimates~\eqref{e:Gamma_rates3},~\eqref{e:linr1}, and~\eqref{e:nlest88}, we obtain
\begin{align*}
\|y_1\|_\infty \lesssim \eta(1)^2, \qquad \|y_2\|_\infty \lesssim 1, \qquad \left\|y_2'\right\|_\infty \lesssim E_0.
\end{align*}
Thus, using the latter bounds and estimate~\eqref{e:yshort}, applying Proposition~\ref{prop:lin2}, and recalling $0 \leq E_0 \leq 1$ and $0 < \alpha < \frac16$, we obtain
\begin{align} \label{e:nlest91} 
\begin{split}
\left\|\re^{\left(d\partial_\xx^2 + a\partial_\xx\right) (t-1)} y(1)\right\|_\infty &\leq K_*,\\
\left\|\partial_\xx \re^{\left(d\partial_\xx^2 + a\partial_\xx\right) (t-1)} y(1)\right\|_\infty &\lesssim \frac{\|y_2\|_\infty^{1-2\alpha} \left\|y_2'\right\|_\infty^{2\alpha} + \|y_1\|_\infty + \left\|y_2'\right\|_\infty}{(1+t)^{\frac12 - \alpha}} \lesssim \frac{E_0^\alpha + \eta(t)^2}{(1+t)^{\frac12-\alpha}}
\end{split}
\end{align}
for all $t \in [0,\smash{\widetilde{\tau}_{\max}})$ with $t \geq 1$ and $\eta(t) \leq \frac{1}{2}$, where we used that $\eta$ is monotonically increasing.

We proceed with bounding the nonlinear term in~\eqref{e:inty}. Take $t \in [0,\smash{\widetilde{\tau}_{\max}})$ with $t \geq 1$ and $\eta(t) \leq \frac{1}{2}$. Define 
\begin{align*}
Y(t) := \sup_{1 \leq s \leq t} \|y(s)\|_\infty.
\end{align*}
Lemma~\ref{lemma_nonlinear_bound_on_G} and estimate~\eqref{e:vbound} give rise to the nonlinear bound
\begin{align*}
\|\Non_c(r(s),y(s),z(s),v(s),\gamma(s),\widetilde{\gamma}(s))\|_\infty \lesssim \eta(s)^2\left(1+ Y(t)\right)\frac{\log(2+s)}{\left(1+s\right)^{\frac{3}{2}-3\alpha}}
\end{align*}
for $s \in [1,t]$, where we use $\eta(t) \leq \frac{1}{2}$. Combining the latter with Proposition~\ref{prop:lin2}, we find a constant $R_* > 0$ such that
\begin{align} \label{e:nlest92}
\begin{split}
&\left\|\partial_\xx^m \int_1^t \re^{\left(d\partial_\xx^2 + a\partial_\xx\right) (t-s)} \Non_c(r(s),y(s),z(s),v(s),\gamma(s),\widetilde{\gamma}(s)) \de s\right\|_\infty \\
&\qquad \lesssim \int_1^t \frac{\eta(s)^2(1+Y(t))\log(2+s)}{(t-s)^{\frac{m}{2}} (1+s)^{\frac{3}{2}-3\alpha}} \de s \leq R_* \frac{\eta(t)^2(1+Y(t))}{(1+t)^{\frac{m}{2}}}.
\end{split}
\end{align}
for $m = 0,1$ and all $t \in [0,\smash{\widetilde{\tau}_{\max}})$ with $t \geq 1$ and $\eta(t) \leq \frac{1}{2}$. Taking suprema with respect to $t$ in~\eqref{e:inty} and using the estimates~\eqref{e:nlest91} and~\eqref{e:nlest92}, we find an $E_0$-independent constant $C_* > 0$ such that
\begin{align*}
Y(t) \leq C_*\left(1 + \eta(t)^2\left(1+Y(t)\right)\right)
\end{align*}
for all $t \in [0,\smash{\widetilde{\tau}_{\max}})$ with $t \geq 1$ and $\eta(t) \leq \frac{1}{2}$. This implies
\begin{align} \label{e:Ytbound}
Y(t) \leq 2C_*
\end{align}
for all $t \in [0,\smash{\widetilde{\tau}_{\max}})$ with $t \geq 1$ and $\eta(t) \leq \frac{1}{2}\min\{1,1/\sqrt{C_*}\}$. Moreover, the lower bound in~\eqref{e:yshort} and the fact that $\smash{\re^{\left(d\partial_\xx^2 + a\partial_\xx\right) t}}$ is a positive operator yield
\begin{align} \label{e:lowerybound}
0 \leq \re^{\left(d\partial_\xx^2 + a\partial_\xx\right) t} \left(K_*y(1) - 1\right) = K_*\re^{\left(d\partial_\xx^2 + a\partial_\xx\right) t} y(1) - 1
\end{align}
for $t \geq 1$. Finally, applying~\eqref{e:nlest91},~\eqref{e:nlest92},~\eqref{e:Ytbound}, and~\eqref{e:lowerybound} to~\eqref{e:inty} and using the short-time bounds~\eqref{e:yshort} and~\eqref{e:yshort2}, we obtain an $E_0$-independent constant $M_* > 0$ such that
\begin{align} \label{e:nlest9}
\begin{split}
y(\xx,t) \leq M_*, \qquad M_* y(\xx,t) \geq 1, \qquad \sqrt{t} \, \|y_\xx(t)\|_\infty \leq M_* \left(E_0^\alpha + \eta(t)^2\right)(1+t)^{\alpha}
\end{split}
\end{align}
for all $\xx \in \R$ and $t \in [0,\smash{\widetilde{\tau}_{\max}})$ with 
$$\eta(t) \leq \frac{1}{2}\min\left\{1,\frac{1}{\sqrt{C_*}},\frac{1}{\sqrt{K_* R_*(1+2C_*)}}\right\} =: \eta_0.$$

\paragraph*{Short-time bounds on \texorpdfstring{$\gamma(t)$}{gamma(t)} and its derivatives.} We use that $\gamma(t) = \smash{\re^{-\partial_\xx^4 t} \gamma_0}$ for $t \in [0,\smash{\widetilde{\tau}_{\max}})$ with $t \leq 1$ by Proposition~\ref{p:gamma}. Hence,~\eqref{e:Gamma_rates3} yields
\begin{align} \label{e:shorttimegamma}
\|\gamma(t)\|_\infty \lesssim 1, \qquad t^{\frac{j}4} \left\|\partial_\xx^j \gamma_{\xx}(t)\right\|_\infty \lesssim E_0, \qquad t^{\frac34} \left\|\widetilde{\gamma}(t)\right\|_\infty \lesssim E_0
\end{align}
for $j = 0,1,2$ and $t \in [0,\smash{\widetilde{\tau}_{\max}})$ with $t \leq 1$. In addition, it implies
\begin{align} \label{e:shorttimegamma2}
\left\|\gamma_{\xx\xx\xx\xx}(t)\right\|_{C_{\mathrm{ub}}^1} \lesssim E_0, \qquad  \left\|\widetilde{\gamma}_{\xx}(t)\right\|_{C_{\mathrm{ub}}^2} \lesssim E_0
\end{align}
for $t \in [0,\smash{\widetilde{\tau}_{\max}})$ with $\frac{t_*}{2} \leq t \leq 1$.

\paragraph*{Bounds on \texorpdfstring{$\gamma(t)$}{gamma(t)} and \texorpdfstring{$\gamma_\xx(t)$}{gamma_\xx(t)}.} We first consider the case $\nu \neq 0$. Recalling~\eqref{e:decomp_gamma} and~\eqref{e:defy}, we take the spatial derivative of
\begin{align*} \gamma(t) = r(t) + \frac{d}{\nu} \log(y(t)),\end{align*}
yielding
\begin{align*} \gamma_\xx(t) = r_\xx(t) + \frac{d y_\xx(t)}{\nu y(t)}.\end{align*}
We note that by~\eqref{e:nlest8} and~\eqref{e:nlest9} the above expressions are well-defined and we deduce 
\begin{align*}
\|\gamma(t)\|_\infty \leq \|r(t)\|_\infty + \frac{d}{\nu} \left|\log(M_*)\right| \lesssim 1
\end{align*}
and
\begin{align} \label{e:gammabounding}
\|\gamma_\xx(t)\|_\infty \leq \|r_\xx(t)\|_\infty + \frac{d M_*}{\nu} \|y_\zeta(t)\|_\infty \lesssim \frac{E_0^\alpha + \eta(t)^2}{(1+t)^{\frac12 - \alpha}}
\end{align}
for $t \in [0,\smash{\widetilde{\tau}_{\max}})$ with $t \geq 1$ and $\eta(t) \leq \eta_0$. Combining the latter with the short-time bounds~\eqref{e:shorttimegamma} yields
\begin{align} \label{e:nlest11} \|\gamma(t)\|_\infty \lesssim 1, \qquad 
\|\gamma_\xx(t)\|_\infty \lesssim \frac{E_0^\alpha + \eta(t)^2}{(1+t)^{\frac12 - \alpha}}, \qquad \|\gamma_\xx(0)\|_\infty \lesssim E_0^\alpha\end{align}
for $t \in [0,\smash{\widetilde{\tau}_{\max}})$ with $\eta(t) \leq \eta_0$, where we use $0 \leq E_0 \leq 1$ and $0 < \alpha < \frac16$. 

We proceed with the case $\nu = 0$. Recalling~\eqref{e:decomp_gamma}, we establish bounds on $\widetilde{y}(t)$. First, we invoke Proposition~\ref{prop:lin2} and estimates~\eqref{e:v0bound} and~\eqref{e:interp9} to bound the linear term 
\begin{align*}
I_*(t) := S_h^0(t) \left(v_0 + \gamma_0' v_0\right) + 
 \re^{\left(d\partial_\xx^2 + a\partial_\xx\right) t}\left(\gamma_0 - A_h(\phi_0') \gamma_0'\right)
\end{align*}
in~\eqref{e:def_tilde_y} by
\begin{align} \label{e:shorttildey}
\|I_*(t)\|_\infty \lesssim 1, \qquad \left\|\partial_\xx I_*(t)\right\|_\infty \lesssim \frac{E_0^\alpha}{(1+t)^{\frac12 - \alpha}}
\end{align}
for $t \in [0,\smash{\widetilde{\tau}_{\max}})$ with $t \geq 1$, where we used $0 \leq E_0 \leq 1$ and $0 < \alpha < \frac16$. Next, we bound the nonlinear terms in~\eqref{e:def_tilde_y}. With the aid of Proposition~\ref{prop:lin2} and estimate~\eqref{e:nlest71} we establish
\begin{align} \label{e:nlest70}
\begin{split}
\left\|\partial_\xx^m \int_0^t S_h^0(t-s) \mathcal Q_p(z(s),v(s),\gamma(s))\de s\right\|_\infty & \lesssim \int_0^t \frac{\eta(s)^2\log(2+s)}{(t-s)^{\frac{m}{2}} s^{\frac34} (1+s)^{\frac{3}{4} - 3\alpha}} \de s \lesssim \frac{\eta(t)^2}{(1+t)^{\frac{m}{2}}},
\end{split}
\end{align}
and, analogously,
\begin{align} \label{e:nlest72}
\begin{split}
&\left\|\partial_\xx^m\! \int_0^t S_h^1(t-s) \mathcal R_p(z(s),v(s),\gamma(s),\widetilde{\gamma}(s))\de s\right\|_\infty, 
\left\|\partial_\xx^m\! \int_0^t S_h^2(t-s) \mathcal S_p(z(s),v(s),\gamma(s))\de s\right\|_\infty, \\
&\left\|\partial_\xx^m \int_0^t \re^{\left(d\partial_\xx^2 + a\partial_\xx\right) (t-s)} \left(A_h(f_p) \partial_\xx \left(\gamma_\xx(s)^2\right)\right) \de s\right\|_\infty \lesssim \frac{\eta(t)^2}{(1+t)^{\frac{m}{2}}},
\end{split}
\end{align}
for $m = 0,1$ and $t \in [0,\smash{\widetilde{\tau}_{\max}})$ with $t \geq 1$ and $\eta(t) \leq \frac12$. Applying~\eqref{e:shorttildey},~\eqref{e:nlest70}, and~\eqref{e:nlest72} to~\eqref{e:def_tilde_y}, we conclude
\begin{align*}
\left\|\widetilde{y}(t)\right\|_\infty \lesssim 1, \qquad \left\|\widetilde{y}_\xx(t)\right\|_\infty \lesssim \frac{E_0^\alpha + \eta(t)^2}{(1+t)^{\frac12 - \alpha}}
\end{align*}
for $t \in [0,\smash{\widetilde{\tau}_{\max}})$ with $t \geq 1$ and $\eta(t) \leq \frac12$.
Combining the latter with the short-time estimates~\eqref{e:shorttimegamma} and the estimates~\eqref{e:nlest8} on $r(t)$ for large times, we bound~\eqref{e:decomp_gamma}, arriving at~\eqref{e:nlest11} for $t \in [0,\smash{\widetilde{\tau}_{\max}})$ with $\eta(t) \leq \frac12$.

\paragraph*{Bounds on \texorpdfstring{$\widetilde{\gamma}(t)$}{tildegamma(t)}, \texorpdfstring{$\gamma_{\xx\xx}(t)$}{gamma_xx(t)}, and their derivatives.} First, we use Proposition~\ref{prop:lin1} and the estimates~\eqref{e:v0bound} and~\eqref{e:interp33} to bound the linear term in~\eqref{e:intgamma} as
\begin{align} \label{e:lingamma}
\left\|(\partial_t - a\partial_\xx)^j \partial_\xx^l S_p^0(t)\left(v_0 + \phi_0'\gamma_0 + \gamma_0' v_0\right)\right\|_\infty \lesssim (1+t)^{-\frac{2j+l}{2} + 2\alpha} E_0^\alpha
\end{align}
for $t \geq 1$, $j \in \{0,1\}$, and $l \in \NM_0$ with $1 \leq \min\{j,l\}$ and $l \leq 5$, where we used $0 \leq E_0 \leq 1$ and $0 < \alpha < \frac16$.

Next, we bound the nonlinear terms in~\eqref{e:intgamma}. To this end, let $t \in (0,\smash{\widetilde{\tau}_{\max}})$ with $\eta(t) \leq \frac{1}{2}$. We invoke Lemma~\ref{lemma_nonlinear_bound_on_N} and employ the estimate~\eqref{e:vbound} to obtain
\begin{align} \label{e:nlest100}
\begin{split}
\|\mathcal Q(v(s),\gamma(s))\|_{\infty}, \left\|\mathcal R(v(s),\gamma(s),\widetilde{\gamma}(s))\right\|_{\infty}, \left\|\partial_\xx^j \mathcal S(v(s),\gamma(s))\right\|_{\infty} &\lesssim \frac{\eta(t)^2}{s^{\frac34} (1+s)^{\frac14 - 2\alpha}}
\end{split}
\end{align}
for $s \in (0,t]$ and $j = 0,1$. So, using Proposition~\ref{prop:lin1} and the facts that $S_p(t)$ vanishes on $[0,1]$ and we have $0 < \alpha < \frac16$, we infer 
\begin{align}
\begin{split}
\left\|(\partial_t - a\partial_\xx)^j \partial_\xx^l \int_0^t S_p^0(t-s) \mathcal{N}\left(v(s),\gamma(s),\partial_t \gamma(s)\right) \de s\right\|_\infty & \lesssim \int_0^t \frac{\eta(s)^2}{(1+t-s) s^{\frac34} (1+s)^{\frac14 - 2\alpha}} \de s\\
&\lesssim \frac{\eta(t)^2\log(2+t)}{(1+t)^{1 - 2\alpha}},\end{split} \label{e:nlest22}
\end{align}
for all $t \in [0,\smash{\widetilde{\tau}_{\max}})$ with $\eta(t) \leq \frac{1}{2}$ and $j,l \in \mathbb{N}_0$ with $2 \leq l + 2j \leq 5$. Thus, applying the linear bound~\eqref{e:lingamma} and the nonlinear bound~\eqref{e:nlest22} to the right-hand side of~\eqref{e:intgamma}, we obtain
\begin{align*}
\begin{split}
\left\|(\gamma_{\xx\xx}(t),\widetilde{\gamma}(t))\right\|_{C_{\mathrm{ub}}^3 \times C_{\mathrm{ub}}^3} \lesssim \left(E_0^\alpha + \eta(t)^2\right)\frac{\log(2+t)}{(1+t)^{1-2\alpha}},
\end{split}
\end{align*}
for all $t \in [0,\smash{\widetilde{\tau}_{\max}})$ with $t \geq 1$ and $\eta(t) \leq \frac{1}{2}$. Combining the latter with the short-time bounds~\eqref{e:shorttimegamma} and~\eqref{e:shorttimegamma2} and using that $0 \leq E_0 \leq 1$ and $0 < \alpha < \frac16$, we arrive at
\begin{align} \label{e:nlest3}
\begin{split}
t^{\frac14} \left\|\gamma_{\xx\xx}(t)\right\|_\infty \lesssim \left(E_0^\alpha + \eta(t)^2\right)\frac{\log(2+t)}{(1+t)^{\frac34-2\alpha}}, \qquad \sqrt{t} \,\left\|\gamma_{\xx\xx\xx}(t)\right\|_\infty \lesssim \left(E_0^\alpha + \eta(t)^2\right)\frac{\log(2+t)}{(1+t)^{\frac12-2\alpha}},
\end{split}
\end{align}
and
\begin{align} \label{e:nlest333}
\begin{split}
t^{\frac34} \left\|\widetilde{\gamma}(t)\right\|_\infty &\lesssim \left(E_0^\alpha + \eta(t)^2\right)\frac{\log(2+t)}{(1+t)^{\frac14-2\alpha}}, \\
\varrho(t)\left\|(\gamma_{\xx\xx\xx\xx}(t),\widetilde{\gamma}_\xx(t))\right\|_{C_{\mathrm{ub}}^1 \times C_{\mathrm{ub}}^2} &\lesssim \left(E_0^\alpha + \eta(t)^2\right)\frac{\log(2+t)}{(1+t)^{1-2\alpha}}
\end{split}
\end{align}
for all $t \in [0,\smash{\widetilde{\tau}_{\max}})$ with $\eta(t) \leq \frac{1}{2}$.

\paragraph*{Short-time bounds on \texorpdfstring{$\vt(t)$}{v(t)} and its derivatives.} Let $t \in (0,\smash{\widetilde{\tau}_{\max}})$ with $t \leq 1$ and $\eta(t) \leq \frac{1}{2}$. Using Lemma~\ref{lem:equivalence} and estimates~\eqref{e:vbound} and~\eqref{e:shorttimegamma}, we deduce
\begin{align} \label{e:vtbound}
\left\|\vt(s)\right\|_\infty \lesssim \left\|v(s)\right\|_\infty + \left\|\gamma_\xx(s)\right\|_\infty \lesssim \frac{\eta(t)}{(1+s)^{\frac12 - \alpha}}
\end{align}
for $s \in [0,t]$. Thus, Lemma~\ref{lemma_nonlinear_bound_on_mathringN} and estimates~\eqref{e:shorttimegamma} and~\eqref{e:vtbound} yield the nonlinear bound
\begin{align*}
\left\|\mathring{\mathcal N}(\vt(s),\gamma(s),\partial_t \gamma(s))\right\|_\infty &\lesssim \frac{E_0^\alpha + \eta(t)^2}{s^{\frac34}}
\end{align*}
for $s \in (0,t]$, where we used $0 \leq E_0 \leq 1$ and $0 < \alpha < \frac16$. Hence, applying the latter estimate and the Green's functions bounds in Proposition~\ref{prop:pointwiseGreen} to the Duhamel formula~\eqref{e:intmathringv}, we establish
\begin{align} \label{e:nlvbound}
\begin{split}
t^{\frac{j}{2}}\left|\partial_\xx^j \vt(\xx,t)\right| &\lesssim  \int_\R \frac{\re^{-\frac{(\xx - \xt)^2}{Mt}}}{\sqrt{t}} E_0\, \de \xt + t^{\frac{j}{2}} \int_0^t \int_\R \frac{\re^{-\frac{(\xx - \xt)^2}{M(t-s)}}}{(t-s)^{\frac{1+j}{2}} s^{\frac34}} \left(E_0^\alpha + \eta(t)^2\right) \de \xt \de s\\
&\lesssim E_0 + \int_0^t \frac{t^{\frac{j}{2}}\left(E_0^\alpha + \eta(t)^2\right) }{(t-s)^{\frac{j}{2}}  s^{\frac34}} \de s \lesssim E_0^\alpha + \eta(t)^2
\end{split}
\end{align}
for $j = 0,1$, $\xx \in \R$, and $t \in [0,\smash{\widetilde{\tau}_{\max}})$ with $t \leq t_*$ and $\eta(t) \leq \frac{1}{2}$, where we used $0 \leq E_0 \leq 1$ and $0 < \alpha < \frac16$. 

To prepare for the application of the nonlinear damping estimate from Proposition~\ref{prop:nonlinear_damping}, we also derive a bound on the second derivative of $\vt(t)$ at time $t = t_*$. For this purpose, we assume $t_* \in [0,\smash{\widetilde{\tau}_{\max}})$ with $\eta(t_*) \leq \frac{1}{2}$. Applying Lemma~\ref{lemma_nonlinear_bound_on_mathringN} once again, while using the estimates~\eqref{e:shorttimegamma},~\eqref{e:shorttimegamma2}, and~\eqref{e:nlvbound}, we arrive at
\begin{align*}
\left\|\mathring{\mathcal N}_1(\vt(s),\gamma(s),\partial_t \gamma(s))\right\|_\infty &\lesssim E_0^\alpha + \eta(t)^2
\end{align*}
for $s \in [\frac{t_*}{2},t_*]$, where we used $0 \leq E_0 \leq 1$ and $0 < \alpha < \frac16$. Combining the latter with Proposition~\ref{prop:pointwiseGreen}, we bound the spatial derivative of~\eqref{e:intmathringw} as
\begin{align} \label{e:nlwbound}
\begin{split}
\left|\vt_{\xx\xx}(\xx,t_*)\right| &\lesssim  \int_\R \re^{-\frac{2(\xx - \xt)^2}{Mt_*}}\left(E_0^\alpha + \eta(t_*)^2\right) \de \xt + \int_{\frac{t_*}{2}}^{t_*} \int_\R \frac{\re^{-\frac{(\xx - \xt)^2}{M(t_*-s)}}}{(t_*-s)} \left(E_0^\alpha + \eta(t_*)^2\right) \de \xt \de s\\ 
&\lesssim E_0^\alpha + \eta(t_*)^2
\end{split}
\end{align}
for $\xx \in \R$. 

\paragraph*{Short-time bound on \texorpdfstring{$z(t)$}{z(t)}.} Estimates~\eqref{e:shorttimegamma} and~\eqref{e:nlvbound} and Lemma~\ref{lem:equivalence} yield
\begin{align} \label{e:shortz}
\left\|z(t)\right\|_\infty \lesssim \left\|v(t)\right\|_\infty + \left\|\gamma_\zeta(t)\right\|_\infty \lesssim \left\|\vt(t)\right\|_\infty + \left\|\gamma_\zeta(t)\right\|_\infty \lesssim E_0^\alpha + \eta(t)^2 
\end{align}
for $t \in [0,\smash{\widetilde{\tau}_{\max}})$ with $t \leq t_*$ and $\eta(t) \leq \frac{1}{2}$, where we used $0 \leq E_0 \leq 1$ and $0 < \alpha < \frac16$. 

\paragraph*{Bounds on \texorpdfstring{$z(t)$}{z(t)}.} Using Proposition~\ref{prop:lin1} and the estimates~\eqref{e:Gamma_rates3},~\eqref{e:v0bound}, and~\eqref{e:interp6}, we bound the linear term
\begin{align*}
J_0(t) := \widetilde{S}(t)\left(v_0 + \phi_0'\gamma_0 + \gamma_0' v_0\right) + \left(1 - \chi(t)\right)\left(\phi_0' + k_0 \partial_k \phi(\cdot;k_0) \partial_\xx\right) \re^{-\partial_\xx^4 t} \gamma_0
\end{align*}
in~\eqref{e:intz} as
\begin{align} \label{e:linz}
\left\|J_0(t)\right\|_\infty \lesssim \frac{E_0^\alpha}{(1+t)^{1-2\alpha}}
\end{align}
for $t \geq 0$, where we used $0 \leq E_0 \leq 1$ and $0 < \alpha < \frac16$. On the other hand, using Proposition~\ref{prop:lin1} and estimates~\eqref{e:vbound} and~\eqref{e:nlest100}, we bound the nonlinear term in~\eqref{e:intz} as
\begin{align} \label{e:nonlz}
\begin{split}
&\left\|\int_0^t\widetilde{S}(t-s)\mathcal{N}(v(s),\gamma(s),\partial_t \gamma(s)) \de s + \gamma_\xx(t)v(t)\right\|_\infty \\
&\qquad\lesssim \frac{\eta(t)^2}{(1+t)^{1-2\alpha}} + \int_0^t \left(1 + \frac{1}{\sqrt{t-s}}\right) \frac{\eta(t)^2}{(1+t-s) s^{\frac34} (1+s)^{\frac14 - 2 \alpha}} \de s \lesssim \frac{\eta(t)^2 \log(2+t)}{(1+t)^{1 - 2\alpha}} 
\end{split}
\end{align}
for $t \in [0,\smash{\widetilde{\tau}_{\max}})$ with $t \geq t_*$ and $\eta(t) \leq \frac{1}{2}$. Combining the short-time bound~\eqref{e:shortz} with the long-time estimates~\eqref{e:linz} and~\eqref{e:nonlz}, we arrive at
\begin{align} \label{e:finalz}
\|z(t)\|_\infty \lesssim \left(E_0^\alpha + \eta(t)^2\right)\frac{\log(2+t)}{(1+t)^{1-2\alpha}}, \qquad \|z(0)\|_\infty \lesssim E_0^\alpha
\end{align}
for $t \in [0,\smash{\widetilde{\tau}_{\max}})$ with $\eta(t) \leq \frac{1}{2}$.

\paragraph*{Short-time bounds on \texorpdfstring{$\mathring{z}(t)$}{ringz(t)}, \texorpdfstring{$\mathring{z}_\xx(t)$}{ringz_x(t)}, and \texorpdfstring{$z_\xx(t)$}{z_x(t)}.} Recalling the identities~\eqref{e:defringv} and~\eqref{e:defringz} and using that we have $\|\gamma_\xx(t)\|_{\infty} < r_0$ by Proposition~\ref{p:gamma}, we find that the mean-value theorem yields
\begin{align} \label{e:equivvz}
\begin{split}
&t^{\frac{j}{2}} \left|\left\|\zt(t)\right\|_{C_{\mathrm{ub}}^j} - \left\|\vt(t)\right\|_{C_{\mathrm{ub}}^j}\right| \lesssim 
t^{\frac{j}{2}} \left\|\zt(t) - \vt(t)\right\|_{C_{\mathrm{ub}}^j}\\
&\qquad \lesssim t^{\frac{j}{2}} \left(\left\|\gamma_\xx(t)\right\|_{C_{\mathrm{ub}}^j} + \left\|\gamma_\xx(t)\right\|_\infty\left(\left\|\gamma(t)\right\|_\infty \left\|\phi_0'\right\|_{C_{\mathrm{ub}}^j} +  \sup_{k \in [k_0-r_0,k_0+r_0]} \left\|\partial_k \phi(\cdot;k)\right\|_{C_{\mathrm{ub}}^j}\right)\right)
\end{split}
\end{align}
for $j = 0,1,2$ and $t \in [0,\smash{\widetilde{\tau}_{\max}})$ with $\eta(t) \leq \frac12$. Applying the estimates~\eqref{e:shorttimegamma},~\eqref{e:nlvbound}, and~\eqref{e:nlwbound} to~\eqref{e:equivvz} and using $0 \leq E_0 \leq 1$ and $0 < \alpha < \frac16$, we establish
\begin{align} \label{e:ztest}
t^{\frac{j}{2}} \left\|\zt(t)\right\|_{C_{\mathrm{ub}}^j} \lesssim E_0^\alpha + \eta(t)^2
\end{align}
for $j = 0,1$ and $t \in [0,\smash{\widetilde{\tau}_{\max}})$ with $t \leq t_*$ and $\eta(t) \leq \frac12$, and
\begin{align} \label{e:ztest3}
 \left\|\zt(t_*)\right\|_{C_{\mathrm{ub}}^2} \lesssim E_0^\alpha + \eta(t_*)^2 
\end{align}
provided $t_* \in [0,\smash{\widetilde{\tau}_{\max}}]$ with $\eta(t_*) \leq \frac12$.

\paragraph*{Bounds on \texorpdfstring{$\mathring{z}(t)$}{ringz(t)}, \texorpdfstring{$\mathring{z}_\xx(t)$}{ringz_x(t)} and \texorpdfstring{$z_\xx(t)$}{z_x(t)}.} Lemma~\ref{lem:equivalence} and~\eqref{e:nlest11},~\eqref{e:nlest3}, and~\eqref{e:finalz} yield
\begin{align} \label{e:ztest2}
\left\|\zt(t)\right\|_\infty \lesssim \left(E_0^\alpha + \eta(t)^2\right)\frac{\log(2+t)}{(1+t)^{1-2\alpha}}
\end{align}
for $t \in [0,\smash{\widetilde{\tau}_{\max}})$ with $\eta(t) \leq \eta_0$. 
Combining~\eqref{e:nlest11},~\eqref{e:nlest3},~\eqref{e:nlest333},~\eqref{e:ztest3}, and~\eqref{e:ztest2} with the nonlinear damping estimate in Proposition~\ref{prop:nonlinear_damping}, we deduce
\begin{align} \label{e:nondamp}
\begin{split}
\big\|\zt(t)\big\|_{C_{\mathrm{ub}}^1} &\lesssim \big\|\zt(t)\big\|_\infty + \left(\re^{t_*-t} \left(E_0^\alpha + \eta(t_*)^2\right)^2 + \int_{t_*}^t \frac{\re^{s-t}\left(\left(E_0^\alpha + \eta(t)^2\right) \log(2+s)\right)^2}{(1+s)^{2 - 4\alpha}}\de s\right)^{\frac12}\\
&\lesssim \left(E_0^\alpha + \eta(t)^2\right)\frac{\log(2+t)}{(1+t)^{1-2\alpha}}
\end{split}
\end{align}
for $t \in [0,\smash{\widetilde{\tau}_{\max}})$ with $t \geq t_*$ and $\eta(t) \leq \eta_0$. Linking the short- and long-time estimates~\eqref{e:ztest} and~\eqref{e:nondamp}, we thus obtain
\begin{align} \label{e:finalztder}
\sqrt{t} \, \left\|\zt_\xx(t)\right\|_\infty \lesssim \left(E_0^\alpha + \eta(t)^2\right)\frac{\log(2+t)}{(1+t)^{\frac12-2\alpha}}
\end{align}
for $t \in [0,\smash{\widetilde{\tau}_{\max}})$ with $\eta(t) \leq \eta_0$. Finally, we apply Lemma~\ref{lem:equivalence} and use~\eqref{e:nlest11},~\eqref{e:nlest3}, and~\eqref{e:finalztder} to establish
\begin{align} \label{e:finalzder}
\sqrt{t} \, \left\|z_\xx(t)\right\|_\infty \lesssim \left(E_0^\alpha + \eta(t)^2\right)\frac{\log(2+t)}{(1+t)^{\frac12-2\alpha}}
\end{align}
 for $t \in [0,\smash{\widetilde{\tau}_{\max}})$ with $\eta(t) \leq \eta_0$.

\paragraph*{Bounds on \texorpdfstring{$\mathring{v}(t)$}{ringz(t)}, \texorpdfstring{$\mathring{v}_\xx(t)$}{ringz_x(t)}.} Combining~\eqref{e:nlest11},~\eqref{e:nlest3},~\eqref{e:ztest2},~\eqref{e:equivvz}, and~\eqref{e:finalztder}, we arrive at
\begin{align} \label{e:finalvt}
\left\|\vt(t)\right\|_\infty \lesssim \frac{E_0^\alpha + \eta(t)}{(1+t)^{\frac12-\alpha}}, \qquad 
\sqrt{t} \, \left\|\vt_\xx(t)\right\|_\infty \lesssim \left(E_0^\alpha + \eta(t)\right)(1+t)^{\alpha}
\end{align}
for $t \in [0,\smash{\widetilde{\tau}_{\max}})$ with $\eta(t) \leq \eta_0$.

\paragraph*{Proof of key estimates.} Combining~\eqref{e:nlest8},~\eqref{e:nlest9},~\eqref{e:nlest11},~\eqref{e:nlest3},~\eqref{e:nlest333},~\eqref{e:finalz}, and~\eqref{e:finalzder}, we conclude that there exists a constant $C > 0$, independent of $E_0$, such that the key estimates~\eqref{e:etaest} hold for all $t \in [0, \smash{\widetilde{\tau}_{\max}})$ with $\eta(t) \leq \eta_0$. Now take $E_0 \in (0, \varepsilon)$. As previously argued, this implies
\begin{align} \label{e:etaest2}
\eta(t) \leq 2C E_0^\alpha < \eta_0, \qquad \|\gamma(s)\|_\infty \leq C
\end{align}
for all $t \in [0, \smash{\widetilde{\tau}_{\max}})$, thereby ruling out the blow-up scenarios~\eqref{e:blowupgamma1} and~\eqref{e:blowupgamma2}, and hence ensuring $\smash{\widetilde{\tau}_{\max}} = \tau_{\max} = T_{\max}$. So,~\eqref{e:finalvt} shows that $u(t)$ remains uniformly bounded on $[0, T_{\max})$, excluding the blow-up alternative~\eqref{e:blowupu} and establishing that $\tau_{\max} = T_{\max} = \infty$. Finally, the bounds~\eqref{e:ztest2},~\eqref{e:finalztder},~\eqref{e:finalvt}, and~\eqref{e:etaest2} yield the estimates~\eqref{e:mtest10} and~\eqref{e:mtest2}.

\paragraph*{Optimal temporal decay rates.} It remains to obtain the estimates~\eqref{e:mtest10} and~\eqref{e:mtest2} for the case $\alpha = 0$. Here, we make use of the fact that we have established~
\begin{align} \label{e:etaest3}
\eta(t) \leq 2C E_0^{\tilde\alpha}, \qquad t \geq 0,
\end{align}
for a fixed $\tilde\alpha \in (0,\frac16)$. First, we apply  Proposition~\ref{prop:lin2} and employ~\eqref{e:yshort},~\eqref{e:nlest92}, and~\eqref{e:etaest3} to bound the right-hand side of~\eqref{e:inty} as
\begin{align*}
\|y_\xx(t)\|_\infty \lesssim \frac{1}{\sqrt{1+t}}
\end{align*}
for $t \geq 1$. The latter, together with~\eqref{e:gammabounding} and~\eqref{e:etaest3}, yields
\begin{align} \label{e:opt1}
\|\gamma_\xx(t)\|_\infty \lesssim \frac{1}{\sqrt{1+t}}
\end{align}
for $t \geq 1$. Therefore, Proposition~\ref{lemma_nonlinear_bound_on_N} and estimates~\eqref{e:vbound} and~\eqref{e:etaest3} result in the nonlinear bounds
\begin{align*}
\begin{split}
\|\mathcal Q(v(s),\gamma(s))\|_{\infty}, \left\|\mathcal R(v(s),\gamma(s),\widetilde{\gamma}(s))\right\|_{\infty}, \left\|\partial_\xx^j \mathcal S(v(s),\gamma(s))\right\|_{\infty} &\lesssim \frac{1}{s^{\frac34} (1+s)^{\frac14}},\\
\|\gamma_\xx(s)v(s)\|_\infty \lesssim \frac{1}{1+s}
\end{split}
\end{align*}
for all $s > 0$ and $j = 0,1$. This shows that the estimates~\eqref{e:nlest22} and~\eqref{e:nonlz} hold for $\alpha = 0$ and $t \geq 1$ (with $\eta(t)$ replaced by $1$). Applying the latter bounds, estimates~\eqref{e:Gamma_rates3} and~\eqref{e:v0bound}, and Proposition~\ref{prop:lin1} to~\eqref{e:intgamma} and~\eqref{e:intz}, we arrive at
\begin{align} \label{e:opt2}
\|z(t)\|_\infty, \left\|\left(\gamma_{\xx\xx}(t),\widetilde{\gamma}(t)\right)\right\|_{C_{\mathrm{ub}}^3 \times C_{\mathrm{ub}}^3} \lesssim \frac{\log(2+t)}{1+t}
\end{align}
for $t \geq t_*$. Next, we observe that Lemma~\ref{lem:equivalence} and the estimates~\eqref{e:nlest11},~\eqref{e:opt1}, and~\eqref{e:opt2} yield
\begin{align} \label{e:opt3}
\left\|\zt(t)\right\|_\infty \lesssim \frac{\log(2+t)}{1+t}
\end{align}
for $t \geq t_*$. Combining~\eqref{e:nlest11},~\eqref{e:ztest3},~\eqref{e:opt1},~\eqref{e:opt2}, and~\eqref{e:opt3} with Proposition~\ref{prop:nonlinear_damping}, we find that estimate~\eqref{e:nondamp} also holds for $\alpha = 0$ and $t \geq t_*$ (with $\eta(t)$ replaced by $1$). This leads to the bound
\begin{align} \label{e:opt4}
\left\|\zt(t)\right\|_{C_{\mathrm{ub}}^1}\lesssim \frac{\log(2+t)}{1+t}
\end{align}
for $t \geq t_*$. Finally,~\eqref{e:nlest11},~\eqref{e:equivvz},~\eqref{e:opt1},~\eqref{e:opt2}, and~\eqref{e:opt4} yield
\begin{align} \label{e:opt5}
\left\|\vt(t)\right\|_{C_{\mathrm{ub}}^1}\lesssim \frac{1}{\sqrt{1+t}}
\end{align}
for $t \geq t_*$. It follows from~\eqref{e:opt1},~\eqref{e:opt2},~\eqref{e:opt3},~\eqref{e:opt4}, and~\eqref{e:opt5} that the estimates~\eqref{e:mtest10} and~\eqref{e:mtest2} also hold for the case $\alpha = 0$.

\paragraph*{Approximation by the viscous Hamilton-Jacobi equation.} 
We begin with establishing auxiliary bounds for short time. We use Propositions~\ref{prop:lin_mod_1} and~\ref{prop:lin_mod_2} to establish the modulational bounds
\begin{align}\label{e:interp22}
\begin{split}
\left\|S_p^0(t)\left(\phi_0' \gamma_0\right) - \chi(t) \re^{\left(d\partial_\xx^2 + a\partial_\xx\right) t}\gamma_0\right\|_\infty &\lesssim E_0,\\
(1-\chi(t)) \left\|\re^{-\partial_\xx^4 t} \gamma_0 - \re^{\left(d\partial_\xx^2 + a\partial_\xx\right) t} \gamma_0\right\|_\infty &\lesssim E_0
\end{split}
\end{align}
for $t \geq 0$. On the other hand, given $\alpha \in (0,\frac16)$, Propositions~\ref{prop:lin1} and~\ref{prop:lin2} in combination with the estimates~\eqref{e:nlest100} and~\eqref{e:etaest2} yield the nonlinear bounds
\begin{align} \label{e:nlest222}
\begin{split}
\left\|\int_0^t S_p^0(t-s) \mathcal{N}\left(v(s),\gamma(s),\partial_t \gamma(s)\right) \de s\right\|_\infty & \lesssim \int_0^t \frac{\eta(s)^2}{s^{\frac34} (1+s)^{\frac14 - 2\alpha}} \de s \lesssim E_0^{2\alpha}(1+t)^{2\alpha}\\
\left\|\partial_\xx^m \int_0^t \re^{\left(d\partial_\xx^2 + a \partial_\xx\right)(t-s)} \gamma_\xx(s)^2 \de s\right\|_\infty & \lesssim \int_0^t \frac{\eta(s)^2}{(t-s)^{\frac{m}{2}} (1+s)^{1 - 2\alpha}} \de s \lesssim \frac{E_0^{2\alpha}}{(1+t)^{\frac{m}{2} - 2 \alpha}}
\end{split} 
\end{align}
for $t \geq 0$ and $m = 0,1$.

Having established the auxiliary bounds~\eqref{e:interp22} and~\eqref{e:nlest222}, we follow the approach of~\cite{BjoernMod} and distinguish between the cases $\nu = 0$ and $\nu \neq 0$. We begin with the case $\nu \neq 0$. Since $\smash{\re^{(d\partial_\xx^2 + a\partial_\xx)t}}$ is a positive operator, we have the pointwise estimate
\begin{align} \label{e:pointwise}
\re^{-\frac{\nu}{d} M} \leq \breve{y}(t) \leq \re^{\frac{\nu}{d} M}, \qquad \breve{y}(t) := \re^{(d\partial_\xx^2 + a\partial_\xx)t}\left(\re^{\frac{\nu}{d}\gamma_0}\right) 
\end{align}
for all $t \geq 0$. One readily verifies that the function $\breve{\gamma} \in \mathcal{Y}$ defined by
\begin{align*} 
\breve \gamma(t) = \frac{d}{\nu}\log\left(\breve{y}(t)\right)
\end{align*}
is a classical global solution to the viscous Hamilton-Jacobi equation~\eqref{e:HamJac} with initial condition $\breve \gamma(0) = \gamma_0 \in C_{\mathrm{ub}}^1(\R)$. Hence, it satisfies the Duhamel formula
\begin{align} \label{e:intgamma3}
\breve{\gamma}(t) = \re^{(d\partial_\xx^2 + a\partial_\xx)t} \gamma_0 + \nu \int_0^t \re^{(d\partial_\xx^2 + a\partial_\xx)(t-s)} \left(\breve{\gamma}_\xx(s)^2\right) \de s 
\end{align}
for $t \geq 0$. Moreover, the pointwise bound~\eqref{e:pointwise} and Proposition~\ref{prop:lin2} yield
\begin{align} \label{e:brevegammabound}
\left\|\breve{\gamma}(t)\right\|_\infty \leq M, \qquad \left\|\breve{\gamma}_\xx(t)\right\|_\infty \lesssim E_0, \qquad \left\|\breve{\gamma}_\xx(t)\right\|_\infty \lesssim \frac{1}{\sqrt{1+t}}
\end{align}
for all $t \geq 0$. We observe that the bounds on $\gamma(t)$ and $\breve{\gamma}(t)$ in~\eqref{e:mtest10} and~\eqref{e:brevegammabound} readily imply the estimates~\eqref{e:mtest3} and~\eqref{e:mtest33} in case $\alpha = 0$. Therefore, we focus on the case $\alpha \in (0,\frac16)$. By Proposition~\ref{prop:lin2} and the bound~\eqref{e:brevegammabound}, we obtain
\begin{align} \label{e:nlest777}
\left\|\partial_\xx^m \int_0^t \re^{\left(d\partial_\xx^2 + a\partial_\xx\right) (t-s)} \left(\breve{\gamma}_\xx(s)^2\right)\de s\right\|_\infty \lesssim \int_0^t \frac{E_0^{2\alpha}}{(t-s)^{\frac{m}{2}} (1+s)^{1-\alpha}} \de s \lesssim \frac{E_0^{2\alpha}}{(1+t)^{\frac{m}{2}-\alpha}}
\end{align}
for $m = 0,1$ and $t \geq 0$. Hence, applying Proposition~\ref{prop:lin1}, employing estimates~\eqref{e:v0bound},~\eqref{e:interp22},~\eqref{e:nlest222}, and~\eqref{e:nlest777}, using $0 \leq E_0 \leq 1$ and $0 < \alpha < \frac16$, and inserting the Duhamel formulas~\eqref{e:intgamma} and~\eqref{e:intgamma3}, we arrive at the short-time bound
\begin{align} \label{e:shortbrevegamma}
\left\|\gamma(t) - \breve{\gamma}(t)\right\|_\infty \lesssim E_0^{2\alpha}(1+t)^{2\alpha}
\end{align}
for all $t \geq 0$. On the other hand, using Proposition~\ref{prop:lin2}, the identities~\eqref{e:def_tilde_y} and~\eqref{e:intgamma3}, and the estimates~\eqref{e:v0bound},~\eqref{e:nlest70},~\eqref{e:nlest72},~\eqref{e:etaest2},~\eqref{e:nlest222}, and~\eqref{e:nlest777}, we deduce
\begin{align*}
\left\|\partial_\xx^m \left(\widetilde{y}(1) - \breve{\gamma}(1)\right)\right\|_\infty \lesssim E_0^\alpha
\end{align*}
for $m = 0,1$, where we use $0 \leq E_0 \leq 1$ and $0 < \alpha < \frac16$. Hence, using the mean-value theorem and the bound~\eqref{e:brevegammabound}, we obtain
\begin{align*}
\left\|\partial_\xx^m \left(y(1) - \breve{y}(1)\right)\right\|_\infty = \left\|\partial_\xx^m \left(\re^{\frac{\nu}{d} \widetilde{y}(1)} - \re^{\frac{\nu}{d} \breve{\gamma}(1)}\right)\right\|_\infty \lesssim E_0^\alpha
\end{align*}
for $m = 0,1$. Combining this with the identities~\eqref{e:inty} and $\breve{y}(t) = \smash{\re^{(d\partial_\xx^2 + a\partial_\xx)(t-1)}} \breve{y}(1)$, Proposition~\ref{prop:lin2}, and the estimates~\eqref{e:nlest92} and~\eqref{e:etaest2}, we infer
\begin{align} \label{e:intfinaly} 
\begin{split}
\left\|\partial_\xx^m \left(y(t) - \breve{y}(t)\right)\right\|_\infty &\lesssim \frac{E_0^\alpha}{(1+t)^{\frac{m}{2}}}
\end{split}
\end{align}
for $m = 0,1$ and $t \geq 0$. Finally, applying the mean-value theorem, using the identities~\eqref{e:defy} and~\eqref{e:decomp_gamma}, and the bounds~\eqref{e:nlest8},~\eqref{e:nlest9},~\eqref{e:etaest2},~\eqref{e:pointwise}, and~\eqref{e:intfinaly}, we obtain
\begin{align} \label{e:brevegammabounds3}
\begin{split}
\left\|\gamma(t) - \breve{\gamma}(t)\right\|_\infty &\lesssim \|r(t)\|_\infty + \left\|y(t) - \breve{y}(t)\right\|_\infty \lesssim E_0^\alpha + \frac{1}{(1+t)^{\frac12 - 2\alpha}},\\
\sqrt{t} \, \left\|\gamma_\xx(t) - \breve{\gamma}_\xx(t)\right\|_\infty &\lesssim \sqrt{t} \, \left(\|r_\xx(t)\|_\infty + \left\|y_\xx(t) - \breve{y}_\xx(t)\right\|_\infty\right) \lesssim E_0^\alpha
\end{split}
\end{align}
for all $t \geq 0$, where we use $0 \leq E_0 \leq 1$ and $0 < \alpha < \frac16$. Thus, using~\eqref{e:shortbrevegamma} for $\smash{E_0^{\frac43}} (1+t)^2 \leq 1$ and the first bound in~\eqref{e:brevegammabounds3} for $(1+t)^{-2} \leq \smash{E_0^{\frac43}}$, we establish the first inequality in~\eqref{e:mtest3}, where we use $0 < \alpha < \frac16$. On the other hand, using~\eqref{e:shorttimegamma} and~\eqref{e:brevegammabound} for short times $t \in [0,1]$ and the lower bound in~\eqref{e:brevegammabounds3} for large times $t \geq 1$, we obtain the second inequality in~\eqref{e:mtest3}. Finally, estimate~\eqref{e:mtest33} follows from~\eqref{e:mtest3} with the aid of the mean-value theorem.

We now consider the case $\nu = 0$. Then, the function $\breve{\gamma} \in \mathcal{Y}$ defined by
\begin{align*} 
\breve \gamma(t) = \re^{(d\partial_\xx^2 + a\partial_\xx)t} \gamma_0
\end{align*}
is a classical global solution to~\eqref{e:HamJac} with initial condition $\breve{\gamma}(0) = \gamma_0 \in C_{\mathrm{ub}}^1(\R)$. Again it follows from Proposition~\ref{prop:lin2} and the bounds~\eqref{e:mtest10} that the estimates~\eqref{e:mtest3} and~\eqref{e:mtest33} are trivially satisfied in the case $\alpha = 0$, allowing us to focus on the case $\alpha \in (0,\frac16)$. On the one hand, Proposition~\ref{prop:lin1}, the estimates~\eqref{e:v0bound},~\eqref{e:interp22}, and~\eqref{e:nlest222}, and the representation~\eqref{e:intgamma} yield the short-time bound
\begin{align} \label{e:shortbrevegamma4}
\left\|\gamma(t) - \breve{\gamma}(t)\right\|_\infty \lesssim E_0^{2\alpha}(1+t)^{2\alpha}
\end{align}
for all $t \geq 0$, where we used $0 \leq E_0 \leq 1$ and $0 < \alpha < \frac16$. On the other hand, invoking the identities~\eqref{e:decomp_gamma} and~\eqref{e:def_tilde_y}, applying Proposition~\ref{prop:lin2}, and using the bounds~\eqref{e:v0bound},~\eqref{e:nlest8},~\eqref{e:nlest70},~\eqref{e:nlest72}, and~\eqref{e:etaest2}, we obtain
\begin{align*} 
\begin{split}
\left\|\gamma(t) - \breve{\gamma}(t)\right\|_\infty &\lesssim \|r_\xx(t)\|_\infty + \left\|\widetilde{y}(t) - \breve{\gamma}(t)\right\|_\infty \lesssim E_0^\alpha + \frac{1}{(1+t)^{\frac12 - 2\alpha}},\\
\sqrt{t} \, \left\|\gamma_\xx(t) - \breve{\gamma}_\xx(t)\right\|_\infty &\lesssim \|r_\xx(t)\|_\infty + \left\|\widetilde{y}_\xx(t) - \breve{\gamma}_\xx(t)\right\|_\infty \lesssim E_0^\alpha
\end{split}
\end{align*}
for $t \in [0,1]$, where we again use $0 \leq E_0 \leq 1$ and $0 < \alpha < \frac16$. Similarly as in the case $\nu \neq 0$, we combine the latter with the short-time bounds~\eqref{e:shorttimegamma},~\eqref{e:shortbrevegamma4}, and $\|\breve{\gamma}_\xx(t)\|_\infty \lesssim E_0$ (cf.~Proposition~\ref{prop:lin2}). This results in the estimate~\eqref{e:mtest3}, which once more implies~\eqref{e:mtest33} via the mean-value theorem.
\end{proof}

\section{Discussion and outlook} \label{sec:discussion}

We discuss the wider applicability of our method and outline potential directions for future research. 

\subsection{Adaptations to other dissipative systems}

We anticipate that the modulational stability framework developed in this paper extends beyond the class of reaction-diffusion systems and applies to diffusively spectrally stable wave trains in other dissipative problems, provided that the linearization about the wave train generates a $C_0$-semigroup on $C_{\mathrm{ub}}(\R)$ whose high-frequency component is damped. A key observation is that the structure of the critical low-frequency component $S_p^0(t)$ of the semigroup is determined by the diffusive spectral stability assumptions~\ref{assD1}-\ref{assD3}, rather than the specific structure of the underlying equation. As a result, the linear estimates on modulational data derived in~\S\ref{sec:mod_data} are expected to carry over to a broader class of systems. This suggests that our method may even extend to certain dissipative \emph{quasilinear} problems, provided that one obtains sufficient regularity control within the nonlinear iteration scheme. Such control may be established via nonlinear damping estimates, as in~\cite{STVenant2}. Alternatively, if the quasilinear equation is parabolic, one may use pointwise Green's function estimates~\cite{howard,ZUH}; see~\cite[Section~6.6]{BjoernMod} for further discussion.

An interesting and more delicate challenge arises when additional conservation laws are present, as in the St.~Venant equations for shallow water waves~\cite{STVenant1}. In such cases, the spectrum of the linearization about the wave train possesses an additional critical mode at the origin, thereby violating the spectral assumption~\ref{assD3}. This changes the nature of the leading-order modulational dynamics: instead of being governed by the scalar viscous Hamilton-Jacobi equation~\eqref{e:HamJac}, it is described by a Whitham modulation system that captures interactions among multiple critical modes; see~\cite{JNRZ14}. This precludes a straightforward application of the Cole-Hopf transform and poses a significant obstacle for extending the current analysis.

\subsection{Extension to multiple spatial dimensions}

We conjecture that our modulational stability results extend to roll waves in reaction-diffusion systems in higher spatial dimensions. These roll waves arise by trivially extending a one-dimensional wave train in the transverse directions. We believe that the core techniques employed in our analysis, such as the Cole-Hopf transform and the detailed decomposition of the linear semigroup, remain valid in this higher-dimensional setting. While we do not expect fundamental differences in the modulational behavior of roll waves compared to the one-dimensional case, we do not anticipate that our framework can be lightly adapted to study fully nonlocalized modulations of the planar periodic patterns studied in~\cite{melinand2024}, or their higher-dimensional counterparts. The reason is that the leading-order dynamics of the  modulations of such periodic patterns, which are nontrivial in any spatial direction, are governed by systems that differ qualitatively from the viscous Hamilton-Jacobi equation. These may include hyperbolic-parabolic systems, models with cross-diffusion, or anisotropic systems with dispersive effects; see~\cite{melinand2024} for more details. Developing an $L^\infty$-theory for such periodic patterns therefore remains an open problem for future research.

\subsection{Unbounded initial phase modulations and wavenumber offsets} \label{sec:unbounded}

We expect that it may be possible to allow initial data $\gamma_0 \in \mathrm{BMO}(\mathbb{R})$ with $\gamma_0' \in C_{\mathrm{ub}}(\mathbb{R})$, where $\mathrm{BMO}(\mathbb{R})$ denotes the space of functions of bounded mean oscillation. This expectation is motivated by the estimate $\smash{\| \partial_\xx \re^{t \partial_\xx^2} g \|_\infty \lesssim t^{-\frac12} \|g\|_{\mathrm{BMO}}}$, which enables interpolation on the critical linear terms in the Duhamel formulation. In particular, this suggests that the initial data $\gamma_0$ may be spatially unbounded. We note that, for the application of the Cole-Hopf transform in case $\nu \neq 0$, it may be necessary to additionally assume that $\smash{\re^{\frac{\nu}{d} \gamma_0}} \in \mathrm{BMO}(\mathbb{R})$. This condition still permits $\gamma_0$ to be spatially unbounded. Furthermore, restricting to  $\nu = 0$, we expect to allow for algebraic growth of $\gamma_0$ at rate $|x|^{\beta}$ as $x \to \pm \infty$ by assuming $L^{p}$-localization of the derivative $\gamma_0'$. Here, $\beta>0$ is sufficiently small and $\frac{1}{1-\beta} < p < \infty$.

A further open question is whether wavenumber offsets can be permitted in settings where the phase modulation grows linearly at spatial infinity. For plane wave solutions in the real Ginzburg-Landau equation, this question has been answered affirmatively in~\cite{BK1,GALMI}. We refer to~\cite[Section~6.3]{BjoernMod} for further discussion.

\appendix

\section{Technical lemmas for linear estimates on modulational data} \label{app:aux}

This appendix is devoted to technical estimates underlying the $L^\infty$-bounds on modulational data in~\S\ref{sec:mod_data}. Our first result is a variant of the low-frequency estimate established in~\cite[Lemma~A.1]{BjoernMod}, addressing the case of a spatially periodic integral kernel. Following the treatment of modulational data in~\cite{JONZNL,JUNNL}, we make use of a Fourier series expansion of the periodic kernel.

\begin{lemma}\label{lem:semigroupEstimate1}
Let $m_1 \in \mathbb N_0$, $m_2 \in \{0,1\}$, and $\xi_0 \in (0,\pi]$. Let $\lambda \in C\big((-\xi_0,\xi_0),\C\big)$ and $F \in C\big(\R^3 \times [1,\infty),\C\big)$. Suppose that there exist constants $C, \mu > 0$ such that
\begin{itemize}
\item[i)] $\lambda'(0) \in \ri \R$;
\item[ii)] $\Re \, \lambda(\xi) \leq -\mu \xi^2$ for all $\xi \in (-\xi_0,\xi_0)$;
\item[iii)] $\overline{\mathrm{supp}(F(\cdot,\xx,\xt,t))} \subset (-\xi_0,\xi_0)$ for all $\xx,\xt \in \R$ and $t \geq 1$;
\item[iv)] $F$ is twice continuously differentiable in its first argument and $1$-periodic in its third argument such that $\smash{\big\|\partial_\xi^\ell F(\xi,\xx,\cdot,t)\big\|_{L^2(0,1)} \leq C t^{\frac{\ell}{2}}}$ for $\xi \in (-\xi_0,\xi_0)$, $\xx \in \R$, $t \geq 1$, and $\ell = 0,1,2$;
\item[v)] if $m_2 = 0$, then $\int_0^1 F(\xi,\xx,\xt,t) \de \xt = 0$ for all $\xi \in (-\xi_0,\xi_0)$, $\xx \in \R$, and $t \geq 1$. 
\end{itemize}
Let $a \in \R$ be such that $\lambda'(0) = a\ri$. Then, the estimate
\begin{align*}
\left\|\int_\R \int_\R \re^{t \lambda(\xi)} \xi^{m_1+m_2} F(\xi,\cdot,\xt,t) \re^{\ri \xi(\cdot-\xt)} v(\xt) \de \xi \de \xt \right\|_\infty &\lesssim t^{-\frac{m_1}{2}} \|v'\|_\infty
\end{align*}
holds for each $t \geq 1$ and $v \in C_{\mathrm{ub}}^1(\R,\R)$. 
\end{lemma}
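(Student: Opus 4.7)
The plan is to exploit the cancellation structure of the kernel
\[
K(\xx,\xt,t) := \int_\R \re^{t\lambda(\xi)}\,\xi^{m_1+m_2}\,F(\xi,\xx,\xt,t)\,\re^{\ri\xi(\xx-\xt)}\,\de\xi
\]
to replace $v(\xt)$ by $v(\xt) - v(\xx+at)$, gaining a factor of $|\xt - \xx - at|$ via the Lipschitz bound $|v(\xt) - v(\xx+at)| \leq \|v'\|_\infty|\xt - \xx - at|$, and then to bound the resulting weighted $L^1$-norm of the kernel by combining Plancherel's theorem in $\xt$ with integration by parts in $\xi$. The two values of $m_2$ are reduced to a common estimate.

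If $m_2 = 0$, the zero-mean hypothesis (v) permits writing $F = \partial_{\xt} G$ for the $1$-periodic antiderivative $G(\xi,\xx,\xt,t) = \sum_{k \neq 0} F_k(\xi,\xx,t)\,\re^{2\pi\ri k\xt}/(2\pi\ri k)$, where $\{F_k\}_{k\in\Z}$ denote the Fourier coefficients of $F$ in its periodic argument; observe that $G$ inherits hypothesis (iv) with constants depending only on $C$. Integrating by parts in $\xt$ splits the integral into an $L^1$-type term $-\int L(\xx,\xt,t)\,v'(\xt)\,\de\xt$, where $L$ has the form of the statement with $F$ replaced by $G$ and no extra factor of $\xi$, plus a second term of the exact form of the statement with $m_2 = 1$ (and $F$ replaced by $\ri G$, which still satisfies all hypotheses). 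If $m_2 = 1$, the kernel $K$ has distributional zero mean in $\xt$: expanding $F = \sum_k F_k \re^{2\pi\ri k\xt}$, the formal computation $\int_\R K\,\de\xt$ concentrates on $\xi = 2\pi k$; this lies outside the support of $F_k$ for $k \neq 0$ (since $|2\pi k| \geq 2\pi > \xi_0$) and is annihilated by the factor $\xi^{m_1+1}$ for $k = 0$. Once the kernel bound in the next step yields $K(\xx,\cdot,t) \in L^1(\R)$, the identity $I(\xx,t) = \int K(\xx,\xt,t)[v(\xt) - v(\xx+at)]\,\de\xt$ follows rigorously, hence $|I(\xx,t)| \leq \|v'\|_\infty \int |K(\xx,\xt,t)|\,|\xt - \xx - at|\,\de\xt$. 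The shift $at$ reflects the natural travelling-wave frame set by the factor $\re^{\ri a t\xi}$ embedded in $\re^{t\lambda(\xi)}$ by hypothesis (i).

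For the kernel estimate, factor $\re^{t\lambda(\xi)} = \re^{\ri a t\xi}\re^{t\lambda_r(\xi)}$ with $\lambda_r(\xi) = O(\xi^2)$ and $\Re\lambda_r \leq -\mu\xi^2$; the modulation $\re^{\ri a t\xi}$ acts as a translation $\xt \mapsto \xt - at$ in physical space, absorbing the offset in the weight. Combining Parseval in $\xt$ (using $\sum_k |F_k(\xi,\xx,t)|^2 = \|F(\xi,\xx,\cdot,t)\|_{L^2(0,1)}^2 \leq C^2$) with Plancherel in $\xi$, applied mode-by-mode since the modes $\re^{2\pi\ri k\xt}\tilde{K}_k$ are orthogonal in $L^2(\R)$ due to their disjoint spectral supports, yields
\[
\|K(\xx,\cdot,t)\|_{L^2(\R)}^2 \lesssim \int_{-\xi_0}^{\xi_0} \re^{-2\mu\xi^2 t}\,\xi^{2(m_1+m_2)}\,\de\xi \lesssim t^{-(m_1+m_2)-\frac12}.
\]
Splitting $\int|K|\,|\xt - \xx - at|\,\de\xt$ at the diffusive scale $R = \sqrt{t}$, the near field contributes $\lesssim R^{3/2}\|K\|_{L^2} \lesssim t^{\frac12-(m_1+m_2)/2}$ by Cauchy-Schwarz, while the far field is controlled by $R^{-1/2}\|(\xt-\xx-at)^2 K\|_{L^2}$; the latter norm is estimated by the same Plancherel principle after integrating by parts twice in $\xi$, using the growth controls $|\lambda_r'(\xi)| \lesssim |\xi|$ and $|\lambda_r''(\xi)| \lesssim 1$ to tame the factors $t^2\xi^2$ and $t$ arising from $\partial_\xi^2 \re^{t\lambda_r}$, yielding the same $t^{\frac12 - (m_1+m_2)/2}$ bound. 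For $m_2 = 1$ this is precisely $t^{-m_1/2}$. The analogous computation for $L$ in the $m_2 = 0$ reduction exploits $|G_k|^2 \leq |F_k|^2/(2\pi k)^2$ to compensate for the missing $\xi$-factor and delivers $\int |L|\,\de\xt \lesssim t^{-m_1/2}$, closing the argument.

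The main obstacle is the careful bookkeeping in the Plancherel plus integration-by-parts step: balancing the powers $t^\ell|\xi|^\ell$ produced by $\partial_\xi^\ell\re^{t\lambda_r}$ against the Gaussian decay $\re^{-2\mu\xi^2 t}$ and the spectral weights $\xi^{2(m_1+m_2)}$, using only the $L^2$-in-$\xt$ bounds on $F$ afforded by hypothesis (iv), and tracking the Fourier-mode orthogonality carefully enough that the summed contributions close. The distributional identity $\int K\,\de\xt = 0$ must also be justified rigorously via the a posteriori $L^1$-bound on $K$; the implicit $C^2$-regularity of $\lambda$ required by the $\xi$-integration by parts is provided in the applications by the analyticity of $\lambda_c$ in Proposition~\ref{prop:speccons}.
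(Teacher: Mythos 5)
Your proposal is correct, but it takes a genuinely different technical route from the paper's proof. Both arguments hinge on the same structural ingredients: the Fourier-series expansion of $F$ in its periodic argument, Bessel/Parseval to convert the $L^{2}(0,1)$-hypothesis into mode-wise control, and two $\xi$-derivatives to gain spatial decay of the kernel at the diffusive scale. The paper, however, integrates by parts in $\xt$ \emph{inside} the Fourier series to move a derivative directly onto $v$, producing the $h_0$ and $h_1$ amplitudes, and then establishes a \emph{pointwise} kernel bound $\lvert I(\xx,\xt,t)\rvert\lesssim t^{-(m+1)/2}\bigl(1+(\xx-\xt+at)^2/t\bigr)^{-1}$ via the $\xi$-integration-by-parts identity~\eqref{eq:split3}, which is then simply integrated against $v'$. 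You instead exploit the (a priori distributional, a posteriori rigorous) zero-mean property $\int_\R K(\xx,\xt,t)\,\de\xt = 0$ to subtract $v(\xx+at)$, gaining the Lipschitz weight $\lvert\xt-\xx-at\rvert$, and control the resulting weighted $L^1$-norm of $K$ via $L^{2}$-Plancherel estimates (on $K$ and $(\xt-\xx-at)^{2}K$, applied mode-by-mode thanks to the disjoint spectral supports) combined with a Cauchy--Schwarz split at scale $\sqrt{t}$; the $m_2 = 0$ case is reduced to $m_2 = 1$ through the periodic antiderivative $G$. The two routes trade the same currency: one unit of $\xt$-integrability of the kernel for one derivative on $v$. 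The paper's pointwise kernel bound is slightly more localized, but your $L^{2}$-based version is entirely natural given that hypothesis~(iv) supplies only $L^{2}(0,1)$-control on $F$, and it cleanly dispenses with the need to manipulate the amplitudes $h_0,h_1$ explicitly. Your bookkeeping of the powers of $t$ in the far-field term checks out (the dominant contribution $t^{2}(\lambda_r')^{2}$ against the Gaussian yields $\lVert w^2 K\rVert_{L^{2}}\lesssim t^{3/4-(m_1+m_2)/2}$, hence $R^{-1/2}\lVert w^2 K\rVert_{L^{2}}\sim t^{-m_1/2}$ for $m_2=1$), and you correctly flag the implicit $C^{2}$-regularity of $\lambda$ and the bounds $\lvert\lambda_r'(\xi)\rvert\lesssim\lvert\xi\rvert$, $\lvert\lambda_r''(\xi)\rvert\lesssim 1$ on $\overline{\mathrm{supp}\,F}$ — which the paper's proof also uses tacitly and which are supplied in all applications by the analyticity of $\lambda_c$.
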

\begin{proof} 
Expanding the $1$-periodic function $F(\xi,\xx,\cdot,t)$ as a Fourier series and subsequently integrating by parts, we obtain
\begin{align} \label{e:decomp_tech}
\begin{split}
&\int_\R \int_\R \re^{t \lambda(\xi)} \xi^{m_1+m_2} F(\xi,\xx,\xt,t) \re^{\ri \xi(\xx-\xt)} v(\xt) \de \xi \de \xt\\
&\qquad = \sum_{j \in \Z} \int_\R \int_\R \re^{t \lambda(\xi)} \xi^{m_1+m_2} \big\langle F(\xi,\xx,\cdot,t),\re^{2\pi \ri j \cdot}\big\rangle_{L^2(0,1)}  \re^{\ri \xi \xx + (2\pi j - \xi) \ri \xt} v(\xt) \de \xi \de \xt\\
&\qquad = \ri \int_\R \int_\R \re^{t \lambda(\xi)} \left(\xi^{m_1+m_2} h_0(\xi,\xx,\xt,t) - \xi^{m_1} h_1(\xi,\xx,\xt,t)\right) \re^{\ri \xi (\xx - \xt)} \de \xi \,  v'(\xt) \de \xt
\end{split}
\end{align}
for $\xx \in \R$, $t \geq 1$, and $v \in C_{\mathrm{ub}}^1(\R,\R)$, where we denote
\begin{align*}
h_0(\xi,\xx,\xt,t) := \sum_{j \in \Z \setminus \{0\}} \frac{\big\langle F(\xi,\xx,\cdot,t),\re^{2\pi \ri j \cdot}\big\rangle_{L^2(0,1)}}{2\pi j - \xi} \re^{2\pi \ri j \xt}
\end{align*}
and where
\begin{align*}
h_1(\xi,\xx,\xt,t) := \big\langle F(\xi,\xx,\cdot,t),1\big\rangle_{L^2(0,1)} = \int_0^1 F(\xi,\xx,y,t) \de y 
\end{align*}
vanishes identically in case $m_2 = 0$, by hypothesis~v). By Hypothesis~iv), we have
\begin{align} \label{e:h1est}
\left\|\partial_\xi^\ell h_1(\xi,\xx,\cdot,t)\right\|_\infty \leq  \left\|\partial_\xi^\ell F(\xi,\xx,\cdot,t)\right\|_{L^2(0,1)} \lesssim t^{\frac{\ell}{2}}
\end{align}
for $\xi \in (-\xi_0,\xi_0)$, $\xx \in \R$, $t \geq 1$, and $\ell = 0,1,2$. On the other hand, hypothesis~iv) in combination with an application of H\"older's and Bessel's inequality yields
\begin{align}  \label{e:hest1}
\begin{split}
\left\|h_0(\xi,\xx,\cdot,t)\right\|_{\infty} &\leq \sum_{j \in \Z \setminus \{0\}} \left|\frac{\big\langle F(\xi,\xx,\cdot,t),\re^{2\pi \ri j \cdot}\big\rangle_{L^2(0,1)}}{2\pi j - \xi}\right|\\
&\leq \left\|F(\xi,\xx,\cdot,t)\right\|_{L^2(0,1)} \left(\sum_{j \in \Z \setminus \{0\}} \frac{1}{(2\pi j - \xi)^2}\right)^{\frac12} \lesssim 1
\end{split}
\end{align}
for $\xi \in (-\xi_0,\xi_0)$, $\xx \in \R$, and $t \geq 1$. Similarly, computing
\begin{align*}
\partial_\xi^{\ell} \left(\frac{1}{2\pi j - \xi}\right) = \frac{\ell!}{(2\pi j - \xi)^{\ell+1}}
\end{align*}
for $j \in \Z \setminus \{0\}$ and using hypothesis~iv), we bound
\begin{align} \label{e:hest2}
\left\|\partial_\xi^\ell h_0(\xi,\xx,\cdot,t)\right\|_\infty \lesssim \sum_{m = 0}^\ell \left\|\partial_\xi^{\ell - m} F(\xi,\xx,\cdot,t)\right\|_{L^2(0,1)} \left(\sum_{j \in \Z \setminus \{0\}} \frac{1}{(2\pi j - \xi)^{2(m+1)}}\right)^{\frac12} \lesssim C t^{\frac{\ell}{2}}
\end{align}
for $\xi \in (-\xi_0,\xi_0)$, $\xx \in \R$, $t \geq 1$, and $\ell = 1,2$. 

Let $m \in \NM_0$ and $\ell \in \{0,1\}$. Proceeding as in~\cite[Lemma~A.1]{BjoernMod}, we use integration by parts to rewrite the integrand
\begin{align*}
I(\xx,\xt,t) := \int_\R \re^{t \lambda(\xi)} \xi^m h_\ell(\xi,\xx,\xt,t) \re^{\ri \xi (\xx - \xt)} \de \xi
\end{align*}
in~\eqref{e:decomp_tech} as 
\begin{align} \label{eq:split3}
I(\xx,\xt,t) = \left(1+\frac{(\xx-\xt+ a t)^2}{t}\right)^{-1} \left(I(\xx,\xt,t) + I_2(\xx,\xt,t)\right)
\end{align}
for $t \geq 1$ and $\xx,\xt \in \R$, where we denote
\begin{align*}
I_2(\xx,\xt,t) = \frac{1}{t} \int_\R \partial_\xi^2 \left(\re^{t \left(\lambda(\xi) - \lambda'(0) \xi\right)}  \xi^m h_\ell(\xi,\xx,\xt,t)\right) \re^{\ri \xi(\xx - \xt + a t)} \de \xi.
\end{align*}
To bound the first integral on the right-hand side of~\eqref{eq:split3}, we use hypotheses~ii) and~iii) and estimates~\eqref{e:h1est} and~\eqref{e:hest1}, yielding
\begin{align} \label{e:Iest}
\left|I(\xx,\xt,t)\right| \lesssim \int_{-\xi_0}^{\xi_0} \left|\xi^m \re^{t \lambda(\xi)}\right| \de \xi \lesssim \int_\R |\xi|^m \re^{-\mu \xi^2 t} \de \xi \lesssim t^{-\frac{m+1}{2}}
\end{align}
for $t \geq 1$ and $\xx,\xt \in \R$. To bound the second integral on the right-hand side of~\eqref{eq:split3}, we first observe that, by hypotheses~i) and~ii), estimates~\eqref{e:h1est},~\eqref{e:hest1}, and~\eqref{e:hest2}, and the fact that $\lambda \in C^2\big((-\xi_0,\xi_0),\C\big)$, it holds 
\begin{align*}
\begin{split}
&\left|\partial_\xi^2 \left(\re^{t (\lambda(\xi)-\lambda'(0)\xi)} \xi^m h_\ell(\xi,\xx,\xt,t)\right)\right|\\ 
&\qquad\lesssim \left(|\xi|^m t \left(1 + |\xi| \sqrt{t} + \xi^2 t\right) + m |\xi|^{m-1} \sqrt{t} \left(1 + |\xi|\sqrt{t}\right) + m(m-1) |\xi|^{m-2}\right)\re^{-\mu \xi^2 t}
\end{split}
\end{align*}
for $\xi \in (-\xi_0,\xi_0)$, $t \geq 1$ and $\xx,\xt \in \R$. Therefore, using hypotheses~ii) and~iii), we establish
\begin{align} \label{e:I2est}
\begin{split}
\left|I_2(\xx,\xt,t)\right| &\lesssim \int_\R \re^{-\mu \xi^2 t} \bigg(|\xi|^m \left(1 + |\xi| \sqrt{t} + \xi^2 t\right)\\ &\qquad \qquad  \qquad + \, \frac{m}{\sqrt{t}} \, |\xi|^{m-1} \left(1 + |\xi|\sqrt{t}\right) + \frac{m(m-1)}{t} |\xi|^{m-2}\bigg)\de \xi \lesssim t^{-\frac{m+1}{2}}
\end{split}
\end{align}
for $\xx,\xt \in \R$ and $t \geq 1$. Applying the estimates~\eqref{e:Iest} and~\eqref{e:I2est} to~\eqref{eq:split3}, we arrive at
\begin{align*}
\left|I(\xx,\xt,t)\right| \leq t^{-\frac{m+1}{2}}\left(1+\frac{(\xx-\xt+ a t)^2}{t}\right)^{-1}
\end{align*}
for $\xx,\xt \in \R$ and $t \geq 1$. Finally, we use this pointwise bound to estimate the right-hand side of~\eqref{e:decomp_tech}, yielding
\begin{align*}
\left|\int_\R \int_\R \re^{t \lambda(\xi)} \xi^{m_1+m_2} F(\xi,\xx,\xt,t) \re^{\ri \xi(\xx-\xt)} v(\xt) \de \xi \de \xt \right| &\lesssim \int_\R t^{-\frac{m_1 + 1}{2}} \left(1+\frac{(\xx-\xt+ a t)^2}{t}\right)^{-1} \|v'\|_\infty \de \xt\\ 
&\lesssim t^{-\frac{m_1}{2}} \|v'\|_\infty
\end{align*}
for $\xx \in \R$, $t \geq 1$, and $v \in C_{\mathrm{ub}}^1(\R,\R)$, which concludes the proof.
\end{proof} 

The second technical estimate is a slight modification of the high-frequency bound established in~\cite[Lemma~A.2]{HDRS22}.

\begin{lemma}
\label{lemma_higher_order_low}
Let $\xi_0,d > 0$ and $a \in \R$. Let $F \in C_{\mathrm{ub}}^2(\R,\R)$ be supported on $\R \setminus [-\xi_0,\xi_0]$. Then, there exists a constant $\mu_0 > 0$ such that we have
\begin{align*}
\left\|\int_\R\int_\R \re^{\ri \xi(\cdot-\xt) + \left(a \ri \xi - d\xi^2\right)t} F(\xi) \de \xi\, v(\xt) \de \xt\right\|_{\infty} \lesssim  \left(1 + \frac{1}{\sqrt{t}}\right)\re^{-\mu_0 t} \|v\|_{\infty}
\end{align*}
for $v \in C_{\mathrm{ub}}(\R,\R)$ and $t > 0$.
\end{lemma}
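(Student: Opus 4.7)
The plan is to recast the double integral as a convolution on $\R$ and then invoke Young's convolution inequality, reducing the lemma to an $L^1$-bound on a kernel. Define
$$K(\xx,t) := \int_\R \re^{\ri\xi\xx + (a\ri\xi - d\xi^2)t} F(\xi) \de \xi, \qquad \xx \in \R,\ t > 0,$$
which is a well-defined Lebesgue integral thanks to the Gaussian factor $\re^{-d\xi^2 t}$. Fubini's theorem then identifies the operator in the statement with the convolution $v \mapsto K(\cdot,t) \ast v$, and Young's inequality gives $\|K(\cdot,t)\ast v\|_\infty \leq \|K(\cdot,t)\|_{L^1}\|v\|_\infty$. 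So it suffices to establish
$$\|K(\cdot,t)\|_{L^1} \lesssim \left(1 + \frac{1}{\sqrt{t}}\right)\re^{-\mu_0 t}, \qquad t > 0,$$
for some $\mu_0 > 0$.

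The engine behind the exponential decay is the support condition on $F$: since $F$ vanishes on $[-\xi_0,\xi_0]$, one has $\re^{-d\xi^2 t} \leq \re^{-\mu_1 t}\re^{-d\xi^2 t/2}$ on $\mathrm{supp}(F)$, where $\mu_1 := d\xi_0^2/2$. I plan to derive two pointwise bounds on $K$ and combine them via a near/far split. The trivial estimate
$$|K(\xx,t)| \leq \|F\|_\infty \int_{|\xi|\geq \xi_0} \re^{-d\xi^2 t}\,\de\xi \lesssim \frac{\re^{-\mu_1 t}}{\sqrt{t}}$$
controls the regime $|\xx| \leq 1$. For $\xx \neq 0$, I integrate by parts twice in $\xi$; the Gaussian ensures the boundary contributions vanish, yielding
$$K(\xx,t) = -\frac{1}{\xx^2} \int_\R \re^{\ri\xi\xx}\, \partial_\xi^2\!\left[\re^{(a\ri\xi - d\xi^2)t}F(\xi)\right]\de\xi.$$
Applying Leibniz, using $F \in C_{\mathrm{ub}}^2(\R)$, and estimating $|a\ri - 2d\xi|^2 \lesssim 1 + \xi^2$, the integrand is dominated by $C(1 + t^2 + \xi^2 t^2)\re^{-d\xi^2 t}\mathbf{1}_{\mathrm{supp}(F)}$. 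Carrying out the Gaussian integral in $\xi$ produces
$$|K(\xx,t)| \leq \frac{C \re^{-\mu_1 t}}{\xx^2}\bigl(t^{-1/2} + t^{1/2} + t^{3/2}\bigr).$$
Choosing $\mu_0 \in (0,\mu_1)$ and using $t^{3/2}\re^{-\mu_1 t} \lesssim \re^{-\mu_0 t}$ absorbs all polynomial-in-$t$ terms into the exponential, giving the second pointwise bound $|K(\xx,t)| \lesssim \xx^{-2}(1 + 1/\sqrt{t})\re^{-\mu_0 t}$.

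Combining the two bounds yields $|K(\xx,t)| \lesssim (1 + 1/\sqrt{t})\re^{-\mu_0 t}\min\{1,\,\xx^{-2}\}$, and integration (splitting the range at $|\xx|=1$) immediately gives the desired $L^1$-estimate $\|K(\cdot,t)\|_{L^1} \lesssim (1 + 1/\sqrt{t})\re^{-\mu_0 t}$. The only real obstacle is the bookkeeping after the double integration by parts: differentiating the Gaussian under the integral generates polynomial factors in both $t$ and $\xi$ that naively would defeat the decay; the decisive trick is first to extract the uniform prefactor $\re^{-\mu_1 t}$ from the support condition, and then to concede a little exponential decay (taking $\mu_0 < \mu_1$) in order to absorb the polynomial $t$-factors, which in turn is possible because $\mathrm{supp}(F)$ is bounded away from zero.
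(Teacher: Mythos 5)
Your proof is correct and follows essentially the same route as the paper's: two integrations by parts in $\xi$ convert the oscillatory integral into an integrand with a decaying weight, the support condition on $F$ together with the Gaussian yields $\re^{-\mu_1 t}$, and the polynomial $t$-growth from Leibniz is absorbed by conceding a smaller exponent $\mu_0 < \mu_1$. The only cosmetic difference is that the paper shifts the phase by $at$ (working with the weight $(1+(\xx-\xt+at)^2)^{-1}$ so that $\partial_\xi^2$ hits only $\re^{-d\xi^2 t}F(\xi)$), whereas you leave the linear phase inside and compensate with slightly larger $t$-powers before the absorption step; both give the same estimate.
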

\begin{proof}
We follow the strategy of the proof of~\cite[Lemma~A.2]{HDRS22} and use integration by parts to rewrite the integrand
\begin{align*}
I(\xx,\xt,t) = \int_\R \re^{\ri \xi(\cdot-\xt) + \left(a \ri \xi - d\xi^2\right)t} F(\xi) \de \xi
\end{align*}
as 
\begin{align} \label{e:rewrite_technical}
I(\xx,\xt,t) = \frac{1}{1 + \left(\xx - \xt + at\right)^2} \left(I(\xx,\xt,t) +  \int_\R \partial_\xi^2\left(e^{-d\xi^2 t}F(\xi)\right) \re^{\ri\xi(\xx-\xt+at)}\de \xi\right)
\end{align}
for $\xx,\xt \in \R$ and $t > 0$. We compute
\begin{align*}
\partial_\xi^2\left(\re^{-d\xi^2 t}F(\xi)\right) = \re^{-d t \xi^2} \left(F''(\xi)-4 d t \xi F'(\xi)+2 d t F(\xi) \left(2 d t \xi^2-1\right)\right)
\end{align*}
for $\xi \in\R$ and $t > 0$. We estimate
\begin{align*}
\left|I(\xx,\xt,t)\right| &\lesssim \int_{\R \setminus [-\xi_0,\xi_0]} \re^{-\frac{d}{2}\xi^2 t - \frac{d}{2}\xi_0^2 t} \|F\|_{\infty} \lesssim \frac{1}{\sqrt{t}} \re^{- \frac{d}{2}\xi_0^2 t}\|F\|_{\infty} ,\\
\left|\int_\R \partial_\xi^2\left(e^{-d\xi^2 t}F(\xi)\right) \re^{\ri\xi(\xx-\xt+at)}\de \xi\right| 
&\lesssim \int_{\R \setminus [-\xi_0,\xi_0]} \re^{-\frac{d}{2}\xi^2 t - \frac{d}{2}\xi_0^2 t} \left(1 + t + |\xi| t + \xi^2 t^2\right) \|F\|_{C_{\mathrm{ub}}^2} \de \xi\\ &\lesssim \left(\sqrt{t} + \frac{1}{\sqrt{t}}\right)\re^{-\frac{d}{2}\xi_0^2 t} \|F\|_{C_{\mathrm{ub}}^2}
\end{align*}
for $\xx,\xt \in \R$ and $t > 0$. Applying these estimates to the identity~\eqref{e:rewrite_technical}, we arrive at
\begin{align*}
\left|\int_\R \int_\R I(\xx,\xt,t) \de \xi\, v(\xt)\de \xt\right| &\lesssim
\int_\R \left(\sqrt{t} + \frac{1}{\sqrt{t}}\right)\frac{\re^{-\frac{d}{2}\xi_0^2 t} \|v\|_\infty}{1 + \left(\xx-\xt + at\right)^2} \de \xt \lesssim \left(1 + \frac{1}{\sqrt{t}}\right)\re^{-\frac{d}{4}\xi_0^2 t} \|v\|_\infty
\end{align*}
for $\xx \in \R$, $v \in C_{\mathrm{ub}}(\R,\R)$, and $t > 0$, which yields the desired bound with $\mu_0 = \frac{d}{4}\xi_0^2$. 
\end{proof}

\section{Local existence of the phase modulation} \label{app:B}

We establish existence of a maximally defined solution $\gamma(t)$ to the integral equation~\eqref{e:intgamma} by employing a contraction-mapping argument. 

\begin{proof}[Proof of Proposition~\ref{p:gamma}]
It follows from standard analytic semigroup theory~\cite{LUN} that the orbit map $\Gamma \in C\big([0,\infty),C_{\mathrm{ub}}^1(\R)\big)$ given by $\Gamma(t) = \smash{\re^{-\partial_\xx^4 t} \gamma_0}$ enjoys the regularity property $\Gamma \in C^j\big((0,\infty),C_{\mathrm{ub}}^l(\R)\big)$ for all $j,l \in \NM_0$ and it obeys the estimates~\eqref{e:Gamma_rates3} for all $t > 0$. In particular, it holds $\|\partial_\xx \Gamma(t)\|_\infty = \smash{\|\re^{-\partial_\xx^4 t} \gamma_0'\|_\infty} \leq \|\gamma_0'\|_\infty < r_0$ for $t \geq 0$. Since the propagator $S_p^0(t)$ vanishes on $[0,1]$, the function $\Gamma(t)$ solves~\eqref{e:intgamma} for $t \in [0,1]$. Hence, we have $\tau_{\max} \geq 1$.

Next, assume $\tau_{\max} \leq T_{\max}$. As $\tau_{\max} \geq 1$, we must have $T_{\max} \geq 1$. Take $t_0 > \frac12$ and $\delta, \eta > 0$ such that $t_0 + \delta < T_{\max}$ and $\eta < r_0$. Let $\check{\gamma} \in C\big([0,t_0],C_{\mathrm{ub}}^1(\R)\big) \cap C\big((0,t_0],C_{\mathrm{ub}}^2(\R)\big) \cap C^1\big((0,t_0],C_{\mathrm{ub}}(\R)\big)$ be a solution to~\eqref{e:intgamma} with $\|\check{\gamma}_\xx(t)\|_{\infty} \leq r_0 - \eta$ for all $t \in [0,t_0]$. Let $R \geq 1$ be such that $\smash{\|(\check{\gamma}(t),\partial_t \check{\gamma}(t))\|_{C_{\mathrm{ub}}^2 \times C_{\mathrm{ub}}} \leq R}$ for all $t \in [\tfrac12,t_0]$. We argue that $\check{\gamma}$ can be extended to a solution $\smash{\gamma_{\mathrm{ext}} \in C\big([0,t_0+\delta],C_{\mathrm{ub}}^1(\R)\big) \cap C\big((0,t_0+\delta],C_{\mathrm{ub}}^2(\R)\big) \cap C^1\big((0,t_0+\delta],C_{\mathrm{ub}}(\R)\big)}$ to~\eqref{e:intgamma} which satisfies $\|\partial_\xx\gamma_{\mathrm{ext}}(t)\|_\infty \leq r_0 - \tfrac12 \eta$ for all $t \in [0,t_0+\delta]$ and $\smash{\|(\gamma_{\mathrm{ext}}(t),\partial_t \gamma_{\mathrm{ext}}(t))\|_{C_{\mathrm{ub}}^2 \times C_{\mathrm{ub}}} \leq 2R}$ for all $t \in [\tfrac12,t_0+\delta]$. To this end, we close a contraction mapping argument in the metric space
\begin{multline*}
\mathcal{M} = \Big\{(\gamma,\gamma_t) \in C\big([t_0,t_0+\delta],C_{\mathrm{ub}}^2(\R) \times C_{\mathrm{ub}}(\R)\big) : \|\gamma_\xx(t)\|_\infty \leq r_0 - \tfrac12 \eta \text{ and }\\ \|(\gamma(t),\gamma_t(t))\|_{C_{\mathrm{ub}}^2 \times C_{\mathrm{ub}}} \leq 2R \text{ for } t \in [t_0,t_0+\delta]\Big\},
\end{multline*}
endowed with the metric from the Banach space $C\big([t_0,t_0+\delta],C_{\mathrm{ub}}^2(\R) \times C_{\mathrm{ub}}(\R)\big)$. 

Applying the mean-value theorem and Proposition~\ref{well_posed_full_sol}, it follows that $V \colon C_{\mathrm{ub}}(\R) \times [t_0,t_0+\delta] \to C_{\mathrm{ub}}(\R)$ given by
\begin{align*}V(\gamma,t)[\xx] &= u(\xx-\gamma(\xx),t) - \phi_0(\xx)\end{align*}
is continuous in $t$ and fulfills
\begin{align*}
\left\|V(\gamma,t) - V(\tilde{\gamma},t)\right\|_\infty \leq \|u_\xx(t)\|_\infty \|\gamma - \tilde{\gamma}\|_\infty
\end{align*}
for $\gamma,\tilde\gamma \in C_{\mathrm{ub}}(\R)$ and $t \in [t_0,t_0+\delta]$. Therefore, setting $\mathcal{K} = \{(\gamma,\gamma_t) \in C_{\mathrm{ub}}^2(\R) \times C_{\mathrm{ub}}(\R) : \|\gamma'\|_\infty \leq r_0-\frac12\eta, \|(\gamma,\gamma_t)\|_{C_{\mathrm{ub}}^2 \times C_{\mathrm{ub}}} \leq 2R\}$ and recalling~\eqref{e:defnonl0}, the nonlinear maps $\mathcal{K} \times [t_0,t_0+\delta] \to C_{\mathrm{ub}}(\R), \, (\gamma,\gamma_t,t) \mapsto \mathcal{Q}(V(\gamma,t),\gamma), \mathcal{R}(V(\gamma,t),\gamma,\gamma_t), \mathcal{S}(V(\gamma,t),\gamma)$ are bounded, continuous in $t$, and Lipschitz continuous in $(\gamma,\gamma_t)$. 

Since the propagator $S_p^0(t)$ vanishes on $[0,1]$, it must hold $\check{\gamma}(t) = \Gamma(t)$ for all $t \in [0,1]$. We show that the action of the nonlinearities on $\Gamma(t)$ is well-defined. Thus, applying the estimates~\eqref{e:Gamma_rates3}, using that $t \mapsto \|u(t)\|_\infty$ is bounded on $[0,1]$, and noting that $\mathcal{R}(v,\gamma,\gamma_t)$ is linear in $\gamma_t$ and $\gamma_{\xx\xx}$, we establish
\begin{align*}
\left\|\mathcal{Q}(V(\Gamma(t),t),\Gamma(t))\right\|_\infty, \left\|\mathcal{S}(V(\Gamma(t),t),\Gamma(t))\right\|_\infty \lesssim 1, \qquad \left\|\mathcal{R}(V(\Gamma(t),t),\Gamma(t),\partial_t \Gamma(t))\right\|_\infty \leq t^{-\frac34}
\end{align*}
for all $t \in (0,1]$. Combining the latter with~\eqref{e:defnonl} and Proposition~\ref{prop:lin1}, we arrive at the estimate
\begin{align*}
\int_0^{\frac12} S_p^0(t-s) \mathcal{N}\left(V(\Gamma(s),s),\Gamma(s),\partial_s \Gamma(s)\right)\de s \lesssim \int_0^{\frac12} s^{-\frac34} \de s \lesssim 1
\end{align*}
for all $t \geq 0$. 

Finally, we observe that, by Proposition~\ref{prop:lin1}, the propagators $\smash{\partial_t^\ell S_p^0(t) \partial_\xx^i} \colon C_{\mathrm{ub}}(\R) \to C_{\mathrm{ub}}^l(\R)$ are $t$-uniformly bounded and strongly continuous on $[0,\infty)$ for any $i,l,\ell \in \mathbb N_0$. A standard contraction mapping argument, cf.~\cite[Theorem~6.1.2]{Pazy}, in the complete metric space $\mathcal{M}$ yields a unique solution $(\gamma,\gamma_t) \in \mathcal{M}$ to the integral system
\begin{align*}
\gamma(t) &= S_p^0(t)\left(v_0 + \phi_0'\gamma_0 + \gamma_0' v_0\right) + \left(1 - \chi(t)\right) \Gamma(t) + \int_0^{\frac12} S_p^0(t-s) \mathcal{N}(V(\Gamma(s),s),\Gamma(s),\partial_s \Gamma(s))\de s\\
&\qquad + \, \int_{\frac12}^{t_0} S_p^0(t-s) \mathcal{N}(V(\check{\gamma}(s),s),\check{\gamma}(s),\partial_s \check{\gamma}(s))\de s\\ 
&\qquad + \, \int_{t_0}^{t_0 + \delta} S_p^0(t-s) \mathcal{N}(V(\gamma(s),s),\gamma(s),\gamma_t(s))\de s,\\
\gamma_t(t) &= \partial_t S_p^0(t)\left(v_0 + \phi_0'\gamma_0 + \gamma_0' v_0\right) - \chi'(t) \Gamma(t) + \left(1 - \chi(t)\right) \partial_t \Gamma(t)\\ 
&\qquad + \, \int_0^{\frac12} \partial_t S_p^0(t-s) \mathcal{N}(V(\Gamma(s),s),\Gamma(s),\partial_s \Gamma(s))\de s\\ 
&\qquad + \, \int_{\frac12}^{t_0} \partial_t S_p^0(t-s) \mathcal{N}(V(\check{\gamma}(s),s),\check{\gamma}(s),\partial_s \check{\gamma}(s))\de s\\ 
&\qquad + \, \int_{t_0}^{t_0 + \delta} \partial_t S_p^0(t-s) \mathcal{N}(V(\gamma(s),s),\gamma(s),\gamma_t(s))\de s,
\end{align*}
provided $\delta > 0$ is sufficiently small. 

By construction, we have $\gamma \in C^1\big([t_0,t_0+\delta],C_{\mathrm{ub}}(\R)\big)$ with $\partial_t \gamma(t) = \gamma_t(t)$ for all $t \in [t_0,t_0+\delta]$. Consequently,
\begin{align*}
\gamma_{\mathrm{ext}}(t) = \begin{cases} \check{\gamma}(t), & t \in [0,t_0), \\ \gamma(t), & t \in [t_0,t_0+\delta],\end{cases}
\end{align*}
defines a solution to~\eqref{e:intgamma}, which extends $\check{\gamma}$. In particular, it satisfies the estimate $\|\partial_\xx\gamma_{\mathrm{ext}}(t)\|_\infty < r_0$ for all $t \in [0,t_0+\delta]$. 

As shown in~\cite[Theorem~4.3.4]{CA98} and~\cite[Theorem~6.1.4]{Pazy}, this extension procedure yields the existence of the desired maximal solution $\gamma(t)$. Its regularity properties then follow by the fact that the propagators $\smash{\partial_t^\ell S_p^0(t) \partial_\xx^i} \colon C_{\mathrm{ub}}(\R) \to C_{\mathrm{ub}}^l(\R)$ are $t$-uniformly bounded and it holds $\Gamma \in C^i\big((0,\infty),C_{\mathrm{ub}}^l(\R)\big)$ for all $i,l,\ell \in \mathbb N_0$.
\end{proof}

\bibliographystyle{abbrv}
\bibliography{mybib}
\end{document}